\theoremstyle{plain}
\theoremstyle{plain}
\newtheorem{theorem}{Theorem}[section]
\newtheorem{remark}[theorem]{Remark}
\newtheorem{lemma}[theorem]{Lemma}
\newtheorem{corollary}[theorem]{Corollary}
\newtheorem{proposition}[theorem]{Proposition}
\newtheorem{definition}{Definition}
\newcommand{\modch}{\color{black}}
\newcommand{\argmin}[1]{\underset{#1}{\operatorname{arg}\!\operatorname{min}}\;}
\newcommand{\one}{\mathds{1}}
\newcommand{\E}{\mathbb{E}}
\newcommand{\R}{\mathbb{R}}
\renewcommand{\P}{\mathbb{P}}
\newcommand{\modar}{\color{black}}
\newcommand{\rev}{\color{black}}
\newcommand{\revd}{\color{black}}
\newcommand{\revnew}{\color{black}}
\newcommand{\tronc}[2]{\left[#1\right]_{#2}}
\DeclarePairedDelimiter{\abs}{\lvert}{\rvert}
\DeclarePairedDelimiter{\norm}{\lVert}{\rVert}
\title{Minimax rate for multivariate data under componentwise local differential privacy constraints}
\author{Chiara Amorino\thanks{ Universit\'e du Luxembourg, L-4364 Esch-Sur-Alzette, Luxembourg. The author gratefully acknowledges financial support of ERC Consolidator Grant 815703 “STAMFORD: Statistical Methods for High Dimensional Diffusions”.} \qquad Arnaud Gloter \thanks{Laboratoire de Math\'ematiques et Mod\'elisation d'Evry, CNRS, Univ Evry, Universit\'e Paris-Saclay, 91037, Evry, France.}}
\begin{document}
\maketitle

\begin{abstract}
%
%
	Our research analyses the balance between maintaining privacy and preserving statistical accuracy when dealing with multivariate data that is subject to \textit{componentwise local differential privacy} (CLDP). With CLDP, each component of the private data is made public through a separate privacy channel. This allows for varying levels of privacy protection for different components or for the privatization of each component by different entities, each with their own distinct privacy policies. 
	{\revd
	It also covers the practical situations where it is impossible to privatize jointly all the components of the raw data.}
	We develop general techniques for establishing minimax bounds that shed light on the statistical cost of privacy in this context, as a function of the privacy levels $\alpha_1, \dots , \alpha_d$ of the $d$ components. \\
 We demonstrate the versatility and efficiency of these techniques by presenting various statistical applications. Specifically, we examine nonparametric density and {\revnew joint moments} estimation under CLDP, providing upper and lower bounds that match up to constant factors, as well as an associated data-driven adaptive procedure. {\revnew Additionally, we conduct a detailed analysis of the effective privacy level, exploring how information about a private characteristic of an individual may be inferred from the publicly visible characteristics of the same individual.}

\noindent
 \textit{Keywords: local differential privacy, minimax optimality, rate of convergence, non parametric estimation, density estimation} 
 
\noindent
\textit{AMS 2020 subject classifications: Primary 62C20, secondary 62G07, 62H05, 68P27}

\end{abstract}

\section{Introduction}
In the current era of information technology, protecting data privacy has become a significant challenge for statistical inference. With the widespread collection and storage of massive amounts of data, including medical records, social media activity and smartphone user behavior, individuals are increasingly reluctant to share sensitive information with companies or state officials. To address this issue, researchers in computer science and related fields have produced a vast literature on constructing privacy-preserving data release mechanisms. Real-world applications have also emerged, with companies such as Apple \cite{Tan17}, Google \cite{Erl14} and Microsoft \cite{Din17} developing data analysis methodologies that achieve strong statistical performance while maintaining individuals' privacy. This interest has been driven by regulatory pressure and the need to comply with privacy laws (see for example \cite{For21, Ari21}).

A highly effective method of protecting data from privacy breaches consists in differential privacy (see the landmarks \cite{Dwo06, Dwo08} as well as \cite{Dwo04, Evf03}). {\revnew It involves introducing carefully calibrated random noise to statistical computations or data summaries. This ensures that the overall statistical properties of the dataset remain intact while making it difficult to infer specific information about any individual data point. 
}

There are two main types of differential privacy: local privacy and central privacy. Local privacy involves privatizing data before sharing it with a data collector, while central privacy involves a centralized curator who maintains the sample and guarantees that any information it releases is appropriately private. While the local model provides stronger privacy protections, it also involves some loss of statistical efficiency. Nevertheless, major technology companies such as Apple and Google (see \cite{Apple} and \cite{Google}, respectively) have adopted local differential privacy protections in their data collection and machine learning tools to protect sensitive data. In this paper, the focus is on the local version of differential privacy, which is formally defined in Section \ref{S:Pb_formulation}.

Recently, there has been growing interest in studying differential privacy from a statistical inference perspective. {\rev The seminal work by Warner \cite{War65} introduced randomized responses, which have since become one of the primary randomization techniques employed in differential privacy.} However, modern research has produced mechanisms for a wide range of statistical problems, including mean and median estimation in \cite{Martin}, hypothesis testing (see for example \cite{Kai14, Jos19, Ber20, Lau22}), robustness \cite{Li23}, change point analysis \cite{Ber21, Li22} and nonparametric estimation \cite{BRS23, But, Kroll, BGW21}, among others.
In light of the increasing significance of data protection, it is crucial to find a balance between statistical utility and privacy: it is essential to ensure that data remains protected from privacy breaches while also allowing for the extraction of useful information and insights. Therefore, finding the optimal balance between these two aspects has become increasingly important.

This paper examines $n$ independent and identically distributed multivariate datasets with law $\bm{X}= (X^1, \dots , X^d)$, subject to what we call \textit{componentwise local differential privacy constraints}. \textit{Componentwise local differential privacy (CLDP)} is a term here introduced and refers to the method of separately making each component public through different privacy channels. This approach can be beneficial as different components may require varying levels of privacy protection {\modar or can not be privatized jointly}. We denote by $\alpha_j$ the amount of privacy ensured to the component $X^j$. Intuitively, $\alpha_j = 0$ guarantees perfect privacy while as $\alpha_j$ increases towards infinity, the privacy constraints become less strict. The reader can refer to Equation \eqref{eq: def local privacy} for a precise definition. The study focuses on exploring the trade-off between privacy protection and efficient statistical inference and aims to determine the optimal mechanisms for preserving privacy in this context.

Let us take the example where data is collected from $n$ individuals, comprising $d$ different aspects of their life. For instance, one component could represent data related to sport
practice, which is widely available on phone applications,
while another component could concern medical expenses. It is evident that disclosing information about the first component has a different impact than disclosing the same about the second, given that maintaining high confidentiality for medical bills is more important. Since some of the components may be correlated, it is necessary to work within a framework that takes this into account. One could wonder if it is possible to extract sensitive information on one component by taking advantage of the fact that on another component, possibly correlated to the first, a smaller amount of privacy is guaranteed. 

We give a first answer to this question in Proposition \ref{prop: proba info}, where we quantify the amount of private information $X^1$ carried by the privatized views of the other components in term of the dependence of $(X^2, \dots, X^d)$ on $X^1$ and of privacy levels $\alpha_1$, $\bar{\alpha}$, where $\bar{\alpha}$ is an upper bound for the levels of privacy ensured on the components $X^2, \dots , X^d$. 

{\rev 
	{\revnew Related to the issue of a varying level of privacy, the reference
	\cite{Wang_Xu_21} studies the problem of label privacy, considering in particular regression estimation when only the response variables are considered as private. However, this situation is extreme as only one component is sent through a channel and it does not compare to ours, where privacy is kept for all the variables.} 
	Also, an emerging literature \cite{Bie_et_al_22} studies the situation where a subset of individuals in the sample 
	allows to access their raw data. In such scenario the privacy constraint is not constant through the sample, but the situation is different with our case where the privacy level is varying through the different variables. In \cite{DR19}, the authors establish results in the case where the privacy parameter can have the same size as the dimension $d$. Specifically, in Section 3, they investigate how the estimation of $d$-dimensional quantities is affected by the presence or absence of correlation among the coordinates.}

Our research is also motivated by situations where different components can not be {\modar privatized} jointly. It may occur when practical aspects prevent the joint components to be gathered by the same organism prior to the privatisation mechanism.
Consider the situation where two different entities have collected data on different aspects of the life of $n$ individuals. These two entities could be, for instance, a health insurance company and a tax office having respectively collected health and income data on a population. Let denote by $(X^j_i)_{i=1,\dots,n}$ the data set belonging to the entity  $j$, with $j\in\{1,2\}$ and assume that none of the two entities is disposed to publicly reveal their data. A statistician interested in inferring the joint law of $\bm{X}=(X^1,X^2)$ would face a componentwise privatisation. Indeed, each entity can still {\modar privatize} its own data on the individual $i$, independently of the knowledge of the data owned by the other entity. Such mechanism yields to the privatization of the vector $\bm{X}_i=(X^1_i,X^2_i)$ component by component. {\revnew  This situation shares 
	some 
	similarities with the 
	federated learning framework, where data
	are collected and stored by different clients. However, in the federated learning situation each client observes the full vector of variables on its own subset of individuals (e.g. see \cite{GuChen24}). Componentwise differential privacy is useful when each data holder knows a subset of variables on the common population, and one wants to infer the whole vector of variables.}

These examples are encouraging us to explore the balance between statistical utility and individual privacy (on a componentwise basis) for the people from whom data is obtained.
By using a framework that considers componentwise local differential privacy constraints, we are able to identify optimal privacy mechanisms for some statistical problems and characterize how the optimal rate of estimation varies as a function of the privacy levels $\alpha_j$'s. 

With a similar goal in mind but in the case where all the components of one vector are made public through the same privacy channel, Duchi, Jordan and Wainwright proposed  the private version 
of the Le Cam, Fano and Assouad lemmas (see \cite{DJW13a, DJW13b, DJW14, Martin}). They provide minimax rates of convergence for specific estimation problems under privacy constraints through a case-by-case study. 
Rohde and Steinberger's research \cite{RS20}, published in 2020, takes a different approach by developing a general theory, similar to that of Donoho and Liu in \cite{Don91}, to characterize the differentially private minimax rate of convergence using the moduli of continuity.

It is worth highlighting that the minimax approach developed under local differential privacy constraints in \cite{Martin} enables the authors to examine the private minimax rate of estimation for various classical problems, including mean, median, and density estimation. These bounds have proven to be vital in other research studies that analyze the impact of privacy constraints on the convergence rate of various estimation problems, such as those discussed in \cite{Kroll}, \cite{But}, \cite{Sar21}, or \cite{GyoKro}, to name a few. The broad range of applications and their diversity demonstrate the significant impact of such research.   However, despite the existence of multivariate data, there is currently no work that considers separate components made public with varying levels of privacy {\rev and through independent channels}. 
Our goal is to address this research gap.

{\revnew The key contributions of this article span conceptual, technical, and practical aspects of differential privacy. First, we introduce a novel differential privacy framework that addresses privacy challenges in multivariate data analysis. Second, we establish bounds on the contraction of Kullback-Leibler (KL) divergence resulting from passing data through $d$ private channels, as stated in Theorem 3.1. These results hold for general interactive mechanisms and provide insight into how the optimal convergence rate depends on the privacy levels $\alpha_1, \dots, \alpha_d$, thereby quantifying the statistical price of privacy. Third, we demonstrate the utility of the KL contraction bounds by deriving minimax lower bounds for two key statistical problems under non-interactive privacy constraints: covariance estimation and nonparametric density estimation. Finally, we propose simple estimators that achieve the derived lower bounds and extend them to adaptive methods, leveraging Lepskii's method to address unknown problem parameters, such as the number of finite moments or Hölder smoothness. These adaptive estimators match the lower bounds up to logarithmic factors, underscoring the efficiency and versatility of our approach.}

To elaborate, we have two sets of raw data samples $\bm{X}=(X^1, \dots , X^d)$ and $\bm{\tilde{X}}=(\tilde{X}^1, \dots , \tilde{X}^d)$, each drawn from a probability distribution $P$ and $\tilde{P}$ respectively. We also have two corresponding sets of privatized samples $\bm{Z}=(Z^1, \dots, Z^d)$ and $\bm{\tilde{Z}}=(\tilde{Z}^1, \dots , \tilde{Z}^d)$, where $Z^j$ and $\tilde{Z}^j$ are the $\alpha_j$-local differential {\modar privatized} views of $X^j$ and $\tilde{X}^j$, respectively. {\revnew Let us denote by $\bm{X}^{(S)}$ the subvector of $\bm{X}$ corresponding to indices in $S$, where $S \subseteq \{1, ... , d \}$}.
The following equation explains the closeness of the laws of the privatized samples based on the proximity of the original laws:
\begin{equation}{\label{eq: main intro}}
	d_{KL}(L_{\bm{Z}}, L_{\bm{\tilde{Z}}}) \leq \Big{(} {\revnew 
		\sum_{\substack{S \subseteq \{1, ... , d \} \\ S \neq \emptyset}} \prod_{h \in S} (e^{\alpha_{h}} - 1) d_{TV}(L_{\bm{X}^{(S)}}, L_{\tilde{\bm{X}}^{(S)}})} \Big{)}^2,
\end{equation}
{\rev where we refer to \eqref{eq: def dtv} and \eqref{eq: def dkl} for a formal definition of the distances introduced above.}
Here, {\revnew $L_{\bm{X}^{(S)}}$} represents the law of the marginals of $\bm{X}$.  

This bound is useful for proving lower bounds for statistical problems 
and so it will be often used with two specific priors that are chosen by statisticians. In this case, it is helpful to use priors that have equal $d-1$ marginals, {\revnew that is $L_{\bm{X}^{(S)}} = L_{\tilde{\bm{X}}^{(S)}}$ for any strict subset $S \subset \{1, ... , d \}$. Then, the bound above} simplifies to the following expression
$$d_{KL}(L_{\bm{Z}}, L_{\bm{\tilde{Z}}}) \le \left( \prod_{j=1}^d (e^{\alpha_j}-1) d_{TV}(P,\tilde{P}) \right)^2.$$
We can compare our result with Theorem 1 of \cite{Martin}, which assumes that only one privacy channel has been used (so $\alpha_1 = \dots = \alpha_d = \alpha$). It provides the result $d_{KL}(L_{\bm{Z}}, L_{\bm{\tilde{Z}}}) \le \min(4, e^{2 \alpha})(e^\alpha -1)^2 d_{TV}(P,\tilde{P})^2$. {\rev In Section \ref{Ss: Comparison LDP}, we demonstrate that componentwise local differential privacy with a privacy parameter $\bm{\alpha} = (\alpha, \dots , \alpha)$ can be viewed as a special case of classical local differential privacy with a privacy parameter $d \alpha$. Consequently, we can derive a crude bound on \eqref{eq: main intro} that yields the same result as in \cite{Martin} (see Remark \ref{r: comp Martin}). However, our findings are generally more precise, enabling us to recover a refined bound that accurately assesses the contributions of the differences between each $k$-dimensional marginal. 
}

Using Equation \eqref{eq: main intro}, we can analyze the rate of convergence for nonparametric density estimation of a vector $\bm{X}$ belonging to an H\"older class $\mathcal{H}(\beta, \mathcal{L})$. We propose a kernel density estimator based on the observation of privatized variables $Z_i^j$, where $i=1, \dots , n $ and $j=1, \dots , d$ (refer to \eqref{eq: kde} for details). By imposing the conditions $\alpha_j \le 1$ and $n \prod_{i=1}^d \alpha_j^2 \rightarrow \infty$, we demonstrate that the $L^2$ pointwise error of this estimator reaches the convergence rate $(\frac{1}{n \prod_{i=1}^d \alpha_j^2})^\frac{\beta}{\beta + d}$. This rate is optimal in a minimax sense for small $\alpha$ (refer to Theorems \ref{th: upper density}, \ref{th: lower density} below).

It is natural to compare the convergence rate of our kernel density estimator with that of non-componentwise local privacy constraints. According to \cite{But} the latter achieves, for $\alpha < 1$, a convergence rate of $(n(e^{\alpha}-1)^2)^{- \frac{2\beta}{2 \beta + 2}} \approx (n \alpha^2)^{- \frac{\beta}{ \beta + 1}} $ for estimating the density of a vector $\bm{X}$ belonging to an H\"older class $\mathcal{H}(\beta, \mathcal{L})$ (see Remark \ref{r: rate density but} below for more details). \\
Our results are consistent with those in \cite{But} when $d=1$, and they provide some extensions for $d>1$. In particular, when $\alpha_1 = \dots = \alpha_d = \alpha$, the role of $\alpha^2$ in \cite{But} is replaced by $\alpha^{2d}$ in our analysis. 

{\rev Furthermore, we provide a detailed analysis of the estimation of the
	joint moment of a $d$-dimensional vector $(X^1,\dots,X^d)$}
	under componentwise privacy constraints, in addition to the density estimation discussed above. Here, we again find that under componentwise privacy mechanism, the quality of the estimation of the {\rev joint moment} is degraded as $\alpha$ becomes small, compared to a joint privacy mechanism (see Remark \ref{R: covariance_dim_d}). {\rev We also draw consequences of these results on the estimation of the covariance and correlation between two variables under componentwise privacy in Section \ref{Sss: application to covariance}.} 

The paper is organized as follows. In Section \ref{S:Pb_formulation}, we provide an introduction to differential privacy, {\rev we present our notation for componentwise local differential privacy and compare it with the classical local differential privacy.} Our main results are presented in Section \ref{S: main}, where we derive bounds on divergence between pairs in Section \ref{S: bounds divergences} and extend them to the case of interactive privatization of independent sampling in Section \ref{S: main sampling}. We demonstrate the practical application of our results in statistical problems in Section \ref{S: applications}. Firstly, in Section \ref{s:privacy level}, we use our techniques to investigate the precision of revealing one marginal of $\bm{X}$ by observing $\bm{Z}$. Next, in Section \ref{s: joint moment}, we focus on the problem of estimating {\rev the joint moment of a vector :} we propose a private estimator and establish upper and lower bounds for its $L^2$ risk in Sections \ref{Sss:joint moment estimator} and \ref{Sss:lower bound joint_moment}, respectively. 
{\rev Section \ref{Sss: application to covariance} deals with the application to the estimation of the covariance between random variables.}
In Section \ref{Sss: adaptive}, we suggest an adaptive procedure for the estimation of the {\rev joint moment}. Then, we examine the problem of nonparametric density estimation in Section \ref{s: density},
using a private kernel density estimator. The convergence rate of the estimator is studied in Section \ref{s: density upper}, while in Section \ref{s: density lower} we establish the minimax optimality of such rate. We conclude the density estimation section by proposing a data-driven procedure for bandwidth selection in Section \ref{s: adaptive density}. Finally, all the remaining proofs are collected in the Appendix.

\section{Problem formulation}\label{S:Pb_formulation}
We consider $\bm{X}_1$,\dots, $\bm{X}_n$ iid data whose law is $\bm{X} = (X^1, \dots , X^d) \in \bm{\mathcal{X}}=\prod_{j=1}^d\mathcal{X}^j$. It can represent the information coming from $n$ different individuals, about $d$ different aspect of their life. For each individual the information is privatized in a different way. Compared to the literature, where all the components relative to the same person are made public through the same channel, we now consider the case where each component is made public separately, that is why we talk of "\textit{componentwise local differential privacy}" (CLDP). 

Let us formalize the framework discussed before. The act of privatizing the raw samples $(\bm{X}_i)_{i = 1, \dots , n}$ and transforming them into the {\modar public} set of samples $(\bm{Z}_i)_{i = 1, \dots , n}$ is modeled by a conditional distribution, called privacy mechanism or channel distribution. 
We assume that each component of a disclosed observation, denoted by $Z_i^j$, is privatized separately and belongs to some space $\mathcal{Z}^j$, which may vary depending on the component $j$. This implies that the observation $\bm{Z}_i$ is an element of the product space $\bm{\mathcal{Z}}:=\prod_{j=1}^d \mathcal{Z}^j$. \\
We also assume that the spaces $\mathcal{X}^j$ and $\mathcal{Z}^j$ are separable {\modar complete} metric spaces, with their Borel sigma-fields defining measurable spaces $(\mathcal{X}^j,\Xi_{\mathcal{X}^j})$ and $(\mathcal{Z}^j,\Xi_{\mathcal{Z}^j})$, respectively, for all $j\in \{1,\dots,d \}$.

The privacy mechanism is allowed to be sequentially interactive, meaning that during the privatization of the $j$-th component of the $i$-th observation $X_i^j$, all previously privatized values $(\bm{Z}_{m})_{m=1,\dots,i-1}$ are publicly available. This leads to the following conditional independence structure, for any $j \in \{1, \dots , d \}$:
$$ \{ {X}_i^j, \bm{Z}_1, \dots , \bm{Z}_{i - 1} \} \rightarrow Z_i^j, \qquad Z_i^j \perp X_k^j \mid \{ {X}_i^j, \bm{Z}_1, \dots , \bm{Z}_{i - 1}\} \, \text{ for } k \neq i.$$
More precisely, for $j=1, \dots, d$ and $i= 1, \dots , n$, given $X_i^j = x_i^j\in\mathcal{X}^j$ and $\bm{Z}_m = \bm{z}_m\in\bm{\mathcal{Z}}$ for $m = 1, \dots , i-1$; the i-th privatized output $Z_i^j\in\mathcal{Z}^j$ is drawn as
\begin{equation}
 {\label{eq: def Z}}
	Z_i^j \sim Q_i^j (\cdot | X_i^j = x_i^j, \bm{Z}_1 = \bm{z}_1, \dots , \bm{Z}_{i-1} = \bm{z}_{i - 1})
\end{equation}
 for Markov kernels $Q_i^j: \Xi_{\mathcal{Z}^j} \times (\mathcal{X}^j \times (\bm{\mathcal{Z}})^{ i - 1}) \rightarrow [0, 1]$.
 The notation $(\bm{\mathcal{Z}}, \Xi_{\bm{\mathcal{Z}}})=(\prod_{j=1}^d \mathcal{Z}^j,\otimes_{j=1}^d \Xi_{\mathcal{Z}^j})$ refers to the measurable space of {\rev privatized} data while $(\bm{\mathcal{X}}, \Xi_{\bm{\mathcal{X}}})=(\prod_{j=1}^d \mathcal{X}^j,\otimes_{j=1}^d \Xi_{\mathcal{X}^j})$ is the space of {\rev sensitive} or raw data. 

 All of the examples presented in Section \ref{S: applications} have raw data that take values in $\bm{\mathcal{X}}=\R^d$. The space of privatized data, denoted by $\bm{\mathcal{Z}}$, can be quite general, as it is selected by the statistician based on a specific privatization mechanism. Nonetheless, in all of the practical examples of privatization that are discussed in Section \ref{S: applications}, the privatized data will {\revd be valued in $\bm{\mathcal{Z}}= \R^{d'}$  with $d' \ge 1$}.



A specific example of the privacy mechanism described earlier is the non-interactive algorithm, where the value of $Z_i^j$ is solely dependent on $X_i^j$. Therefore, Equation \eqref{eq: def Z} no longer contains any correlation with the previously generated $\bm{Z}$ values. In this scenario, 
we eliminate any dependence of the Markov kernels on the observation $i$. However, when different components represent diverse encrypted information associated with the same individual, there is no justification for the distinct components to follow the same distribution. Therefore, it is necessary to consider that different components may have different laws. In the non-interactive case for any $j=1, \dots , d$ and for any $i=1, \dots , n$ the privatized output is given by
\begin{equation*}
Z_i^j \sim Q^j (\cdot | X_i^j = x_i^j). 
\end{equation*}
Although it is usually easier to consider non interactive algorithms, as they lead to iid privatized sample, in some situations it is useful for the channel's output to rely on previous computations. Stochastic approximation schemes, for instance, necessitate this kind of dependency (see \cite{49 Martin}).

It is possible to quantify the privacy through the notion of local differential privacy. For
a given parameter $\bm{\alpha} = (\alpha_1, \dots , \alpha_d), \, \alpha_j \ge 0$, for any $j\in \{1, \dots , d \}$, the random variable $Z_i^j$ is an $\alpha_j$-differentially locally {\modar privatized} view of $X_i^j$ if for all  $\bm{z}_1, \dots , \bm{z}_{i-1} \in  {\modar \bm{\mathcal{Z}}}$ and $x, x' \in {\modar \mathcal{X}^j}$ we have 
\begin{equation}{\label{eq: def local privacy}}
{ \sup_{A \in  \Xi_{\mathcal{Z}^j}} \frac{Q_i^j(A |X_i^j = x, \bm{Z_1} = \bm{z_1}, \dots, \bm{Z}_{i-1} = \bm{z_{i - 1}} )}{Q_i^j(A |X_i^j = x', \bm{Z}_1 = \bm{z}_1, \dots, \bm{Z}_{i-1} = \bm{z}_{i - 1})} \le \exp(\alpha_j).}
\end{equation}
We say that the privacy mechanism { $\bm{Q}_i=(Q^1_i,\dots,Q^d_i)$ for $i=1,\dots,n$} is $\bm{\alpha}$-differentially locally private if each variable $Z_i^j$ is an { $\alpha_j$- differentially} locally private view { of $X^j_i$}. {\modar We denote by $\mathcal{Q}_{\bm{\alpha}}^{(n)}$  the set of all local $\bm{\alpha}$-differential private Markov kernels $(Q^j_i)_{\substack{1\le i \le n\\1\le j\le d\\}}$.}

The parameter $\alpha_j$ quantifies the amount of privacy that is guaranteed to the variable $X_i^j$: setting $\alpha_j = 0$ ensures perfect privacy {for recovering $X^j_i$ from the view of $Z^j_i, \bm{Z}_{1},\dots,\bm{Z}_{i-1}$}, whereas letting $\alpha_j$ tend to infinity softens the privacy restriction. \\
In the non-interactive case, {$\bm{Q}_i=\bm{Q}=(Q^1,\dots,Q^d)$ does not depend on $i$ and} the bound \eqref{eq: def local privacy} becomes
\begin{equation}{\label{eq: def local privacy non-int}}
\sup_{A \in { \Xi_{\mathcal{Z}^j}}} \frac{Q^j(A |X_i^j = x)}{Q^j(A |X_i^j = x')} \le \exp(\alpha_j).
\end{equation}
{Under componentwise local differential privacy the kernels $Q^j(\cdot |X^j_i=x)$  are mutually absolutely continuous for different $x$. 
	Hence, we can suppose that there exists a dominating measure $\mu^j$ on $(\mathcal{Z}^j,\Xi_{\mathcal{Z}^j} )$ such that the kernel $Q^j$ admits a density with respect to $\mu^j$. We denote by $q^j$ this density.}  Then, the property of $\bm{\alpha}$-CLDP defined in \eqref{eq: def local privacy non-int} is equivalent to the following. For all $x, x' \in {\modar \mathcal{X}^j}$ 
\begin{equation}{\label{eq: def density local privacy}}
\sup_{z \in {\modar \mathcal{Z}^j}} \frac{q^j(z |X^j = x)}{q^j(z |X^j = x')} \le \exp(\alpha_j).
\end{equation}

\noindent In this framework, we want to characterize the tradeoff between local differential privacy and statistical utility. In particular, we want to characterize how, for several canonical estimation problems, the optimal rate of convergence changes as a function of the privacy. For this reason, we develop some bounds on pairwise divergences which lead us to the derivation of minimax bounds under CLDP constraints.

{\revnew
In the following, we compare the newly introduced framework, referred to as CLDP, with the well-established Local Differential Privacy (LDP). Before proceeding, we provide a formal definition of LDP, following the notation and framework presented in Section 2.2 of \cite{Martin}. Specifically, consider the process of transforming raw data samples $\{ X_i \}_{i=1}^n$ into private samples $\{ Z_i \}_{i=1}^n$, ensuring $\alpha$-LDP. The full conditional distribution, and consequently the privacy mechanism, can be specified in terms of the conditional distributions $Q_i(Z_i | X_i = x_i, Z_1 = z_1, \dots, Z_{i-1} = z_{i-1})$. 

Local differential privacy imposes certain restrictions on the conditional distribution $Q_i$. For a given privacy parameter $\alpha \ge 0$, the random variable $Z_i$ is said to provide an $\alpha$-differentially private view of $X_i$ if, for all possible values of $z_1, \dots, z_{i-1}$ and for all $x, x' \in \mathcal{X}$, the following condition holds:
$$ \sup_{A \in \Xi_{\mathcal{Z}}} \frac{Q_i(A | X_i = x, Z_1 = z_1, \dots, Z_{i-1} = z_{i-1})}{Q_i(A | X_i = x', Z_1 = z_1, \dots, Z_{i-1} = z_{i-1})} \le \exp(\alpha), $$
where $\Xi_{\mathcal{Z}}$ denotes an appropriate $\sigma$-field on $\mathcal{Z}$.

In the non-interactive case, this bound simplifies to:
$$ \sup_{A \in \Xi_{\mathcal{Z}}} \sup_{x, x' \in \mathcal{X}} \frac{Q(A | X_i = x)}{Q(A | X_i = x')} \le \exp(\alpha). $$

With this definition in mind, we now proceed to compare the new CLDP framework with the traditional LDP, as discussed in the following section.}

{\rev
\subsection{Comparison with LDP}\label{Ss: Comparison LDP}
To better understand the difference between CLDP and LDP, let us focus on the situation of one sample $\bm{X}=(X^1,\dots,X^d) \in \bm{\mathcal{X}}=\prod_{j=1}^d \mathcal{X}^j$ which is publicly displayed through a CLDP mechanism based on $d$ independent channels $\bm{Q}=(Q^1,\dots,Q^d)$ with privacy parameter 
$\bm{\alpha}=(\alpha_1,\dots,\alpha_d)$. Each kernel $Q^j$ is a randomization taking values in some space $\mathcal{Z}^j$.
This CLDP mechanism also induces a LDP mechanism on the whole vector $\bm{X} \in \bm{\mathcal{X}}$ with Markov kernel
${\revnew \overline{Q}}$ taking values in $\bm{\mathcal{Z}}=\prod_{j=1}^d \mathcal{Z}^j$, defined by ${\revnew \overline{Q}}(A^1\times \dots \times A^d \mid \bm{X}=(x^1,\dots,x^d))=\prod_{j=1}^d Q^j(A^j \mid X^j=x^j)$. 
Then, the following lemma shows that ${\revnew \overline{Q}}$ satisfies the classical LDP constraint. 
\begin{lemma} \label{L:comp CLDP_LDP}
	If $\bm{Q}$ satisfies the $\bm{\alpha}$-CLDP constraint with $\bm{\alpha}=(\alpha_1,\dots,\alpha_d)$, then ${\revnew \overline{Q}}$ satisfies ${\revnew \overline{\alpha}}$-LDP constraint with ${\revnew \overline{\alpha}}=\sum_{j=1}^d \alpha_j$.
\end{lemma}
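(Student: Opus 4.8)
The plan is to verify the defining inequality \eqref{eq: def local privacy} (in its non-interactive form \eqref{eq: def local privacy non-int}) directly for the product kernel $\overline{Q}$. Since $\overline{Q}$ is built as a product of the individual channels, it suffices to test the privacy ratio on measurable rectangles $A = A^1 \times \dots \times A^d$ with $A^j \in \Xi_{\mathcal{Z}^j}$, and then extend to general Borel sets of $\bm{\mathcal{Z}}$ by a standard monotone class / $\pi$-$\lambda$ argument (the rectangles generate $\Xi_{\bm{\mathcal{Z}}}$ and form a $\pi$-system, and the set of $A$ for which the two-sided bound transfers is closed under the relevant operations once one works with the dominating product measure $\bigotimes_j \mu^j$).

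First I would fix two raw vectors $\bm{x} = (x^1, \dots, x^d)$ and $\bm{x}' = ((x')^1, \dots, (x')^d)$ in $\bm{\mathcal{X}}$, and a rectangle $A = A^1 \times \dots \times A^d$. By the product structure,
\begin{equation*}
\frac{\overline{Q}(A \mid \bm{X} = \bm{x})}{\overline{Q}(A \mid \bm{X} = \bm{x}')} = \prod_{j=1}^d \frac{Q^j(A^j \mid X^j = x^j)}{Q^j(A^j \mid X^j = (x')^j)}.
\end{equation*}
Each factor is bounded above by $\exp(\alpha_j)$ by the $\bm{\alpha}$-CLDP hypothesis \eqref{eq: def local privacy non-int} applied to channel $j$ (here one uses that $Q^j(A^j \mid X^j = (x')^j) > 0$, which follows from mutual absolute continuity of the kernels $Q^j(\cdot \mid X^j = x)$ noted in the excerpt, so the ratio is well defined). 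Multiplying the $d$ bounds gives
\begin{equation*}
\frac{\overline{Q}(A \mid \bm{X} = \bm{x})}{\overline{Q}(A \mid \bm{X} = \bm{x}')} \le \prod_{j=1}^d \exp(\alpha_j) = \exp\Big( \sum_{j=1}^d \alpha_j \Big) = \exp(\overline{\alpha}),
\end{equation*}
which is exactly the $\overline{\alpha}$-LDP bound on rectangles.

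To pass from rectangles to arbitrary $A \in \Xi_{\bm{\mathcal{Z}}}$, I would argue via densities: writing $q^j(\cdot \mid x^j)$ for the density of $Q^j(\cdot \mid X^j = x^j)$ with respect to $\mu^j$, the product $\prod_j q^j(z^j \mid x^j)$ is a density for $\overline{Q}(\cdot \mid \bm{X} = \bm{x})$ with respect to $\bigotimes_j \mu^j$, and the pointwise ratio bound $\prod_j q^j(z^j \mid x^j) \le \exp(\overline{\alpha}) \prod_j q^j(z^j \mid (x')^j)$ follows coordinatewise from \eqref{eq: def density local privacy}. Integrating this pointwise inequality over any $A \in \Xi_{\bm{\mathcal{Z}}}$ immediately yields $\overline{Q}(A \mid \bm{X} = \bm{x}) \le \exp(\overline{\alpha}) \, \overline{Q}(A \mid \bm{X} = \bm{x}')$, hence the claim. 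There is essentially no hard step here; the only mild subtlety — and the one place to be slightly careful — is the measure-theoretic bookkeeping ensuring the product kernel genuinely has the product density and that the ratio bound, which is stated for rectangles in the definition, propagates to the full $\sigma$-field; the density formulation sidesteps this cleanly.
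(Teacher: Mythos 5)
Your proof is correct and its core computation coincides with the paper's: factor the ratio $\overline{Q}(A^1\times\cdots\times A^d\mid\bm{x})/\overline{Q}(A^1\times\cdots\times A^d\mid\bm{x}')$ into a product of single-channel ratios and bound each factor by $e^{\alpha_j}$ via \eqref{eq: def local privacy non-int}. The one place you go beyond the paper's argument is in explicitly addressing the passage from rectangles to arbitrary $A\in\Xi_{\bm{\mathcal{Z}}}$: the paper stops after bounding the supremum over rectangles and asserts the LDP constraint, but a signed measure that is nonnegative on a generating $\pi$-system need not be nonnegative everywhere, so the rectangle bound does not transfer to general Borel sets by a bare $\pi$-$\lambda$ argument (your own parenthetical about the $\pi$-$\lambda$ route is therefore too optimistic as stated). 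Your density-based fix is the right one and closes the gap cleanly: the product $\prod_j q^j(z^j\mid x^j)$ is a version of the density of $\overline{Q}(\cdot\mid\bm{X}=\bm{x})$ with respect to $\bigotimes_j\mu^j$, the pointwise bound $\prod_j q^j(z^j\mid x^j)\le e^{\overline{\alpha}}\prod_j q^j(z^j\mid(x')^j)$ follows from \eqref{eq: def density local privacy} coordinatewise, and integrating over any $A$ gives the claim. So the verdict is: same route as the paper on the substantive step, with a worthwhile extra layer of care on the measure-theoretic extension; you should simply drop the suggestion that a monotone-class argument alone would do and rely entirely on the density argument, as you in fact do in the second half.
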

\begin{proof}
	Using the definition of ${\revnew \overline{Q}}$ we write,
	\begin{multline*}
			\sup_{(A^1,\dots,A^d) \in \prod_{j=1}^d \Xi_{\mathcal{Z}^j}} \frac{{\revnew \overline{Q}}(A^1\times\dots \times A^d \mid \bm{X} =( x^1,\dots,x^d ))}
		{{\revnew \overline{Q}}(A^1\times\dots \times A^d \mid \bm{X} =( x^{\prime 1},\dots,x^{\prime d} ))}
	\\
	\le \prod_{j=1}^d 	
	\sup_{A^j \in \Xi_{\mathcal{Z}^j}} \frac{ Q^j(A^j \mid X^j=x^j)}{Q^j(A^j \mid X^j=x^{\prime j})}
			\le \exp\left( \sum_{j=1}^d \alpha^j \right)				
	\end{multline*}
	where we used \eqref{eq: def local privacy non-int}. This implies that the channel ${\revnew \overline{Q}}$ satisfies the classical LDP constraint with level ${\revnew \overline{\alpha}}=\sum_{j=1}^d \alpha^j$.	
\end{proof}
This lemma allows us to see the CLDP kernels as a subset of the LDP kernels with privacy parameter ${\revnew \overline{\alpha}}$.
The main specificity of CLDP type kernels is that they must satisfy the additional constraint to act independently on the different components of the {\revd raw} data $\bm{X}$. This additional constraint reduces the set of possible kernels and makes the inference on the law of the vector $\bm{X}$ harder than without the componentwise constraint. For instance if $\alpha_1=\dots=\alpha_d=\alpha$, we show in Sections \ref{s: density} that the rate of estimation for the joint law of $\bm{X}$ is determined by the growth to infinity of the quantity $n \alpha^{2d}$ when the vector $\bm{X}$ is privatized in a componentwise way, whereas this rate is determined by $n {\revnew \overline{\alpha}}^2=n d^2 \alpha^2$ when the vector $\bm{X}$ is privatized with a classical LDP  channel with privacy parameter ${\revnew \overline{\alpha}}=d\alpha$. It shows that the impact of the componentwise constraint is large when the privacy parameters are small and $n \alpha^{2d} << n d^2 \alpha^2$.
However, we stress that the CLDP constraint may be unavoidable in practice if it is impossible to collect all the components of the sensitive vector $\bm{X}$ prior emitting the public views.

We now give examples of privacy channels satisfying the CLDP constraint and for which the inclusion described in Lemma \ref{L:comp CLDP_LDP} is sharp.

Assume that $\mathcal{X}^j=\mathbb{R}$. Let $T^{(j)}>0$ and $\alpha_j>0$. We recall the definition of the Laplace mechanism channel $Q^j$. This channel is valued in $\mathcal{Z}^j=\mathbb{R}$. It is defined by $Q^j(A\mid X^j=x^j)=\P(Z^j \in A)$ where
$Z^j=\tronc{x}{T^{(j)}} + \frac{2 T^{(j)}}{\alpha_j} \mathcal{E}^j$ with 
$\tronc{x}{T^{(j)}}=\max(\min(x,T^{(j)}),-T^{(j)})$ and $\mathcal{E}^j $ is a Laplace random variable. This channel admits a density with respect to the Lebesgue measure $\mu^j$, given by $q^j(z^j \mid X^j=x^j)= \frac{\alpha_j}{4 T^{(j)}}
\exp\left( \frac{\alpha_j}{2T^{(j)}}  \abs{z^j-\tronc{x^j}{T^{(j)}}} \right)$. 
Then, we have the following result.
\begin{lemma}\label{L: preuve Laplace mech generique}
	1) We have for all $j \in \{1,\dots,d\}$,
	\begin{equation*}
		\sup_{(x,x',z) \in \mathbb{R}^3}
 \frac{q^j(z \mid X^j=x)}{q^j(z \mid X^j=x')}=e^{\alpha_j}.
	\end{equation*}
	In particular $\bm{Q}=(Q^1,\dots,Q^d)$ satisfies the $\bm{\alpha}$-CLDP constraint with
	$\bm{\alpha}=(\alpha_1,\dots,\alpha_d)$.
	
	2) The kernel ${\revnew \overline{Q}}$ satisfies the ${\revnew \overline{\alpha}}$-LDP constraint with ${\revnew \overline{\alpha}}=\sum_{j=1}^d \alpha_j$ but is not 
	$\beta$-LDP for $\beta < \sum_{j=1}^d \alpha_j$.	
\end{lemma}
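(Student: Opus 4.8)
The plan is to prove the two parts of Lemma \ref{L: preuve Laplace mech generique} separately, with part 1) being a direct computation and part 2) being the genuinely interesting assertion about sharpness of the inclusion in Lemma \ref{L:comp CLDP_LDP}.

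For part 1), I would fix $j$ and directly estimate the ratio of densities. Writing $y=\tronc{x}{T^{(j)}}$ and $y'=\tronc{x'}{T^{(j)}}$, the ratio $q^j(z\mid X^j=x)/q^j(z\mid X^j=x')$ equals $\exp\!\big(\tfrac{\alpha_j}{2T^{(j)}}(\abs{z-y'}-\abs{z-y})\big)$. By the triangle inequality $\abs{z-y'}-\abs{z-y}\le \abs{y-y'}\le 2T^{(j)}$, since both $y,y'\in[-T^{(j)},T^{(j)}]$; this gives the upper bound $e^{\alpha_j}$ and hence the $\bm\alpha$-CLDP constraint via \eqref{eq: def density local privacy}. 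For the ``in particular'' equality, I would exhibit the extremal configuration: take $x=T^{(j)}$, $x'=-T^{(j)}$ (so $y=T^{(j)}$, $y'=-T^{(j)}$) and $z\ge T^{(j)}$, for which $\abs{z-y'}-\abs{z-y}=(z+T^{(j)})-(z-T^{(j)})=2T^{(j)}$, achieving the ratio $e^{\alpha_j}$ exactly. The supremum over the common value $j$ then follows, and since $\bm Q=(Q^1,\dots,Q^d)$ is non-interactive, the bound \eqref{eq: def local privacy non-int} holds componentwise, which is exactly the $\bm\alpha$-CLDP constraint.

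For part 2), the fact that ${\revnew \overline{Q}}$ satisfies the ${\revnew \overline{\alpha}}$-LDP constraint with ${\revnew \overline{\alpha}}=\sum_{j=1}^d\alpha_j$ is immediate from Lemma \ref{L:comp CLDP_LDP} together with part 1). The substantive claim is that no smaller $\beta$ works. The plan is to exhibit two raw points $\bm x=(x^1,\dots,x^d)$, $\bm x'=(x'^1,\dots,x'^d)$ and a rectangle $A=A^1\times\cdots\times A^d$ on which the ratio $\overline{Q}(A\mid \bm X=\bm x)/\overline{Q}(A\mid\bm X=\bm x')$ is arbitrarily close to $e^{\overline\alpha}$. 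Since $\overline Q$ has product form, this ratio factorizes as $\prod_{j=1}^d Q^j(A^j\mid X^j=x^j)/Q^j(A^j\mid X^j=x'^j)$, so it suffices to choose, for each $j$, sets $A^j$ and points $x^j,x'^j$ making the $j$-th factor close to $e^{\alpha_j}$. Guided by the extremal configuration found in part 1), I would take $x^j=T^{(j)}$, $x'^j=-T^{(j)}$, and $A^j=[M,\infty)$ for large $M>T^{(j)}$: on this half-line the density ratio is identically $e^{\alpha_j}$ pointwise, hence $Q^j([M,\infty)\mid X^j=T^{(j)})/Q^j([M,\infty)\mid X^j=-T^{(j)})=e^{\alpha_j}$ exactly (integrating a pointwise-constant ratio). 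Multiplying over $j$ gives $\overline Q(A\mid\bm X=\bm x)/\overline Q(A\mid\bm X=\bm x')=e^{\sum_j\alpha_j}=e^{\overline\alpha}>e^{\beta}$ for any $\beta<\overline\alpha$, so $\overline Q$ cannot be $\beta$-LDP. (In fact, because the ratio is constant on the half-line, one does not even need a limiting argument — any $M>T^{(j)}$ works, and one could also use $A^j=\mathbb R$ if the half-line bookkeeping is cleaner to write as $A^j=[T^{(j)},\infty)$.)

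The main obstacle — really the only place requiring care rather than bookkeeping — is verifying that the density ratio $q^j(z\mid X^j=x^j)/q^j(z\mid X^j=x'^j)$ is genuinely constant (equal to $e^{\alpha_j}$) for all $z$ in the chosen set $A^j$, so that the \emph{integrated} ratio $Q^j(A^j\mid\cdot)/Q^j(A^j\mid\cdot)$ equals that same constant rather than merely being bounded by $e^{\alpha_j}$. This hinges on the observation that for $z\ge T^{(j)}$, $x^j=T^{(j)}$, $x'^j=-T^{(j)}$ we have $\abs{z-\tronc{x^j}{T^{(j)}}}=z-T^{(j)}$ and $\abs{z-\tronc{x'^j}{T^{(j)}}}=z+T^{(j)}$, so their difference is the constant $2T^{(j)}$ independent of $z$; this is exactly the regime where the triangle inequality in part 1) is tight. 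Everything else is routine: the product-form factorization of $\overline Q$, monotonicity of $e^{(\cdot)}$, and the definition of the LDP constraint.
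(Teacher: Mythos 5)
Your proof is correct and follows essentially the same route as the paper: the ratio bound via the reverse triangle inequality, and the extremal configuration $x=T^{(j)}$, $x'=-T^{(j)}$, $z\ge T^{(j)}$ to show the bound is attained. The paper's proof of part 2) is terser — it computes that the supremum of the \emph{density} ratio $\overline{q}(\bm{z}\mid\bm{x})/\overline{q}(\bm{z}\mid\bm{x}')$ equals $e^{\overline{\alpha}}$ and concludes directly, relying on the equivalence between the set-level constraint \eqref{eq: def local privacy non-int} and the density-level constraint \eqref{eq: def density local privacy} stated just before. You instead make the passage from density ratio to set ratio explicit by integrating the pointwise-constant ratio $e^{\alpha_j}$ over the half-line $A^j=[M,\infty)$ with $M\ge T^{(j)}$; this is the step you correctly flag as the one place requiring care, and making it explicit is a modest improvement in rigor over the paper's one-line conclusion. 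One small slip: in your final parenthetical, $A^j=\mathbb{R}$ would give ratio $1$, not $e^{\alpha_j}$ — you presumably meant $A^j=[T^{(j)},\infty)$, which is what the rest of the argument uses.
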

\begin{proof}
1) From the expression of the density of the Markov kernel $Q^j$, we have
for $(x,x',z') \in \mathbb{R}^3$,
	\begin{align*}
	 \frac{q^j(z |X^j = x)}{q^j(z |X^j = x')} &  =  \exp \Big(- \frac{\alpha_j}{2 T^{(j)}} \abs{z - \tronc{x}{T^{(j)}}} + \frac{\alpha_j}{2 T^{(j)}} \abs { z - \tronc{x'}{T^{(j)}}} \Big) \\
		& \le \exp \Big(\frac{1}{2 T^{(j)} } \alpha_j \abs{ \tronc{x}{T^{(j)}} -  \tronc{x'}{T^{(j)}}} \Big) \\
		& \le \exp(\alpha_j),
	\end{align*}
	where in the second line we used the reverse triangle inequality, and in the third line that $\abs{ \tronc{x}{T^{(j)}} -  \tronc{x'}{T^{(j)}}} \le 2T^{(j)}$. This proves that we have $	\sup_{(x,x',z) \in \mathbb{R}^3}
	\frac{q^j(z \mid X^j=x)}{q^j(z \mid X^j=x')}\le e^{\alpha_j}$ and in turn the kernel $\bm{Q}=(Q^1,\dots,Q^d)$ is $\bm{\alpha}$-CLDP by the definition \eqref{eq: def density local privacy}.
	
	Moreover, if one takes
	$x=T^{(j)}$, $x'=-T^{(j)}$ and $z=T^{(j)}$ we see that the above supremum is exactly $e^{\alpha_j}$.
	
	2) Using the product structure of  the kernel ${\revnew \overline{Q}}$, it is defined from $\bm{\mathcal{X}}=\mathbb{R}^d$ and takes values in the public space $\bm{\mathcal{Z}}=\mathbb{R}^d$ with the associated density
	\begin{equation*}
		{\revnew \overline{q}}(\bm{z}\mid \bm{X}=\bm{x})=\prod_{j=1}^d q^j(z^j |X^j = x^j),
	\end{equation*}
	for all $\bm{x}=(x^1,\dots,x^d) \in \bm{\mathcal{X}}$ and $\bm{z}=(z^1,\dots,z^d) \in \bm{\mathcal{Z}}$. Then, by computations analogous to the first point of the lemma we get 
$ \sup_{\bm{x},\bm{x}',\bm{z}}
	\frac{{\revnew \overline{q}}(\bm{z} \mid \bm{X}=\bm{x})}{{\revnew \overline{q}}(\bm{z} \mid \bm{X}=\bm{x}')}
	= e^{{\revnew \overline{\alpha}}}$. This shows the second point of the lemma.
\end{proof}
By the first point of the above lemma, we see that the product of Laplace kernels is $\bm{\alpha}$-CLDP with
$\bm{\alpha}=(\alpha_1,\dots,\alpha_d)$, and that the values of the privacy parameter can not be reduced.
The purpose of Point 2) is to show that it is impossible, in general, to reduce the value of ${\revnew \overline{\alpha}}$ in Lemma \ref{L:comp CLDP_LDP}.
}

\subsection{Minimax framework}
Before we keep proceeding, we introduce the minimax risk in the classical framework. It will be useful to present the notion of multivariate $\bm{\alpha}$-private minimax rate, which is defined starting from the observation of the privatized outputs $Z_i^j$, for $i\in \{1, \dots , n \}$ and $j \in \{1, \dots , d \}$. \\
\\
Suppose we have a set of probability distributions $\mathcal{P}$ defined on a sample space $\mathcal{X}$, and let $\theta(P)$ be a function that maps each distribution in $\mathcal{P}$ to a value in a set of parameters $\Theta$. The specific set $\Theta$ depends on the statistical model being used. For instance, if we are estimating the mean of a single variable, $\Theta$ will be a subset of the real numbers. On the other hand, if we are estimating a probability density function, $\Theta$ can be a subset of the space of all possible density functions over $\mathcal{X}$. 
Suppose moreover we have a function $\rho$ that measures the distance between two points in the set of parameters $\Theta$ and which is a semi-metric (i.e. it does not necessarily satisfy the triangle inequality). We use this function to evaluate the performance of an estimator for the parameter $\theta$. Additionally, we consider a non-decreasing function $\Phi: \R_{+} \rightarrow \R_{+}$ such that $\Phi(0) = 0$. The classical example consists in taking $\rho(x, y) = |x - y|$ and $\Phi(t) = t^2$.

In a scenario without privacy, a statistician has access to iid observations $\bm{X}_1, \dots, \bm{X}_n$ that are drawn from a probability distribution $P \in \mathcal{P}$. The goal is to estimate an unknown parameter $\theta(P)$ that belongs to a set of parameters $\Theta$. To achieve this goal, the statistician uses a measurable function $\hat{\theta}: \mathcal{X}^n \rightarrow \Theta$. The quality of the estimator $\hat{\theta}(\bm{X}_1, \dots, \bm{X}_n)$ is evaluated in terms of its minimax risk, defined as 
\begin{equation}{\label{eq: classical minimax}}
\inf_{\hat{\theta}} \sup_{P \in \mathcal{P}} \E_P [\Phi(\rho(\hat{\theta}(\bm{X}_1, \dots, \bm{X}_n), \theta(P)))],
\end{equation}
where the inf is taken over all the possible estimators $\hat{\theta}$.

A vast body of statistical literature is dedicated to the development of methods for determining upper and lower bounds on the minimax risk for different types of estimation problems. 

In this paper we want to consider the private analogous of the minimax risk described above, which takes into account the privacy constraints in the multivariate context, where the components are made public separately. Its definition is a straightforward consequence of the $\bm{\alpha}$-CLDP mechanism as in \eqref{eq: def local privacy}. 
Indeed, for any given privacy level $\alpha_j > 0$ we have $\mathcal{Q}_{\bm{\alpha}}$ denoting the set of all the privacy mechanisms having the $\bm{\alpha}$-CLDP property. Then, for any sample $\bm{X}_1, \dots , \bm{X}_n$, any distribution $\bm{Q}^n:= (\bm{Q}_1, \dots , \bm{Q}_n) \in{\modar \mathcal{Q}^{(n)}_{\bm{\alpha}}}$ produces a set of privatized observations which have been made public separately, i.e. $Z_1^1, \dots , Z_1^d, \dots, Z_n^1, \dots , Z_n^d$. We can focus on estimators $\hat{\theta}$ which depend exclusively on the privatized sample and we can therefore write $\hat{\theta}= \hat{\theta}(Z_1^1,\dots , Z_1^d, \dots, Z_n^1, \dots , Z_n^d)$. Then, it seems natural to look for the privacy mechanism $\bm{Q}^n \in
{\modar \mathcal{Q}^{(n)}_{\bm{\alpha}}}$ for which the estimator $\hat{\theta}(Z_1^1, \dots , Z_1^d, \dots, Z_n^1, \dots , Z_n^d)$ performs as good as possible. The performance of the estimator is even in this case judged in term of the minimax risk, which leads us to the following definition. 
\begin{definition}
Given a privacy parameter $\bm{\alpha}= (\alpha_1, \dots , \alpha_d), \, \alpha_j > 0$ and a family of distributions $\theta(P)$, the componentwise $\bm{\alpha}$ private minimax risk in the metric $\rho$ is
$$\inf_{\bm{Q}^n \in \mathcal{Q}^{(n)}_{\bm{\alpha}}} \inf_{\hat{\theta}} \sup_{P \in \mathcal{P}} \E_{P, \bm{Q}^{n}} [\Phi(\rho(\hat{\theta}(Z_1^1, \dots, Z_n^d), \theta(P)))],$$
where the inf is taken over all the estimators $\hat{\theta}$ and all the choices {\modar $(\bm{Q}_1, \dots , \bm{Q}_n) \in {\modar \mathcal{Q}^{(n)}_{\bm{\alpha}}}$ such that 
the data $Z_1^1, \dots, Z_n^d$ are $\bm{\alpha}$-CLDP views of $X_1^1, \dots, X_n^d$ in the sense of \eqref{eq: def local privacy}.
{\revnew The notation $\E_{P, \bm{Q}^{n}}$ emphasizes the dependence on the 
randomization $\bm{Q}^{n}$ in the law of $Z^1_1,\dots,Z^d_n$.}}
\end{definition}

Our main goal consists in proving some sharp bounds on pairwise divergences, as in Section \ref{S: bounds divergences}. From there, it will be possible to derive sharp lower bounds on the 
$\bm{\alpha}$ private minimax risk for the statistical estimation of manifolds canonical problems, see Section \ref{S: applications} for some examples of applications.

\section{Main results}{\label{S: main}}
{\modar In this section, we establish a connection between the proximity of two laws for the private individual variable $\bm{X}$ and the proximity of their corresponding public views under the $\bm{\alpha}$-CLDP property. Then, we explore the usefulness of this result for the privatization of independent samplings.}
\subsection{Bounds on pairwise divergences}{\label{S: bounds divergences}}
 We assume that we are given a pair of distributions $P$ and $\tilde{P}$ defined on a common space ${\bm{\mathcal{X}}}=(\mathcal{X}^1,\dots,\mathcal{X}^d)$,
 and a privatization kernel $\bm{Q}=(Q^1,\dots,Q^d)$ where $Q^j$ is the privatization channel from $\mathcal{X}^j$ to $\mathcal{Z}^j$. We denote by $M$ and $\tilde{M}$ the law of the images of $P$ and $\tilde{P}$ through the operation of privatization. It means that we consider a couple of raw samples $\bm{X}$, $\bm{\tilde{X}}$ with distribution $P$,  $\tilde{P}$, and that the associated privatized samples $\bm{Z}$,  $\bm{\tilde{Z}}$ have distribution denoted by $M$ and  $\tilde{M}$. Consistently with the description in Section \ref{S:Pb_formulation}, each channel $Q^j$ acts on its associated component $X^j$ independently of the other channels. More formally, we can write the correspondence between $P$ and $M$ as
	\begin{equation*}
	M\left( \prod_{j=1}^d A_j\right)=\int_{\bm{\mathcal{X}}} \prod_{j=1}^d Q^j(A_j \mid X^j=x^j)
	P(dx^1,\dots,dx^d),
\end{equation*}
for any $A_j \in \Xi_{\mathcal{Z}^j}$.

Before we keep proceeding, let us introduce some notation. We denote as $d_{TV}(P_1, P_2)$ the total variation distance between the two measures $P_1$ and $P_2$:
\begin{equation}{\label{eq: def dtv}}
	d_{TV}(P_1, P_2)=\int \abs{dP_1-dP_2}=\int \abs{\frac{dP_1}{dP_1+dP_2}(x)-\frac{dP_2}{dP_1+dP_2}(x)} (P_1+P_2)(dx).
\end{equation}
 Moreover, we denote as $d_{KL}(P_1, P_2)$ the Kullback divergence between the two measures $P_1$ and $P_2$,
\begin{equation}{\label{eq: def dkl}}
d_{KL}(P_1, P_2)=\int \log\big( \frac{dP_2}{dP_1}\big) dP_2
\end{equation} 
 for $P_2$ absolutely continuous to $P_1$. 
 
Finally, we denote as {\revnew $\bm{X}^{(S)}$ the subvector of $\bm{X}$ corresponding to indices in $S$, where $S \subseteq \{1, ... , d \}$ and by $L_{\bm{X}^{(S)}}$} the law of the marginals of $\bm{X}$. According to this notation, {\revnew for $S = \{1, ... , d \}$} it is clearly {\revnew $L_{\bm{X}^{(S)}} = P$ and $L_{\tilde{\bm{X}}^{(S)}} = \tilde{P}$.} \\
\\
Our main result gives an intuition on how close the two output distributions shall be, depending on how close the laws of the {\revnew privatized} data were. Its proof can be found at the end of this section.  

\begin{theorem}
 Let $\alpha_j \ge 0$ and assume that $\bm{Q}=(Q^1,\dots,Q^d)$ guarantees the $\bm{\alpha}$-CLDP constraint as defined by the condition  \eqref{eq: def local privacy non-int}. Then,
$$d_{KL}(M, \tilde{M}) \le \Big{(} {\revnew \sum_{\substack{S \subseteq \{1, ... , d \}\\ S \neq \emptyset }} \prod_{h \in S} (e^{\alpha_{h}} - 1) d_{TV}(L_{\bm{X}^{(S)}}, L_{\tilde{\bm{X}}^{(S)}})} \Big{)}^2.$$

 \noindent In the case where $\alpha_1 = \dots = \alpha_d =: \alpha$, it reduces to
\begin{equation}
	\label{eq: maj KL par TV cas alpha constant}
	d_{KL}(M, \tilde{M}) \le \Big{(} {\revnew \sum_{ \substack{S \subseteq \{1, ... , d \}\\ S \neq \emptyset } } (e^{\alpha} - 1)^{|S|} d_{TV}(L_{\bm{X}^{(S)}}, L_{\tilde{\bm{X}}^{(S)}})} \Big{)}^2.
\end{equation}
\label{th: main bound}
\end{theorem}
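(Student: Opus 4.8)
The plan is to run the standard privacy-contraction argument (as in \cite{Martin}) but exploiting the product structure of the CLDP channel. Since we are in the non-interactive regime \eqref{eq: def local privacy non-int}, the channels $Q^j(\cdot\mid x)$ are mutually absolutely continuous in $x$ and admit densities $q^j$ with respect to $\mu^j$, so $M$ and $\tilde M$ have densities $m,\tilde m$ with respect to $\mu:=\bigotimes_{j=1}^d\mu^j$ given by $m(\bm z)=\int_{\bm{\mathcal{X}}}\prod_{j=1}^d q^j(z^j\mid x^j)\,P(d\bm x)$ and likewise for $\tilde m$ with $\tilde P$. Applying $\log t\le t-1$ in the definition \eqref{eq: def dkl} gives the $\chi^2$-type reduction
\[
d_{KL}(M,\tilde M)=\int \log\!\Big(\tfrac{\tilde m}{m}\Big)\,\tilde m\,d\mu\ \le\ \int \frac{(\tilde m-m)^2}{m}\,d\mu ,
\]
so it suffices to bound the right-hand side. (That dividing by $m$ is legitimate, i.e. $\tilde M\ll M$, follows from the mutual absolute continuity built into the CLDP definition.)

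\textbf{Factoring each channel density.} For every $j$ put $c_j(z^j):=\inf_{x\in\mathcal{X}^j}q^j(z^j\mid x)$ (one takes a measurable version; this is the routine measure-theoretic point). By \eqref{eq: def density local privacy} we may write $q^j(z^j\mid x^j)=c_j(z^j)\big(1+r^j(z^j,x^j)\big)$ with $0\le r^j(z^j,x^j)\le e^{\alpha_j}-1$, and $\int c_j\,d\mu^j\le 1$ because $\int q^j(\cdot\mid x)\,d\mu^j=1$. Multiplying out and using the multilinear expansion
\[
\prod_{j=1}^d\big(1+r^j(z^j,x^j)\big)=\sum_{S\subseteq\{1,\dots,d\}}\ \prod_{h\in S} r^h(z^h,x^h),
\]
then integrating against $\tilde P-P$ and noting that the $S=\emptyset$ term contributes $\int(\tilde P-P)=0$, we get
\[
\tilde m(\bm z)-m(\bm z)=\Big(\prod_{j=1}^d c_j(z^j)\Big)\sum_{\substack{S\subseteq\{1,\dots,d\}\\ S\neq\emptyset}} A_S(\bm z),\qquad
A_S(\bm z):=\int_{\bm{\mathcal{X}}}\prod_{h\in S} r^h(z^h,x^h)\,(\tilde P-P)(d\bm x).
\]

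\textbf{From $A_S$ to marginal total variations, and conclusion.} For fixed $\bm z$ and nonempty $S$, the integrand $\prod_{h\in S}r^h(z^h,x^h)$ depends on $\bm x$ only through the subvector $\bm x^{(S)}$ and takes values in $[0,\prod_{h\in S}(e^{\alpha_h}-1)]$; hence $A_S(\bm z)$ is the integral of a bounded function against $L_{\tilde{\bm X}^{(S)}}-L_{\bm X^{(S)}}$, and with the convention \eqref{eq: def dtv},
\[
\abs{A_S(\bm z)}\le \Big(\prod_{h\in S}(e^{\alpha_h}-1)\Big)\,d_{TV}\big(L_{\bm X^{(S)}},L_{\tilde{\bm X}^{(S)}}\big)=:a_S,
\]
a bound not depending on $\bm z$. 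Since also $m(\bm z)\ge\prod_{j=1}^d c_j(z^j)$ (as $\prod_j q^j(z^j\mid x^j)\ge\prod_j c_j(z^j)$ for every $\bm x$; on $\{\prod_j c_j=0\}$ one checks $m=\tilde m=0$, so it is negligible), combining the displays yields
\[
\int\frac{(\tilde m-m)^2}{m}\,d\mu
\le \int \Big(\prod_{j=1}^d c_j(z^j)\Big)\Big(\sum_{S\neq\emptyset}\abs{A_S(\bm z)}\Big)^{2}d\mu(\bm z)
\le\Big(\sum_{S\neq\emptyset}a_S\Big)^{2}\prod_{j=1}^d\int c_j\,d\mu^j
\le\Big(\sum_{S\neq\emptyset}a_S\Big)^{2},
\]
which is exactly the asserted inequality; the constant case is the specialization $\prod_{h\in S}(e^{\alpha_h}-1)=(e^{\alpha}-1)^{|S|}$, giving \eqref{eq: maj KL par TV cas alpha constant}.

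\textbf{Main obstacle.} The analytic part is short; the points requiring care are measure-theoretic — producing a measurable choice of $c_j(z^j)=\inf_x q^j(z^j\mid x)$ and disposing of the null sets where $c_j$ or $m$ degenerates — together with the bookkeeping of the $2^d-1$ terms in the multilinear expansion and the verification that, for each nonempty $S$, the corresponding term genuinely reduces to the $|S|$-dimensional marginal discrepancy rather than the full $d$-dimensional one.
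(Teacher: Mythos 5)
Your proposal is correct and arrives at the theorem by a genuinely different route from the paper, so a brief comparison is worthwhile.

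\textbf{Decomposition.} The paper reaches the subset-indexed sum $\sum_{S\neq\emptyset}\prod_{h\in S}(e^{\alpha_h}-1)\,d_{TV}(L_{\bm{X}^{(S)}},L_{\tilde{\bm{X}}^{(S)}})$ by introducing the symbolic functions $l[\zeta^1,\dots,\zeta^d]$ and proving the recurrence of Proposition \ref{prop: recurrence} inductively, one coordinate at a time, each step isolating $q^j(\cdot\mid x^j_*)$ and a factor $(e^{\alpha_j}-1)$ via the observation \eqref{eq: sup q1 - infq1}. You arrive at the same decomposition in one stroke: write $q^j(z^j\mid x^j)=c_j(z^j)\bigl(1+r^j(z^j,x^j)\bigr)$ with $0\le r^j\le e^{\alpha_j}-1$, expand $\prod_j(1+r^j)=\sum_S\prod_{h\in S}r^h$, and integrate against $\tilde P-P$; the $S=\emptyset$ term vanishes because $\tilde P-P$ has zero mass, and for nonempty $S$ the integrand depends on $\bm{x}$ only through $\bm{x}^{(S)}$, which is exactly what collapses the term to the $|S|$-dimensional marginal discrepancy. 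This is the same content as the paper's recurrence (your $c_j$ is the paper's $q^j(\cdot\mid x^j_*)$, and $c_j\, r^j$ plays the role of the ``jump'' the paper controls via $\sup q^j - \inf q^j$), but the multilinear identity makes the combinatorics transparent and avoids the inductive bookkeeping.

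\textbf{Final inequality.} The paper applies Lemma 4 of \cite{Martin}, $\abs{\log(q/\tilde q)}\le \abs{q-\tilde q}/\min(q,\tilde q)$, to the quantity $\int(q-\tilde q)\log(q/\tilde q)\,d\bm{\mu}$, i.e. to the \emph{symmetrized} divergence $D(M\|\tilde M)+D(\tilde M\| M)$. You instead invoke the elementary $\chi^2$-bound $D(\tilde M\| M)\le\int(\tilde m-m)^2/m\,d\mu$, which controls one direction only. This matches the literal definition \eqref{eq: def dkl}, but note that the paper's own computation \eqref{eq: middle step3 5} and the comparison with Theorem~1 of \cite{Martin} in Remark \ref{r: comp Martin} indicate the symmetrized quantity is intended. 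If so, your argument needs to be run symmetrically (it applies verbatim with $m$ and $\tilde m$ exchanged, since $\tilde m\ge\prod_j c_j$ as well), which costs a harmless factor of $2$ in the constant. The statistical conclusions are unaffected.

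\textbf{What each approach buys.} Your route is shorter and more transparent, and your handling of the null set $\{\prod_j c_j=0\}$ and the normalization $\int c_j\,d\mu^j\le1$ is exactly the content behind the paper's lower bound $q(\bm{z})\ge\prod_j q^j(z^j\mid x^j_*)$ and the final $\int\prod_j q^j(z^j|x_*^j)\,d\bm{\mu}\le1$. The paper's recurrence is heavier notationally, but the intermediate inequality \eqref{eq: diff q in proof main} on $\abs{q-\tilde q}$ is explicitly stated and reused later (Remark \ref{R: diff q}, Proposition \ref{prop: proba info}); your $\chi^2$ bound jumps past that intermediate density-level control, so it gives the KL bound but not the pointwise ratio bound that Section \ref{s:privacy level} leans on. That is the one structural advantage the paper's longer route retains.
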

{\revnew In the sequel, for the sake of shortness, we remove the condition $S\neq\emptyset$ when possible, as we can consider that 
	$d_{TV}(L_{\bm{X}^{(\emptyset)}}, L_{\tilde{\bm{X}}^{(\emptyset)}}) =0$.}
\begin{remark}
To better understand the formula in the statement of the theorem the reader may observe that, for $d=2$, the left hand side of our main bound is 
$$ \Big{[}(e^{\alpha_1} - 1) d_{TV} (L_{X^1}, L_{\tilde{X}^1} ) + (e^{\alpha_2} - 1) d_{TV} (L_{X^2}, L_{\tilde{X}^2} ) + (e^{\alpha_1} - 1)(e^{\alpha_2} - 1) d_{TV} (L_{(X^1, X^2)}, L_{(\tilde{X}^1, \tilde{X}^2)} ) \Big{]}^2. $$
For $d=3$ it is instead 
\begin{align*}
\Big{[} \sum_{i=1}^3 (e^{\alpha_i} - 1)  d_{TV} (L_{X^i}, L_{\tilde{X}^i} ) & + \sum_{1 \le i < j \le 3} (e^{\alpha_i} - 1)(e^{\alpha_j} - 1)  d_{TV} (L_{(X^i, X^j)}, L_{(\tilde{X}^i, \tilde{X}^j)} ) \\
& + (e^{\alpha_1} - 1)(e^{\alpha_2} - 1)(e^{\alpha_3} - 1) d_{TV} (L_{(X^1, X^2, X^3)}, L_{(\tilde{X}^1, \tilde{X}^2, \tilde{X}^3)} )  \Big{]}^2. 
\end{align*}
\end{remark}

\begin{remark}{\label{r: comp Martin}}
In the mono-dimensional case, where $\bm{\mathcal{X}}=\mathcal{X}^1$ and $\bm{\alpha} =\alpha_1$, we recover a bound similar to the one in Theorem 1 of \cite{Martin}, which is
	\begin{equation} \label{eq: control Martin rappel}
d_{KL}(M, \tilde{M}) \le \min(4, e^{2 \alpha}) (e^{\alpha} -1)^2 d_{TV}(P,\tilde{P})^2.		
	\end{equation}
In the multidimensional setting with $\alpha_1=\dots=\alpha_d=\alpha$, if we use in 
	\eqref{eq: maj KL par TV cas alpha constant} the crude bound 
	\begin{equation*}
		{\revnew 
			\forall S \subseteq \{1,\dots,n\},\quad
			d_{TV}(L_{\bm{X}^{(S)}}, L_{\tilde{\bm{X}}^{(S)}})}\le d_{TV}(P,\tilde{P}),
	\end{equation*}
	 we obtain
\begin{equation}{\label{eq: comp Martin}}
 d_{KL}(M, \tilde{M}) \le \left( \sum_{k=1}^d (e^\alpha-1)^k  \binom{n}{k} d_{TV}(P,\tilde{P}) \right)^2 =
(e^{\alpha d}-1)^2  d_{TV}(P,\tilde{P})^2,
\end{equation}
where we have used Newton's binomial formula.
It is important to note that when $d>1$, the inequality stated in Theorem 1 of \cite{Martin} (referred to as \eqref{eq: control Martin rappel}) is still valid. {\rev Recalling the discussion in Section \ref{Ss: Comparison LDP}, the CLDP channel with privacy parameter $\bm{\alpha}=(\alpha,\dots,\alpha)$ is a special case of LDP channel with privacy parameter $d \times \alpha$. Hence,
	the results given by the bounds \eqref{eq: control Martin rappel} and  
	 \eqref{eq: comp Martin} are the same, up to a constant.}
However, our result is generally more precise. 
{\rev Indeed, our analysis takes into account the fact that the individual components of the vector have been made public independently, which allows to recover the more precise upper bound \eqref{eq: maj KL par TV cas alpha constant}, where the contribution of  the differences between each $k$-dimensional marginals is assessed.

A main purpose of controls like \eqref{eq: maj KL par TV cas alpha constant}--\eqref{eq: comp Martin} is to theoretically assess the minimum loss of information about the raw law when the data are transmitted through the channel. Our bound   \eqref{eq: maj KL par TV cas alpha constant} reveals 
that, in the case of a CLDP channel, when $\alpha$ gets small the information about the joint law is lost faster than the information about the marginal laws.
}
\end{remark}

\noindent As we will see in next section, the bound on pairwise divergences gathered in Theorem \ref{th: main bound} is particularly helpful when one wants to show lower bound on the minimax risk, in order to illustrate the optimality of a proposed estimator, in a minimax sense. \\
In this case one can propose two priors whose marginal laws are all the same but for the last term, where the whole vector is considered. Then, our main result reduces to the bound below. 

\begin{corollary}
Let us consider a couple of raw samples $\bm{X}$, $\bm{\tilde{X}}$ with distributions $P$, $\tilde{P}$ and the associated couple of privatized samples $\bm{Z}$, $\bm{\tilde{Z}}$ with distributions $M$, $\tilde{M}$. Assume moreover that, for any 
{\revnew strict subset $S \subset \{1, ... , d \}$,} it is {\revnew $L_{\bm{X}^{(S)}} = L_{\tilde{\bm{X}}^{(S)}}$}. Then, 
$$d_{KL}(M, \tilde{M}) \le  \Big( \prod_{i = 1}^d (e^{\alpha_i} - 1)^{2} \Big) d_{TV}(P, \tilde{P})^2.$$
\label{cor: main}
\end{corollary}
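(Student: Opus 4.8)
\textbf{Proof plan for Corollary \ref{cor: main}.} The plan is to simply specialize Theorem \ref{th: main bound} to the situation at hand. Starting from the bound
$$d_{KL}(M, \tilde{M}) \le \Big( \sum_{\substack{S \subseteq \{1, \dots, d\} \\ S \neq \emptyset}} \prod_{h \in S} (e^{\alpha_h} - 1)\, d_{TV}(L_{\bm{X}^{(S)}}, L_{\tilde{\bm{X}}^{(S)}}) \Big)^2,$$
I would observe that the sum over nonempty $S$ splits into the strict subsets $S \subsetneq \{1,\dots,d\}$ and the single full set $S = \{1,\dots,d\}$. By the hypothesis of the corollary, $L_{\bm{X}^{(S)}} = L_{\tilde{\bm{X}}^{(S)}}$ for every strict subset $S$, so $d_{TV}(L_{\bm{X}^{(S)}}, L_{\tilde{\bm{X}}^{(S)}}) = 0$ and each of those summands vanishes. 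Only the term $S = \{1,\dots,d\}$ remains, and for that term $L_{\bm{X}^{(S)}} = P$, $L_{\tilde{\bm{X}}^{(S)}} = \tilde{P}$, and $\prod_{h \in S}(e^{\alpha_h}-1) = \prod_{i=1}^d (e^{\alpha_i}-1)$.

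Substituting, the right-hand side becomes $\big( \prod_{i=1}^d (e^{\alpha_i}-1)\, d_{TV}(P,\tilde{P}) \big)^2 = \big( \prod_{i=1}^d (e^{\alpha_i}-1)^2 \big) d_{TV}(P,\tilde{P})^2$, which is exactly the claimed inequality. This completes the argument.

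There is essentially no obstacle here: the corollary is a direct consequence of Theorem \ref{th: main bound}, and all the real work is contained in the proof of that theorem. The only point worth stating carefully is the bookkeeping of which terms in the subset sum survive — namely, that under the stated hypothesis every proper-subset contribution is killed and only the top-dimensional term, involving the full joint laws $P$ and $\tilde{P}$, contributes. (One may also note, consistently with the convention introduced just after Theorem \ref{th: main bound}, that the empty set contributes $0$ as well, so restricting to nonempty $S$ or not is immaterial.)
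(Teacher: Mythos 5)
Your proof is correct and matches the paper's intended derivation: the paper leaves Corollary \ref{cor: main} without an explicit proof, treating it as an immediate specialization of Theorem \ref{th: main bound}, and it makes the same observation explicitly for the analogous Corollary \ref{cor: main samples} (that under the marginal-equality hypothesis, the only surviving term in the sum is $S = \{1,\dots,d\}$). Your bookkeeping of the vanishing strict-subset terms and the final algebraic simplification is exactly right.
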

\noindent  As we will see in next section, the bound {\modar stated} in the corollary {\modar is extremely useful to assess minimax risks under the $\alpha$-CLDP property.} 
{\rev Indeed, under the assumption of the corollary, the reduction of the distance between the private laws and the public laws is maximal when the $\alpha$'s are small. This is a worst case scenario when a statistician wants to determine from the public view which of the two private laws, $P$ or $\tilde{P}$, is the true one. Hence, this result is crucial in the proof of lower bounds related to estimation problems.}

\noindent The rest of this section is devoted to the proof of Theorem \ref{th: main bound}.

\begin{proof}[Proof of Theorem \ref{th: main bound}]
To prove our main theorem we introduce some notation. 
\noindent We first recall that $q^j(z^j\mid x^j)$ is the density of the the law of $Z^j$ conditional to $X^j=x^j$ with respect to a dominating measure $\mu^j(dz^j)$. We denote by $q(z^1,\dots,z^d)$ the density of the law of
	 $(Z^1,\dots,Z^d)$, which exists with respect to the reference measure $\bm{\mu}(d\bm{z}):=\mu^1(dz^1)\times\dots\times\mu^d(dz^d)$. 
	 In a more general way, we examine a collection of $d$ symbols $\zeta^1,\dots,\zeta^d$, which can take one of three possible values: $\zeta^j = dx^j$, $\zeta^j = z^j$, or $\zeta^j = \emptyset$. We define a vector $\bm{W}$ such that the $j$-th component of $\bm{W}$, denoted $W^j$, takes the value of $X^j$ if $\zeta^j = dx^j$, takes the value of $Z^j$ if $\zeta^j = z^j$, and is removed entirely if $\zeta^j = \emptyset$. We denote as $q(\zeta^1, \dots , \zeta^d)$ the Markovian kernel such that
	 \begin{equation*}
q(\zeta^1, \dots , \zeta^d) \times \prod_{j : \zeta^j=z^j} \mu^j(dz^j)
\end{equation*}
is the law of $\bm{W}$.
For example,
\begin{equation*}
q(dx^1, \dots , dx^i, \emptyset, \dots , \emptyset, z^{i +j+1}, \dots , z^d)\mu^{i+j+1}(dz^{i+j+1})\times\dots\times \mu^d(dz^d)   \end{equation*}
is the law of $(X^1, \dots , X^i, Z^{i + j + 1}, \dots , Z^d)$ and we thus have
\begin{multline*}
\E\left[f(X^1,\dots,X^i,Z^{i+j+1},\dots,Z^d)\right]
=\int_{\mathcal{X}^1\times\dots\times\mathcal{X}^i\times\mathcal{Z}^{i+j+1}\times\dots\times\mathcal{Z}^{d}} f(x^1,\dots,x^i,z^{i+j+1},\dots,z^d) \\q(dx^1, \dots , dx^i, \emptyset, \dots , \emptyset, z^{i +j+1}, \dots, z^d)\mu^{i+j+1}(dz^{i+j+1})\times\dots\times \mu^d(dz^d),
\end{multline*}
for any positive measurable function $f$.
Such Markovian kernels $q(\zeta^1, \dots , \zeta^d)$ exist for all choices of symbols $\zeta^j$. Indeed, it is possible to disintegrate the law of $(W^j)_{j:\zeta_j\neq\emptyset}$ with respect to the law of  $(W^j)_{j : \zeta^j = z^j}$ and use that the law of $(W^j)_{j : \zeta^j = z^j}$ admits a density with respect to $\prod_{j: \zeta^j=z^j} \mu^j(dz^j)$.
With a slight abuse of notation we consider
 $q(\emptyset,\dots,\emptyset)=1$. It is consistent with the fact that, when removing one marginal $W^j=X^j$ (or $W^j=Z^j$) from a random vector, the corresponding probability measure is integrated with respect to the variable $x_j$ (or $z_j$). Hence, when removing  ultimately all the variables the probability integrates to $1$, yielding to the notation $q(\emptyset,\dots,\emptyset)=1$.  
  Let us stress that these notations are cumbersome as we are dealing with general variables $\bm{X}$ and $\bm{Z}$. For instance, in the simple case where $\bm{\mathcal{X}}=\bm{\mathcal{Z}}=\R^d$, $\bm{X}$ with density on $\R^d$, and privacy channels having densities $q^j(z^j\mid x^j)$ with respect to the Lebesgue measure, we would have simply defined $q(\zeta^1,\dots,\zeta^d)$ as the density of the variables $(W^j)_{j:\zeta^j\neq\emptyset}$.

	
 We introduce analogously $\tilde{q}(\zeta_1, \dots , \zeta_d)$, which corresponds to the law of $\bm{\tilde{X}}$ and  $\bm{\tilde{Z}}$ in the same way as above. 

 Then, using these notations and the fact that the law of $Z^1$ conditional to 
	$(X^1,Z^2,\dots,Z^d)$ is {\modar given,   from the definition of the privacy mechanism,} by $Q^1(dz^1\mid X^1=x^1)=q^1(z^1 \mid x^1) \mu^1(dz^1)$, it is 
 \begin{equation} \label{eq: q_zi depuis q1 et qx1zi}
		q(z^1, \dots, z^d)= \int_{\mathcal{X}^1}  q^1(z^1 | x^1) q(dx^1, z^2, \dots , z^d).
	\end{equation}

Finally, we introduce the function $l[\zeta^1, \dots , \zeta^d] : \mathcal{Z}^1\times\dots\times\mathcal{Z}^d \to \mathbb{R}_+$ as below:
\begin{align}\label{eq: def l z1 zd}
	l[\zeta^1,\dots,\zeta^d]:=&
		|q(z^1, \dots, z^d) - \tilde{q}(z^1, \dots, z^d)| , ~  \text{ for $\bm{\zeta} = \bm{z}$},	
		\\ \nonumber
		l[\zeta^1,\dots,\zeta^d]:=&\prod_{j : \zeta^j \neq z^j} q^j(z^j | x^j_*) \prod_{j: \zeta^j = dx^j} |e^{\alpha_j} - 1| \times\\
		&\qquad\qquad\qquad\qquad		\label{eq: def l zeta1 zetad}
		\int_{\prod\limits_{j: \zeta^j = dx^j}\mathcal{X}^j}  |q(\zeta^1, \dots , \zeta^d) - \tilde{q}(\zeta^1, \dots , \zeta^d)|,~  \text{ for $\bm{\zeta} \neq \bm{z}$},
\end{align}
%
where 
$ q^j(z^j | x^j_*) := \inf_{x^j}  q^j(z^j | x^j)$ and 
$\bm{\zeta}=(\zeta^1,\dots,\zeta^d)$. 
 To clarify the notation, let us stress that the integration variables in \eqref{eq: def l zeta1 zetad} are the $\zeta^j$ such that $\zeta^j=dx^j$. Moreover, $l[\zeta^1,\dots,\zeta^d]$ is a function of $(z^1,\dots,z^d)$ whatever is the choice $(\zeta^1,\dots,\zeta^d)$. Indeed, the variable $z^j$ appears either in the product $ \prod_{j : \zeta^j \neq z^j} q^j(z^j | x^j_*) $ when $j$ is such that $\zeta^j\neq z^j$, or in the integral when $\zeta^j=z^j$. To give an example, the quantity
	$l[dx^1,\dots,dx^i,\emptyset,\dots,\emptyset,z^{i+j+1},\dots,z^d]$ is equal to
	\begin{multline*}
		\prod_{l=1}^{i+j} q^l(z^l | x^l_*) \prod_{l=1}^i |e^{\alpha_l} - 1|  \times
		\\\int_{\prod\limits_{l=1}^{i}\mathcal{X}^l}  |q(dx^1,\dots,dx^i,\emptyset, \dots,\emptyset, z^{i+j+1},\dots, z^d) - \tilde{q}(dx^1,\dots,dx^i,\emptyset, \dots,\emptyset, z^{i+j+1},\dots,  z^d)|,
	\end{multline*} 
	which is clearly a function of $(z^1,\dots,z^d)$.
		In the scenario where $\bm{\zeta}=\bm{z}$, the equation \eqref{eq: def l zeta1 zetad} aligns with \eqref{eq: def l z1 zd}, except that the integration variables disappear. Then, in the right-hand side of \eqref{eq: def l zeta1 zetad} the integral no longer appears.
		We also specify that $l[\emptyset,\dots,\emptyset]=0$, which results from the fact that, abusing the notations, we have $q(\emptyset,\dots,\emptyset) =\tilde{q}(\emptyset,\dots,\emptyset)=1$.

Our main result heavily relies on the following proposition. Its  proof, based on a recurrence argument, can be found in the Appendix.

\begin{proposition}{\label{prop: recurrence}}
Let the function $l[\zeta^1, \dots , \zeta^d]$ be defined according to \eqref{eq: def l z1 zd} and \eqref{eq: def l zeta1 zetad}. Then, under the hypothesis of Theorem \ref{th: main bound}, the following inequality holds true. 
\begin{align}{\label{eq: goal main A}}
 l[z^1, \dots , z^d] \le   \sum_{(\zeta^1, \dots , \zeta^d) \in \prod_{j=1}^d\{ \emptyset, {dx^j} \}} l[\zeta^1, \dots , \zeta^d].
\end{align}
\end{proposition}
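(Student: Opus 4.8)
The plan is to prove, by induction on the number $k$ of "active" symbols (those with $\zeta^j = z^j$), the following family of inequalities indexed by subsets: for every $S \subseteq \{1,\dots,d\}$,
\[
l[\zeta^1,\dots,\zeta^d]\big|_{\zeta^j = z^j \text{ for } j\in S,\ \zeta^j=\emptyset \text{ for } j\notin S}
\ \le\ \sum_{(\zeta^j)_{j\notin S}\in\prod_{j\notin S}\{\emptyset,dx^j\}} l[\zeta^1,\dots,\zeta^d]\big|_{\zeta^j = z^j \text{ for } j\in S},
\]
where the right-hand side ranges over all ways of replacing the $\emptyset$'s outside $S$ by $\emptyset$ or $dx^j$; the desired statement \eqref{eq: goal main A} is the case $S=\{1,\dots,d\}$. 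The base case $S=\emptyset$ is the trivial identity $l[\emptyset,\dots,\emptyset]=0\le 0$. For the induction step, I would pick some index, say $1\in S$, write $S = \{1\}\cup S'$, and peel off the first coordinate.

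The key mechanism for peeling off coordinate $1$ is the representation \eqref{eq: q_zi depuis q1 et qx1zi}, which expresses $q(z^1,z^2,\dots,z^d)$ as $\int q^1(z^1\mid x^1)\,q(dx^1,z^2,\dots,z^d)$, together with the trivial decomposition $q^1(z^1\mid x^1) = q^1(z^1\mid x^1_*) + \big(q^1(z^1\mid x^1) - q^1(z^1\mid x^1_*)\big)$, where $q^1(z^1\mid x^1_*) = \inf_{x^1} q^1(z^1\mid x^1)$. Applying this to both $q$ and $\tilde q$, subtracting, and using the triangle inequality on $|q(z^1,\dots) - \tilde q(z^1,\dots)|$, the $q^1(z^1\mid x^1_*)$-part factors out of the integral and yields a term of the form $q^1(z^1\mid x^1_*)\cdot|q(\emptyset,z^2,\dots,z^d) - \tilde q(\emptyset,z^2,\dots,z^d)|$ after integrating in $x^1$ — this is exactly $l$ with $\zeta^1$ switched from $z^1$ to $\emptyset$ (up to recognizing the marginal). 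The remainder term involves $|q^1(z^1\mid x^1) - q^1(z^1\mid x^1_*)| \le (e^{\alpha_1}-1)\,q^1(z^1\mid x^1_*)$, the inequality being precisely the $\bm\alpha$-CLDP density bound \eqref{eq: def density local privacy}; this produces the factor $(e^{\alpha_1}-1)\,q^1(z^1\mid x^1_*)$ and an integral against $|q(dx^1,z^2,\dots,z^d) - \tilde q(dx^1,z^2,\dots,z^d)|$, i.e. the $l$-term with $\zeta^1 = dx^1$. So one step of peeling gives
\[
l[z^1,z^2,\dots]\ \le\ l[\emptyset,z^2,\dots]\ +\ l[dx^1,z^2,\dots],
\]
and more generally replaces the active coordinate $1$ by a sum over $\zeta^1\in\{\emptyset,dx^1\}$ while leaving the bound valid coordinatewise (as functions of the remaining $z$'s). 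Iterating over all $j\in S$, or equivalently invoking the induction hypothesis on the two resulting configurations (which each have one fewer active symbol), closes the argument.

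The main obstacle I anticipate is bookkeeping rather than conceptual: one must be careful that the "reduced" quantities produced after integrating out $x^1$ genuinely match the definitions \eqref{eq: def l z1 zd}–\eqref{eq: def l zeta1 zetad} of $l$ at the smaller configuration — in particular that integrating $q(dx^1,z^2,\dots,z^d)$ over $x^1\in\mathcal{X}^1$ returns the kernel $q(\emptyset,z^2,\dots,z^d)$ (this is the consistency of the disintegration family, and the role of the convention $q(\emptyset,\dots,\emptyset)=1$), and that the prefactors $\prod_{j:\zeta^j\neq z^j} q^j(z^j\mid x^j_*)$ and $\prod_{j:\zeta^j=dx^j}(e^{\alpha_j}-1)$ accumulate correctly as coordinates are peeled. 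A secondary subtlety is that when a coordinate $j$ already has $\zeta^j = dx^j$ or $\zeta^j=\emptyset$ we do not touch it, so the induction should be organized to peel only the currently active ($z^j$) coordinates one at a time; handling a single coordinate and then appealing to the inductive hypothesis for the rest keeps the combinatorics manageable. Once the one-coordinate peeling identity is established cleanly, the passage to \eqref{eq: goal main A} is immediate by induction on $|S|$.
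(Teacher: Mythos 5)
Your plan is correct and follows the same inductive strategy as the paper's proof: peel one active coordinate at a time via the representation $q(z^1,\dots,z^d)=\int q^1(z^1\mid x^1)\,q(dx^1,z^2,\dots,z^d)$, apply the CLDP constraint to get $q^1(z^1\mid x^1)-q^1(z^1\mid x^1_*)\le(e^{\alpha_1}-1)\,q^1(z^1\mid x^1_*)$, and track the prefactors and marginals by induction. The only difference is the algebraic step within the one-coordinate peeling: you additively decompose the kernel density as $q^1(\cdot\mid x^1)=q^1(\cdot\mid x^1_*)+\big(q^1(\cdot\mid x^1)-q^1(\cdot\mid x^1_*)\big)$ and then apply the triangle inequality, whereas the paper instead splits the signed measure $q(dx^1,\cdot)-\tilde q(dx^1,\cdot)$ into its positive and negative parts, bounds each against $\sup_{x^1}q^1$ and $\inf_{x^1}q^1$, and then rearranges to extract the factor $(\sup q^1-\inf q^1)$; both manipulations yield the same single-step inequality $l[z^1,z^2,\dots]\le l[\emptyset,z^2,\dots]+l[dx^1,z^2,\dots]$ and then the same recursion.
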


 Recalling \eqref{eq: def l z1 zd}, we remark that the left hand side of \eqref{eq: goal main A} assesses the difference between the densities of $\bm{Z}$ and $\bm{\tilde{Z}}$, while the right hand side only relies on the laws of $\bm{X}$, $\bm{\tilde{X}}$ as the symbol $\zeta^j=z^j$ disappears in the sum. From Proposition \ref{prop: recurrence} we have 
\begin{align}{\label{eq: start step3 4.5}}
 |q(\bm{z}) - \tilde{q}(\bm{z})| & =l[z^1, \dots , z^d] 
 \le \sum_{\bm{\zeta}: \zeta^j \in \{ \emptyset,  dx^j \}}  l(\bm{\zeta}) \\
 & = \sum_{\bm{\zeta}: \zeta^j \in \{ \emptyset, {\modar dx^j} \} } \prod_{j = 1}^{d} {\modar q^j}(z^j | x^j_*) \prod_{j: \zeta^j =  dx^j} (e^{\alpha_j} - 1) {\modar \int_{\prod\limits_{j: \zeta^j = dx^j}\mathcal{X}^j}  |q(\bm{\zeta}) - \tilde{q}(\bm{\zeta})|}.
\end{align}
By the definition of total variation distance, the quantity above can be seen as 
\begin{align}\nonumber
& \prod_{j = 1}^{d} {\modar q^j}(z^j | x^j_*) \sum_{\bm{\zeta}: \zeta^j \in \{ \emptyset, x^j \} } ~
\Bigg\{
 \big( \prod_{j: \zeta^j = dx^j} (e^{\alpha_j} - 1)\big) {\modar  d_{TV}(L_{(X^j)_{j :  \zeta^j = dx^j }}, L_{(\tilde{X}^{j})_{ j :  \zeta^j =dx^j }} )} \Bigg\} \\
& \nonumber
= \prod_{j = 1}^{d} {\modar q^j}(z^j | x^j_*) \sum_{k = 1}^d  
{\modar 
\sum_{\substack{ j_1< \dots< j_k \\ \text{ $k$ different indexes} \\ \text{
			in $\{1,\dots,d\}$ } } } }
		\Bigg\{
		\big(
	 \prod_{i=1}^k (e^{\alpha_{j_i}} - 1) \big) ~d_{TV}(L_{(X^{j_1}, \dots , X^{j_k})}, L_{(\tilde{X}^{ {j_1}}, \dots , \tilde{X}^{{j_k}})}) \Bigg\}
\\
&=\label{eq: upper bound diff qz qz' par somme}
{\revnew \prod_{j = 1}^{d} q^j(z^j | x^j_*) 
\sum_{ S \subseteq \{1, ... , d \}} \prod_{h \in S} (e^{\alpha_{h}} - 1) d_{TV}(L_{\bm{X}^{(S)}}, L_{\tilde{\bm{X}}^{(S)}}).}
\end{align}
To conclude the proof we observe it is 
\begin{align}\nonumber
d_{KL}(M, \tilde{M}) & = \int_{{\modar \mathcal{Z}^1\times\dots\times\mathcal{Z}^d}} q(\bm{z}) \log (\frac{q(\bm{z})}{\tilde{q}(\bm{z})}) {\modar d\bm{\mu}(\bm{z})} + \int_{{\modar \mathcal{Z}^1\times\dots\times\mathcal{Z}^d}} \tilde{q}(\bm{z}) \log (\frac{\tilde{q}(\bm{z})}{q(\bm{z})}) {\modar d\bm{\mu}(\bm{z})} \\
& {\label{eq: middle step3 5}}
= \int_{{\modar \mathcal{Z}^1\times\dots\times\mathcal{Z}^d}} (q(\bm{z}) - \tilde{q}(\bm{z})) \log (\frac{q(\bm{z})}{\tilde{q}(\bm{z})}) {\modar d\bm{\mu}(\bm{z})}. 
\end{align}
Then, Lemma 4 in \cite{Martin} entails $|\log \frac{q(\bm{z})}{\tilde{q}(\bm{z})} | \le \frac{|q(\bm{z}) - \tilde{q}(\bm{z})|}{\min (q(\bm{z}), \tilde{q}(\bm{z}))}$.
 In order to study the denominator, we write 
\begin{align*}
q(\bm{z}) & ={\modar  \int_{{\modar x^1 \in \mathcal{X}^1}} q^1(z^1 | x^1) q(dx^1, z^2, \dots , z^d) }  \ge {\modar q^{1}}(z^1 | x^1_*) q ({\modar \emptyset, } z^2, \dots , z^d).
\end{align*}
We iterate the arguing above, to recover 
\begin{align*}
q(\bm{z}) & \ge \prod_{j = 1}^{d-1} {\modar q^j}(z^j | x^j_*) {\modar \int_{{\modar x^d \in \mathcal{X}^d}} {\modar q^d}(z^d | x^d)  q(\emptyset, \dots,\emptyset, dx^d)} \ge \prod_{j = 1}^{d} {\modar q^j(z^j | x^j_*),} 
\end{align*}
{\modar where we used $\int_{x^d \in \mathcal{X}^d}q(\emptyset, \dots,\emptyset, dx^d)=1$.}
An analogous lower bound holds true for $\tilde{q}(\bm{z})$. Thus, using also the bound in \eqref{eq: start step3 4.5}--\eqref{eq: upper bound diff qz qz' par somme}, we obtain 
\begin{align} \nonumber
&\abs{\log (\frac{q(\bm{z})}{\tilde{q}(\bm{z})})}\le \frac{\abs{q(\bm{z})-\tilde{q}(\bm{z})}}{\min(q(\bm{z}), \tilde{q}(\bm{z}))} \\ {\label{eq: diff q in proof main}}
& \qquad \le {\revnew \sum_{ S \subseteq \{1, ... , d \}} \prod_{h \in S} (e^{\alpha_{h}} - 1) d_{TV}(L_{\bm{X}^{(S)}}, L_{\tilde{\bm{X}}^{(S)}})}.  
\end{align}
We replace it in \eqref{eq: middle step3 5} which, together with \eqref{eq: start step3 4.5}--\eqref{eq: upper bound diff qz qz' par somme}, implies 
	\begin{multline*}
		d_{KL}(M, \tilde{M}) \le \Big{(} {\revnew \sum_{ S \subseteq \{1, ... , d \}} \prod_{h \in S} (e^{\alpha_{h}} - 1) d_{TV}(L_{\bm{X}^{(S)}}, L_{\tilde{\bm{X}}^{(S)}})} \Big{)}^2 \\ \times \int_{\mathcal{Z}^1\times\dots\times\mathcal{Z}^d} \prod_{j = 1}^{d} q^j(z^j | x^j_*) d\bm{\mu}(\bm{z}).
	\end{multline*}
The proof of Theorem \ref{th: main bound} is then complete, as the last integral can not be larger than one. 
\end{proof}
{\modar \begin{remark}\label{R: diff q}
		Let us stress that our proof also provides a control on the difference between the densities of $\bm{Z}$ and $\bm{\tilde{Z}}$ given by \eqref{eq: diff q in proof main}.
\end{remark}}

\subsection{Application to privatization of independent sampling}{\label{S: main sampling}}

This section applies the previously proven results to a scenario where the original samples $\bm{X}_1, \dots , \bm{X}_n$ composed by independent vectors distributed according to $\bm{X}$, are transformed into privatized samples $\bm{Z}_1, \dots , \bm{Z}_n$. The notation used in this section is consistent with that used in Section \ref{S:Pb_formulation} and will be used throughout the rest of the paper. Specifically, $X_i^j$ refers to the $j$-th component of the $i$-th individual $\bm{X}_i$. \\
\\
Assume we 
sample a random vector $(\bm{X}_1, \dots , \bm{X}_n)$ 
with product measure of the form {\modar $P^n(d\bm{x_1}, \dots , d\bm{x_n}):= \prod_{i = 1}^n P_i(d\bm{x_i})$.} We draw then an $\bm{\alpha}$ componentwise local differential private view of the sample $(\bm{X}_1, \dots, \bm{X}_n)$ through the privacy mechanism $\bm{Q}^n= (\bm{Q}_1, \dots, \bm{Q}_n)$, where $\bm{Q}_i= (Q_i^1, \dots , Q_i^d)$. 
The privatized samples $(\bm{Z}_1, \dots , \bm{Z}_n)$ 
is distributed according to some measure $M^n$. 
As we consider also the case where the algorithm is interactive, in general the measure $M^n$ is not in a product form (with respect to $i$). However, the proposition on tensorization inequality that follows will yield a result similar to that provided by independence. It will prove especially useful for applications. \\

\begin{proposition}{\label{prop: tensorization}}
Let $\alpha_j \ge 0$ and assume that $\bm{Q}^n$ guarantees the $\bm{\alpha}$- CLDP constraint as defined by the condition \eqref{eq: def local privacy}. Then, for any paired sequences of independent vectors $(\bm{X}_1, \dots , \bm{X}_n)$ and {\modar $(\bm{\tilde{X}}_1, \dots , \bm{\tilde{X}}_n)$} of distributions $P^n = \prod_{i = 1}^n P_i$ and $\tilde{P}^{ n} = \prod_{i = 1}^n \tilde{P}_i$ respectively, we have
\begin{equation}
\label{eq: dk M Mtilde cooperative}
d_{KL}(M^n, \tilde{M}^{ n}) \le \sum_{h = 1}^n \Big{(} {\revnew \sum_{ S \subseteq \{1, ... , d \}} \prod_{j \in S} (e^{\alpha_{j}} - 1) d_{TV}(L_{\bm{X}_h^{(S)}}, L_{\tilde{\bm{X}}_h^{(S)}})} \Big{)}^2.
\end{equation}
\end{proposition}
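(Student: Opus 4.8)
The plan is to reduce the statement, via the chain rule for the Kullback--Leibler divergence along the filtration generated by the successively released views, to $n$ applications of Theorem \ref{th: main bound}, one per time step. Concretely, I would disintegrate $M^n$ and $\tilde M^n$ step by step: write $M^{(h)}$, $\tilde M^{(h)}$ for the laws of $(\bm Z_1,\dots,\bm Z_h)$, and $M_h(\cdot\mid\bm z_1,\dots,\bm z_{h-1})$, $\tilde M_h(\cdot\mid\bm z_1,\dots,\bm z_{h-1})$ for the conditional laws of $\bm Z_h$ given $\bm Z_1=\bm z_1,\dots,\bm Z_{h-1}=\bm z_{h-1}$ under $P^n,\bm Q^n$ and $\tilde P^n,\bm Q^n$ respectively. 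These regular conditional distributions exist since the $\mathcal Z^j$ are separable complete metric spaces. Recalling that with the convention \eqref{eq: def dkl} one has $d_{KL}(\mu,\nu)=D(\nu\,\|\,\mu)$, the chain rule gives
$$d_{KL}(M^n,\tilde M^n)=\sum_{h=1}^n\int_{\bm{\mathcal Z}^{\,h-1}}d_{KL}\!\bigl(M_h(\cdot\mid\bm z_1,\dots,\bm z_{h-1}),\ \tilde M_h(\cdot\mid\bm z_1,\dots,\bm z_{h-1})\bigr)\,\tilde M^{(h-1)}(d\bm z_1,\dots,d\bm z_{h-1}).$$

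Next I would analyse a single conditional term. Fix $h$ and $\bm z_1,\dots,\bm z_{h-1}$. Since the raw vectors are sampled independently, $\bm X_h$ is independent of $(\bm Z_1,\dots,\bm Z_{h-1})$, so the conditional law of $\bm X_h$ given $\bm Z_1=\bm z_1,\dots,\bm Z_{h-1}=\bm z_{h-1}$ is still $P_h$ (resp.\ $\tilde P_h$); and by the conditional independence structure of Section \ref{S:Pb_formulation}, given this past the vector $\bm Z_h$ is the image of $\bm X_h$ under the product channel $\bigl(Q_h^1(\cdot\mid\cdot,\bm z_1,\dots,\bm z_{h-1}),\dots,Q_h^d(\cdot\mid\cdot,\bm z_1,\dots,\bm z_{h-1})\bigr)$. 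Once the past is frozen, the interactive privacy bound \eqref{eq: def local privacy} for $\bm Q_h$ becomes exactly the non-interactive $\bm\alpha$-CLDP bound \eqref{eq: def local privacy non-int} for this channel. Therefore Theorem \ref{th: main bound}, applied with raw laws $P_h,\tilde P_h$ and this channel, gives
$$d_{KL}\!\bigl(M_h(\cdot\mid\bm z_1,\dots,\bm z_{h-1}),\ \tilde M_h(\cdot\mid\bm z_1,\dots,\bm z_{h-1})\bigr)\le\Bigl(\sum_{S\subseteq\{1,\dots,d\}}\prod_{j\in S}(e^{\alpha_j}-1)\,d_{TV}(L_{\bm X_h^{(S)}},L_{\tilde{\bm X}_h^{(S)}})\Bigr)^2,$$
where $L_{\bm X_h^{(S)}}$, $L_{\tilde{\bm X}_h^{(S)}}$ are the $S$-marginals of $P_h$, $\tilde P_h$.

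The key observation is that this upper bound is a deterministic constant: it does not depend on the conditioning values $\bm z_1,\dots,\bm z_{h-1}$, precisely because independence of the sampling forces the conditional raw law to be the fixed $P_h$ rather than a posterior depending on the observed past. Hence integrating against $\tilde M^{(h-1)}$ leaves the bound unchanged, and summing over $h$ yields \eqref{eq: dk M Mtilde cooperative}. I expect the only genuinely delicate point to be the first step — writing the disintegration and the KL chain rule rigorously for a sequentially interactive mechanism on general Polish spaces, and checking carefully that, conditionally on the past, the mechanism at step $h$ does factorize as ``independent draw $\bm X_h\sim P_h$, then componentwise privatization through $\bm Q_h(\cdot\mid\cdot,\bm z_1,\dots,\bm z_{h-1})$'', so that Theorem \ref{th: main bound} applies verbatim; once that is in place the rest is immediate.
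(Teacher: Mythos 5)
Your proposal is correct and follows essentially the same route as the paper: chain rule for KL along the sequence $\bm Z_1,\dots,\bm Z_n$, then for each step freeze the past so that the conditional channel becomes a non-interactive $\bm\alpha$-CLDP kernel, apply Theorem \ref{th: main bound} to the fixed raw laws $P_h$, $\tilde P_h$ (using independence of $\bm X_h$ from the past), and observe that the resulting bound is deterministic so the outer integral is trivial. Your remark that the paper's convention \eqref{eq: def dkl} means $d_{KL}(\mu,\nu)=D(\nu\|\mu)$, so the chain-rule averaging is against $\tilde M^{(h-1)}$, is a sharper reading than the paper's own display (which writes $dM^{h-1}$), but since the conditional bound is constant this distinction is immaterial.
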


Assume now that the samples $(\bm{X}_1, \dots , \bm{X}_n)$ and $(\bm{\tilde{X}}_1, \dots , \bm{\tilde{X}}_n)$, in addition to being independent, are identically distributed {\modar and thus with laws
$P^n=P^{\otimes n}$ and $\tilde{P}^n=\tilde{P}^{\otimes n}$.} {\rev Moreover, we suppose that all the marginal laws of $\bm{X_h}$ and $\bm{\tilde{X}_h}$ are equals.} {\revnew This means that the law of $\bm{X}_h^{(S)}$ and $\tilde{\bm{X}}_h^{(S)}$ are the same for any strict subset $S \subset \{1, ... , d \}$.} Then, in analogy to Corollary \ref{cor: main}, the proposition above leads to the following corollary.

\begin{corollary}{\label{cor: main samples}}
Let $\alpha_j \ge 0$ and assume that $\bm{Q}^n$ guarantees the $\bm{\alpha}$- CLDP constraint as defined by the condition \eqref{eq: def local privacy}. Then, for any paired sequences of iid vectors $(\bm{X}_1, \dots , \bm{X}_n)$ and $(\bm{\tilde{X}}_1, \dots , \bm{\tilde{X}}_n)$ of distributions 
{\rev ${P}^n={P}^{\otimes n}$} and  $\tilde{P}^n=\tilde{P}^{\otimes n}$ 
which are such that 
{\revnew $L_{\bm{X}_h^{(S)}} = L_{\tilde{\bm{X}}_h^{(S)}}$ for any strict subset $S \subset \{1, ... , d \}$}, we have
\begin{align*}
d_{KL}(M^n, \tilde{M}^{n}) 
& \le n \Big{(} \prod_{j = 1}^d (e^{\alpha_{j}} - 1) \Big{)}^2  {\modar  d^2_{TV}(P^n, \tilde{P}^n).}
\end{align*}
\end{corollary}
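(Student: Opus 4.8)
The plan is to obtain the corollary as an immediate consequence of Proposition \ref{prop: tensorization}, in the same spirit as Corollary \ref{cor: main} follows from Theorem \ref{th: main bound}. Applying Proposition \ref{prop: tensorization} to the two sequences $(\bm{X}_1,\dots,\bm{X}_n)$ and $(\tilde{\bm{X}}_1,\dots,\tilde{\bm{X}}_n)$ gives
$$d_{KL}(M^n,\tilde{M}^n) \le \sum_{h=1}^n \Big( \sum_{S \subseteq \{1,\dots,d\}} \prod_{j \in S} (e^{\alpha_j}-1)\, d_{TV}\big(L_{\bm{X}_h^{(S)}}, L_{\tilde{\bm{X}}_h^{(S)}}\big) \Big)^2 .$$
The first step is to invoke the hypothesis that all strict-subset marginals agree: for every $S \subsetneq \{1,\dots,d\}$, and trivially for $S = \emptyset$, we have $d_{TV}(L_{\bm{X}_h^{(S)}}, L_{\tilde{\bm{X}}_h^{(S)}}) = 0$, so the only term that survives in each inner sum is the one indexed by the full set $S = \{1,\dots,d\}$.

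It then remains to identify that surviving term. Since the vectors are identically distributed, $L_{\bm{X}_h} = P$ and $L_{\tilde{\bm{X}}_h} = \tilde{P}$ for every $h$, and for $S = \{1,\dots,d\}$ the law $L_{\bm{X}_h^{(S)}}$ is precisely the law of the whole vector, namely $P$ (respectively $\tilde{P}$); hence each inner sum equals $\big(\prod_{j=1}^d (e^{\alpha_j}-1)\big)\, d_{TV}(P,\tilde{P})$. Summing the $n$ identical squared contributions yields
$$d_{KL}(M^n,\tilde{M}^n) \le n \Big( \prod_{j=1}^d (e^{\alpha_j}-1) \Big)^2 d_{TV}(P,\tilde{P})^2 .$$
To match the displayed statement I would finally pass from $d_{TV}(P,\tilde{P})$ to $d_{TV}(P^n,\tilde{P}^n)$ by the data-processing inequality for total variation distance: the first-coordinate projection pushes $P^n = P^{\otimes n}$ forward to $P$ and $\tilde{P}^n = \tilde{P}^{\otimes n}$ forward to $\tilde{P}$, and total variation does not increase under a deterministic pushforward, so $d_{TV}(P,\tilde{P}) \le d_{TV}(P^n,\tilde{P}^n)$; substituting gives the claimed inequality.

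I do not anticipate any genuine obstacle here, since all the analytic content is already carried by Theorem \ref{th: main bound} and Proposition \ref{prop: tensorization}; what remains is purely combinatorial bookkeeping — checking that equality of the strict-subset marginals annihilates every term of the sum over $S$ except the full one, and that the iid assumption makes the $n$ contributions identical. The only point worth a moment's care is the index convention $L_{\bm{X}^{(\{1,\dots,d\})}} = P$; one may also note that the argument in fact produces the sharper bound with $d_{TV}(P,\tilde{P})^2$ in place of $d_{TV}(P^n,\tilde{P}^n)^2$, the displayed form being a slightly weaker consequence.
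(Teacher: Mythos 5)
Your proof is correct and follows essentially the same one-line argument the paper uses: apply Proposition~\ref{prop: tensorization}, observe that the hypothesis $L_{\bm{X}_h^{(S)}} = L_{\tilde{\bm{X}}_h^{(S)}}$ for strict subsets $S$ kills every term in the inner sum except $S = \{1,\dots,d\}$, and invoke the iid assumption so all $n$ contributions coincide. Your closing remark is also on point: the argument directly yields the sharper bound $n\big(\prod_j (e^{\alpha_j}-1)\big)^2 d_{TV}(P,\tilde{P})^2$, which is indeed what the paper actually uses downstream (see e.g.\ the displays in the proofs of Theorems~\ref{th: lower joint} and~\ref{th: lower density}), and the passage to $d_{TV}(P^n,\tilde{P}^n)$ by projection is a valid, if slightly wasteful, way to reconcile with the displayed statement.
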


The proof of Proposition \ref{prop: tensorization} follows next, while Corollary \ref{cor: main samples} is a direct consequence of the aforementioned proposition,
 remarking that in the two inner sums of \eqref{eq: dk M Mtilde cooperative} the only non-zero term is for {\revnew $S = \{1, ... , d \}$.} 

\begin{proof}[Proof of Proposition \ref{prop: tensorization}] 
We can introduce the marginal distribution of $\bm{Z}_h$ conditioned on $\bm{Z}_1= \bm{z}_1$, \dots , $\bm{Z}_{h- 1}= \bm{z}_{h-1}$. We denote it as 
$$M_h(\cdot | \bm{Z}_1= \bm{z}_1, \dots, \bm{Z}_{h- 1}= \bm{z}_{h-1} ) =: M_h(\cdot | \bm{z}_{1: h-1}). $$
Observe that for any $A_j \in \Xi_{\mathcal{Z}^j}$ and {\revnew $\bm{x} \in \bm{\mathcal{X}}$, $\bm{z}_{1}, \dots, \bm{z}_{h-1} \in \bm{\mathcal{Z}}$} it is
\begin{align*}
M_h(\prod_{j=1}^d A_j | \bm{z}_{1: h-1}) & = \int_{\bm{\mathcal{X}}} \prod_{j = 1}^d Q_h^{j}(A_j | X_h^j= x^{\revnew j}, \bm{Z}_1= \bm{z}_1, ... , \bm{Z}_{h- 1}= \bm{z}_{h-1})  P_{h}(dx^1, \dots , dx^d) \\
& = : \int_{\bm{\mathcal{X}}} \prod_{j = 1}^d Q_h^{j}(A_j | X_h^j, \bm{Z}_{1 : h-1})  P_{h}(dx^1, \dots , dx^d).
\end{align*}
Moreover, we introduce the notation $d_{KL}(M_h, \tilde{M}_h)$ for the integrated Kullback divergence of the conditional distributions on the $\bm{Z}_h$, which is 
\begin{equation*}
    \int_{\bm{\mathcal{Z}}^{h-1}} d_{KL}\Big(M_h (\cdot | \bm{z}_{1: h-1}), \tilde{M}_h (\cdot | \bm{z}_{1: h-1})  \Big) dM^{h-1}(\bm{z}_1, \dots , \bm{z}_{h-1}). 
\end{equation*}
Then, the chain rule for Kullback-Lieber divergences as gathered in Chapter 5.3 of \cite{29 Martin} provides 
$$d_{KL}(M^n, \tilde{M}^{n}) = \sum_{h = 1}^n d_{KL} (M_h, \tilde{M}_h). $$
By the definition of $\bm{\alpha}$-CLDP for sequentially interactive privacy mechanism provided in \eqref{eq: def local privacy}, the distribution $Q_h^{j}(A_j | X_h^j, \bm{Z}_{1 : h-1})$ 
is $\alpha_j$-differentially private for $X_h^j$. We can therefore apply Theorem \ref{th: main bound} on $d_{KL}\Big(M_h (\cdot | \bm{z}_{1: h-1}), \tilde{M}_h (\cdot | \bm{z}_{1: h-1})  \Big)$ which entails, together with the chain rule above,  
\begin{align*}
d_{KL}(M^n, \tilde{M}^{ n}) & = \sum_{h = 1}^n \int_{\bm{\mathcal{Z}}^{h-1}} d_{KL}\Big(M_h (\cdot | \bm{z}_{1: h-1}), \tilde{M}_h (\cdot | \bm{z}_{1: h-1})  \Big) dM^{h-1}(\bm{z}_1, \dots , \bm{z}_{h-1}) \\
& = \sum_{h = 1}^n \int_{\bm{\mathcal{Z}}^{h-1}} \Big{(} {\revnew \sum_{ S \subseteq \{1, ... , d \}} \prod_{j \in S} (e^{\alpha_{j}} - 1) d_{TV}(L_{(\bm{X}_h^{(S)}| \bm{z}_{1: h-1})}, L_{(\tilde{\bm{X}}_h^{(S)}| \bm{z}_{1: h-1})})} \Big{)}^2 \\
&  \times  dM^{h-1}(\bm{z}_1,\dots , \bm{z}_{h-1}),
\end{align*}
where we have denoted as {\revnew $L_{(\bm{X}_h^{(S)}| \bm{z}_{1: h-1})}$} the conditional distribution of {\revnew $\bm{X}_h^{(S)}$} given the first $h-1$ values $\bm{Z}_1, \dots , \bm{Z}_{h-1}$. Clearly in an analogous way {\revnew $L_{(\tilde{\bm{X}}_h^{(S)}| \bm{z}_{1: h-1})} $} is the conditional distribution of {\revnew $\tilde{\bm{X}}_h^{(S)}$} given the first $h-1$ values $\bm{Z}_1, \dots , \bm{Z}_{h-1}$. However, by construction, the random variables $\bm{X}_h$ are 
 independent, which implies that {\revnew $L_{(\bm{X}_h^{(S)}| \bm{z}_{1: h-1})} = L_{\bm{X}_h^{(S)}}$ and $L_{(\tilde{\bm{X}}_h^{(S)}| \bm{z}_{1: h-1})} = L_{\tilde{\bm{X}}_h^{(S)}}$}. It yields 
\begin{align*}
 d_{KL}(M^n, \tilde{M}^{n}) & \le  \sum_{h = 1}^n \Big{(} {\revnew \sum_{ S \subseteq \{1, ... , d \}} \prod_{j \in S} (e^{\alpha_{j}} - 1) d_{TV}(L_{\bm{X}_h^{(S)}}, L_{\tilde{\bm{X}}_h^{(S)}})} \Big{)}^2 \\
 & \times \int_{\bm{\mathcal{Z}}^{h-1}} dM^{h-1}(\bm{z}_1, \dots , \bm{z}_{h-1}).
\end{align*}
The proof is then concluded once we remark that the integral in $dM^{h-1}(\bm{z}_1, \dots , \bm{z}_{h-1})$ {\modar is equal to $1$.}
\end{proof}

{\rev

	\subsection{Contraction on $f$-divergence}
	In this section we present a result similar to \eqref{th: main bound}, but where we control a family of $f$-divergences different from the Kullback Lieber distance. For $f : \mathbb{R}_+ \to \mathbb{R} \cup \{\infty\}$ with $f(1)=0$ we define the $f$ divergence between two distributions as $D_f(P \| Q) =\int f\left(\frac{dP}{dQ}\right) dQ$. In the sequel, we focus on $f$-divergences with $f(t)=f_l(t)=\abs{t-1}^l$ for $l>1$.
	
	We are in the same context and use the same notations as in Section \ref{S: bounds divergences}. Recall that $\bm{Q}=(Q^1,\dots,Q^d)$ is a family of $d$ kernels, where $Q^j$ is a randomization from some space $\mathcal{X}^j$ to $\mathcal{Z}^j$. In this section we assume that for all $j$ there exists a reference measure $\mu^j$ on $\mathcal{Z}^j$ such that $\frac{dQ^j(z^j \mid X_j=x)}{d\mu^j (z^j)}$ is given as a positive density 
	$q(z^j \mid X^j=x) $. Moreover, we do not assume that the $\bm{\alpha}$-CLDP \eqref{eq: def local privacy non-int} constraint is valid. Instead, we assume that for some $l>1$,
	\begin{equation}\label{eq : divergence CLDP} 
		\forall j \in \{1,\dots,d\},
		\sup_{x,x^{\prime} \in \mathcal{X}^j}
		D_{f_l}(Q^j(\cdot\mid X^j=x') \| Q^j(\cdot\mid X^j=x) )\le (\varepsilon_j)^l,
	\end{equation}
	with some positive constants $\varepsilon_1,\dots,\varepsilon_d$.
	
	The following proposition is an extension of Proposition 8 in \cite{Duchi_Ruan24}. Its proof is given in the Appendix.
	\begin{proposition} \label{P: ajout main en divergence}
		 Assume $l>1$. We have
		\begin{equation}\label{eq: main with f divergence}
			D_{f_l}(M\|\tilde{M})^{1/l} \le {\revnew \sum_{ S \subseteq \{1, ... , d \}} \prod_{j \in S} \varepsilon_j \, d_{TV}(L_{\bm{X}^{(S)}}, L_{\tilde{\bm{X}}^{(S)}})}.
		\end{equation}
	\end{proposition}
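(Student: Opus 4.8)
The plan is to mimic the structure of the proof of Theorem \ref{th: main bound}, replacing the $d_{KL}$-specific estimates with $f_l$-divergence estimates, and invoking the recurrence machinery of Proposition \ref{prop: recurrence} in a form adapted to the $f_l$-divergence hypothesis \eqref{eq : divergence CLDP}. First I would record the key one-dimensional consequence of \eqref{eq : divergence CLDP}: writing $q^j(z^j\mid x^j)$ for the density of $Q^j(\cdot\mid X^j=x^j)$ with respect to $\mu^j$, the hypothesis says $\int \abs{\frac{q^j(z^j\mid x')}{q^j(z^j\mid x)}-1}^l q^j(z^j\mid x)\,\mu^j(dz^j)\le \varepsilon_j^l$ for all $x,x'$. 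Taking $x'$ to range over all values and using the lower envelope $q^j(z^j\mid x^j_*):=\inf_{x^j}q^j(z^j\mid x^j)$, this controls the oscillation of $q^j(z^j\mid\cdot)$ around $q^j(z^j\mid x^j_*)$ in the appropriate weighted $L^l$ sense; this is the analogue of the bound $\abs{e^{\alpha_j}-1}$ used in \eqref{eq: def l zeta1 zetad}, with $\varepsilon_j$ now playing the role of $e^{\alpha_j}-1$.

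Next I would run the same recursion as in Proposition \ref{prop: recurrence} — peeling off one coordinate $Z^j$ at a time via the identity analogous to \eqref{eq: q_zi depuis q1 et qx1zi}, $q(z^1,\dots,z^d)=\int_{\mathcal{X}^1}q^1(z^1\mid x^1)\,q(dx^1,z^2,\dots,z^d)$ — but now I track the $f_l$-divergence (equivalently, a weighted $L^l$ norm of $q-\tilde q$) instead of the supremum difference. Concretely, I would define, in parallel to \eqref{eq: def l z1 zd}–\eqref{eq: def l zeta1 zetad}, quantities $l[\zeta^1,\dots,\zeta^d]$ measuring $\bigl(\int \abs{q(\zeta)-\tilde q(\zeta)}^l \prod_{j:\zeta^j\ne z^j}q^j(z^j\mid x^j_*)\,d\bm\mu\bigr)^{1/l}$ with the extra factors $\prod_{j:\zeta^j=dx^j}\varepsilon_j$, and show by the same inductive argument (peeling coordinates, using Minkowski's inequality in $L^l$ in place of the triangle inequality for $\abs{\cdot}$, and using the one-dimensional $L^l$ oscillation bound above in place of the $L^\infty$ bound) that $l[z^1,\dots,z^d]\le\sum_{\bm\zeta\in\prod_j\{\emptyset,dx^j\}}l[\zeta^1,\dots,\zeta^d]$. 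This is exactly where Proposition 8 of \cite{Duchi_Ruan24} enters: it is the $d=1$ base case, and the inductive step glues the one-dimensional estimates together. Summing over $\bm\zeta$ and identifying each term $l[\zeta^1,\dots,\zeta^d]$ with $\prod_{h\in S}\varepsilon_h\,d_{TV}(L_{\bm X^{(S)}},L_{\tilde{\bm X}^{(S)}})$ (with $S=\{j:\zeta^j=dx^j\}$, using that integrating out the remaining $q^j(z^j\mid x^j_*)$ factors contributes at most $1$), I get
\begin{equation*}
	\Bigl(\int \abs{q(\bm z)-\tilde q(\bm z)}^l\,d\bm\mu(\bm z)\Bigr)^{1/l}\le \sum_{S\subseteq\{1,\dots,d\}}\prod_{j\in S}\varepsilon_j\,d_{TV}(L_{\bm X^{(S)}},L_{\tilde{\bm X}^{(S)}}).
\end{equation*}
Finally, since $D_{f_l}(M\|\tilde M)=\int\abs{\frac{dM}{d\tilde M}-1}^l d\tilde M=\int\frac{\abs{q-\tilde q}^l}{\tilde q^{l-1}}\,d\bm\mu$, I would need a lower bound $\tilde q(\bm z)\ge\prod_{j=1}^d q^j(z^j\mid x^j_*)$ — obtained exactly as in the proof of Theorem \ref{th: main bound} — to bound $\int \abs{q-\tilde q}^l \tilde q^{-(l-1)}\,d\bm\mu$ by $\int\abs{q-\tilde q}^l\prod_j (q^j)^{-(l-1)}$, but actually it is cleaner to carry the weight $\prod_j q^j(z^j\mid x^j_*)^{-(l-1)}$ through the recursion from the start, so that the final quantity is literally $D_{f_l}(M\|\tilde M)^{1/l}$ and no loose integral is left. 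Taking $l$-th roots gives \eqref{eq: main with f divergence}.

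The main obstacle I anticipate is the bookkeeping in the inductive step: one must choose the weighting functions $\prod_{j}q^j(z^j\mid x^j_*)^{\gamma}$ (with the right exponent $\gamma$, namely $\gamma=-(l-1)$ on the coordinates already integrated and the exponent that makes Minkowski applicable elsewhere) so that (i) Minkowski's inequality in $L^l(\mu^j)$ can be applied at each peeling step, (ii) the single-coordinate oscillation bound \eqref{eq : divergence CLDP} is exactly what is needed to produce the factor $\varepsilon_j$, and (iii) at the end the weights combine into precisely $\tilde q^{-(l-1)}$ (or a lower bound thereof) so that the left-hand side is genuinely $D_{f_l}(M\|\tilde M)^{1/l}$. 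Getting these exponents consistent — which is transparent for $l=2$ but needs care for general $l>1$ — is the delicate point; once the correct weighted functionals are in place, the recurrence of Proposition \ref{prop: recurrence} goes through essentially verbatim with $\abs{\cdot}$ replaced by the weighted $L^l$ norm.
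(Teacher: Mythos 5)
Your plan attempts to transplant the proof of Theorem \ref{th: main bound} wholesale, replacing the $L^\infty$ ratio control by the $f_l$-divergence hypothesis \eqref{eq : divergence CLDP} and keeping the lower envelope $q^j(z^j\mid x^j_*)=\inf_{x^j}q^j(z^j\mid x^j)$ as the reference. This is where the argument breaks. The hypothesis \eqref{eq : divergence CLDP} controls $\int \abs*{q^j(z\mid x')-q^j(z\mid x)}^l \,q^j(z\mid x)^{1-l}\,\mu^j(dz)$ for each \emph{fixed} pair $(x,x')$; the reference density in the denominator must be $q^j(\cdot\mid x)$ for a single $x$. The envelope $x^j_*=x^j_*(z)$ is $z$-dependent, so $\int \abs*{q^j(z\mid x')-q^j(z\mid x_*)}^l\, q^j(z\mid x_*)^{1-l}\,\mu^j(dz)$ is \emph{not} of the form $D_{f_l}(Q^j(\cdot\mid x')\|Q^j(\cdot\mid x))$ and is not bounded by $\varepsilon_j^l$ — in fact it inflates the integrand because $q^j(\cdot\mid x_*)^{1-l}$ is the largest possible weight. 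Moreover, without the $\alpha$-CLDP ratio constraint (which the proposition explicitly drops), the envelope $q^j(z\mid x_*)$ may vanish on a non-null set, making your bound $\tilde q(\bm z)\ge \prod_j q^j(z^j\mid x^j_*)$ vacuous.

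The missing idea is the Jensen step. The paper writes $\tilde q(\bm z)^{1-l}=\bigl(\int \prod_j q^j(z^j\mid x^j_0)\,\tilde P(d\bm{x_0})\bigr)^{1-l}$ and uses convexity of $t\mapsto t^{1-l}$ to push the exponent inside: $\tilde q(\bm z)^{1-l}\le \int \prod_j q^j(z^j\mid x^j_0)^{1-l}\,\tilde P(d\bm{x_0})$. This produces a \emph{fixed} reference $\bm{x_0}$ (fixed with respect to $\bm z$, then averaged over $\tilde P$), to which the one-dimensional $f_l$-hypothesis genuinely applies. The decomposition that follows (Lemma \ref{L: lemma ajoute recurrence k}) is also structurally different from the one you propose: it expands $q-\tilde q$ around the fixed point $\bm{x_0}$ via the exact telescoping identity built from $\delta^j(z^j\mid x^j,x_0^j)=q^j(z^j\mid x^j)-q^j(z^j\mid x_0^j)$, giving an \emph{equality} $k[z^1,\dots,z^d]=\sum_{\bm\zeta}k[\bm\zeta]$, rather than the inequality of Proposition \ref{prop: recurrence} built around the envelope. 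Your generalized-Minkowski step and the final identification of each summand with $\prod_{j\in S}\varepsilon_j\,d_{TV}(L_{\bm X^{(S)}},L_{\tilde{\bm X}^{(S)}})$ are both sound once the reference is a fixed $\bm{x_0}$; what you cannot salvage without Jensen is the passage from $\tilde q^{1-l}$ to a weight of the right form.
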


}
\section{Applications to statistical inference}{\label{S: applications}}
{\modar In this upcoming section, one objective is to demonstrate the usefulness of the bounds on divergences between distributions that have been accumulated in Section \ref{S: main}. 
We will show versatile applications of these bounds in different statistical problems. First, we study how information about a private characteristic of an individual can be revealed by the public views of other characteristics of the same individual. For this problem, the results obtained in Section \ref{S: main} are insightful tools that lead us to introduce 
the quantity \eqref{eq: Delta ind} as the main parameter for measuring information leakage. 
In the following, we will also provide details on the estimation of {\revnew  joint moments} and density in a locally private and multivariate context. For these statistical problems, we construct explicit {\revnew estimators. The results of Section \ref{S: main} can be used to derive the rate optimality of these estimators relying on the Le Cam's two points method}. Finally, we propose adaptive versions of our estimators.}

{\revnew Let us stress that the paper \cite{Martin} proposes extension of Fano and Assouad methods in the context of local differential privacy. Generalization of such methods for componentwise LDP is an interesting issue. Such extensions would be useful in statistical problems where the Le Cam's two points method is insufficient to provide useful lower bounds. We leave these extensions for further research.
}

\subsection{Effective privacy level}{\label{s:privacy level}}

When the data $X^1,\dots,X^d$ are disclosed by independent channels and with different privacy levels $\alpha_1,\dots,\alpha_d$, a natural question is how precisely the value of one marginal, say $X^1$, could be revealed by the observations of $Z^1,\dots,Z^d$, which are publicly available. {\revd This question leads to relate the values $\alpha_1,\dots,\alpha_d$ with the effective level of protection for the raw data $X^1$.}

The case where some variable ${X}^1\in\mathcal{X}^1$ is privatized using a Markov kernel into the public data ${Z^1}\in\mathcal{Z}^1$ is the situation studied in \cite{WassermanZhou10} and \cite{Martin}. It is known from \cite{Martin} that if the privacy channel is $\alpha$-LDP, then  for all $x^1,x^{1\prime} \in {\mathcal{X}^1}$ and $\psi : {\mathcal{Z}^1} \to \{x^1,x^{1\prime}\}$ 
we have
\begin{equation}\label{eq: proba error disclose}
	\frac{1}{2} \P\left( \psi(Z^1)\neq x^1 \mid X=x^1 \right) 
	+\frac{1}{2} \P\left(  \psi(Z^1)\neq x^{1\prime} \mid X=x^{1\prime} \right) \ge \frac{1}{1+e^\alpha},	
\end{equation}
 This means that even if someone accesses two values $x^1$ and $x^{1\prime}$ from the raw data set, it will be impossible for them to determine with a high level of certainty which of the values corresponds to a specific observation, denoted as $Z^1$. Any attempt to make a decision in this regard would result in an error, albeit with minimal probability. 

If a vector $\bm{X}$ is privatized with independent channel for each components {\revnew and} the components of $\bm{X}$ are independent, then the result of \cite{Martin} applies componentwise. Indeed,  the observations of $Z^2,\dots,Z^d$ carry no information about the value of $X^1$ and thus recovering information on $X^1$ from $\bm{Z}$ or from $Z^1$ is equivalent and 
a result like \eqref{eq: proba error disclose} applies with $\alpha=\alpha_1$.  

The situation is more intricate if the components of $\bm{X}$ are dependent, as the observation of $Z^2,\dots,Z^d$ brings informations on $X^1$. In the extreme situation where all the components of $\bm{X}$ are almost surely equal $X^1=\dots=X^d$, it is clear that the observation of $\bm{Z}=(Z^1,\dots,Z^d)$ is a repetition of $d$ independent views of the same raw data $X^1$ with different privacy level. Thus, {\modar this mechanism is equivalent to the privatization of the single variable $X^1 \in \mathcal{X}^1$ through a channel taking the 
	value 
$\bm{Z}=(Z^1,\dots,Z^d)\in\bm{\mathcal{Z}}$. By independence of the $Z^j$'s conditional to $X^1$ and \eqref{eq: def local privacy non-int}, we can check that this mechanism is a non componentwise $\alpha$-LDP view of $X^1$ with 
$\alpha=\sum_{j=1}^d \alpha_j$. In turn, the lower bound \eqref{eq: proba error disclose} for deciding the value of $X^1$ from the observation of $\bm{Z}$ holds true with $\alpha=\sum_{j=1}^d \alpha_j \ge \alpha_1$. This evaluates how the privacy of $X^1$ is deteriorated by observation of the side-channels $Z^2,\dots,Z^d$ in the worst case scenario $X^1=X^2=\dots=X^d$.
}

In intermediate situation, we need to introduce some quantity which
 assesses the independence of $(X^2,\dots,X^d)$ on $X^1$.
Let us denote by $q(dx^2,\dots,dx^d\mid X^1=x^1)$ the conditional distribution of $X^2,\dots,X^d$ conditional to  $X^1=x^1$. We let
\begin{equation} \label{eq: Delta ind}
	\Delta_\text{ind}:=\sup_{x^1,x^{1\prime } \in \mathcal{X}^1} d_{TV}\left(q(dx^2,\dots,dx^d\mid X^1=x^1),q(dx^2,\dots,dx^d\mid X^1=x^{1\prime})\right),
\end{equation}
which quantifies how close $(X^2,\dots,X^d)$ are from being independent from $X^1$.
We have indeed $\Delta_\text{ind} \in [0,2]$ and $\Delta_\text{ind}=0$ when $X^1$ is independent from $(X^2,\dots,X^d)$. We let $m(z_1,\dots,z_d \mid X^1=x^1)$ the density with respect to $\bm{\mu}$ of the law of $(Z^1,\dots,Z^d)$ conditional to $X^1=x^1$. We have, using the notation of Section \ref{S: main}
\begin{equation}\label{eq: density m par kernel}
	m(z^1,\dots,z^d \mid X^1=x^1)=\int_{\prod\limits_{j=2}^d \mathcal{X}^j} 
	\prod_{j=1}^d q^j(z^j\mid x^j) q(dx^2,\dots,dx^d\mid X^1=x^1).
\end{equation}
To elaborate, the function $\bm{z} \mapsto m(\bm{z}\mid  X^1=x^1)$ is the density of the channel which gives $\bm{Z}$ as a public view of the marginal $X^1$, gathering the information directly revealed by the channel $q^1$ and indirectly by the channels $q^j$, $j \ge 2$. The following proposition gives an upper bound for the privacy level of this channel. It is essentially a consequence of Theorem \ref{th: main bound}.

\begin{proposition}{\label{prop: proba info}}
Let $\alpha_j \ge 0$, and assume that $\bm{Q}=(Q^1,\dots,Q^d)$ guarantees the $\bm{\alpha}$-CLDP constraint as defined by the condition  \eqref{eq: def local privacy non-int}. Assume that
there exists $\alpha_{\text{max}}$ such that $\alpha_j \le \alpha_{\text{max}}$ for $j\in \{2,\dots,d\}$.
	Then, we have
	\begin{equation}\label{eq: main first application}
\sup_{x^1,x^{1\prime}\in \mathcal{X}^1} ~ \frac{m(z^1,\dots,z^d \mid X^1=x^1)}{m(z^1,\dots,z^d \mid X^1=x^{1\prime})}
\le \exp\left( \alpha_1 + \alpha_{\text{max}}\times(d-1) \Delta_\text{ind}\right).
	\end{equation}
	
\end{proposition}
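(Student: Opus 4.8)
The plan is to separate the information that $Z^1$ reveals about $X^1$ \emph{directly} from the information carried \emph{indirectly} by $Z^2,\dots,Z^d$, and to treat the latter by reducing it to a $(d-1)$-dimensional instance of Theorem~\ref{th: main bound}. Starting from the representation \eqref{eq: density m par kernel}, I would first pull the factor $q^1(z^1\mid x^1)$ out of the integral and write $m(z^1,\dots,z^d\mid X^1=x^1)=q^1(z^1\mid x^1)\,G_{x^1}(z^2,\dots,z^d)$, where
$$G_{x^1}(z^2,\dots,z^d):=\int_{\prod_{j=2}^d\mathcal{X}^j}\ \prod_{j=2}^d q^j(z^j\mid x^j)\ q(dx^2,\dots,dx^d\mid X^1=x^1).$$
Then the ratio in \eqref{eq: main first application} factorizes as $\frac{q^1(z^1\mid x^1)}{q^1(z^1\mid x^{1\prime})}\cdot\frac{G_{x^1}(z^2,\dots,z^d)}{G_{x^{1\prime}}(z^2,\dots,z^d)}$, and by \eqref{eq: def density local privacy} the first factor is at most $e^{\alpha_1}$. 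Hence the problem reduces to controlling $\log\bigl(G_{x^1}/G_{x^{1\prime}}\bigr)$.

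The key observation is that $(z^2,\dots,z^d)\mapsto G_{x^1}(z^2,\dots,z^d)$ is exactly the density of the privatized law obtained by passing the $(d-1)$-dimensional raw vector $(X^2,\dots,X^d)$ — distributed according to the conditional law $q(\,\cdot\mid X^1=x^1)$ on $\prod_{j=2}^d\mathcal{X}^j$ — through the product channel $(Q^2,\dots,Q^d)$, which satisfies the $(\alpha_2,\dots,\alpha_d)$-CLDP constraint. I would therefore apply the \emph{pointwise} estimate obtained inside the proof of Theorem~\ref{th: main bound}, i.e.\ inequality \eqref{eq: diff q in proof main} (recorded in Remark~\ref{R: diff q}), to the pair of raw laws $q(\,\cdot\mid X^1=x^1)$ and $q(\,\cdot\mid X^1=x^{1\prime})$. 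This yields
$$\Bigl|\log\frac{G_{x^1}(z^2,\dots,z^d)}{G_{x^{1\prime}}(z^2,\dots,z^d)}\Bigr|\ \le\ \sum_{\emptyset\neq S\subseteq\{2,\dots,d\}}\ \prod_{h\in S}(e^{\alpha_h}-1)\ d_{TV}\bigl(L_{\bm{X}^{(S)}\mid X^1=x^1},\,L_{\bm{X}^{(S)}\mid X^1=x^{1\prime}}\bigr).$$

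To conclude, I would bound each total variation term: since $\bm{X}^{(S)}$ is a marginal of $(X^2,\dots,X^d)$ and the total variation distance cannot increase under marginalization, every such term is at most $d_{TV}\bigl(q(\,\cdot\mid X^1=x^1),q(\,\cdot\mid X^1=x^{1\prime})\bigr)\le\Delta_{\text{ind}}$ by definition \eqref{eq: Delta ind}. Using also $\prod_{h\in S}(e^{\alpha_h}-1)\le(e^{\alpha_{\max}}-1)^{|S|}$ and summing over subsets by Newton's binomial formula, $\sum_{\emptyset\neq S\subseteq\{2,\dots,d\}}(e^{\alpha_{\max}}-1)^{|S|}=e^{(d-1)\alpha_{\max}}-1$, one obtains $\bigl|\log(G_{x^1}/G_{x^{1\prime}})\bigr|\le\Delta_{\text{ind}}\,\bigl(e^{(d-1)\alpha_{\max}}-1\bigr)$, and combining with the factor $e^{\alpha_1}$ from the first channel gives \eqref{eq: main first application} once the factor $e^{(d-1)\alpha_{\max}}-1$ is expressed in terms of $(d-1)\alpha_{\max}$ in the exponent. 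I expect the delicate points to be: (i) correctly identifying $G_{x^1}$ as the privatized density of the conditional law of $(X^2,\dots,X^d)$ and invoking the \emph{internal} estimate \eqref{eq: diff q in proof main} rather than the final Kullback bound of Theorem~\ref{th: main bound}, together with checking that the underlying disintegrations are measure-theoretically legitimate; and (ii) the final passage from the factor $e^{(d-1)\alpha_{\max}}-1$ to the linear expression $(d-1)\alpha_{\max}$, which requires care and, presumably, exploiting that the relevant privacy levels are not too large.
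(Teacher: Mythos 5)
Your factorization $m(\bm{z}\mid X^1=x^1)=q^1(z^1\mid x^1)\,G_{x^1}(z^2,\dots,z^d)$, the identification of $G_{x^1}$ as the privatized density of the conditional law $q(\cdot\mid X^1=x^1)$ through the $(d-1)$-fold channel $(Q^2,\dots,Q^d)$, the appeal to the pointwise estimate of Remark~\ref{R: diff q} rather than to the final Kullback bound, the monotonicity of total variation under marginalization, the bound by $\Delta_{\text{ind}}$, and the binomial collapse to $e^{(d-1)\alpha_{\max}}-1$ are all exactly what the paper does; the plan is sound up to the last step.

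The gap is precisely at the point you flag as (ii), and it is fatal for the route you chose. You extract from \eqref{eq: diff q in proof main} the \emph{logarithmic} consequence $\abs{\log(G_{x^1}/G_{x^{1\prime}})}\le(e^{(d-1)\alpha_{\max}}-1)\Delta_{\text{ind}}$, so that $G_{x^1}/G_{x^{1\prime}}\le\exp\bigl[(e^{(d-1)\alpha_{\max}}-1)\Delta_{\text{ind}}\bigr]$, and then you need $e^{(d-1)\alpha_{\max}}-1\le(d-1)\alpha_{\max}$ to match \eqref{eq: main first application}. That is false for every $\alpha_{\max}>0$, since $e^{x}-1\ge x$ on $[0,\infty)$, and no smallness assumption on the $\alpha_j$ can reverse it; your chain therefore ends with a strictly larger constant in the exponent than claimed. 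The fix is not to pass to the logarithm: the same chain \eqref{eq: diff q in proof main} also yields the sharper \emph{additive} bound $\abs{\widetilde{m}-\widetilde{m}'}/\widetilde{m}'\le(e^{(d-1)\alpha_{\max}}-1)\Delta_{\text{ind}}$, hence $G_{x^1}/G_{x^{1\prime}}\le 1+(e^{(d-1)\alpha_{\max}}-1)\Delta_{\text{ind}}$, which dominates your exponential bound because $1+y\le e^{y}$. This is the form the paper keeps; it then concludes by comparing the latter to $e^{(d-1)\alpha_{\max}\Delta_{\text{ind}}}$ via the pointwise inequality $1+(e^{a}-1)q\le e^{aq}$, applied with $a=(d-1)\alpha_{\max}$ and $q=\Delta_{\text{ind}}$. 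Retrace your proof carrying $G_{x^1}/G_{x^{1\prime}}-1$ rather than $\log(G_{x^1}/G_{x^{1\prime}})$, and the argument closes.
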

\begin{remark}
	If $x^1,x^{1\prime}\in \mathcal{X}^1$ and $\psi: \bm{\mathcal{Z}} \to \{x^1,x^{1\prime}\}$ is any measurable {\rev map}, then the average probability of mispredicting $X^1$ from $\bm{Z}$, 
	$\frac{1}{2} \P\left(\psi(\bm{Z})\neq x^1 \mid X^1=x^1 \right)
	+\frac{1}{2} \P\left(  \psi(\bm{Z})\neq x^{1\prime} \mid X^1=x^{1\prime} \right)  $ is lower bounded by the same quantity as in Equation \eqref{eq: proba error disclose} where $\alpha$ is replaced by $\alpha_1 + \alpha_{\text{max}}\times(d-1) \Delta_\text{ind}$.
\end{remark}

\begin{proof}[Proof of Proposition \ref{prop: proba info}]
We will apply the results of Section \ref{S: main} with two well chosen probabilities $\widetilde{P}$, $\widetilde{P}'$ on $\widetilde{\bm{\mathcal{X}}}=\prod_{j=2}^d \mathcal{X}^j$. We fix $x^1,x^{1\prime}\in\mathcal{X}^1$ and let $P$ be the measure on $\widetilde{\bm{\mathcal{X}}}$ given by 
$$
\widetilde{P}(dx^2,\dots,dx^d)=q(dx^2,\dots,dx^d\mid X^1=x^1).
$$
We define $\widetilde{P}'$ analogously with $x^{1\prime}$ in place of $x^1$. We denote by $\widetilde{M}$ the measure on $\widetilde{\bm{\mathcal{Z}}}=\prod_{j=2}^d \mathcal{Z}^j$ of the privatized view of $P$ through the kernel $\widetilde{Q}=(Q^2,\dots,Q^d)$. In an analogous way, $\widetilde{M}'$ is the law of a privatized version of $\widetilde{P}'$. 
{\modar Let us denote by $\widetilde{m}(z^2,\dots,z^d)$ and $\widetilde{m}^\prime(z^2,\dots,z^d)$  the densities of $\widetilde{M}$ and $\widetilde{M}'$. As emphasized in Remark \ref{R: diff q}, the equation \eqref{eq: diff q in proof main} provides a control on the difference between $\widetilde{m}$ and 
	$\widetilde{m}'$, which yields to}
	\begin{equation*}
		\frac{\abs{\widetilde{m}(z^2,\dots,z^d)-\widetilde{m}^\prime(z^2,\dots,z^d)}}{\widetilde{m}^\prime(z^2,\dots,z^d)}
		\le 
		{\revnew \sum_{ S \subseteq \{2, \dots , d \}}
\prod_{j\in S} (e^{\alpha_j}-1)
		d_{TV}\left(\tilde{P}_{\mid \bm{X}^{(S)}} ,\tilde{P}'_{\mid \bm{X}^{(S)}}\right)}
	\end{equation*}
{\revnew where $\tilde{P}_{\mid \bm{X}^{(S)}}$  is the restriction of the measure $\tilde{P}$ on $\prod_{j\in S} \mathcal{X}^{j}$, and 
$\tilde{P}'_{\mid \bm{X}^{(S)}}$ is defined analogously.} We use the bound
{\revnew $	d_{TV}\left(\tilde{P}_{\mid \bm{X}^{(S)}} ,\tilde{P}'_{\mid \bm{X}^{(S)}}\right)\le 
d_{TV}\left(\tilde{P},\tilde{P}^\prime\right)$} to deduce,
\begin{align*}
	\frac{\abs{\widetilde{m}(z^2,\dots,z^d)-\widetilde{m}^\prime(z^2,\dots,z^d)}}{\widetilde{m}^\prime(z^2,\dots,z^d)}
&	\le 	{\revnew \sum_{ S \subseteq \{2, \dots , d \}}
	\prod_{j\in S} (e^{\alpha_j}-1)
	d_{TV}\left(\tilde{P}_{\mid \bm{X}^{(S)}} ,\tilde{P}'_{\mid \bm{X}^{(S)}}\right)}
	\\
&	\le \sum_{k=1}^{d-1} \binom{{\revnew d}-1}{k} (e^{\alpha_{\text{max}}}-1)^k
	d_{TV}\left(\tilde{P},\tilde{P}^\prime\right), \text{ using $\alpha_j \le \alpha_{\text{max}}$ for $j \ge 2$,}
	\\
	&\le \left[ e^{\alpha_{\text{max}}\times({\revnew d}-1)} -1\right] d_{TV}\left(\tilde{P},\tilde{P}^\prime\right), \text{ from the binomial formula.} 
\end{align*}
The definitions of $\widetilde{P}$ and $\widetilde{P}^\prime$ as conditional distributions imply that  $d_{TV}\left(\tilde{P},\tilde{P}^\prime\right) \le \Delta_{\text{ind}}$, and thus we deduce
\begin{equation*}
	\frac{\widetilde{m}(z^2,\dots,z^d)}{\widetilde{m}^\prime(z^2,\dots,z^d)} \le 1 + \left[ e^{\alpha_{\text{max}}\times({\revnew d}-1)} -1\right] \Delta_{\text{ind}}.
\end{equation*}
Using the simple inequality $1+(e^\alpha-1)q \le e^{\alpha q}$ for $\alpha,q \ge 0$, we get
\begin{equation} \label{eq: ratio m tilde m tilde prime}
	\frac{\widetilde{m}(z^2,\dots,z^d)}{\widetilde{m}^\prime(z^2,\dots,z^d)} \le  e^{\alpha_{\text{max}}\times({\revnew d}-1) \Delta_{\text{ind}}}.
\end{equation}
Recalling that $\widetilde{m}(z^2,\dots,z^d)$ is the density of the privatized view of $\tilde{P}$ through $\widetilde{Q}=(Q^2,\dots,Q^d)$, we have
\begin{equation*}
		\widetilde{m}(z^2,\dots,z^d)=\int_{\prod\limits_{j=2}^d \mathcal{X}^j} 
		\prod_{j=2}^d q^j(z^j\mid x^j) q(dx^2,\dots,dx^d\mid X^1=x^1), 
\end{equation*}
and thus by comparison with \eqref{eq: density m par kernel}
\begin{equation*}
m(z^1,\dots,z^d \mid X^1=x^1)=q^{1}(z^1\mid x^1)\widetilde{m}(z_2,\dots,z_d).
\end{equation*}
An analogous relation holds true for $m'$ and in turn,
\begin{equation*}
\frac{m(z^1,\dots,z^d \mid X^1=x^1)}{m(z^1,\dots,z^d \mid X^1=x^{1\prime})}
=
\frac{q(z^1\mid x^1)}{q(z^1 \mid x^{1\prime})}
\frac{\widetilde{m}(z^2,\dots,z^d)}{\widetilde{m}^\prime(z^2,\dots,z^d)}.
\end{equation*}	
Now, the proposition is a consequence of 
\eqref{eq: def density local privacy} and
\eqref{eq: ratio m tilde m tilde prime}.
\end{proof}
{\revnew 
	\begin{remark}
		Let us point out that, except in the case where $X^1$ is independent of $(X^2,\dots,X^d)$, the upper bound 
		\eqref{eq: main first application} is uninformative 
		when $\alpha_{\text{max}} \to \infty$. Indeed, its RHS converges to infinity.  However, it is impossible to find, in general, a confidentiality guarantee as strong as a LDP constraint on a variable $X^1$ when observing publicly a dependent variable $X^2$. It is sufficient to consider $\bm{X}=(X^1,X^2)$ where $X^1$ is non constant and the law of $X^2$ conditional to $X^1=x^1$ is $\mathcal{N}(x^1,1)$. In this setup, by writing the ratio of Gaussian densities, we show $\sup_{x^1,x^{1\prime}, x^2}\frac{q(dx^2 \mid x^1)}{q(dx^2 \mid x^{1\prime})}=\infty$. Thus, we see that in this example, corresponding formally to a choice $\alpha_2=\infty$, the LHS of \eqref{eq: main first application} must be equal to $+\infty$.
	\end{remark}
}

\subsection{Locally private {\rev joint moment} estimation}{\label{s: joint moment}}

In this section we assume that {\rev $\bm{X}=(X^1,\dots,X^d)$ is a $d$-dimensional vector, for which we want to estimate the moment $\gamma=\E\left[\prod_{j=1}^d X^j \right]$ under local differential privacy constraints. Again, the components are made public separately. A particular application of our results for the case $d=2$ will give the estimation of the covariance and correlation discussed in Section \ref{Sss: application to covariance}.}



\subsubsection{Local differential private estimator}\label{Sss:joint moment estimator}

We assume that $\bm{X}_1,\dots,\bm{X}_n$ are $n$ iid copies of {\rev $\bm{X}=(X^1,\dots,X^d)$.} As in this paper we stick to the framework of local differential privacy, we want to introduce {\revnew a privatization} procedure to transform the $X_i^j$ to some 
$$Z_i^j \sim q_j(dz | X_i^j) = q_j(z| X_i^j) dz$$
which satisfies the condition of local differential privacy, as in \eqref{eq: def density local privacy}. In particular, the privacy mechanism we consider in this example is non-interactive. 

It is well-known that adding centered Laplace distributed noise on bounded random variables provides $\alpha$ differential privacy (cfr \cite{Martin}, \cite{Kroll}, \cite{RS20}). This motivates our choice for the anonymization procedure, which consists in constructing the public version of the $\bm{X}_i$ by using a Laplace mechanism with an independent channel for each component. Let us denote $\mathcal{E}^i_j$ for $(i,j)\in\{1,\dots,n\}{\rev \times\{1,\dots,d\}}$ a family of independent random variables, such that $(\mathcal{E}^i_j)_i$ are iid sequences with law $\mathcal{L}(\frac{2T^{(j)}}{\alpha_j}) $ for {\rev $j=1,\dots,d$.} The truncation $T^{(j)}>0$ will be specified later. We assume that the variables $\mathcal{E}^j_i$ are independent from the data $\bm{X}_1,\dots,\bm{X}_n$ and we set
\begin{equation} \label{eq: def Z covariance}
Z^j_i=\tronc{X^j_i}{T^{(j)}}+\mathcal{E}^j_i, \quad \forall (i,j)\in\{1,\dots,n\}{\rev \times\{1,\dots,d\},}
\end{equation}
where $\tronc{x}{T}=\max(\min(x,T),-T)$.

 Denoting by $z\mapsto q^j(z\mid X^j_i=x)$ the density of the privatized data $Z^j_i$ conditional to $X^j_i=x$ for {\rev $j\in\{1,\dots,d\}$,} it is 
{\rev a direct application of Lemma \ref{L: preuve Laplace mech generique} to 
check that the local differential privacy control \eqref{eq: def density local privacy} holds true, as stated in the following lemma.}
\begin{lemma}{\label{l: ldp covariance}}
For any $i \in \{1, \dots, n \}$ and {\rev $j \in \{1,\dots,d\}$,} the random variables $Z^j_i=\tronc{X^j_i}{T^{(j)}}+\mathcal{E}^j_i$, with $\mathcal{E}_i^j$ iid $\sim \mathcal{L}(\frac{2T^{(j)}}{\alpha_j})$, are $\alpha_j$ differential private views of the original $X_i^j$.
\end{lemma}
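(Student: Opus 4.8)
The plan is to reduce the statement to part~1) of Lemma~\ref{L: preuve Laplace mech generique}, which has already been established and which contains precisely the privacy guarantee needed for a single Laplace channel.

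First I would identify the mechanism \eqref{eq: def Z covariance} with the Laplace mechanism channel introduced before Lemma~\ref{L: preuve Laplace mech generique}. Fix $j\in\{1,\dots,d\}$ and $i\in\{1,\dots,n\}$. Since the noise $\mathcal{E}^j_i\sim\mathcal{L}\bigl(\tfrac{2T^{(j)}}{\alpha_j}\bigr)$ is independent of $\bm{X}_1,\dots,\bm{X}_n$, the conditional law of $Z^j_i=\tronc{X^j_i}{T^{(j)}}+\mathcal{E}^j_i$ given $X^j_i=x$ is the law of $\tronc{x}{T^{(j)}}+\mathcal{E}^j_i$; its density with respect to the Lebesgue measure $\mu^j$ is exactly the density $z\mapsto q^j(z\mid X^j=x)$ appearing in Lemma~\ref{L: preuve Laplace mech generique}, and in particular it does not depend on $i$. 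This also shows that the mechanism is non-interactive, so the relevant privacy condition is \eqref{eq: def local privacy non-int}, equivalently its density form \eqref{eq: def density local privacy}.

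Then I would invoke part~1) of Lemma~\ref{L: preuve Laplace mech generique}, which yields $\sup_{(x,x',z)\in\mathbb{R}^3} q^j(z\mid X^j=x)/q^j(z\mid X^j=x')=e^{\alpha_j}$; hence the bound \eqref{eq: def density local privacy} holds with parameter $\alpha_j$, i.e.\ $Z^j_i$ is an $\alpha_j$-locally differentially private view of $X^j_i$. Since this holds for every $i\in\{1,\dots,n\}$ and $j\in\{1,\dots,d\}$, and the channels act independently on the components, the mechanism $\bm{Q}=(Q^1,\dots,Q^d)$ satisfies the $\bm{\alpha}$-CLDP constraint, which is the assertion.

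I do not expect any real obstacle here: the only step requiring an argument is the identification of the conditional density of $Z^j_i$ with the density $q^j$ of Lemma~\ref{L: preuve Laplace mech generique}, which is a one-line computation using the independence of $\mathcal{E}^j_i$ from the data and the translation structure of $Z^j_i$; the privacy bound itself is then a direct citation of the already-proven lemma.
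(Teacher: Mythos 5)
Your proposal is correct and matches the paper's own argument: the paper states Lemma \ref{l: ldp covariance} precisely as ``a direct application of Lemma \ref{L: preuve Laplace mech generique}'' and gives no further proof. You simply spell out the identification of the channel in \eqref{eq: def Z covariance} with the Laplace mechanism of Lemma \ref{L: preuve Laplace mech generique} (part~1)), which is exactly the intended reasoning.
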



Assume that, {\rev for $j \in \{1,\dots,d\}$ we have $X^j \in \mathbf{L}^{k_j}$
	for $k_j >1$, with the condition $\sum_{j=1}^d \frac{1}{k_j} <1$.
	By H\"older's inequality, it ensures that $\E[|X^1\times\dots\times X^d|]<\infty$.
The goal is to estimate $\gamma:=\E[X^1\times\dots\times X^d]$.}

 The estimation of {\rev the expectation of the marginals  $m^{(j)}:=\E[X^j]$}
is discussed in \cite{Martin}, from which we recall the result. We will state later the result on the estimation of the cross {\rev term
$\gamma=\E[X^1\times\dots\times X^d]$.}

\noindent Let 
{\rev
\begin{align} \label{eq: def m hat est cov}
&	\hat{m}^{(j)}_n:=\frac{1}{n}\sum_{i=1}^n Z_i^j, \text{for $j\in \{1,\dots,d\}$},
	\\ \label{eq: def gamma hat est cov}
&	\hat{\gamma}_n:=\frac{1}{n}\sum_{i=1}^n \prod_{j=1}^d Z_i^j.
\end{align}}
\begin{theorem}[Corollary 1 in \cite{Martin}]\label{th: recall Duchi est moyenne}
	Let $0<\alpha_j\le1$. Assume $k_j>1$ and set $\widetilde{T}^{(j)}=(n\alpha^2_j)^{1/(2k_j)}$ for $j \in {\rev \{1,\dots,d\}}$. Then, there exists $c>0$ such that for all $n\ge 1$, $j\in {\rev \{1,\dots,d\}}$,
	\begin{equation*}
		\E \left[\abs*{{\revd \hat{m}^{(j)}_n}-m^{(j)}}^2 \right] \le c (n\alpha^2_j)^{-\frac{k_j-1}{k_j}}.
	\end{equation*}
\end{theorem}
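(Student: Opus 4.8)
The plan is to reduce the statement to a classical, non-private bias–variance analysis after accounting for the truncation and Laplace noise. Write $Y_i^j := \tronc{X_i^j}{\widetilde T^{(j)}}$ and recall $Z_i^j = Y_i^j + \mathcal{E}_i^j$ with $\mathcal{E}_i^j$ iid $\mathcal{L}(2\widetilde T^{(j)}/\alpha_j)$, independent of the data, so that $\E[\mathcal{E}_i^j]=0$ and $\E[(\mathcal{E}_i^j)^2] = 2(2\widetilde T^{(j)}/\alpha_j)^2 = 8\widetilde T^{(j)2}/\alpha_j^2$. First I would decompose $\hat m_n^{(j)} - m^{(j)} = \big(\tfrac1n\sum_i Y_i^j - \E[Y^j]\big) + \tfrac1n\sum_i \mathcal{E}_i^j + \big(\E[Y^j] - m^{(j)}\big)$, and correspondingly $\E[|\hat m_n^{(j)} - m^{(j)}|^2] \le 3\big(A + B + C\big)$ with $A$ the sampling variance of the truncated variable, $B$ the noise variance, and $C$ the squared truncation bias.

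Next I would bound each term. For $B$: by independence, $\E[(\tfrac1n\sum_i \mathcal{E}_i^j)^2] = \tfrac1n \cdot 8\widetilde T^{(j)2}/\alpha_j^2 = 8 (n\alpha_j^2)^{1/k_j}/(n\alpha_j^2) = 8 (n\alpha_j^2)^{-(k_j-1)/k_j}$, which is exactly the target rate. For $A$: $\mathrm{Var}(Y^j) \le \E[(Y^j)^2] \le \E[|X^j|^2 \wedge \widetilde T^{(j)2}]$; using $|Y^j|\le\widetilde T^{(j)}$ and $|Y^j|\le |X^j|$ one gets $\E[(Y^j)^2] \le \widetilde T^{(j)2-k_j}\E[|X^j|^{k_j}]$ when $k_j\le 2$ (and simply $\le \E[|X^j|^2]$ when $k_j\ge 2$, which is even smaller), so $A \le \tfrac1n \E[|X^j|^{k_j}] (n\alpha_j^2)^{(2-k_j)/(2k_j)} \le \mathrm{const}\cdot (n\alpha_j^2)^{-(k_j-1)/k_j}$ since $\alpha_j\le 1$ forces $n\alpha_j^2 \le n$, and one checks the exponent arithmetic $\tfrac{2-k_j}{2k_j} - 1 \le -\tfrac{k_j-1}{k_j}$. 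For $C$: the truncation bias satisfies $|\E[Y^j] - m^{(j)}| \le \E[|X^j|\id{|X^j|>\widetilde T^{(j)}}] \le \widetilde T^{(j)\,1-k_j}\E[|X^j|^{k_j}]$, so $C \le \mathrm{const}\cdot (\widetilde T^{(j)})^{2(1-k_j)} = \mathrm{const}\cdot (n\alpha_j^2)^{(1-k_j)/k_j} = \mathrm{const}\cdot (n\alpha_j^2)^{-(k_j-1)/k_j}$, again the target rate.

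Combining the three bounds and collecting the constants (which depend only on $\sup_j \E[|X^j|^{k_j}]$ and on $k_j$, hence can be taken uniform over $j$ since there are finitely many components) yields $\E[|\hat m_n^{(j)} - m^{(j)}|^2] \le c\,(n\alpha_j^2)^{-(k_j-1)/k_j}$ for a universal $c$ and all $n\ge 1$, which is the claim. The main obstacle is purely bookkeeping: tracking the exponents of $n\alpha_j^2$ through the three terms and verifying that $B$ and $C$ match the stated rate exactly while $A$ is no worse, using $0<\alpha_j\le 1$ to control $n\alpha_j^2\le n$. There is no deep difficulty here — the statement is quoted as Corollary 1 of \cite{Martin}, and the only thing to be careful about is handling both regimes $k_j\le 2$ and $k_j>2$ when bounding the second moment of the truncated variable, and noting the result is stated uniformly in $j$ because the dimension $d$ and the moment orders are fixed.
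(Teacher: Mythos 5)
The paper does not prove this result itself --- it is quoted as Corollary 1 of \cite{Martin}, and the bias--variance decomposition you use is exactly the one that reference uses (the paper even restates that decomposition later, in the display just before Corollary~\ref{C: upper bound covariance}). Your decomposition into the sampling fluctuation of the truncated variable, the Laplace noise fluctuation, and the deterministic truncation bias is correct, and the bounds for $B$ and $C$ and the final assembly are all fine.

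One step deserves a patch. In bounding $A = n^{-1}\operatorname{Var}(Y^j)$ you pass through $\tfrac1n \le \tfrac{1}{n\alpha_j^2}$ and then invoke $\tfrac{2-k_j}{2k_j}-1 \le -\tfrac{k_j-1}{k_j}$. Since the exponent on the left is actually $-\tfrac{k_j-1}{k_j}-\tfrac12$, i.e.\ \emph{strictly} smaller, the chain $(n\alpha_j^2)^{-\frac{k_j-1}{k_j}-\frac12} \le (n\alpha_j^2)^{-\frac{k_j-1}{k_j}}$ only closes when $n\alpha_j^2 \ge 1$; the theorem as stated has no such restriction. The fix is cheap: write directly
\[
A \le \tfrac1n\, \E[|X^j|^{k_j}]\,(n\alpha_j^2)^{\frac{2-k_j}{2k_j}}
= \E[|X^j|^{k_j}]\,\frac{\alpha_j}{\sqrt{n}}\,(n\alpha_j^2)^{-\frac{k_j-1}{k_j}}
\le \E[|X^j|^{k_j}]\,(n\alpha_j^2)^{-\frac{k_j-1}{k_j}},
\]
using only $\alpha_j\le 1$ and $n\ge 1$, which holds unconditionally. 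With this correction (and the uninteresting observation that the factor $3$ from $(a+b+c)^2\le 3(a^2+b^2+c^2)$ is unnecessary since the first two summands are independent and centered and the third is deterministic), the argument is complete and essentially the argument of \cite{Martin}.
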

{\revnew We would like to emphasize that the constant \( c \) appearing above is dependent on the true moments of \( X^j \).}

 In Corollary 1 in \cite{Martin}, the privacy level $\alpha$ is the same for all components.  However, the result is useful also in our context, as we plan to apply Corollary 1 in \cite{Martin} with $d=1$ separately to each components of $\bm{X}$.
By \cite{Martin}, the choice of truncation $\widetilde{T}^{(j)}=(n\alpha^2_j)^{1/(2k_j)}$ is optimal when estimating $m^{(j)}$.
The result for the estimation of the {\rev joint moment} is the following. 
\begin{theorem}\label{th: upper bound joint moment covariance}
	Let $\alpha_j\le1$. Assume {\rev  $1/k_1+\dots+1/k_d <1$} and set {\rev $T^{(j)}=(n
	\prod_{l=1}^d \alpha^2_l)^{1/(2k_j)}$} for $j\in {\rev \{1,\dots,d\}}$. Then, there exists $c>0$, such that for all $n \ge 1$,
	\begin{equation}\label{eq: borne sup gamma}
	\E \left[ \abs*{\hat{\gamma}_n - \gamma}^2\right] \le c (n\prod_{j=1}^d\alpha^2_j )^{-\frac{\overline{k}-{\rev d}}{\overline{k}}},\\
	\end{equation}
where {\rev $\overline{k}=d\left(\frac{1}{k_1}+\dots+\frac{1}{k_d}\right)^{-1}>d$ is the harmonic mean of $k_1, \dots k_d$.} The constant $c$ does not depend on {\rev $\alpha_j$,} $n$, as soon as {\rev $n\alpha^2_1\times\dots\times \alpha_d^2 \ge 1$.}
\end{theorem}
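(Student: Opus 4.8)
The plan is to bound the $L^2$ error by a bias-variance decomposition, $\E[|\hat\gamma_n-\gamma|^2] = (\E[\hat\gamma_n]-\gamma)^2 + \operatorname{Var}(\hat\gamma_n)$, and then to choose the truncation levels $T^{(j)}$ so as to balance the two contributions. First I would compute the expectation: by independence of the noise variables $\mathcal{E}^j_i$ from the data and from each other, and since each $\mathcal{E}^j_i$ is centered, $\E[\hat\gamma_n] = \E[\prod_{j=1}^d \tronc{X^j}{T^{(j)}}]$. The bias is therefore $\E[\prod_j \tronc{X^j}{T^{(j)}}] - \E[\prod_j X^j]$, which I would control by a telescoping argument: write the difference $\prod_j \tronc{X^j}{T^{(j)}} - \prod_j X^j$ as a sum of $d$ terms, each of which replaces one factor $X^j$ by the truncation error $\tronc{X^j}{T^{(j)}}-X^j$ while keeping a mix of truncated and untruncated factors. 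Each such term is bounded, via Hölder's inequality with exponents $k_1,\dots,k_d$, by a product of $\norm{X^l}_{k_l}$ for $l\neq j$ times $\norm{(\tronc{X^j}{T^{(j)}}-X^j)}_{k_j}$; the last factor is $O((T^{(j)})^{-(k_j-1)})$ since $|\tronc{x}{T}-x| \le |x| \one_{|x|>T}$ and $\E[|X^j|^{k_j}\one_{|X^j|>T^{(j)}}] \le (T^{(j)})^{-(k_j-1)} \E[|X^j|^{k_j}]$... wait, more carefully $\E[|X^j-\tronc{X^j}{T^{(j)}}|^{k_j}]^{1/k_j} \lesssim (T^{(j)})^{1-k_j}$ up to a power adjustment; in any case the bias is $O(\sum_j (T^{(j)})^{-(k_j-1)})$ up to constants depending on the moments of the $X^l$.

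Next I would bound the variance. Since the $\bm{Z}_i$ are iid, $\operatorname{Var}(\hat\gamma_n) = \frac1n \operatorname{Var}(\prod_{j=1}^d Z^j_1) \le \frac1n \E[\prod_{j=1}^d (Z^j_1)^2]$. Expanding $Z^j_1 = \tronc{X^j_1}{T^{(j)}} + \mathcal{E}^j_1$ and using independence of the $\mathcal{E}^j_1$'s from each other and from the data, $\E[\prod_j (Z^j_1)^2]$ is a sum over subsets of terms of the form $\prod_{j\in A}\E[(\tronc{X^j_1}{T^{(j)}})^2]\cdot\prod_{j\notin A}\E[(\mathcal{E}^j_1)^2]$. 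Since $\mathcal{E}^j_1 \sim \mathcal{L}(2T^{(j)}/\alpha_j)$ has second moment of order $(T^{(j)}/\alpha_j)^2$, and $\E[(\tronc{X^j_1}{T^{(j)}})^2] \le \min((T^{(j)})^2, \E[(X^j)^2])$ but in general we only control $(T^{(j)})^2$ (when $k_j$ can be close to $1$), the dominant term is the one where $A=\emptyset$, giving $\prod_{j=1}^d (T^{(j)})^2/\alpha_j^2$. Hence $\operatorname{Var}(\hat\gamma_n) \lesssim \frac1n \prod_{j=1}^d \frac{(T^{(j)})^2}{\alpha_j^2}$.

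Finally I would optimize. With $T^{(j)} = (n\prod_{l=1}^d\alpha_l^2)^{1/(2k_j)}$, set $N := n\prod_{l=1}^d\alpha_l^2$. The variance becomes $\frac1n \prod_j \frac{N^{1/k_j}}{\alpha_j^2} = \frac{1}{n\prod_j\alpha_j^2} N^{\sum_j 1/k_j} = N^{-1}\cdot N^{d/\bar k} = N^{-(1 - d/\bar k)} = N^{-(\bar k - d)/\bar k}$, using $\sum_j 1/k_j = d/\bar k$. Each bias term is $(T^{(j)})^{-(k_j-1)} = N^{-(k_j-1)/(2k_j)}$, so the squared bias contributes $N^{-(k_j-1)/k_j}$ per index; one checks $(k_j-1)/k_j \ge (\bar k - d)/\bar k$ (equivalently $d/\bar k \ge 1/k_j$, i.e. $\sum_l 1/k_l \ge 1/k_j$, which is trivially true), so each squared-bias term is dominated by the variance rate $N^{-(\bar k-d)/\bar k}$. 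Collecting everything and absorbing the finitely many constants depending on the moments $\norm{X^j}_{k_j}$ into $c$, and noting $N\ge1$ keeps all these bounds uniform, yields \eqref{eq: borne sup gamma}. The main obstacle I anticipate is the bookkeeping in the telescoping bias bound — making sure that Hölder's inequality applies cleanly with the mixed products of truncated and untruncated factors and that the exponents $1/k_1+\dots+1/k_d<1$ leave enough room, and likewise correctly identifying which cross-term dominates the variance expansion.
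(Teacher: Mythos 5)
Your overall plan --- bias-variance decomposition, telescoping the bias, expanding the variance and identifying the dominant term, then optimizing $T^{(j)}$ --- is exactly the paper's. The variance half is essentially right: your sum-over-subsets formula implicitly treats the truncated data factors $\tronc{X^j_1}{T^{(j)}}$ as independent across $j$, which they are not, but this does no harm because $\abs{\tronc{X^j_1}{T^{(j)}}}\le T^{(j)}$ deterministically, so the cross-expectations can still be bounded by $\prod_{j\in A}(T^{(j)})^2$, and the $A=\emptyset$ term indeed dominates under $\alpha_j\le 1$, giving $\operatorname{Var}(\hat{\gamma}_n)\lesssim n^{-1}\prod_j(T^{(j)}/\alpha_j)^2$.

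The bias bound, however, has a genuine gap. You apply H\"older with exponents $k_1,\dots,k_d$ (implicitly padding with the constant function $1$ in $L^r$, where $1/r=1-\sum_j 1/k_j>0$), which leaves the factor $\norm{\Delta^{(l)}}_{k_l}=\E[\abs{X^l-\tronc{X^l}{T^{(l)}}}^{k_l}]^{1/k_l}$. Under the assumption $X^l\in L^{k_l}$ only, this tends to $0$ with no polynomial rate, and the inequality you invoke, $\E[\abs{X^j}^{k_j}\one_{\{\abs{X^j}>T^{(j)}\}}]\le (T^{(j)})^{-(k_j-1)}\E[\abs{X^j}^{k_j}]$, is simply false --- it would force $\one_{\{\abs{X^j}>T^{(j)}\}}\le (T^{(j)})^{-(k_j-1)}$ pointwise. (The correct Markov-type bound $\E[\abs{X^j}\one_{\{\abs{X^j}>T\}}]\le T^{-(k_j-1)}\E[\abs{X^j}^{k_j}]$ applies to the \emph{first} power, and you cannot isolate that factor because the components of $\bm{X}$ are dependent.) The missing idea is to keep the indicator explicit: write $\abs{\Delta^{(l)}}\le\abs{X^l}\,\one_{\{\abs{X^l}\ge T^{(l)}\}}$, bound every other factor by $\abs{X^j}$, and apply H\"older with the $d+1$ exponents $(k_1,\dots,k_d,r)$, spending the slack $1/r$ on the indicator rather than on the constant $1$. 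Then $\norm{\one_{\{\abs{X^l}\ge T^{(l)}\}}}_r=\P(\abs{X^l}\ge T^{(l)})^{1/r}\le\bigl(\norm{X^l}_{k_l}^{k_l}/(T^{(l)})^{k_l}\bigr)^{1/r}$ by Markov, giving a bias term of order $(T^{(l)})^{-k_l/r}=(T^{(l)})^{-k_l(1-d/\overline{k})}$. With $T^{(l)}=N^{1/(2k_l)}$, where $N=n\prod_j\alpha_j^2$, the squared bias is $N^{-(\overline{k}-d)/\overline{k}}$, which equilibrates with --- and is not dominated by --- the variance, contrary to what your (optimistic) rate would have suggested.
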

{\modar  The proof of this theorem is in the Appendix. It relies on a bias-variance trade-off, and the choice for $T^{(j)}$, given in the statement, is in this regard optimal.}
\begin{remark}{\label{R: covariance_dim_d}}
 The upper bound provided by Theorem 
 \ref{th: upper bound joint moment covariance} can be compared with the case where the private data 
 {\rev $\bm{X}_i = (X_i^1, \dots, X_i^d)$} is disclosed using a single channel that accesses 
 {\rev all components of $\bm{X}_i$ and satisfying LDP constraint with parameter 
 ${\revnew \overline{\alpha}}$}   . 
 In this scenario, we can apply the results of Section 3.2.1 in \cite{Martin} to estimate the mean of the iid {\rev one dimensional} private data $(\Gamma_i)_{i=1,\dots,n}$, where {\rev $\Gamma_i=\prod_{j=1}^dX^j_i$,} using a locally differentially privatized version of $\Gamma_i$. By applying H\"older's inequality, we can see that $\Gamma_1 \in \mathbf{L}^{\overline{k}/{\rev d}}$, and hence by Corollary 1 in \cite{Martin}, there exists an estimator $\tilde{\gamma}_n$ such that
	$$
	\E \left[ (\tilde{\gamma}_n - \gamma)^2\right] \le c (n{\revnew \overline{\alpha}}^2)^{-\frac{\overline{k}/{\rev d}-1}{\overline{k}/{\rev d}}}=c(n{\revnew \overline{\alpha}}^2)^{-\frac{\overline{k}-{\rev d}}{\overline{k}}},
	$$
	where {\rev ${\revnew \overline{\alpha}}$} is the LDP level when disclosing the $\bm{X}_i$'s. \\
    
 We can conclude that the rate exponent for estimating $\gamma$ is unchanged when the data are disclosed using independent channels for each component, compared to a situation where both components can be accessed before publicly releasing the data. However, the effective number of data is reduced from
 {\rev  $n{\revnew \overline{\alpha}}^2$ to $n\prod_{j=1}^d \alpha_j^2$.} 
 	If we consider the special case where {\rev $\alpha_1=\dots=\alpha_d=\alpha$ we know that, by Lemma \ref{L:comp CLDP_LDP}, the CLDP kernel is a special case of LDP kernel with ${\revnew \overline{\alpha}}=d\alpha$. Thus,} 	
 	it is evident that the loss is significant for small values of {\rev $\alpha$, as $n \alpha^{2d}< n d^2 \alpha^2$}. 
 	  On the other hand, the loss can be moderate if {\rev the $\alpha_j$ are close to $1$.} We will see in Section \ref{Sss:lower bound joint_moment} that this loss is unavoidable.
\end{remark}
{\rev 
\begin{remark}
	The construction of the estimator $\hat{\gamma}_n$ necessitates to choose the truncation levels $T^{(j)}$'s. The optimal choice is given in the statement of Theorem \ref{th: upper bound joint moment covariance} and relies on the number of finite moments $k_j$'s of each components of the vector $\bm{X}$. In practice, these constants $k_j$'s are unknown and the choice
	 of the optimal $T^{(j)}$'s seems unfeasible. We will present in Section \ref{Sss: adaptive} an adaptive version of the estimator $\hat{\gamma}_n$ where the choice of the truncation levels is data-driven, while preserving the rate of convergence of the estimator, up to a log-term loss. 
\end{remark}
}

{\revnew
\begin{remark}
We emphasize that the current results are firmly rooted in the scenario where all components are collected privately, i.e., \(\alpha_j \leq 1\) for all \(j\). It is an interesting 
direction to relax this assumption and consider cases where some components are collected privately while others are publicly available (i.e., \(\alpha_j = \infty\) for some \(j \in \{1, \dots, d\}\)).  

On one hand, we believe it is feasible to adapt the proofs of our upper bounds to account for the fact that only certain privacy levels would contribute to the final convergence rates. On the other hand, our current lower bounds become entirely uninformative in this setting, and deriving meaningful lower bounds for such a mixed-privacy framework poses significantly challenges.  

Consequently, we have decided to leave a detailed analysis of scenarios involving both privately and publicly collected variables for further investigation in future work.
\end{remark}
}

{\rev 
\subsubsection{Application to the covariance estimation} \label{Sss: application to covariance}
 We now focus on the estimation of the covariance between two random variables when the associated data are privatized in the componentwise way. For simplicity, we assume that we are dealing with a $2$-dimensional vector
	$\bm{X}=(X^1,X^2)$ with $k_1^{-1}+k_2^{-1} < 1$. It ensures that $\E[|X^1X^2|]<\infty$, by H\"older's inequality. 	The goal is to estimate $\theta:=\text{cov}(X^1,X^2)=\E[X^1X^2]-\E[X^1]\E[X^2]$. We assume that $n$ private
	 data $(\bm{X}_i)_{i=1,\dots,n}$ are iid and that the statistician can observe public views obtained through a CLDP mechanism with parameter $\bm{\alpha}=(\alpha_1,\alpha_2)$. We apply the results of Section \ref{Sss:joint moment estimator}. Consistently with the notations of this section, we have $\hat{\gamma}_n=\frac{1}{n} \sum_{i=1}^n Z^1_i Z^2_i$, and define $\hat{\theta}_n:=	\hat{\gamma}_n-\hat{m}^{(1)}_n\hat{m}^{(2)}_n$. 
		
	The result for the estimation of the covariance is the following. Its proof is given in the Appendix. 
	\begin{corollary}\label{C: upper bound covariance}
		Let $\alpha_j\le1$. Assume $1/k_1+1/k_2 <1$ and set $T^{(j)}=(n\alpha^2_1\alpha^2_2)^{1/(2k_j)}$ for $j\in \{1,2\}$. Then, there exists $c>0$, such that for all $n \ge 1$,
		\begin{equation}\label{eq: borne sup theta}
			\E \left[ \abs*{\hat{\theta}_n - \theta}^2\right] \le c (n\alpha^2_1\alpha^2_2)^{-\frac{\overline{k}-2}{\overline{k}}},
		\end{equation}
		where $\overline{k}=2\left(\frac{1}{k_1}+\frac{1}{k_2}\right)^{-1}>2$ is the harmonic mean of $k_1$, $k_2$. The constant $c$ does not depend on $\alpha_1$, $\alpha_2$ $n$, as soon as $n\alpha^2_1\alpha_2^2 \ge 1$.
	\end{corollary}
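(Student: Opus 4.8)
The plan is to reduce the covariance error to the ingredients already available — Theorem \ref{th: upper bound joint moment covariance} for the cross moment, a marginal-mean bound in the spirit of Theorem \ref{th: recall Duchi est moyenne}, and an extra product term — via the algebraic identity
\[
\hat{\theta}_n-\theta=(\hat{\gamma}_n-\gamma)-m^{(2)}(\hat{m}^{(1)}_n-m^{(1)})-m^{(1)}(\hat{m}^{(2)}_n-m^{(2)})-(\hat{m}^{(1)}_n-m^{(1)})(\hat{m}^{(2)}_n-m^{(2)}),
\]
where $\gamma=\E[X^1X^2]$ and $m^{(j)}=\E[X^j]$, both finite since $X^j\in\mathbf{L}^{k_j}$ with $k_j>1$. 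Using $(a_1+a_2+a_3+a_4)^2\le4\sum_i a_i^2$, it suffices to bound the $L^2$ norm of each of the four summands by $c\,(n\alpha_1^2\alpha_2^2)^{-(\overline{k}-2)/\overline{k}}$. The first summand is handled immediately by Theorem \ref{th: upper bound joint moment covariance} with $d=2$: the truncation levels $T^{(j)}=(n\alpha_1^2\alpha_2^2)^{1/(2k_j)}$ and the target rate are exactly those of that theorem in dimension two.

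For the two marginal summands I would re-run the bias–variance analysis of $\hat m^{(j)}_n$, since it cannot be quoted verbatim from Theorem \ref{th: recall Duchi est moyenne}: the truncation used here, $T^{(j)}=(n\alpha_1^2\alpha_2^2)^{1/(2k_j)}$ (coming from \eqref{eq: def Z covariance}), is strictly smaller than the level $(n\alpha_j^2)^{1/(2k_j)}$ of that result because $\alpha_{3-j}\le1$. Splitting $\hat m^{(j)}_n-m^{(j)}$ into the centered fluctuation and the truncation bias $\E[\tronc{X^j}{T^{(j)}}]-m^{(j)}$, the bias is of order $(T^{(j)})^{-(k_j-1)}$ by Markov's inequality applied to $\E[|X^j|^{k_j}]$, while the fluctuation has variance of order $(T^{(j)})^2/(n\alpha_j^2)$, the Laplace-noise variance being of order $(T^{(j)})^2/\alpha_j^2$ and $T^{(j)}\ge1\ge\alpha_j$ whenever $n\alpha_1^2\alpha_2^2\ge1$. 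Since $\alpha_1^2\alpha_2^2\le\alpha_j^2$, the variance contribution is dominated by the squared bias, so $\E[|\hat m^{(j)}_n-m^{(j)}|^2]\le c\,(n\alpha_1^2\alpha_2^2)^{-(k_j-1)/k_j}$; and because $(k_j-1)/k_j=1-1/k_j\ge1-1/k_1-1/k_2=(\overline{k}-2)/\overline{k}$ with $n\alpha_1^2\alpha_2^2\ge1$, this is at most the target rate, and the finite prefactors $(m^{(j)})^2$ do not change that.

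The genuinely delicate summand — and the main obstacle — is the product $(\hat m^{(1)}_n-m^{(1)})(\hat m^{(2)}_n-m^{(2)})$: its two factors are \emph{not} independent, because $X^1$ and $X^2$ are correlated, and the Laplace noise makes them unbounded, so one cannot simply multiply $L^2$ rates. Instead I would apply Cauchy–Schwarz, $\E[|(\hat m^{(1)}_n-m^{(1)})(\hat m^{(2)}_n-m^{(2)})|^2]\le\|\hat m^{(1)}_n-m^{(1)}\|_4^2\,\|\hat m^{(2)}_n-m^{(2)}\|_4^2$, and estimate fourth central moments. Here the truncation is what rescues the argument even though possibly $k_j<4$: $\tronc{X^j}{T^{(j)}}$ is bounded by $T^{(j)}$, so $\E[(\tronc{X^j}{T^{(j)}})^4]\le c\,(T^{(j)})^{(4-k_j)_+}$, the Laplace fourth moment is of order $(T^{(j)})^4/\alpha_j^4$, and the classical fourth-moment bound for a mean of iid variables together with $\alpha_1^2\alpha_2^2\le\alpha_j^2$ and $n\alpha_1^2\alpha_2^2\ge1$ gives $\|\hat m^{(j)}_n-m^{(j)}\|_4^2\le c\,(n\alpha_1^2\alpha_2^2)^{-(k_j-1)/k_j}$, again with the bias dominating. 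Hence this product term is of order $(n\alpha_1^2\alpha_2^2)^{-[(k_1-1)/k_1+(k_2-1)/k_2]}=(n\alpha_1^2\alpha_2^2)^{-(2-2/\overline{k})}$, and since $2-2/\overline{k}\ge(\overline{k}-2)/\overline{k}$ with $n\alpha_1^2\alpha_2^2\ge1$, it too lies below the target rate. Summing the four bounds yields the claim.
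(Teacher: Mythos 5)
Your proof is correct, but it routes the only genuinely delicate term differently from the paper. Both you and the paper start from $\hat{\theta}_n-\theta=(\hat{\gamma}_n-\gamma)-\bigl(\hat{m}^{(1)}_n\hat{m}^{(2)}_n-m^{(1)}m^{(2)}\bigr)$, invoke Theorem~\ref{th: upper bound joint moment covariance} for the first term, and both note that the marginal-mean error must be re-derived with the $\gamma$-optimal truncation $T^{(j)}=(n\alpha_1^2\alpha_2^2)^{1/(2k_j)}$ rather than the one in Theorem~\ref{th: recall Duchi est moyenne}, with $\alpha_1^2\alpha_2^2\le\alpha_j^2$ making the variance term harmless. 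The divergence is in the product of means: the paper uses the asymmetric telescoping $\hat{m}^{(1)}_n\hat{m}^{(2)}_n-m^{(1)}m^{(2)}=(\hat{m}^{(1)}_n-m^{(1)})m^{(2)}+\hat{m}^{(1)}_n(\hat{m}^{(2)}_n-m^{(2)})$, and for the second summand avoids higher moments entirely by splitting $\hat{m}^{(1)}_n=\frac{1}{n}\sum_i\tronc{X^1_i}{T^{(1)}}+\frac{1}{n}\sum_i\mathcal{E}^1_i$, using that the truncated sum is deterministically bounded by $T^{(1)}$ and the Laplace sum is independent of the second channel, so $\E[|e_n^{(2)}|^2]\le c\,(T^{(1)})^2\,\E[(\hat{m}^{(2)}_n-m^{(2)})^2]$ and everything stays at the $L^2$ level, hitting the target rate exactly. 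You instead expand symmetrically into $m^{(1)}\delta_2+m^{(2)}\delta_1+\delta_1\delta_2$ and handle $\delta_1\delta_2$ via Cauchy--Schwarz in $L^4$; this needs fourth-moment bounds, which you correctly obtain because the truncation makes the signal part deterministically bounded and the Laplace noise has finite fourth moment, with the bias dominating just as in $L^2$. Your estimate $\|\delta_1\delta_2\|_2^2\lesssim(n\alpha_1^2\alpha_2^2)^{-(2-2/\overline{k})}$ is then strictly below the target rate, whereas the paper's $e_n^{(2)}$ is exactly at the rate. Both routes are valid; the paper's is slightly more elementary (no $L^4$), while yours is more symmetric and makes transparent that the product term is strictly subdominant.
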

}

{\revnew We would like to emphasize that, as in Theorems \ref{th: recall Duchi est moyenne} and \ref{th: upper bound joint moment covariance} above, the constant \( c \) appearing in Equation \eqref{eq: borne sup theta} depends on the true moments of \( \bm{X} \).}

\begin{remark}
	By definitions \eqref{eq: def Z covariance}--\eqref{eq: def gamma hat est cov},  when estimating $\theta=\gamma-m^{(1)}m^{(2)}$, we use  the same truncation levels $T^{(j)}$
		for the estimation of $\gamma$,
		$m^{(1)}$ and $m^{(2)}$. It would be possible to use the optimal levels $\widetilde{T}^{(j)}=(n\alpha^2_j)^{1/(2k_j)}$ for the estimation of $m^{(1)}$, $m^{(2)}$ and the optimal levels $T^{(j)}=(n\alpha^2_1\alpha^2_2)^{1/(2k_j)}$ for the estimation of $\gamma$. However, this approach would necessitate publicly disclosing two values for each private data point: one corresponding to the truncation level $\widetilde{T}^{(j)}$ and one with level ${T}^{(j)}$. As a result, the overall privacy of the procedure would be reduced. In Section \ref{Sss: adaptive}, we will explore another scenario where we must disclose multiple public values for each private data point.
\end{remark}
{\rev
\begin{remark}
Assuming that $k_1>2$ and $k_2>2$, it is also possible to estimate the correlation	between the two variables $X^1$ and $X^2$. From the definition $\mathop{cor}(X^1,X^2)=\mathop{cov}(X^1,X^2)/\sqrt{\mathop{var}(X^1)\mathop{var}(X^2)}$ and since the covariance is estimated according to Corollary \ref{C: upper bound covariance}, it remains to estimate the variance of $X^1$ and $X^2$ and deduce an estimate of the correlation as a ratio. Since the estimation of the variance of each variable reduces to the estimation on the marginal laws, we can use the result in \cite{Martin}, recalled by Theorem \ref{th: recall Duchi est moyenne}. As $|X^1|^2 \in \mathbf{L}^{k_1/2}$, we can derive that the rate of estimation of $\E[|X^1|^2]$ is $(n \alpha_1^2)^{\frac{k_1/2-1}{2 k_1 /2}}=(n \alpha_1^2)^{1/2-1/k_1}$. We can deduce that it is possible to estimate $\mathop{var}(X^1)$ with some estimator converging with rate 
$(n \alpha_1^2)^{1/2-1/k_1}$, and analogously  $\mathop{var}(X^2)$ is estimated with rate 
 $(n \alpha_2^2)^{1/2-1/k_2}$. On the other hand, from Corollary \ref{C: upper bound covariance} the covariance is estimated at rate $(n \alpha_1^2\alpha_2^2)^{\frac{1}{2}-\frac{1}{2k_1}-\frac{1}{2k_2}}$. If $k_1=k_2=k$, and using that $\alpha_j \le 1$, we see that the slower rate is given by the estimation of the covariance. Consequently, the correlation coefficient can be estimated with an estimator having at least a rate given by $(n\alpha_1^2\alpha_2^2)^{\frac{1}{2}-\frac{1}{{\revd \overline{k}}}}$. If $k_1 \neq k_2$, the rate of this estimation procedure is at most the worst of the three rates, which is now dependent on the relative positions of $\alpha_1$ and $\alpha_2$. 
\end{remark}
}

\subsubsection{Lower bound for the {\rev joint moment} estimation}
\label{Sss:lower bound joint_moment}
For {\rev $k_1>1,\dots,k_d>1$ with $1/k_1+\dots +1/k_d<1$, we introduce the notation
\begin{equation*}
	\mathcal{P}_{k_1,\dots,k_d}=\left\{ P, \quad \text{probability on $\mathbb{R}^d$ such that } \E_P[|X^j|^{k_j}] \le 1, \text{ for $j \in \{1,\dots,d\}$ }  \right\},
\end{equation*}
where $\bm{X}=(X^1,\dots,X^d)$ is the canonical random variable on $\mathbb{R}^d$.} For {\rev $P \in \mathcal{P}_{k_1,\dots,k_d}$,} we set
\begin{equation*}
	\gamma(P)=\E_P\left[{\rev \prod_{j=1}^d X^j}\right]. 
\end{equation*}
{ We denote by $\mathcal{Q}_{\bm{\alpha}}$ the set of 
	privacy mechanisms, where for simplicity we restrict ourself to non interactive kernels. Thus, {\rev $\bm{Q}=(Q^1,\dots,Q^d) \in  \mathcal{Q}_{\bm{\alpha}}$} is such that
	$Q^j$ is a Markov kernel from $\mathcal{X}^j=\mathbb{R}$ to some measurable space
	$(\mathcal{Z}^j,\Xi_{\mathcal{Z}^j})$, and the condition \eqref{eq: def local privacy non-int} is satisfied for {\rev $j=1,\dots,d$}. 
}

\noindent The private data are given by the iid sequence $(\bm{X}_i)_{i=1,\dots,n}$.
We assume that the public data $(\bm{Z}_i)_{i=1,\dots,n}$ are given by the non interactive mechanism where the variable $Z_{i}^j$
is drawn according to the law $Q^j(d z \mid X_i^j)$.

\noindent We introduce the minimax risk
 $$
	\mathcal{M}_n(\gamma(\mathcal{P}_{{\rev k_1, \dots, k_d}}),\bm{\alpha})=\inf_{\bm{Q} \in \mathcal{Q}_{\bm{\alpha}}}
	\inf_{\hat{\gamma}_n} 
	\sup_{ {P\in\mathcal{P}_{{\rev k_1, \dots, k_d}}}} 
	\E_P\left[\left( \hat{\gamma}_n - \gamma(P) \right)^2 \right],
	$$ 
where $\hat{\gamma}_n$ is any $\hat{\gamma}_n((Z_i^j)_{\substack{1\le i \le n\\1\le j \le {\rev d}}})$ measurable function from {\rev $\big(\prod_{j=1}^d \mathcal{Z}^j\big)^n$.
taking values in $\mathbb{R}$, with finite second moment.}
\begin{theorem}{\label{th: lower joint}}
	There exists some constant $c$ such that,
	\begin{equation*}
		\mathcal{M}_n(\gamma(\mathcal{P}_{{\rev k_1,\dots,k_d}}),\bm{\alpha}) \ge 
		{\rev c(n\prod_{j=1}^d \abs{e^{\alpha_j} - 1}^{2}  )^{-\frac{\overline{k}-d}{\overline{k}}} }
	\end{equation*}
	for all $n \ge 1$, $n \prod_{j=1}^d \abs{e^{\alpha_j} - 1}^{2} \ge 1$.
\end{theorem}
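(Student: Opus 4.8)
The plan is to prove Theorem~\ref{th: lower joint} by Le Cam's two-point method, using Corollary~\ref{cor: main samples} to bound the Kullback--Leibler divergence between the two privatized experiments. The first task is to construct two distributions $P_0,P_1\in\mathcal{P}_{k_1,\dots,k_d}$ on $\R^d$ that share all of their strict sub-marginals (so that Corollary~\ref{cor: main samples}, and not merely the cruder bound involving $d_{TV}$ on every individual marginal, is available), are close in total variation, and yet have well-separated joint moments $\gamma$.

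Concretely, fix $\rho\in(0,1)$ and $M_1,\dots,M_d>0$ to be chosen, let $B\sim\mathrm{Bernoulli}(\rho)$, and under each $P_i$ set $\bm{X}=\bm{0}$ on $\{B=0\}$ while on $\{B=1\}$ set $X^j=\tau_jM_j$, where $(\tau_1,\dots,\tau_d)$ is drawn uniformly among the sign vectors in $\{-1,1\}^d$ with $\prod_{j=1}^d\tau_j=+1$ under $P_0$ and with $\prod_{j=1}^d\tau_j=-1$ under $P_1$. For any strict subset $S\subsetneq\{1,\dots,d\}$ there is at least one coordinate outside $S$, so $(\tau_j)_{j\in S}$ is uniform on $\{-1,1\}^{|S|}$ under both priors; hence $L_{\bm{X}_0^{(S)}}=L_{\bm{X}_1^{(S)}}$ for every strict $S$, and Corollary~\ref{cor: main samples} applies. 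Since $P_0$ and $P_1$ agree on $\{B=0\}$ and are mutually singular on $\{B=1\}$, one has $d_{TV}(P_0,P_1)=2\rho$, whence, writing $N:=n\prod_{j=1}^d(e^{\alpha_j}-1)^2$,
\[
d_{KL}(M^n,\tilde{M}^n)\ \le\ n\Big(\prod_{j=1}^d(e^{\alpha_j}-1)\Big)^2(2\rho)^2\ =\ 4\rho^2N .
\]
On the other hand $\gamma(P_0)=\rho\prod_{j=1}^d M_j$ and $\gamma(P_1)=-\rho\prod_{j=1}^d M_j$, since $\prod_j\tau_j\equiv\pm1$ on $\{B=1\}$, so $\abs{\gamma(P_0)-\gamma(P_1)}=2\rho\prod_{j=1}^d M_j$, while the moment constraint reads $\E_{P_i}\!\left[\abs{X^j}^{k_j}\right]=\rho M_j^{k_j}$.

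It then remains to balance the parameters and invoke Le Cam. Taking $M_j=\rho^{-1/k_j}$ makes $P_0,P_1\in\mathcal{P}_{k_1,\dots,k_d}$ and gives $\abs{\gamma(P_0)-\gamma(P_1)}=2\rho^{\,1-\sum_j 1/k_j}=2\rho^{(\overline{k}-d)/\overline{k}}$; taking $\rho=c_0N^{-1/2}$ for a small absolute constant $c_0$ (admissible whenever $N\ge1$) keeps $d_{KL}(M^n,\tilde{M}^n)\le 4c_0^2$ bounded by an absolute constant, so the privatized samples cannot distinguish $P_0$ from $P_1$. The standard two-point lower bound (via, e.g., Pinsker's or the Bretagnolle--Huber inequality) then gives
\[
\mathcal{M}_n(\gamma(\mathcal{P}_{k_1,\dots,k_d}),\bm{\alpha})\ \ge\ c'\,\bigl(\gamma(P_0)-\gamma(P_1)\bigr)^2\ =\ c''\,N^{-(\overline{k}-d)/\overline{k}},
\]
which is exactly the asserted bound. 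The only step requiring genuine care is the combinatorial verification that \emph{all} strict sub-marginals of $P_0$ and $P_1$ coincide: this is what unlocks the sharp factor $\prod_j(e^{\alpha_j}-1)^2$ in Corollary~\ref{cor: main samples}, hence the componentwise deterioration of the rate. The value of $\gamma$, the moment bound, the exponent identity $1-\sum_j 1/k_j=(\overline{k}-d)/\overline{k}$, and the admissibility of the choices $\rho=c_0N^{-1/2}$, $M_j=\rho^{-1/k_j}$ for every $n$ with $N\ge1$ are elementary, and the concluding Le Cam step is routine.
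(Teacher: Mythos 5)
Your proof is correct and follows essentially the same strategy as the paper: Le Cam's two-point method combined with Corollary~\ref{cor: main samples}, choosing two discrete priors whose strict sub-marginals agree so that the KL contraction reduces to the single term $n\prod_{j}(e^{\alpha_j}-1)^2\,d_{TV}(P_0,P_1)^2$. The only material difference is in the priors: the paper perturbs the uniform-sign law on $\prod_j\{\pm\delta^{-1/k_j}\}$ by a small signed measure proportional to $\prod_j a_j$ (giving $\gamma(P)=0$ and $\gamma(P^*)=\tfrac12\delta^{1-d/\overline{k}}$), whereas you concentrate the conditional mass entirely on the two parity classes $\prod_j\tau_j=\pm1$, which makes $d_{TV}(P_0,P_1)=2\rho$ immediate, gives the symmetric separation $\gamma(P_0)=-\gamma(P_1)=\rho\prod_j M_j$, and spares the verification that the perturbed weights remain nonnegative; both variants satisfy the moment constraints with equality and yield the same rate $N^{-(\overline{k}-d)/\overline{k}}$.
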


\begin{remark}
	Comparing with Theorem \ref{th: upper bound joint moment covariance}, we see that when {\rev $\alpha_j\le 1$} the rate $(n\prod_{j=1}^d\alpha_j^2)^{\frac{\overline{k}-{\rev d}}{\overline{k}}}$ achieved by the estimator of Section \ref{Sss:joint moment estimator}. 
	can not be improved.
\end{remark}

\begin{proof}
	{\rev  To get a lower bound for $\mathcal{M}_n(\gamma(\mathcal{P}_{k_1,\dots,k_d}),\bm{\alpha})$ we want to apply the two hypothesis method (see for example Section 2.3 in \cite{Tsy}). We} need to construct $P$ and $P^*$ such that
	\begin{enumerate}
		\item $P$, $P^*$ are elements of {\rev $\mathcal{P}_{k_1,\dots,k_d}$,}
		\item $\exists c > 0$ with $\abs{\gamma(P)-\gamma(P^*) }\ge c (n {\rev \prod_{j=1}^d \abs{e^{\alpha_j}-1}^2})^{-\frac{\overline{k}-{\rev d}}{2\overline{k}}}$,
		\item $\exists \epsilon_0 > 0$ such that $d_{KL} \Big( Law((\bm{Z}_{i})_{i = 1, \dots , n}), Law((\bm{Z}^*_{i})_{i = 1, \dots, n})  \Big) < \epsilon_0 < 2$,
	\end{enumerate} 
	where  $\bm{Z}_i={\rev (Z_i^1,\dots,{\revd Z_i^d})}$ for $i=1,\dots,n$ are the public views of  
	$\bm{X}_i={\rev (X_i^1,\dots,X_i^d)}$, $i=1,\dots,n$ a iid sequence of random variables with law $P$, and $\bm{Z}_i^*={\rev (Z_i^1,\dots,Z_i^d)}$ are the public views of  
	$\bm{X}^*_i={\rev (X_i^{*,1},\dots,X_i^{*,d})}$, $i=1,\dots,n$ a iid sequence of random variables with law $P^*$.
	
	Let $0<\delta<1$ be a parameter whose value will be calibrated later.
	We denote by $P$ the probability on $\mathbb{R}^{\revnew d}$ which makes $\bm{X}={\rev (X^1,\dots,X^d)}$ a discrete random variable taking values in 
		{\rev $\prod_{j=1}^d \{-\delta^{1/k_j},0,-\delta^{1/k_j}\}$ with the joint distribution given
		by
		\begin{equation*}
			P(X^1=a_1 \delta^{-1/k_1},\dots,X^d=a_d \delta^{-1/k_1})=:p_{a_1,\dots,a_d} 
		\end{equation*}
	for all $(a_1,\dots,a_d) \in \{-1,0,1\}^d$  and 
	\begin{equation*}
		p_{a_1,\dots,a_d}=
\begin{cases}
	1-\delta & \text{if $a_1=\dots=a_d=0$,}
	\\
	\delta (\frac{1}{2})^{d} & \text{if $a_j \neq 0, \forall j$,}
	\\
	0 & \text{if $\exists j_1,j_2$ with $a_{j_1}=0$, $a_{j_2}\neq0$.}
\end{cases}	
\end{equation*}
It means that, under $P$, we flip a coin to decide if all the $X^j$ are zero with probability $1-\delta$, and otherwise the $X^j$ are taking the extremal values $\pm \delta^{-1/k_j}$ independently and with equal probability. We can check that $P(X^j=\delta^{-1/k_j})= P(X^j=-\delta^{-1/k_j})=\frac{\delta}{2}$ and $P(X^j=0)=1-\delta$. Thus $\E[|X^j|^{k_j}]=1$ for all $j \in \{1,\dots,d\}$ implying that $P \in \mathcal{P}_{k_1,\dots,k_d}$. Moreover, 
\begin{align*}
	\gamma(P)&=\E_P\left[X^1 \times \dots \times X^d\right]=(1-\delta)\times 0  + \delta 
	\sum_{(a_1,\dots,a_d)\in \{-1,1\}^d} \left(\frac{1}{2}\right)^{d}\prod_{j=1}^d (a_j \delta^{{\revnew -1/k_j}})
	\\
	&= \delta^{1-\sum_{j=1}^d \frac{1}{k_j}} \left(\frac{1}{2}\right)^{d} \sum_{l=0}^d \binom{d}{l} (-1)^l = \delta^{1-\sum_{j=1}^d \frac{1}{k_j}} \left(\frac{1}{2}\right)^{d} (1-1)^d
=0,\end{align*}
where in the second line $l$ is the cardinal of the $a_j$'s equal to $-1$.

We now define $P^*$. To this end, we set for $(a_1,\dots,a_d)\in \{-1,0,1\}^d$:
\begin{equation*}
	h_{a_1,\dots,a_d}:=
		\frac{\delta}{2} \frac{1}{2^{d}} \prod_{j=1}^d a_j, \quad
		p^*_{a_1,\dots,a_d}=p_{a_1,\dots,a_d}+h_{a_1,\dots,a_d}.
\end{equation*}
Remark that these coefficients $p^*$ are those of a probability and {\revd it} allows us to define $P^*$ by $P^*(X^1=a_1 \delta^{-1/k_1},\dots,X^d=a_d \delta^{-1/k_1})=p^*_{a_1,\dots,a_d}$.
Also, we have for all $j\in\{1,\dots,d\}$, $(a_1,\dots,a_{j-1},a_{j+1},\dots, a_d)\in \{-1,0,1\}^{d-1}$ that, $\displaystyle \sum_{b \in \{-1,0,1\}} h(a_1,\dots,a_{j-1},b,a_{j+1},\dots, a_d)=0$. This implies that  
$$ \sum_{b \in \{-1,0,1\}} p(a_1,\dots,a_{j-1},b,a_{j+1},\dots, a_d)=  \sum_{b \in \{-1,0,1\}} p^*(a_1,\dots,a_{j-1},b,a_{j+1},\dots, a_d),$$
and in turn the law of $(X^{1},\dots,X^{j-1},X^{j+1},\dots,X^d)$ is the same under $P$ and $P^*$.
This property is crucial to allow the application of Corollaries \ref{cor: main} or \ref{cor: main samples}.
It also yields that for any $l \in \{1,\dots,d\}$, we have $\E_{P^*}[|X^l|^{k_l}]=\E_{P}[|X^l|^{k_l}]=1$ and thus, $P^* \in \mathcal{P}_{k_1,\dots,k_d}$. 
Moreover,
\begin{align*}
	\gamma(P^*)&=\E_{P^*}\left[X^1 \times \dots \times X^d\right]=\E_{P}\left[X^1 \times \dots \times X^d\right]  + \frac{\delta}{2} 
	\sum_{(a_1,\dots,a_d)\in \{-1,1\}^d} \left(\frac{1}{2}\right)^{d}\prod_{j=1}^d (a_j \delta^{{\revnew -1/k_j}})
	\\
	&= 0 + \frac{1}{2} \delta^{1-\sum_{j=1}^d \frac{1}{k_j}} \left(\frac{1}{2}\right)^{d} 
		\sum_{(a_1,\dots,a_d)\in \{-1,1\}^d}
	(\prod_{j=1}^d a_j)^2
\\	&= \frac{1}{2} \delta^{1-\sum_{j=1}^d \frac{1}{k_j}} \left(\frac{1}{2}\right)^{d} 
\sum_{(a_1,\dots,a_d)\in \{-1,1\}^d} 1 = \frac{1}{2} \delta^{1-\sum_{j=1}^d \frac{1}{k_j}}.
\end{align*}
As a result, we have $\abs{\gamma(P)-\gamma(P^*)}= \frac{1}{2} \delta^{1-\sum_{j=1}^d \frac{1}{k_j}}$.
}
	We apply Corollary {\modch \ref{cor: main samples}} to the sequences of raw samples $(\bm{X}_i)_{i=1,\dots,n}$, $(\bm{X}_i^*)_{i=1,\dots,n}$ whose distributions are $P^{\otimes n}$ and $(P^*)^{\otimes n}$. 
		Indeed, this is permitted  as the {\rev $(d-1)-$dimensional marginal laws} of the {\rev  $d-$dimensional vectors $\bm{X}_i$ and $\bm{X}_i^*$ coincide.}
	 We deduce
		\begin{equation*}
			d_{KL} \Big( Law((\bm{Z}_{i})_{i = 1, \dots , n}), Law((\bm{Z}^*_{i})_{i = 1, \dots , n})  \Big)\le n \times 
			{\rev \prod_{j=1}^d  |e^{\alpha_j} - 1|^{2}  \times  d_{TV}(P, P^*)^2}
		\end{equation*}
	Furthermore, {\rev 
		\begin{align*}
			d_{TV}(P, P^*)&=
		\sum_{(a_1,\dots,a_d)\in\{-1,0,1\}^d} 
	|p_{a_1,\dots,a_d}-p^*_{a_1,\dots,a_d}|
	=	\sum_{(a_1,\dots,a_d)\in\{-1,0,1\}^d} 
|h(a_1,\dots,a_d)|
\\ &\le \sum_{(a_1,\dots,a_d)\in\{-1,0,1\}^d} \frac{\delta}{2} \frac{1}{2^d} 
\abs{\prod_{j=1}^d a_j}
 \le   \frac{\delta}{2} \frac{1}{2^d} \sum_{(a_1,\dots,a_d)\in\{-1,1\}^d} 1 \le \frac{\delta}{2},
\end{align*}
and so we get
	\begin{equation*}
		d_{KL} \Big( Law((\bm{Z}_{i})_{i = 1, \dots , n}), Law((\bm{Z}^*_{i})_{i = 1, \dots , n})  \Big)\le n {\rev \times 
		\prod_{j=1}^d |e^{\alpha_j} - 1|^{2} \times \frac{\delta^2}{4}. }
	\end{equation*}
}
	We now set {\rev $\delta=  \left( 2 \prod_{j=1}^d |e^{\alpha_j} - 1|^{2} n \right)^{-1/2}$} which is strictly smaller than $1$ by the assumption {\rev 
			$n \prod_{j=1}^d |e^{\alpha_j} - 1|^{2} \ge 1$.
		} Then, we get, $d_{KL} \Big( Law((\bm{Z}_{i})_{i = 1, \dots , n}), Law((\bm{Z}^*_{i})_{i = 1, \dots , n})  \Big) {\rev \le 1/8  <2}$.	
	Moreover,  {\rev 
	\begin{multline*}
		\abs{\gamma(P)-\gamma(P^*) }=	\frac{1}{2} \delta^{1-\sum_{j=1}^d\frac{1}{k_j}} =  
				\frac{1}{2} \delta^{1-d/\overline{k}}=
		\frac{1}{2^{3/2}} \Big(n \prod_{j=1}^d \abs{e^{\alpha_j} - 1}^{2}\Big)^{-(\frac{1}{2}-\frac{d}{2\overline{k}})}
		\\= \frac{1}{2^{3/2}} \Big( n \prod_{j=1}^d \abs{e^{\alpha_j} - 1}^{2}\Big)^{-\frac{\overline{k}-d}{2\overline{k}}}.
	\end{multline*} 
}
	We have obtained the Points 1--3 stated at the beginning of the proof and the lower bound on 	{\rev $\mathcal{M}_n(\gamma(\mathcal{P}_{k_1,\dots,k_d}),\bm{\alpha}) $} follows.	
\end{proof}
{\rev
	\begin{remark}
		For simplicity, we restrict the discussion on non-interactive mechanisms. However, the Corollary \ref{cor: main samples} is true for interactive and non-interactive mechanism. Thus, the lower bound holds true for interactive mechanism as well. As the statistical procedure described in \ref{Sss:joint moment estimator} is non-interactive, it means that in the statistical problem of moment estimation, non-interactive mechanism are rate efficient.
However, optimality of non interactive mechanisms is problem specific as shown in \cite{Ber20}.	
	\end{remark}
}

\subsubsection{Adpative estimation of the {\rev joint moment}}\label{Sss: adaptive}
As discussed in Section \ref{Sss:joint moment estimator}, 
the privacy procedure proposed in this study requires 
selecting the optimal truncation levels $T^{(j)}$'s, which depend on the number of finite moments {\rev $k_j$'s} for the variables. 
However, in practice, it is unrealistic to assume that the number of finite moments is known in all situations. To address this issue, we propose an adaptive method to estimate the covariance that does not necessitate prior knowledge of {\rev the $k_j$'s} and conforms to the privacy constraint.


The main idea is to send a collection of public data with different truncation levels via the privatization channel, and then let the statistician decide on the optimal truncation level using a penalization method.


 We introduce the following set of truncations :
\begin{align}\nonumber
&	\mathcal{T}:={\rev \prod_{j=1}^d \mathcal{T}^{(j)}}, \text{where for {\rev $j\in \{1,\dots,d\}$}}
	\\ \label{eq: def ens T possible}
&	\mathcal{T}^{(j)}:=\left\{ T^{(j)} \in (0,\infty) \mid T^{(j)}=\frac{n}{2^{r}},\quad 
	\text{ for some $r\in\{1,\dots,\lfloor\log_2(n)\rfloor\}$}\right\}.
\end{align}
Let {\rev $\beta_n^j >0$, for $j=1,\dots,d$  be $d$ parameters that we will specify later. For all $i\in\{1,\dots,n\}$ we are given $\sum_{j=1}^d\mathop{card}(\mathcal{T}^{(j)})=d \lfloor \log_2(n) \rfloor$} independent variables, {$\mathcal{E}^{j,T^{(j)}}_i$ where $T^{(j)}$ ranges in $\mathcal{T}^{(j)}$ and $j$ in $\{1,\dots,d\}$.}
We assume that each of the variables $\mathcal{E}^{j,T^{(j)}}_i$ follows a Laplace distribution with parameter $2\frac{T^{(j)}}{\beta_n^j}$, where $T^{(j)}\in \mathcal{T}^{(j)}$ and $j$ takes values 
{\rev in $\{1,\dots,d\}$}. We further assume that these variables are independent for different values of $i$ ranging from 1 to $n$.

 We define the privatized data {\rev $\bm{Z}_i=(Z_i^1,\dots,Z_i^d) \in \mathbb{R}^{\mathcal{T}^{(1)}}\times \dots \times
\mathbb{R}^{\mathcal{T}^{(d)}}$, $i\in\{1,\dots,n\}$, by
\begin{equation} \label{eq: def Zi adpat}
	\begin{aligned}
		Z^1_i&=(Z^{1,T}_i)_{T\in \mathcal{T}^{(1)}}, &  	Z^{1,T}_i&=\tronc{X_i^1}{T}+\mathcal{E}^{(1),T}_i, &  &\quad \text{for $T \in \mathcal{T}^{(1)}$, $i\in\{1,\dots,n\}$},
	\\
	&~~\vdots   & &~~ \vdots   & &\quad\quad\vdots
	\\
Z^d_i&=(Z^{d,T}_i)_{T\in \mathcal{T}^{(d)}}, &	Z^{d,T}_i&=\tronc{X_i^d}{T}+\mathcal{E}^{(d),T}_i,& &\quad \text{for $T \in \mathcal{T}^{(d)}$, $i\in\{1,\dots,n\}$}.
\end{aligned}\end{equation}
Let us stress that on contrary to the privacy channel defined by \eqref{eq: def Z covariance}, where each data $X_i^j$ is publicly released using a one dimensional noisy view, here each data $X_i^j$ is disclosed through a repetition of $\mathop{card}(\mathcal{T}^{j})$ noisy views, where $\mathop{card}(\mathcal{T}^{j})$ will grow to infinity. This tends to
reduce the privacy of the channel as all these public views contain information on the same private value.
To guarantee that this procedure is compatible with the $\bm{\alpha}$-CLDP constraint, we need that the noise injected in the channel growths with the dimension of the public view.  
}
\begin{lemma}\label{l: privacy adaptive covariance}
	Assume that $\beta_n^j=\frac{{\modar \alpha_j}}{\mathop{card}(\mathcal{T}^{(j)})}=\frac{\alpha_j}{\lfloor \log_2(n)\rfloor}$.
Then, the privacy procedure satisfies the {\modar  ${\bm{\alpha}}$-CLDP constraint as in \eqref{eq: def density local privacy}.} 
\end{lemma}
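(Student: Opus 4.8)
The plan is to reduce the statement to a componentwise verification of the density formulation \eqref{eq: def density local privacy}, and then to exploit two facts: that the privatized component $Z_i^j$ is, by construction, a vector of $\mathop{card}(\mathcal{T}^{(j)})$ \emph{conditionally independent} Laplace-mechanism views of the same raw value $X_i^j$, and that the worst-case likelihood ratio of a single Laplace mechanism channel is controlled sharply by Point 1 of Lemma \ref{L: preuve Laplace mech generique}. Since the mechanism described in \eqref{eq: def Zi adpat} is non-interactive, it is enough to fix $j\in\{1,\dots,d\}$ and bound $\sup_{z}\, q^j(z\mid X^j=x)/q^j(z\mid X^j=x')$ for the channel $Q^j$ producing $Z_i^j\in\mathbb{R}^{\mathcal{T}^{(j)}}$ out of $X_i^j$, uniformly over $x,x'\in\mathbb{R}$.

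First I would record the product structure of the conditional density. Because the noises $(\mathcal{E}^{(j),T}_i)_{T\in\mathcal{T}^{(j)}}$ are mutually independent given $X_i^j$, the conditional law of $Z_i^j=(Z^{j,T}_i)_{T\in\mathcal{T}^{(j)}}$ given $X_i^j=x$ has, with respect to Lebesgue measure on $\mathbb{R}^{\mathcal{T}^{(j)}}$, a density that factorizes as $q^j(z\mid X^j=x)=\prod_{T\in\mathcal{T}^{(j)}} q^{j,T}(z^T\mid X^j=x)$, where $q^{j,T}(\cdot\mid X^j=x)$ is the density of $\tronc{x}{T}+\mathcal{E}^{(j),T}_i$, i.e. precisely the Laplace mechanism channel of Lemma \ref{L: preuve Laplace mech generique} with truncation level $T$ and privacy parameter $\beta_n^j$ (the scale of $\mathcal{E}^{(j),T}_i$ being $2T/\beta_n^j$).

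Next I would apply Point 1 of Lemma \ref{L: preuve Laplace mech generique} to each single channel $Q^{j,T}$, which yields $\sup_{(x,x',z^T)\in\mathbb{R}^3} q^{j,T}(z^T\mid X^j=x)/q^{j,T}(z^T\mid X^j=x')=e^{\beta_n^j}$. Multiplying these bounds over $T\in\mathcal{T}^{(j)}$ and using the factorization of $q^j$ gives $\sup_{z}\, q^j(z\mid X^j=x)/q^j(z\mid X^j=x')\le e^{\beta_n^j\,\mathop{card}(\mathcal{T}^{(j)})}$ for all $x,x'\in\mathbb{R}$, and the choice $\beta_n^j=\alpha_j/\mathop{card}(\mathcal{T}^{(j)})=\alpha_j/\lfloor\log_2(n)\rfloor$ makes the exponent equal to $\alpha_j$. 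This is exactly \eqref{eq: def density local privacy} for the $j$-th component; since $j$ was arbitrary, the procedure satisfies the $\bm{\alpha}$-CLDP constraint.

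There is no real obstacle here: the only point deserving care is the bookkeeping that the $\mathop{card}(\mathcal{T}^{(j)})$ public coordinates attached to $X_i^j$ are genuinely conditionally independent given $X_i^j$, so that the overall likelihood ratio is the product of the individual ones and the noise inflation by the factor $\mathop{card}(\mathcal{T}^{(j)})$ exactly compensates for the repetition of views. One may also note, using the equality case of Lemma \ref{L: preuve Laplace mech generique} (take $x=T$, $x'=-T$, $z^T=T$ for each $T\in\mathcal{T}^{(j)}$), that the bound $e^{\alpha_j}$ is attained, so the injected noise cannot be reduced.
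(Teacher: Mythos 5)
Your proposal is correct and follows essentially the same route as the paper's proof: both exploit the conditional independence of the noises to factor the likelihood ratio over $T\in\mathcal{T}^{(j)}$, bound each factor by $e^{\beta_n^j}$, and conclude via the choice of $\beta_n^j$. The only cosmetic difference is that you invoke Point 1 of Lemma \ref{L: preuve Laplace mech generique} for the single-channel bound, whereas the paper re-derives it inline using the reverse triangle inequality and $\abs{\tronc{x}{T}-\tronc{x'}{T}}\le 2T$.
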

\begin{proof}
	 For {\rev }$j\in \{1,\dots,d\}$, let us denote by $q^j( (z^{j,T})_{T\in \mathcal{T}^{(j)}} \mid X^j=x)$
the density of the law of {\modar $Z^j_i=(Z^{j,T}_i)_{T \in \mathcal{T}^{(j)}}$} conditional to $X^j_i=x\in\mathbb{R}$.
Then, using the independence property of the variables $(Z^{j,T}_i)_{T \in \mathcal{T}^{(j)}}$, we have
\begin{align*}
	\frac{q^j( (z^{j,T})_{T\in \mathcal{T}^{(j)}} \mid X^j_i=x)}{q^j( (z^{j,T})_{T\in \mathcal{T}^{(j)}} \mid X^j_i=x')}&=
	\frac{\prod_{T\in\mathcal{T}^{(j)}} \exp\left( \abs{z^{j,T}-\tronc{x}{T}} \frac{\beta_n^j}{2 T} \right)}{\prod_{T\in\mathcal{T}^{(j)}} \exp\left( \abs{z^{j,T}-\tronc{x'}{T}} \frac{\beta_n^j}{2 T} \right)}
	\\
	&=\prod_{T\in\mathcal{T}^{(j)}} \exp\left(\frac{\beta_n^j}{2 T}\left\{ \abs{z^{j,T}-\tronc{x}{T}}-\abs{z^{j,T}-\tronc{x'}{T}} 
		\right\} \right)
	\\
	&\le \prod_{T\in\mathcal{T}^{(j)}} \exp\left( \frac{\beta_n^j}{2 T}\abs*{\tronc{x}{T}- \tronc{x'}{T}} \right)
\end{align*}
where we used the inverse triangular inequality in the last line. As $\abs*{\tronc{x}{T}- \tronc{x'}{T}}  \le 2T$ we deduce,
\begin{equation*}
	\frac{q^j( (z^{j,T})_{T\in \mathcal{T}^{(j)}} \mid X^j_i=x)}{q^j( (z^{j,T})_{T\in \mathcal{T}^{(j)}} \mid X^j_i=x')}\le 
	\prod_{T\in\mathcal{T}^{(j)}} \exp\left( \beta_n^j \right)=
	\exp(\mathop{card}(\mathcal{T}^{(j)}) \beta_n^j) \le \exp(\alpha_j),
\end{equation*}
by the choice of $\beta_n^j$.
\end{proof}

{\revnew Note that, although the proof of Lemma \ref{l: privacy adaptive covariance} follows a standard argument, we have chosen to include it here, as it provides an explanation for the necessity of the additional \( \log_2(n) \) term in the scale parameter of the Laplace random variable.}

We construct our adaptive estimator, following Goldenshluger-Lepski method. 
For {\rev $\bm{T}=(T^{(1)},\dots,T^{(d)}) \in \mathcal{T}$,} we set
\begin{equation} \label{eq : gamma hat T adapt}
{\rev \hat{\gamma}^{(\bm{T})}_n=\frac{1}{n}\sum_{i=1}^n  \prod_{j=1}^d Z^{j,T^{(j)}}_i,}
\end{equation} 
and for ${\rev \bm{T}=(T^{(1)},\dots,T^{(d)})} \in \mathcal{T}$, ${\rev \bm{T}'=(T^{\prime(1)},\dots,T^{\prime(d)}) \in \mathcal{T}}$
{\rev
\begin{equation} \label{eq : gamma hat T Tprime adapt}
	\hat{\gamma}^{(\bm{T},\bm{T}^\prime)}_n=\frac{1}{n}
		\sum_{i=1}^n \prod_{j=1}^d Z^{j,T^{(j)}\wedge T^{\prime(j)}}_i.
\end{equation}}
Let us remark that the following commutativity relation hold true: $\hat{\gamma}^{(\bm{T},\bm{T}^\prime)}_n=\hat{\gamma}^{(\bm{T}^\prime,\bm{T})}_n$.
 Based on the upper bound \eqref{eq: upper bound var est gamma} given in the Appendix for the variance of the estimator, we introduce the penalization term for $\bm{T} \in \mathcal{T}$,
 {\rev
\begin{equation} \label{eq: def penalisation est cov}
	\mathbb{V}_{\bm{T}}=\mathbb{V}_{(T^{(1)},\dots,T^{(d)})}= \kappa_n
	\frac{\prod_{j=1}^d \abs{T^{(j)}}^2}{n\prod_{j=1}^d \abs{\beta_n^j}^2}
\end{equation}
}
for $\kappa_n\ge 1$ some sequence tending slowly to $\infty$, which will be specified in Theorem \ref{th: est gamma adaptive}.

\noindent For ${\rev \bm{T}}\in \mathcal{T}$, we define
\begin{equation} \label{eq: def est biais adaptatif est cov}
	\mathbb{B}_{{\rev\bm{T}}}=\sup_{{\rev\bm{T}^\prime }\in \mathcal{T}}
	\left\{   \left( \abs*{\hat{\gamma}^{{\rev (\bm{T},\bm{T}^\prime)}}_n - \hat{\gamma}^{{\rev (\bm{T}^\prime)}}_n }^2 -
	\mathbb{V}_{{\rev \bm{T}^\prime}}\right)_+ \right\}, 
\end{equation}
and set
\begin{equation} \label{eq: def That est cov}
	\widehat{{\rev\bm{T}}}=\argmin{{\rev\bm{T}}\in \mathcal{T}} 
	\left\{ \mathbb{B}_{\rev\bm{T}}+\mathbb{V}_{\rev\bm{T}} \right\}.
\end{equation}
Our adaptive estimator is $\hat{\gamma}_n^{(\widehat{{\rev\bm{T}}})}$.
\begin{theorem}\label{th: est gamma adaptive}
	Assume that {\rev $k_1^{-1}+\dots+k_d^{-1}<1$,} $\beta_n^j=\frac{\alpha_j}{\lfloor \log_2(n) \rfloor}$, for {\rev $j=1,\dots,d$ } and  $\kappa_n=c_0 \log (n)$ for some $c_0>0$. {\modar If $c_0$ is large enough, there exist} $c>0$, $\overline{c}_0>0$, such that
	\begin{equation*}
		\E\left[(\hat{\gamma}^{\widehat{T}}_n - \gamma )^2\right]
		\le c \left( \frac{n{\rev \prod_{j=1}^d\alpha_j^2}}{(\log(n))^{{\rev 2d+1}}}\right)^{-\frac{\overline{k}-{\rev d}}{\overline{k}}}
		+ \frac{c}{{\rev \prod_{j=1}^d \alpha_j^2 } n^{\overline{c}_0}},
	\end{equation*}
	for all $n \ge 1$, $\alpha_j\le 1$, {\rev $(n \prod_{j=1}^d\alpha^2_j)/(\log(n))^{2d+1} \ge 1 $.} Moreover, the constant $\overline{c}_0$ can be chosen arbitrarily large by choosing $c_0$ large enough.
\end{theorem}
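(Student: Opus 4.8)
The plan is to run the classical Goldenshluger--Lepski analysis adapted to the privatized estimators $\hat{\gamma}^{(\bm{T})}_n$ and $\hat{\gamma}^{(\bm{T},\bm{T}')}_n$ of \eqref{eq : gamma hat T adapt}--\eqref{eq : gamma hat T Tprime adapt}. For $\bm{T}=(T^{(1)},\dots,T^{(d)})\in\mathcal{T}$ set $\gamma^{(\bm{T})}:=\E\big[\prod_{j=1}^d \tronc{X^j}{T^{(j)}}\big]$ and $\gamma^{(\bm{T},\bm{T}')}:=\E\big[\prod_{j=1}^d \tronc{X^j}{T^{(j)}\wedge T'^{(j)}}\big]$. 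Since the Laplace noises $\mathcal{E}^{(j),\cdot}_i$ in \eqref{eq: def Zi adpat} are centered, independent of the data and mutually independent across $j$, expanding the product and killing every term containing at least one noise factor gives $\E[\hat{\gamma}^{(\bm{T})}_n]=\gamma^{(\bm{T})}$ and $\E[\hat{\gamma}^{(\bm{T},\bm{T}')}_n]=\gamma^{(\bm{T},\bm{T}')}$. The first ingredient is a deterministic bias bound: telescoping $\prod_j X^j-\prod_j \tronc{X^j}{T^{(j)}}$ over the $d$ coordinates, and applying the generalized H\"older inequality with exponents $k_j$ on the (truncated or not) bounded factors and the conjugate exponent $p_l$ on the factor $X^l-\tronc{X^l}{T^{(l)}}$ --- this is where $1/k_1+\dots+1/k_d<1$ is used, to ensure $p_l<k_l$ --- one obtains $b(\bm{T}):=\abs{\gamma-\gamma^{(\bm{T})}}\le c\sum_{l=1}^d (T^{(l)})^{-\delta_l}$ with exponents $\delta_l>0$ depending only on the $k_j$'s and $c$ depending only on the moments of $\bm{X}$. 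The same computation run between the levels $T'^{(l)}$ and $T^{(l)}\wedge T'^{(l)}$ yields the key estimate $\abs{\gamma^{(\bm{T},\bm{T}')}-\gamma^{(\bm{T}')}}\le b(\bm{T})$, i.e. the doubly smoothed quantity only inherits the bias of the coarser level.

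The second ingredient is a uniform stochastic control. Using the variance bound \eqref{eq: upper bound var est gamma}, one gets $\mathrm{Var}(\hat{\gamma}^{(\bm{T})}_n)\le c\,\kappa_n^{-1}\mathbb{V}_{\bm{T}}$ and, more precisely, $\mathrm{Var}(\hat{\gamma}^{(\bm{T},\bm{T}')}_n)\le c\,\kappa_n^{-1}\mathbb{V}_{\bm{T}\wedge\bm{T}'}$ with $\mathbb{V}$ as in \eqref{eq: def penalisation est cov}; note that $\mathbb{V}$ is nondecreasing in $\bm{T}$, so $\mathbb{V}_{\bm{T}\wedge\bm{T}'}\le\mathbb{V}_{\bm{T}'}$. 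The centered summands $\prod_j(\tronc{X^j_i}{\,\cdot\,}+\mathcal{E}^{(j),\cdot}_i)-\gamma^{(\cdot)}$ are products of bounded truncations (by $\le n$) and Laplace variables of scale $\le 2T^{(j)}/\beta^j_n$, hence sub-exponential with the above variance proxy. A Bernstein-type deviation inequality together with a union bound over $\mathcal{T}$ (of cardinality $\le(\log_2 n)^d$) and over the pairs $\mathcal{T}\times\mathcal{T}$ (at most $(\log_2 n)^{2d}$) produces an event $\Omega$ with $\P(\Omega^c)\le n^{-\overline{c}_0}$ on which, simultaneously for all $\bm{T},\bm{T}'\in\mathcal{T}$, $\abs{\hat{\gamma}^{(\bm{T})}_n-\gamma^{(\bm{T})}}^2\le\tfrac18\mathbb{V}_{\bm{T}}$ and $\abs{\hat{\gamma}^{(\bm{T},\bm{T}')}_n-\gamma^{(\bm{T},\bm{T}')}}^2\le\tfrac18\mathbb{V}_{\bm{T}\wedge\bm{T}'}$, provided $\kappa_n=c_0\log n$ with $c_0$ large; enlarging $c_0$ makes $\overline{c}_0$ arbitrarily large. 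The extra $\log_2 n$ factor in $\beta^j_n$ of Lemma \ref{l: privacy adaptive covariance} and the size of $\kappa_n$ are exactly what is needed to absorb these union bounds and the sub-exponential corrections.

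With these ingredients I would run the Lepski argument. Fix an oracle $\bm{T}\in\mathcal{T}$. Using the commutativity $\hat{\gamma}^{(\bm{T},\bm{T}')}_n=\hat{\gamma}^{(\bm{T}',\bm{T})}_n$ and the definitions \eqref{eq: def est biais adaptatif est cov}--\eqref{eq: def That est cov}, the triangle inequality gives $\abs{\hat{\gamma}^{(\widehat{\bm{T}})}_n-\gamma}^2\le 3\big(\mathbb{B}_{\bm{T}}+\mathbb{V}_{\widehat{\bm{T}}}+\mathbb{B}_{\widehat{\bm{T}}}+\mathbb{V}_{\bm{T}}+\abs{\hat{\gamma}^{(\bm{T})}_n-\gamma}^2\big)$, and since $\widehat{\bm{T}}$ minimizes $\mathbb{B}_{\cdot}+\mathbb{V}_{\cdot}$ this is $\le 3\big(2\mathbb{B}_{\bm{T}}+2\mathbb{V}_{\bm{T}}+\abs{\hat{\gamma}^{(\bm{T})}_n-\gamma}^2\big)$. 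On $\Omega$, inserting the bias estimate $\abs{\gamma^{(\bm{T},\bm{T}')}-\gamma^{(\bm{T}')}}\le b(\bm{T})$ and the stochastic bounds into \eqref{eq: def est biais adaptatif est cov} shows $\mathbb{B}_{\bm{T}}\le c\,b(\bm{T})^2$: the terms $\mathbb{V}_{\bm{T}'}$ subtracted in \eqref{eq: def est biais adaptatif est cov} precisely swallow the fluctuations of $\hat{\gamma}^{(\bm{T},\bm{T}')}_n$ and $\hat{\gamma}^{(\bm{T}')}_n$, uniformly in $\bm{T}'$. On $\Omega^c$ one bounds the contribution by Cauchy--Schwarz, using that $\abs{\hat{\gamma}^{(\widehat{\bm{T}})}_n-\gamma}$ has all moments at most $\mathrm{poly}(n)/\prod_j\alpha_j^{2}$ (truncations $\le n$, Laplace scales $\le 2n\log n/\alpha_j$) and $\P(\Omega^c)^{1/2}\le n^{-\overline{c}_0/2}$, which yields the residual $c\,(\prod_j\alpha_j^2)^{-1}n^{-\overline{c}_0}$. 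Taking expectations and using $\E[(\hat{\gamma}^{(\bm{T})}_n-\gamma)^2]=b(\bm{T})^2+\mathrm{Var}(\hat{\gamma}^{(\bm{T})}_n)\le b(\bm{T})^2+\mathbb{V}_{\bm{T}}$, we get, for every $\bm{T}\in\mathcal{T}$, $\E[(\hat{\gamma}^{(\widehat{\bm{T}})}_n-\gamma)^2]\le c\big(b(\bm{T})^2+\mathbb{V}_{\bm{T}}\big)+c(\prod_j\alpha_j^2)^{-1}n^{-\overline{c}_0}$. It then remains to perform the bias--variance trade-off exactly as in the proof of Theorem \ref{th: upper bound joint moment covariance}: balancing $b(\bm{T})^2\asymp\sum_j (T^{(j)})^{-2\delta_j}$ against $\mathbb{V}_{\bm{T}}\asymp (\log n)^{2d+1}\prod_j (T^{(j)})^2/(n\prod_j\alpha_j^2)$ (recall $\beta^j_n=\alpha_j/\lfloor\log_2 n\rfloor$ and $\kappa_n=c_0\log n$) gives an ideal truncation with $\prod_j (T^{(j)})^2\asymp n\prod_j\alpha_j^2/(\log n)^{2d+1}$, which lies between the smallest and largest points of the geometric grid $\mathcal{T}$ under the assumption $(n\prod_j\alpha_j^2)/(\log n)^{2d+1}\ge 1$; the nearest grid point loses only a constant factor and gives the stated rate $(n\prod_j\alpha_j^2/(\log n)^{2d+1})^{-(\overline{k}-d)/\overline{k}}$, with $\overline{k}$ the harmonic mean of the $k_j$'s.

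The main obstacle is the uniform stochastic control of the second paragraph: establishing a Bernstein-type deviation inequality for the unbounded variables $\prod_{j=1}^d(\tronc{X^j_i}{T^{(j)}}+\mathcal{E}^{(j),T^{(j)}}_i)$, carefully tracking how both the variance proxy and the sub-exponential range scale in $T^{(j)}$, $\beta^j_n$ and $d$, and verifying that the slack furnished by $\kappa_n=c_0\log n$ dominates the $(\log n)^{2d}$-size union bound over $\mathcal{T}\times\mathcal{T}$ together with those tail corrections, simultaneously for all pairs; everything else is the bookkeeping around the Lepski inequality and the H\"older-type bias estimate already underlying Theorem \ref{th: upper bound joint moment covariance}.
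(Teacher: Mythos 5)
Your proposal reproduces the paper's argument: the same H\"older bias bound, the same bias comparison across truncation levels (controlling $\mathbb{D}_{\bm T}$), the same Bernstein-on-a-truncation-event concentration with $\kappa_n=c_0\log n$ absorbing the union bound over $\mathcal{T}\times\mathcal{T}$, the same Lepski-type chain yielding an oracle inequality, and the same final calibration over the geometric grid. The only cosmetic difference is that you phrase the concentration step as an almost-sure bound for $\mathbb{B}_{\bm T}$ on a high-probability event $\Omega$, whereas the paper integrates the Bernstein tail in expectation for $\E[\mathbb{B}^{(l)}_{\bm T}]$ and treats $\Omega_n^c$ separately by Cauchy--Schwarz; both are equivalent bookkeepings of the same estimate.
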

\begin{remark}
	{\modar Comparing with Theorem \ref{th: upper bound joint moment covariance}, we observe that the rate of the adaptive version of the estimator worsens by a factor of {\rev $\log(n)^{2d+1}$.} The loss of a $\log(n)$ factor is a well-known characteristic of adaptive methods and is sometimes unavoidable, as mentioned in \cite{BrownLow96}. The additional loss of a $\log(n)^{{\rev 2d}}$ term arises from the disclosure of $\mathop{card}{\mathcal{T}^{(j)}}\asymp\log_2(n)$ observations for each raw data, which increases the variance of the privatization mechanism while maintaining a constant level of privacy, as demonstrated in Lemma \ref{l: privacy adaptive covariance}. This is one reason why, in defining the sets $\mathcal{T}^{(j)}$, we have attempted to minimize their cardinality.}
\end{remark}
\noindent
The proof of the adaptive procedure gathered in Theorem \ref{th: est gamma adaptive} can be found in the Appendix.

\subsection{Locally private multivariate density estimation}{\label{s: density}}
In this section we consider the non-parametric estimation of the density of the vector $\bm{X}=(X^1, \dots , X^d)$, under $\alpha$-CLDP. We will see that, similarly to the case where the components become public jointly, this implies a deterioration on the convergence rate depending on $\bm{\alpha}$ (see for example Section 5.2.2 of \cite{Martin}). 

 Consider $\bm{X}_1, \dots , \bm{X}_n$, $n$ iid copies of $\bm{X}$. We will assume that the density $\pi$ of $\bm{X}$ belongs to an H\"older class $\mathcal{H}(\beta, \mathcal{L})$ (see for example Definition 1.2 in \cite{Tsy}). We aim at estimating such density under componentwise local differential privacy. We recall we reduce to consider the non-interactive privacy mechanism for the statistical applications, in order to lighten the notation.


\subsubsection{Local differential private estimator}{\label{s: density upper}}
In absence of privacy constraints, a well-studied estimator for density estimation consists in the kernel density estimator (see for example Section 1.2 of \cite{Tsy} and Part III of \cite{BosBla}). It achieves the convergence rate $n^{-\frac{2\beta}{2 \beta + d}}$, which has been shown to be optimal in a minimax sense (see Theorem 1.1 in \cite{Tsy} for the monodimensional case). 

We therefore introduce some kernel function $K: \mathbb{R} \rightarrow \mathbb{R}$ satisfying, for all $l \in \left \{ 1, \dots , \beta \right \}$,
\begin{equation}{\label{eq: property kernel}}
\int_\mathbb{R} K(x) dx = 1, \quad \left \| K \right \|_\infty < \kappa, \quad \mbox{supp}(K) \subset [-1, 1], \quad \int_\mathbb{R} K(x) x^l dx = 0.
\end{equation}
Then, as for the estimation of the covariance, we add centered Laplace distributed noise on bounded random variables to obtain $\alpha$-CLDP. 

\begin{lemma}{\label{l: privacy procedure}}
For any $i \in \{1, \dots , n \}$, $j \in \{1, \dots , d \}$ and any $\bm{x_0} \in \R^d$, the random variables 
\begin{equation}{\label{eq: def Zij}}
Z_i^j  := \frac{1}{h} K(\frac{X_i^j - x_0^j}{h}) + \mathcal{E}_i^j,
\end{equation}
with $\mathcal{E}_i^j$ iid $\sim \mathcal{L}(\frac{2 \kappa}{\alpha_j h})$, are $\alpha_j$-differentially private views of the original $X_i^j$.
\end{lemma}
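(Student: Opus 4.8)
The plan is to imitate the proof of Lemma \ref{L: preuve Laplace mech generique}, the only change being that the bounded statistic perturbed by Laplace noise is now $x \mapsto \frac{1}{h} K\!\left(\frac{x - x_0^j}{h}\right)$ rather than the truncation $[x]_{T^{(j)}}$. Since the mechanism \eqref{eq: def Zij} is non-interactive and acts on each component independently, it suffices to verify the density bound \eqref{eq: def density local privacy} for each fixed $j \in \{1,\dots,d\}$.

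First I would write down the conditional density of $Z_i^j$ given $X_i^j = x$. Recalling that $\mathcal{E}_i^j \sim \mathcal{L}\!\left(\frac{2\kappa}{\alpha_j h}\right)$ has density $t \mapsto \frac{\alpha_j h}{4\kappa}\exp\!\left(-\frac{\alpha_j h}{2\kappa}|t|\right)$ with respect to the Lebesgue measure, the variable $Z_i^j = \frac{1}{h} K\!\left(\frac{x - x_0^j}{h}\right) + \mathcal{E}_i^j$ has conditional density
\begin{equation*}
q^j(z \mid X^j = x) = \frac{\alpha_j h}{4\kappa}\exp\!\left(-\frac{\alpha_j h}{2\kappa}\left| z - \frac{1}{h} K\!\left(\frac{x - x_0^j}{h}\right) \right|\right).
\end{equation*}
Next I would form the ratio $q^j(z \mid X^j = x)/q^j(z \mid X^j = x')$ for arbitrary $x, x', z \in \mathbb{R}$, which equals $\exp\!\big(\frac{\alpha_j h}{2\kappa}\big[\,|z - \frac1h K(\frac{x' - x_0^j}{h})| - |z - \frac1h K(\frac{x - x_0^j}{h})|\,\big]\big)$. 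Applying the reverse triangle inequality $|a|-|b|\le|a-b|$ to the bracket and then the kernel bound $\|K\|_\infty < \kappa$ from \eqref{eq: property kernel}, the bracket is at most $\frac{1}{h}\big| K(\frac{x - x_0^j}{h}) - K(\frac{x' - x_0^j}{h})\big| \le \frac{2}{h}\|K\|_\infty \le \frac{2\kappa}{h}$. Substituting gives the uniform bound $\exp\!\big(\frac{\alpha_j h}{2\kappa}\cdot\frac{2\kappa}{h}\big) = \exp(\alpha_j)$, which is exactly \eqref{eq: def density local privacy}; hence each $Z_i^j$ is an $\alpha_j$-differentially private view of $X_i^j$, and since the $d$ channels act independently this yields the $\bm{\alpha}$-CLDP property.

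There is essentially no obstacle here: the argument is the standard calibration of the Laplace mechanism to the $L^\infty$-sensitivity of the released statistic. The only point deserving a word of care is that the role played by $2T^{(j)}$ in Lemma \ref{L: preuve Laplace mech generique} is now played by $2\kappa/h$, which bounds the diameter of the range of $x \mapsto \frac1h K(\frac{x - x_0^j}{h})$ thanks to $\mathrm{supp}(K)\subset[-1,1]$ and $\|K\|_\infty<\kappa$; the scale $\frac{2\kappa}{\alpha_j h}$ of the injected noise is precisely this sensitivity divided by $\alpha_j$. Note also that, in contrast to the truncation-based channel \eqref{eq: def Z covariance}, here no explicit truncation of $X_i^j$ is needed, the boundedness being supplied directly by the kernel $K$.
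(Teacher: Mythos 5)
Your proof is correct and follows essentially the same approach as the paper: write the Laplace conditional density, form the ratio, apply the reverse triangle inequality, and bound the bracket by $\frac{2}{h}\|K\|_\infty \le \frac{2\kappa}{h}$ using \eqref{eq: property kernel}. (Minor aside: your normalization constant $\frac{\alpha_j h}{4\kappa}$ for the Laplace density is the correct one; the one written in the paper's proof has a small typo, immaterial since it cancels in the ratio. Also, $\mathrm{supp}(K)\subset[-1,1]$ is not actually needed here — only $\|K\|_\infty<\kappa$ enters.)
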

The index $h$ introduced in \eqref{eq: def Zij} is small. In particular, we assume $h < 1$. The proof of Lemma \ref{l: privacy procedure} consists in checking property \eqref{eq: def density local privacy}, similarly as in Lemma \ref{l: ldp covariance}.
{\rev The proof of the lemma is rather close to the proof of Lemma \ref{L: preuve Laplace mech generique} and the detailed proof is left to the Appendix.}
\noindent We introduce the kernel density estimator $\hat{\pi}^Z_h$ based on the discrete observations $Z_i^j$, for $i\in \{1, \dots , n \}$ and $j \in \{1,\dots , d \}$. We define, for any $\bm{x_0} \in \R^d$, 
\begin{equation}{\label{eq: kde}}
 \hat{\pi}^Z_h(\bm{x_0}):= \frac{1}{n} \sum_{i = 1}^n \prod_{j = 1}^d Z_i^j= \frac{1}{n} \sum_{i = 1}^n \prod_{j = 1}^d \Big( \frac{1}{h} K(\frac{X_i^j - x_0^j}{h}) + \mathcal{E}_i^j \Big).
\end{equation}
We now prove an upper bound the $L^2$ pointwise risk, showing that $\hat{\pi}^Z_h$ achieves the convergence rate $(\frac{1}{n \prod_{j = 1}^d \alpha_j^2})^{\frac{\beta}{\beta + d}}$. 
\begin{theorem}{\label{th: upper density}}
Assume $\bm{X}_1, \dots , \bm{X}_n$ are iid copies of an $\R^d$ vector $\bm{X}$ whose density $\pi$ belongs to the H\"older class $\mathcal{H}(\beta, \mathcal{L})$ {\modar and $\bm{x_0}\in \mathbb{R}^d$. Let $0<\alpha_j \le 1 $
for any $j \in \{1, \dots , d \}$. }
If $n \prod_{j = 1}^d \alpha_j^2 \rightarrow \infty$, then 
there exist $c > 0$ and $n_0 > 0$ such {\modar that for any $n \ge n_0$,} 
$$\E[|\hat{\pi}^Z_h(\bm{x_0}) - \pi(\bm{x_0})|^2] \le c  (\frac{1}{n \prod_{j = 1}^d \alpha_j^2})^{\frac{\beta}{\beta + d}}.$$  
\end{theorem}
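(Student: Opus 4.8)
The plan is to carry out the standard bias--variance decomposition for the kernel estimator $\hat\pi^Z_h(\bm{x_0})$, exploiting the fact that the injected Laplace noises $\mathcal{E}_i^j$ are centered and independent of the data and of each other. First I would compute the expectation: since $\E[\mathcal{E}_i^j]=0$ and the noises are independent across $j$, we get
\[
\E[\hat\pi^Z_h(\bm{x_0})] = \E\Big[\prod_{j=1}^d \tfrac{1}{h}K\big(\tfrac{X^j-x_0^j}{h}\big)\Big] = \int_{\mathbb{R}^d} \prod_{j=1}^d \tfrac1h K\big(\tfrac{u^j-x_0^j}{h}\big)\,\pi(u)\,du,
\]
which is exactly the classical multivariate kernel-smoothed version of $\pi$ at $\bm{x_0}$ with product kernel $\prod_j K$. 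Using the H\"older assumption $\pi\in\mathcal{H}(\beta,\mathcal{L})$ together with the vanishing-moment conditions \eqref{eq: property kernel} on $K$, a Taylor expansion up to order $\lfloor\beta\rfloor$ gives the bias bound $|\E[\hat\pi^Z_h(\bm{x_0})]-\pi(\bm{x_0})|\le C h^\beta$; this is entirely routine and I would only sketch it, referring to Definition 1.2 and the argument in Section 1.2 of \cite{Tsy}.

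The variance is the part that carries the privacy cost, so I would treat it carefully. Writing $\hat\pi^Z_h(\bm{x_0})=\frac1n\sum_{i=1}^n \xi_i$ with $\xi_i=\prod_{j=1}^d\big(\tfrac1h K(\tfrac{X_i^j-x_0^j}{h})+\mathcal{E}_i^j\big)$ i.i.d., we have $\mathrm{Var}(\hat\pi^Z_h(\bm{x_0}))=\frac1n\mathrm{Var}(\xi_1)\le \frac1n\E[\xi_1^2]$. Expanding the product $\xi_1^2=\prod_{j=1}^d\big(\tfrac1h K(\cdot)+\mathcal{E}_1^j\big)^2$ and using independence of the $\mathcal{E}_1^j$ from each other and from $\bm{X}_1$, the expectation factorizes termwise; the dominant term is the one where every factor contributes its Laplace-noise part, namely $\prod_{j=1}^d \E[(\mathcal{E}_1^j)^2]=\prod_{j=1}^d 2\big(\tfrac{2\kappa}{\alpha_j h}\big)^2 = \frac{C}{h^{2d}\prod_{j=1}^d\alpha_j^2}$. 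The mixed terms, where some factors contribute $\tfrac1h K$ and others contribute $\mathcal{E}_1^j$, are of strictly smaller order: each replacement of a noise factor $\mathcal{E}_1^j$ (which has second moment $\asymp h^{-2}\alpha_j^{-2}$ and first moment $0$) by a kernel factor $\tfrac1h K(\tfrac{X_1^j-x_0^j}{h})$ (whose moments against $\pi$ are $O(h^{-1}\cdot h)=O(1)$ by a change of variables and boundedness of $\pi$, or at worst $O(1/h)$) costs a factor $h^2\alpha_j^2$ or better; since $\alpha_j\le 1$ and $h<1$, all these are dominated. Hence $\mathrm{Var}(\hat\pi^Z_h(\bm{x_0}))\le \frac{C}{n h^{2d}\prod_{j=1}^d\alpha_j^2}$. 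I would present the bookkeeping of these cross terms as the technical core of the proof but keep it at the level of order-of-magnitude estimates rather than explicit constants.

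Combining, $\E[|\hat\pi^Z_h(\bm{x_0})-\pi(\bm{x_0})|^2]\le C\big(h^{2\beta} + \tfrac{1}{n h^{2d}\prod_{j}\alpha_j^2}\big)$, and I would optimize over $h$ by balancing the two terms: setting $h^{2\beta}\asymp \tfrac{1}{n h^{2d}\prod_j\alpha_j^2}$ yields $h\asymp\big(\tfrac{1}{n\prod_j\alpha_j^2}\big)^{1/(2\beta+2d)}$, and plugging back gives the rate $\big(\tfrac{1}{n\prod_j\alpha_j^2}\big)^{\beta/(\beta+d)}$. The assumption $n\prod_j\alpha_j^2\to\infty$ guarantees that this choice of $h$ satisfies $h<1$ for $n$ large enough (say $n\ge n_0$), which is exactly the regime where Lemma \ref{l: privacy procedure} applies and where the Taylor expansion for the bias is valid. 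I expect the main obstacle to be the careful enumeration and bounding of the $2^d$ cross terms in $\E[\xi_1^2]$: one must verify that, uniformly in the choice of which factors are "kernel" versus "noise," every such term is $O\big(\tfrac{1}{h^{2d}\prod_j\alpha_j^2}\big)$ and that the pure-noise term genuinely dominates when $\alpha_j\le 1$ and $h<1$ — the other steps (bias via vanishing moments, final optimization) are standard. Note also the implicit requirement $\beta\ge 1$ so that the vanishing-moment condition in \eqref{eq: property kernel} is nonvacuous; otherwise one uses the $l=0$ condition $\int K=1$ and a zeroth-order H\"older bound, which I would remark on briefly.
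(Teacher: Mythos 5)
Your approach coincides with the paper's: same bias bound via the vanishing-moment Taylor argument, same treatment of the variance (the paper expands $\xi_i=\prod_j(\tfrac1hK+\mathcal{E}_i^j)$ as a sum over subsets $I_k$ of indices and bounds the variance of each summand, whereas you bound $\E[\xi_1^2]$ directly and use $\E[\mathcal{E}_i^j]=0$ to kill the odd cross terms — the same calculation reorganized), same observation that $h\alpha_j^2\le 1$ makes the pure-noise term dominate, and the same optimal $h^*=(n\prod_j\alpha_j^2)^{-1/(2(\beta+d))}$. One small arithmetic slip: replacing a noise factor by a kernel factor costs $h\alpha_j^2$, not $h^2\alpha_j^2$, since $\E\big[(\tfrac1hK)^2\big]=O(1/h)$ rather than $O(1)$ — this does not affect the conclusion because $h\alpha_j^2<1$ still holds under $h<1$, $\alpha_j\le 1$.
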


\noindent This shows that the effects of local differential privacy constraints are severe for non-parametric density estimation, as they lead to a different convergence rate.

\noindent In the case where $\alpha_1 = \dots = \alpha_d$ it is possible to obtain the following result, which provides the threshold which dictates the behaviour of the estimator with respect to the privacy mechanism. Indeed, for $\alpha  \ge n^{\frac{1}{2(2 \beta + d)}}$, we recover the same convergence rate as in absence of privacy.
\begin{theorem}{\label{th: upper density same alpha}}
Assume $\bm{X}_1, \dots , \bm{X}_n$ are iid copies of an $\R^d$ vector $\bm{X}$ whose density $\pi$ belongs to the H\"older class $\mathcal{H}(\beta, \mathcal{L})$, {\modar and $\bm{x_0}\in \mathbb{R}^d$}. 
Then, the following inequalities hold true
\begin{enumerate}
    \item If $\alpha  \ge n^{\frac{1}{2(2 \beta + d)}}$, then
there exist $c > 0$ and $n_0 > 0$ such {\modar that, for any $n \ge n_0$,} 
$$\E[|\hat{\pi}^Z_h(\bm{x_0}) - \pi(\bm{x_0})|^2] \le c  (\frac{1}{n})^{\frac{2 \beta}{2 \beta + d}}.$$
\item If otherwise $\alpha < n^{\frac{1}{2(2 \beta + d)}}$ and $ n \alpha^{2d } \rightarrow \infty$, then there exist $c > 0$ and $n_0 > 0$ such {\modar that, for any $n \ge n_0$,} 
$$\E[|\hat{\pi}^Z_h(\bm{x_0}) - \pi(\bm{x_0})|^2] \le c  (\frac{1}{n \alpha^{2d}})^{\frac{\beta}{\beta + d}}.$$  
\end{enumerate}
\end{theorem}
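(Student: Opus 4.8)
The plan is to reduce both statements to a single bias--variance estimate for $\hat{\pi}^Z_h$ that holds for \emph{every} bandwidth $h\in(0,1)$ and \emph{every} privacy level $\alpha>0$ (in particular, without assuming $\alpha\le1$), and then to insert two different choices of $h$ according to whether $\alpha$ lies above or below the threshold $n^{1/(2(2\beta+d))}$. The only genuinely new feature compared with Theorem~\ref{th: upper density} is that here $\alpha$ may be large, so the ``non-private'' variance term can no longer be absorbed into the ``privacy'' term, and this is precisely what produces the two regimes.

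\emph{Step 1: the bias--variance bound.} Write $\E[|\hat{\pi}^Z_h(\bm{x_0})-\pi(\bm{x_0})|^2]=(\E[\hat{\pi}^Z_h(\bm{x_0})]-\pi(\bm{x_0}))^2+\mathrm{Var}(\hat{\pi}^Z_h(\bm{x_0}))$. Since the $\mathcal{E}^j_i$ are centred and independent of the data and of one another, $\E[\hat{\pi}^Z_h(\bm{x_0})]=h^{-d}\E[\prod_{j=1}^d K((X^j_i-x_0^j)/h)]$, and the usual Taylor argument for a product kernel of order $\beta$ against a density in $\mathcal{H}(\beta,\mathcal{L})$ gives $|\E[\hat{\pi}^Z_h(\bm{x_0})]-\pi(\bm{x_0})|\le C_1h^{\beta}$ with $C_1$ independent of $h$ and $\alpha$, exactly as in the proof of Theorem~\ref{th: upper density}. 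For the variance, use $\mathrm{Var}(\hat{\pi}^Z_h(\bm{x_0}))=\frac1n\mathrm{Var}(\prod_j Z^j_i)\le\frac1n\,\E[\prod_{j=1}^d\E[(Z^j_i)^2\mid X^j_i]]$ together with $\E[(Z^j_i)^2\mid X^j_i]=h^{-2}K^2((X^j_i-x_0^j)/h)+8\kappa^2\alpha^{-2}h^{-2}$ (the Laplace noise has variance $2(2\kappa/(\alpha h))^2$). Expanding this product over $j$ into $2^d$ terms indexed by $S\subseteq\{1,\dots,d\}$, and bounding $\E[\prod_{j\in S}h^{-2}K^2((X^j_i-x_0^j)/h)]\le C\,h^{-|S|}$ using $\|K\|_\infty<\kappa$, $\mathrm{supp}(K)\subset[-1,1]$ and boundedness of the $|S|$-dimensional marginals of $\pi$, one obtains
$$\mathrm{Var}(\hat{\pi}^Z_h(\bm{x_0}))\le \frac{C_2}{n}\Big(\frac{1}{h^{d}}+\frac{1}{h^{2d}\alpha^{2d}}\Big),$$
and hence $\E[|\hat{\pi}^Z_h(\bm{x_0})-\pi(\bm{x_0})|^2]\le C\big(h^{2\beta}+(nh^{d})^{-1}+(nh^{2d}\alpha^{2d})^{-1}\big)$ for all $h\in(0,1)$ and all $\alpha>0$.

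\emph{Step 2: the two bandwidth choices.} For (1) take $h=n^{-1/(2\beta+d)}<1$: then $h^{2\beta}=(nh^{d})^{-1}=n^{-2\beta/(2\beta+d)}$, while $(nh^{2d}\alpha^{2d})^{-1}=n^{(d-2\beta)/(2\beta+d)}\alpha^{-2d}\le n^{-2\beta/(2\beta+d)}$ precisely because $\alpha\ge n^{1/(2(2\beta+d))}$, so the MSE is $\le Cn^{-2\beta/(2\beta+d)}$. For (2) take $h=(n\alpha^{2d})^{-1/(2(\beta+d))}$, which is $<1$ as soon as $n\alpha^{2d}>1$, hence for all $n\ge n_0$ since $n\alpha^{2d}\to\infty$: then $h^{2\beta}=(nh^{2d}\alpha^{2d})^{-1}=(n\alpha^{2d})^{-\beta/(\beta+d)}$, while $(nh^{d})^{-1}=n^{-1}(n\alpha^{2d})^{d/(2(\beta+d))}$, and a short computation shows this last quantity is $\le(n\alpha^{2d})^{-\beta/(\beta+d)}$ exactly when $\alpha\le n^{1/(2(2\beta+d))}$, which is the standing assumption of case (2); summing the three terms yields the MSE $\le C(n\alpha^{2d})^{-\beta/(\beta+d)}$.

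The routine ingredients are the kernel bias estimate and the marginal-boundedness bound used for the variance, both of which mirror the proof of Theorem~\ref{th: upper density}. The one point that requires care is verifying that the middle variance term $(nh^d)^{-1}$ never dominates: it is here that the exponent $1/(2(2\beta+d))$ enters, being exactly the value of $\alpha$ at which the non-private term $(nh^d)^{-1}$ and the privacy term $(nh^{2d}\alpha^{2d})^{-1}$ coincide for the respective optimal bandwidths. I expect this bookkeeping to be the only, and fairly mild, obstacle.
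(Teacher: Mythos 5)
Your proof is correct and follows essentially the same strategy as the paper's: the same bias--variance decomposition, the same two bandwidth choices $h=n^{-1/(2\beta+d)}$ and $h=(n\alpha^{2d})^{-1/(2(\beta+d))}$, and the same role for the threshold $n^{1/(2(2\beta+d))}$. The only cosmetic difference is that you consolidate the variance sum $\frac{c}{n(h\alpha)^{2d}}\sum_{k=0}^d\binom{d}{k}(h\alpha^2)^k$ into the clean two-term bound $\frac{C}{n}(h^{-d}+h^{-2d}\alpha^{-2d})$ (implicitly via $(a+b)^d\lesssim a^d+b^d$) before plugging in, whereas the paper instead identifies which single term of the sum dominates by checking whether $h\alpha^2\gtrless 1$; both routes produce the same estimates with the same bookkeeping.
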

The proof of these two results can be found in Section \ref{s: Proof locally density} of the Appendix.

\begin{remark}
The above result indicates that a threshold for the behavior of a system with and without privacy is determined by $n^{\frac{1}{2(2 \beta + d)}}$. If $\alpha$ is greater than this value, it means that the level of privacy provided is not significant enough to degrade the convergence rate of our estimator compared to the case without privacy. 
However, if $\alpha$ is smaller than $n^{\frac{1}{2(2 \beta + d)}}$, then the level of privacy provided is sufficient to reduce the statistical utility, leading to a deterioration of the convergence rate as a function of $\alpha$. It is essential to note that it is impossible to achieve perfect privacy even in this context ($\alpha = 0$). The  condition that $n \alpha^{2d} \rightarrow \infty$ 
must indeed be satisfied, which is the price to pay for allowing statistical inference.
\end{remark}

\subsubsection{Lower bound for density estimation}{\label{s: density lower}}
We can now derive minimax lower bound, based on the key result gathered in Theorem \ref{th: main bound} and its consequences.

\begin{theorem}{\label{th: lower density}}
Let $\alpha_j \in (0, \infty)$ for $j \in \{1, \dots , d \}$ and let $\beta, \mathcal{L} > 0$. Then, there exists a constant $c > 0$ such that
$$\inf_{\bm{Q} \in \mathcal{Q}_{\bm{\alpha}}}\inf_{\tilde{\pi}} \sup_{\pi \in \mathcal{H}(\beta, \mathcal{L})} \E[|\tilde{\pi}(\bm{x_0}) - \pi(\bm{x_0})|^2]\ge c \Big( n \, \prod_{j = 1}^d(e^{\alpha_j} -1)^{2} \Big)^{- \frac{\beta}{\beta + d}},$$
for all $n \ge 1$, $n \prod_{j = 1}^d |e^{\alpha_j} - 1| \rightarrow \infty$. The infimum is taken over all the estimators $\tilde{\pi}$ based on the privatized vectors $\bm{Z}_1, \dots , \bm{Z}_n $ and all the non-interactive Markov kernels in $\mathcal{Q}_{\bm{\alpha}}$ guaranteeing ${\bm{\alpha}}$-CLDP.
\end{theorem}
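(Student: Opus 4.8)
The plan is to apply Le Cam's two-point method, exactly as in the proof of Theorem~\ref{th: lower joint}, with the divergence between the privatized samples controlled through Corollary~\ref{cor: main samples}. The whole difficulty is concentrated in the choice of the two competing densities: they must both lie in $\mathcal{H}(\beta,\mathcal{L})$, differ by a prescribed amount at $\bm{x_0}$, and --- and this is the essential point --- share all their marginals of order strictly less than $d$, so that the hypothesis $L_{\bm{X}_h^{(S)}}=L_{\tilde{\bm{X}}_h^{(S)}}$ for every strict subset $S\subset\{1,\dots,d\}$ required by Corollary~\ref{cor: main samples} holds, and the resulting $d_{KL}$ bound carries the factor $\prod_{j=1}^d(e^{\alpha_j}-1)^2$ rather than a cruder one.

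Concretely, I would fix a reference density $\pi_0$ on $\R^d$ lying in the interior of the H\"older class, say $\pi_0\in\mathcal{H}(\beta,\mathcal{L}/2)$, continuous and strictly positive at $\bm{x_0}$, hence bounded below by some $c_1>0$ on a fixed ball $B(\bm{x_0},r_0)$. Then pick $\Psi\in C^\infty_c([-1,1])$ with $\int_\R\Psi=0$ and $\Psi(0)\neq0$, and for a bandwidth $h\in(0,r_0)$ and a small constant $c_0>0$ set
$$\pi_1(\bm{x})=\pi_0(\bm{x})+c_0\,h^{\beta}\prod_{j=1}^d\Psi\!\Big(\frac{x^j-x_0^j}{h}\Big).$$
The tensor-product form of the perturbation is what does the work: integrating out any nonempty set of coordinates annihilates it, since each omitted factor contributes $h\int_\R\Psi=0$; hence $\pi_0$ and $\pi_1$ have identical marginals on every proper subset of $\{1,\dots,d\}$, and in particular $\int\pi_1=1$. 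A routine rescaling argument (the $d$-dimensional analogue of the one-dimensional construction in \cite{Tsy}) shows $\big\|\prod_{j=1}^d\Psi((\,\cdot\,-x_0^j)/h)\big\|_{\mathcal{H}(\beta)}\lesssim h^{-\beta}$, so the perturbation belongs to $\mathcal{H}(\beta,\mathcal{L}/2)$ provided $c_0$ is small enough, whence $\pi_1\in\mathcal{H}(\beta,\mathcal{L})$; and since its sup-norm is $c_0 h^{\beta}\|\Psi\|_\infty^d\to0$ while $\pi_0\ge c_1$ on the support of the perturbation, $\pi_1\ge0$ for $h$ small. Finally $|\pi_1(\bm{x_0})-\pi_0(\bm{x_0})|=c_0|\Psi(0)|^d\,h^{\beta}$ and $d_{TV}(P,\tilde P)=\int_{\R^d}|\pi_1-\pi_0|=c_0\|\Psi\|_1^d\,h^{\beta+d}$, where $P,\tilde P$ denote the one-observation laws with densities $\pi_0,\pi_1$.

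With this in hand, Corollary~\ref{cor: main samples} gives, uniformly over all non-interactive kernels $\bm{Q}\in\mathcal{Q}_{\bm{\alpha}}$,
$$d_{KL}(M^n,\tilde M^n)\le n\Big(\prod_{j=1}^d(e^{\alpha_j}-1)\Big)^2 d_{TV}(P,\tilde P)^2=c_0^2\|\Psi\|_1^{2d}\;n\Big(\prod_{j=1}^d(e^{\alpha_j}-1)^2\Big)\,h^{2(\beta+d)}.$$
Choosing $h=h_n$ so that this right-hand side equals a fixed small constant, i.e. $h_n\asymp\big(n\prod_{j=1}^d(e^{\alpha_j}-1)^2\big)^{-\frac{1}{2(\beta+d)}}$ (admissible for $n$ large under the assumption of the theorem, and truncated at $r_0$ otherwise), the two-point argument --- reduce estimation of $\pi(\bm{x_0})$ to testing $\pi_0$ against $\pi_1$, bound any estimator below by $\tfrac12|\pi_1(\bm{x_0})-\pi_0(\bm{x_0})|$ times the indicator that the induced test errs, and use that the total testing error is bounded below by an absolute constant when $d_{KL}(M^n,\tilde M^n)$ is small --- yields, uniformly over $\bm{Q}\in\mathcal{Q}_{\bm{\alpha}}$ and over estimators $\tilde\pi$,
$$\sup_{\pi\in\{\pi_0,\pi_1\}}\E[|\tilde\pi(\bm{x_0})-\pi(\bm{x_0})|^2]\;\gtrsim\;h_n^{2\beta}\;\asymp\;\Big(n\prod_{j=1}^d(e^{\alpha_j}-1)^2\Big)^{-\frac{\beta}{\beta+d}},$$
which is the announced bound after taking the infimum over $\tilde\pi$ and over $\bm{Q}$.

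The main obstacle is the construction of $\pi_1$ in the second paragraph: one must satisfy simultaneously membership in $\mathcal{H}(\beta,\mathcal{L})$, non-negativity, unit mass, the prescribed gap at $\bm{x_0}$, and the vanishing of all marginals of order below $d$ --- this last property being precisely what allows Corollary~\ref{cor: main samples} to be invoked and is responsible for the factor $\prod_{j=1}^d(e^{\alpha_j}-1)^2$ in the rate (a non-product bump would force using $d_{TV}(P,\tilde P)$ in the full Theorem~\ref{th: main bound} and degrade the exponent). The tensor bump $\prod_{j=1}^d\Psi((\,\cdot\,-x_0^j)/h)$ is tailored to meet all of these at once; checking that its $\mathcal{H}(\beta)$-norm scales like $h^{-\beta}$ is the only genuinely computational step, and it is entirely analogous to the standard one-dimensional computation.
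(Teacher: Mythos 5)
Your construction is essentially identical to the paper's: a fixed reference density plus a tensor-product bump $h^\beta\prod_j\Psi((x^j-x_0^j)/h)$ with mean-zero univariate factor, so that all strict marginals coincide and Corollary~\ref{cor: main samples} applies, followed by the Le Cam two-point argument with $h$ calibrated to make the KL divergence small. The paper merely parameterizes via $1/M_n=h_n^\beta$ and uses the concrete reference $\pi(\bm{x})=c_\pi e^{-\eta|x|^2}$ instead of an abstract $\pi_0\in\mathcal{H}(\beta,\mathcal{L}/2)$, but the argument is the same.
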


\begin{remark}
When the privacy parameters $\alpha_j$ are small, it is clear that the upper and lower bounds in Theorems \ref{th: upper density} and \ref{th: lower density} match each other. This suggests that the proposed privacy mechanism is optimal (in the minimax sense) as long as a reasonable amount of privacy is ensured for all the components (i.e., $\alpha_j < 1$ for any $j \in \{1, \dots , d \}$).
\end{remark}

\begin{remark}{\label{r: rate density but}}
One can compare the deterioration of the convergence rate gathered in our Theorem \ref{th: lower density} (and the corresponding upper bound in Theorem \ref{th: upper density same alpha}) with the results in \cite{But}, which focuses on estimating the density under privacy constraints using $n$ independent and identically distributed random variables $X_1, \dots, X_n$. Their analysis on Besov spaces $\mathcal{B}_{pq}^s$ under mean integrated $L^r$-risk revealed an elbow effect that led to the optimal (in the minimax sense) convergence rate of $(n(e^{\alpha}-1)^2)^{- \frac{rs}{2s + 2}}$ whenever $p > \frac{r}{s + 1}$ (see Equation (1.2) in \cite{But}). Using our notation, this rate corresponds to $(n(e^{\alpha}-1)^2)^{- \frac{2\beta}{2 \beta + 2}} = (n(e^{\alpha}-1)^2)^{- \frac{\beta}{ \beta + 1}} $ and the condition on $p$ reduces to $2 > \frac{2}{\beta + 1}$, which is always true. Therefore, it is evident that our results match those of \cite{But} when considering $d=1$, but they are in general different as in the case where $\alpha_1 = \dots = \alpha_d = \alpha$ the size $(e^{\alpha}-1)^2$ in \cite{But} is now replaced by $(e^{\alpha}-1)^{2d}$.
\end{remark}

\begin{proof}[Proof of Theorem \ref{th: lower density}]
We can assume without loss of generality that $\bm{x_0} = \bm{0}$, the general case can be deduced by translation. \\
The proof of the lower bound relies on the two hypothesis method, as in Section 2.3 of
 \cite{Tsy}. It consists in proposing $\pi$ and $\pi^*$, densities of $\bm{X}$ and $\bm{X}^*$ and with privatized views $\bm{Z}$ and $\bm{Z}^*$, such that the following three conditions hold true: 
\begin{enumerate}
    \item $\pi$ and $\pi^*$ belong to $\mathcal{H}(\beta, \mathcal{L})$,
    \item $|\pi(\bm{0}) - \pi^* (\bm{0})| \ge \frac{1}{M_n}$,
    \item $\exists c > 0$ such that $d_{KL} \Big( Law((\bm{Z}_{i})_{i = 1, \dots , n}), Law((\bm{Z}^*_{i})_{i = 1, \dots , n})  \Big) < \epsilon_0 < 2$,
\end{enumerate}
where $\frac{1}{M_n}$ is a calibration parameter which will be chosen later, in order to obtain the wanted convergence rate. 
If the constraints above are satisfied, then in the same way as in the proof of Theorem \ref{th: lower joint} it follows there exists $c>0$ such that
\begin{equation}{\label{eq: goal lower}}
\inf_{Q \in \mathcal{Q}_{\bm{\alpha}}} \inf_{\tilde{\pi}} \sup_{\pi \in \mathcal{H}(\beta, \mathcal{L})} \E[|\tilde{\pi}(\bm{0}) - \pi(\bm{0})|^2]\ge c (\frac{1}{M_n})^2.
\end{equation}
Let us define, for any $\bm{x} \in \R^d$, $\pi(\bm{x}) := c_\pi e^{- \eta |x|^2}. $
The constant $\eta$ can be chosen as small as we want, while $c_\pi$ is a normalization constant added in order to get $\int_{\R^d} \pi(\bm{x}) d\bm{x} = 1$. Regarding $\pi^*$, we give it as $\pi$ to which we add a bump. Let $\tilde{\psi}: \R \rightarrow \R$ be a $C^\infty$ function with support on $[-1, 1]$ and such that $\tilde{\psi}(0) = 1$, $\int_{-1}^1 \tilde{\psi}(z) dz = 0.$
Then, we set 
$\pi^*(\bm{x}) := \pi(\bm{x}) + \frac{1}{M_n} \prod_{l = 1}^d \tilde{\psi}(\frac{x^l}{h_n}) =: \pi(\bm{x}) + \frac{1}{M_n} \psi_{h_n}(\bm{x}).  $
As $\frac{1}{M_n}$, $h_n$ will be calibrated later. The two calibration constants satisfy $M_n \rightarrow \infty$ for $n \rightarrow \infty$ and $h_n \rightarrow 0$ for $n \rightarrow \infty$. \\
\\
It is easy to check Condition 1 holds true. Indeed, we can choose $\eta$ small enough to obtain $\pi \in \mathcal{H}(\beta, \mathcal{L})$ and a similar reasoning ensures also that $\pi^* \in \mathcal{H}(\beta, \mathcal{L})$. However, $\left \| \psi_{h_n}^{(k)}\right \|_\infty \le \frac{c}{h_n^k} $ and so in the k-derivative of $\pi^*$ an extra $\frac{1}{M_n h_n^k}$ appears. It implies we have to ask the existence of some constant $c> 0$ such that $\frac{1}{M_n h_n^k} < c$ for any $k \in \{0, \dots, \lfloor \beta \rfloor \}$, in order to obtain $\pi^* \in \mathcal{H}(\beta, \mathcal{L})$. Thus, for some $c > 0$ it naturally arises the condition 
$\frac{1}{M_n h_n^\beta} < c.$ \\
Concerning Condition 2, by construction and from the properties of the function $\tilde{\psi}$ it is 
$$|\pi(\bm{0}) - \pi^*(\bm{0})| = |\frac{1}{M_n} \psi_{h_n}(\bm{0})| = \frac{1}{M_n} \prod_{l = 1}^d |\tilde{\psi}(0)| = \frac{1}{M_n}. $$
We are left to prove Condition 3. We observe that, for any 
{\revnew $S \subset \{1, ... , d \}$,} it is {\revnew $L_{\bm{X}^{(S)}} = L_{\bm{X}^{*,(S)}}$}.
 Indeed, {\revnew for any $k \in \{1, ... , d-1 \}$ and $S = \{i_1, ... , i_k \}$, } the law of {\revnew $\bm{X}^{*,(S)}$} is given by
\begin{align*}
& \int_{\R^{d -k}} \Big( \pi(x^1, \dots , x^d) + \frac{1}{M_n} \psi_{h_n}(x^1, \dots , x^d)\Big) \prod_{j: \, j\notin \{ i_1, \dots , i_k \} } dx^j \\
& = \int_{\R^{d -k}}  \pi(x^1, \dots , x^d) \prod_{j: \, j\notin \{ i_1, \dots , i_k \} } dx^j +  \frac{1}{M_n} \int_{\R^{d -k}} \prod_{l = 1}^d \tilde{\psi}(\frac{x^l}{h_n}) \prod_{j: \, j\notin \{ i_1, \dots , i_k \} } dx^j \\
& = \int_{\R^{d -k}}  \pi(x^1, \dots , x^d) \prod_{j: \, j\notin \{ i_1, \dots , i_k \} } dx^j  = {\revnew L_{\bm{X}^{(S)}}},
\end{align*}
where we have used that the integrals of $\tilde{\psi}$ are $0$ by construction. Thus, we can use Corollary \ref{cor: main samples}. It yields 
\begin{equation}{\label{eq: bound in cor}}
d_{KL} \Big( Law((\bm{Z}_{i})_{i = 1, \dots , n}), Law((\bm{Z}^*_{i})_{i = 1, \dots , n})  \Big) \le  {\modar n \times } \prod_{j = 1}^d |e^{\alpha_j} - 1|^{2} \Big( d_{TV} (Law(\bm{X}), Law(\bm{X}^*))  \Big)^2.
\end{equation}
To conclude, we observe it is 
\begin{align}{\label{eq: end bound}}
\Big( d_{TV} (Law(\bm{X}), Law(\bm{X}^*))  \Big)^2 & \le \Big( \int_{\R^d} \frac{1}{M_n} \psi_{h_n} (x^1, \dots , x^d) dx^1, \dots , dx^d  \Big)^2  \le c \frac{h_n^{2d}}{M_n^2}. 
\end{align}
From \eqref{eq: bound in cor} and \eqref{eq: end bound} we get there exists some constant $c_k > 0$ such that
$$d_{KL} \Big( Law((\bm{Z}_{i})_{i = 1, \dots , n}), Law((\bm{Z}^*_{i})_{i = 1, \dots , n})  \Big) \le c_k n \prod_{j = 1}^d |e^{\alpha_j} - 1|^{2} \frac{h_n^{2d}}{M_n^2}. $$
Hence, Condition 3 holds true up to say that $ c_k n \prod_{j = 1}^d |e^{\alpha_j} - 1|^{2} \frac{h_n^{2d}}{M_n^2}$ is bounded by some $\epsilon_0 < 2$. \\
The constraint given by Condition 1 leads us to the choice $h_n = (\frac{1}{M_n})^\frac{1}{\beta}$, which entails
$c_k n  \prod_{j = 1}^d |e^{\alpha_j} - 1|^{2} \Big(\frac{1}{M_n}\Big)^{\frac{2d}{\beta} + 2} < \epsilon_0$. It holds true if and only if 
$\Big(\frac{1}{M_n}\Big)^{\frac{2d + 2 \beta}{\beta}} < \frac{\epsilon_0}{c_k n \prod_{j = 1}^d |e^{\alpha_j} - 1|^{2}}.$
We therefore choose 
$\frac{1}{M_n} = \Big(\frac{\epsilon_0}{c_k n \prod_{j = 1}^d |e^{\alpha_j} - 1|^{2} }\Big)^{\frac{\beta}{2(d + \beta)}}.$ \\
Equation \eqref{eq: goal lower} concludes then the proof.

\end{proof}

\subsubsection{Adaptive density estimation}{\label{s: adaptive density}}
As seen in previous subsection, the proposed procedure leads us to the choice of a bandwidth which depends on the regularity $\beta$, that is in general unknown. This motivates a data-driven procedure for the choice of $h$. \\
We introduce the set of candidate bandwidths 
\begin{equation}{\label{eq: def Hn}}
H_n := \left \{ h \in (0, 1]: \, \mbox{ such that } \frac{1}{h} = \frac{n}{2^r} \mbox{ for some } r \in \{1, \dots, \lfloor \log_2(n) \rfloor \}\right \}.
\end{equation}
In a similar way as in Section \ref{Sss: adaptive} we introduce, for $j \in \{1, \dots , d \}$, some parameters $\beta_n^j > 0$ that will be better specified later. For any $i \in \{1, \dots , n \}$ and $j \in \{1, \dots , d \}$ the independent variables $\mathcal{E}_i^{j,h}$ are distributed as Laplace random variables with law $\mathcal{L}(\frac{2 \kappa}{h \beta_n^j})$ for all $h \in H_n$, where $\kappa$ is defined in \eqref{eq: property kernel}. We therefore define the privatized data $\bm{Z}_i = (Z_i^1, \dots , Z_i^d) \in \R^{H_n} \times \dots \times \R^{H_n}$ by $Z_i^j = (Z_i^{j,h})_{h \in H_n}$; 
\begin{equation}{\label{eq: def Zh}}
 Z_i^{j,h} := \frac{1}{h} K(\frac{X_i^j - x_0^j}{h}) + \mathcal{E}_i^{j,h}   
\end{equation}
for $h \in H_n$, $i \in \{1, \dots , n \}$ and $j \in \{1, \dots , d \}$. The set of potential estimators is defined accordingly: 
$$\mathcal{F}(H_n) := \left \{ \hat{\pi}^z_h (\bm{x_0}) = \frac{1}{n} \sum_{i = 1}^n \prod_{j = 1}^d Z_i^{j,h} \quad h \in H_n \right \}.$$ 
By following closely the proof of Lemma \ref{l: privacy procedure} it is easy to check the following lemma, whose proof is in the Appendix.
\begin{lemma}{\label{l: privacy adaptive density}}
Assume that $\beta_n^j= \frac{\alpha_j}{\lfloor \log_2 n \rfloor}$ for any $j \in \{1, \dots , d \}$. Then, the privatized variables described in \eqref{eq: def Zh} are $\bm{\alpha}$-local differential private views of the original $X_i^j$. 
\end{lemma}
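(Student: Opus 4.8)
The plan is to mirror the argument of Lemma~\ref{l: privacy adaptive covariance} almost verbatim, the only structural change being that the truncation map $x\mapsto\tronc{x}{T}$ is replaced by the bounded kernel map $x\mapsto\frac1h K\!\big(\frac{x-x_0^j}{h}\big)$, whose sensitivity is controlled not by a Lipschitz estimate but simply by the uniform bound $\|K\|_\infty<\kappa$ from \eqref{eq: property kernel}. First I would fix $j\in\{1,\dots,d\}$ and $i\in\{1,\dots,n\}$ and write down the conditional density $q^j\big((z^{j,h})_{h\in H_n}\mid X^j_i=x\big)$ of the channel output $Z_i^j=(Z_i^{j,h})_{h\in H_n}$ with respect to Lebesgue measure on $\R^{H_n}$: since the noises $\mathcal{E}_i^{j,h}$, $h\in H_n$, are independent and each $\mathcal{E}_i^{j,h}\sim\mathcal{L}\!\big(\frac{2\kappa}{h\beta_n^j}\big)$, this density equals the product over $h\in H_n$ of $\frac{h\beta_n^j}{4\kappa}\exp\!\big(-\frac{h\beta_n^j}{2\kappa}\,\abs*{z^{j,h}-\tfrac1h K(\tfrac{x-x_0^j}{h})}\big)$.

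Next I would form the likelihood ratio between two values $x,x'\in\R$. By the product structure it factorizes over $h\in H_n$, and on each factor the reverse triangle inequality gives the bound $\exp\!\big(\frac{h\beta_n^j}{2\kappa}\,\abs*{\tfrac1h K(\tfrac{x-x_0^j}{h})-\tfrac1h K(\tfrac{x'-x_0^j}{h})}\big)$. Because $\|K\|_\infty<\kappa$, the argument of the absolute value is at most $\frac{2\kappa}{h}$, so each factor is at most $e^{\beta_n^j}$. Taking the product over the $\mathop{card}(H_n)=\lfloor\log_2 n\rfloor$ candidate bandwidths, and inserting $\beta_n^j=\alpha_j/\lfloor\log_2 n\rfloor$, yields
$$\sup_{(z^{j,h})_h}\ \frac{q^j\big((z^{j,h})_h\mid X^j_i=x\big)}{q^j\big((z^{j,h})_h\mid X^j_i=x'\big)}\ \le\ \exp\!\big(\mathop{card}(H_n)\,\beta_n^j\big)=\exp(\alpha_j),$$
which is exactly the $\alpha_j$-CLDP condition \eqref{eq: def density local privacy} for the channel sending $X_i^j$ to $Z_i^j$; repeating this for every $j\in\{1,\dots,d\}$ shows the mechanism is $\bm{\alpha}$-CLDP.

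I do not expect any genuine obstacle: the computation is routine and essentially identical to Lemma~\ref{l: privacy adaptive covariance}. The one point worth stating explicitly (and, as the authors note for the analogous lemma, the reason it is worth spelling out) is why the extra $\log_2 n$ factor appears in the scale of the Laplace noise: each raw datum $X_i^j$ is now disclosed through $\mathop{card}(H_n)\asymp\log_2 n$ correlated noisy views, so the per-view privacy budget must be shrunk to $\alpha_j/\lfloor\log_2 n\rfloor$ in order to keep the aggregated budget at $\alpha_j$, which is precisely what the prescribed choice of $\beta_n^j$ achieves.
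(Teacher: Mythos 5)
Your proposal is correct and follows essentially the same route as the paper's proof: factor the conditional density over $h\in H_n$ by independence, apply the reverse triangle inequality to each Laplace factor, bound the kernel difference by $2\kappa$ from $\|K\|_\infty<\kappa$, and multiply the $\mathop{card}(H_n)=\lfloor\log_2 n\rfloor$ per-bandwidth bounds $e^{\beta_n^j}$ to obtain $e^{\alpha_j}$. Your concluding remark on why the scale parameter must shrink by $\lfloor\log_2 n\rfloor$ matches the remark the authors make after Lemma~\ref{l: privacy adaptive covariance}.
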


\noindent As our adaptive procedure is based on Goldenshluger-Lepski method, we want to introduce an auxiliary estimator. For any $h, \eta \in H_n$ we set 
$$\hat{\pi}_{h,\eta}^z (\bm{x_0}) := \hat{\pi}^z_{h \land \eta} (\bm{x_0}) = \frac{1}{n} \sum_{i = 1}^n \prod_{j = 1}^d \frac{1}{h \land \eta} K(\frac{X_i^j - x_0^j}{h \land \eta}) + \mathcal{E}_i^{j,h \land \eta}. $$
Clearly it is $\hat{\pi}_{h,\eta}^z (\bm{x_0}) = \hat{\pi}_{\eta, h}^z (\bm{x_0})$. 
{\modar In the proof of Theorem \ref{th: upper density} given in the Appendix, we obtain the bound \eqref{eq: var} for the variance of the kernel estimator. This yield us to introduce the}
 penalization term $\mathbb{V}_h := a_n \frac{1}{n h^{2d}} \frac{1}{\prod_{j = 1}^d (\beta_n^j)^2}, $
for $a_n \ge 1$ some sequence tending slowly to $\infty$, which will be specified in Theorem \ref{th: adaptive density} below. We also define, for any $h \in H_n$, $\mathbb{B}_h := \sup_{\eta \in H_n} \left \{ ( | \hat{\pi}_{h,\eta}^z(\bm{x_0}) - \hat{\pi}_{\eta}^z(\bm{x_0}) |^2 - \mathbb{V}_\eta )_+ \right \}.$
Heuristically, $\mathbb{V}_h$ plays the role of the variance while $\mathbb{B}_h$ plays the role of the bias; the chosen bandwidth for the adaptive procedure is the one that minimizes their sum, i.e. $\hat{h} := argmin_{h \in H_n} \{ \mathbb{B}_h + \mathbb{V}_h \}.$
The associated adaptive estimator is $\hat{\pi}_{\hat{h}}^z(\bm{x_0})$, for which the following theorem holds true.
\begin{theorem}{\label{th: adaptive density}}
Assume that $\pi \in \mathcal{H}(\beta, \mathcal{L})$ for some $\beta$ and $\mathcal{L} \ge 1$. Moreover, $\beta_n^j = \frac{\alpha_j}{ \lfloor \log_2 n \rfloor}$ for any $j \in \{1, \dots , d \}$ and $a_n = {c_0} \log n$ for some ${c_0} > 0$. {\modar If $c_0$ is large enough, there} exist $c> 0$ and $\bar{c} > 0$ such that 
$$\E [(\hat{\pi}_{\hat{h}}^z(\bm{x_0}) - \pi (\bm{x_0}))^2] \le c (\frac{\log n^{1 + 2d}}{n \prod_{j = 1}^d \alpha_j^2})^{\frac{\beta}{\beta + d}} + \frac{c}{n^{\bar{c}}\prod_{j=1}^d \alpha_j^2}$$
for all $n \ge 1$, $\alpha_j \le 1$ and $\frac{n \prod_{j = 1}^d \alpha_j^2}{\log n^{1 + 2d}} \ge 1$. Moreover, the constant $\bar{c}$ can be chosen arbitrarily large, taking the constant $c_0$ large enough.
\end{theorem}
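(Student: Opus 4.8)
The plan is to run the Goldenshluger--Lepski bandwidth--selection argument, in the same spirit as the proof of Theorem~\ref{th: est gamma adaptive} for the joint moment, so that the whole analysis rests on two inputs already produced by the non-adaptive study: the \emph{bias bound} $|\bar\pi_h(\bm{x_0})-\pi(\bm{x_0})|\le c\,\mathcal{L}\,h^{\beta}$, where $\bar\pi_h(\bm{x_0}):=\E[\hat\pi^z_h(\bm{x_0})]=\E\big[\prod_{j=1}^d\tfrac1h K(\tfrac{X_1^j-x_0^j}{h})\big]$ (valid because $\pi\in\mathcal{H}(\beta,\mathcal{L})$ and the $\mathcal{E}_i^{j,h}$ are centered and independent of the data, and because $K$ has vanishing moments up to order $\lfloor\beta\rfloor$, exactly as in the proof of Theorem~\ref{th: upper density}), and the \emph{variance bound} $\mathrm{Var}\big(\hat\pi^z_h(\bm{x_0})\big)\le c\,v_h$ with $v_h:=\big(n\,h^{2d}\prod_{j=1}^d(\beta_n^j)^2\big)^{-1}$, which is the bound \eqref{eq: var} obtained in the proof of Theorem~\ref{th: upper density}. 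With $\beta_n^j=\alpha_j/\lfloor\log_2 n\rfloor$ and $a_n=c_0\log n$ one has $\mathbb{V}_h\asymp (\log n)^{2d+1}\big(n\,h^{2d}\prod_j\alpha_j^2\big)^{-1}$.

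First I would prove the deterministic Goldenshluger--Lepski inequality: for every $h\in H_n$,
$$|\hat\pi^z_{\hat h}(\bm{x_0})-\pi(\bm{x_0})|^2\le C\Big(\mathbb{B}_h+\mathbb{V}_h+|\hat\pi^z_h(\bm{x_0})-\pi(\bm{x_0})|^2\Big),$$
obtained by inserting $\hat\pi^z_{h,\hat h}(\bm{x_0})=\hat\pi^z_{\hat h,h}(\bm{x_0})$, invoking the two defining bounds $|\hat\pi^z_{h,\hat h}-\hat\pi^z_{\hat h}|^2\le\mathbb{B}_h+\mathbb{V}_{\hat h}$ and $|\hat\pi^z_{\hat h,h}-\hat\pi^z_h|^2\le\mathbb{B}_{\hat h}+\mathbb{V}_h$, together with the minimality $\mathbb{B}_{\hat h}+\mathbb{V}_{\hat h}\le\mathbb{B}_h+\mathbb{V}_h$. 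I then specialize to $h=h^*$, the element of $H_n$ realizing up to a factor $2$ the balance $\mathbb{V}_{h^*}\asymp\mathcal{L}^2(h^*)^{2\beta}$; a direct computation with the above expression for $\mathbb{V}_h$ gives $(h^*)^{2\beta}\asymp\mathbb{V}_{h^*}\asymp\big((\log n)^{1+2d}/(n\prod_j\alpha_j^2)\big)^{\beta/(\beta+d)}$, which is the announced rate, while the bias/variance split yields $\E[|\hat\pi^z_{h^*}(\bm{x_0})-\pi(\bm{x_0})|^2]\le c\,\mathcal{L}^2(h^*)^{2\beta}+c\,v_{h^*}\lesssim\text{rate}$. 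It then only remains to show that $\E[\mathbb{B}_{h^*}]\lesssim\text{rate}+ c\,(n^{\bar c}\prod_j\alpha_j^2)^{-1}$.

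To bound $\E[\mathbb{B}_{h^*}]$ I would, for each $\eta\in H_n$, decompose $\hat\pi^z_{h^*,\eta}-\hat\pi^z_\eta$ into the deterministic part $\bar\pi_{h^*\wedge\eta}-\bar\pi_\eta$ and the two centered fluctuations $\xi_{h^*\wedge\eta}:=\hat\pi^z_{h^*\wedge\eta}-\bar\pi_{h^*\wedge\eta}$, $\xi_\eta:=\hat\pi^z_\eta-\bar\pi_\eta$. By the structure of the Goldenshluger--Lepski comparison (handled exactly as in the proof of Theorem~\ref{th: est gamma adaptive}), the deterministic part is shown to be controlled, uniformly over $\eta$, by the bias at $h^*$, namely by $c\,\mathcal{L}(h^*)^{\beta}$, so it contributes $O(\mathcal{L}^2(h^*)^{2\beta})=O(\text{rate})$ to $\mathbb{B}_{h^*}$. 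For the stochastic part I would use that $\hat\pi^z_h(\bm{x_0})$ is an average of $n$ i.i.d.\ summands $\prod_{j=1}^d\big(\tfrac1h K(\tfrac{X_i^j-x_0^j}{h})+\mathcal{E}_i^{j,h}\big)$; since in each coordinate the factor is a bounded quantity plus a Laplace variable, the product is sub-Weibull of order $1/d$ with variance proxy $v_h$, and a Bernstein/Bennett--type deviation inequality controls $\P(|\xi_h|>t)$. Taking $t\asymp\sqrt{\mathbb{V}_\eta}=\sqrt{a_n v_\eta}$ with $a_n=c_0\log n$ makes each exceedance probability at most $n^{-c'c_0}$, and a union bound over $|H_n|\le\log_2 n$ bandwidths gives $\P(\mathcal{B})\le n^{-c'c_0+1}$ for the corresponding bad event $\mathcal{B}$; on $\mathcal{B}^c$ all the quantities $(3\xi_{h^*\wedge\eta}^2+3\xi_\eta^2-\mathbb{V}_\eta)_+$ vanish, so this part of $\mathbb{B}_{h^*}$ is $0$, while on $\mathcal{B}$ a Cauchy--Schwarz estimate combined with crude fourth--moment bounds for $\hat\pi^z_h(\bm{x_0})$ (largest at $h_{\min}\asymp 1/n$) gives a contribution $\lesssim n^{-\bar c}\big(\prod_j\alpha_j^2\big)^{-1}$, with $\bar c$ arbitrarily large for $c_0$ large. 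Collecting the deterministic and stochastic contributions to $\mathbb{B}_{h^*}$ and feeding them into the Goldenshluger--Lepski inequality completes the argument.

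The \emph{main obstacle} is this last step: obtaining a deviation inequality for $\hat\pi^z_h(\bm{x_0})-\bar\pi_h(\bm{x_0})$ that is sharp enough and uniform over $H_n$, given that the summands are products over $d$ coordinates of (bounded $+$ heavy--tailed Laplace) variables and hence only sub-Weibull, not sub-exponential; one must verify that the effective variance is indeed $v_h$ (dominated by the product of the Laplace variances $\prod_j(2\kappa/(h\beta_n^j))^2$) and calibrate $c_0$ in $a_n=c_0\log n$ large enough that the polynomial--in--$n$ blow-up of the worst--case bandwidth $h_{\min}$ is absorbed into the residual term $c\,(n^{\bar c}\prod_j\alpha_j^2)^{-1}$. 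The bias--related point — that only the bias at $h^*$, and not at the larger bandwidths $\eta$, enters $\mathbb{B}_{h^*}$ — is the other place requiring care, but it is dealt with exactly as in the joint--moment case of Theorem~\ref{th: est gamma adaptive}.
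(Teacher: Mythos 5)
Your proposal tracks the paper's own proof quite closely: the same Goldenshluger--Lepski oracle inequality (for the paper this is Proposition~\ref{prop: adaptive density}), the same bias bound $\mathbb{D}_h \le c\,h^\beta$, the same strategy of bounding $\E[\mathbb{B}_h]$ by uniform one-sided deviations of the centred averages $\xi_\eta$, the same union bound over $H_n$, and the same treatment of the complement of the good event by Cauchy--Schwarz and crude moment bounds, after which one optimizes $h$ to $h^*$. The only genuine divergence is technical: for the deviation bound you invoke an abstract sub-Weibull (order $1/d$) Bernstein inequality for the products of Laplace-perturbed terms, whereas the paper truncates the Laplace errors at $\tilde{a}_n^j/h$ on the event $\tilde{\Omega}_n$ of probability $\ge 1-d\lfloor\log_2 n\rfloor n^{-4d-c_0}$, so the summands become bounded by $M_\eta$ and the classical bounded-variables Bernstein inequality applies, and $\tilde{\Omega}_n^c$ is handled separately. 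The two are essentially equivalent implementations of the same estimate; the paper's has the advantage of making the calibration of $c_0$ entirely explicit.

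One caveat worth flagging before you flesh the sketch out, and which is present in the paper's printed notation as well: both you and the paper write the auxiliary estimator $\hat{\pi}^z_{h,\eta}$ as $\hat{\pi}^z_{h\wedge\eta}$ with $\wedge=\min$, but the two properties your argument rests on hold only with $\hat{\pi}^z_{h\vee\eta}$, $\vee=\max$. Namely, that the deterministic part of $\mathbb{B}_{h^*}$ is dominated by the bias at $h^*$ and not at $\eta$, and that $\xi_{h^*\wedge\eta}$ has variance proxy $\lesssim v_\eta$ so that the $\mathbb{V}_\eta$ penalty controls it. With $\wedge$ read literally, for $\eta > h^*$ one has $\xi_{h^*\wedge\eta}=\xi_{h^*}$ with variance $v_{h^*}\gg v_\eta$, and the deterministic part $|\E[\hat{\pi}^z_{h^*}]-\E[\hat{\pi}^z_\eta]|\lesssim \eta^\beta$, not $(h^*)^\beta$, so neither claim survives. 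The $\vee$ is the correct analogue of the $T\wedge T'$ used in the joint-moment adaptation of Section~\ref{Sss: adaptive}, because the truncation level $T$ plays the role of $1/h$ (shrinking $T$ and enlarging $h$ both increase bias and shrink variance); reading $\vee$ throughout is also what makes the paper's definition \eqref{eq: def Dh} of $\mathbb{D}_h$ (whose supremum is effectively over $\eta<h$) the right bound for the deterministic part of $\mathbb{B}_h$. With that reading your sketch is correct and essentially the paper's proof.
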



\begin{remark}
Just as in the case of estimating the covariance, the adaptive version of the density estimation algorithm has a slower convergence rate than the one presented in Theorem \ref{th: upper density}, by a logarithmic factor. 
\end{remark}

\noindent The proof of the data-driven selection of the bandwidth as proposed in Theorem \ref{th: adaptive density} can be found in the Appendix.

\appendix

\section{Appendix}
In this section we provide all the technical proofs. {\rev We will start by proving Propositions \ref{prop: recurrence} and \ref{P: ajout main en divergence}.}
After that, we will give the proofs for the locally private estimation of {\rev the joint moment} and of the density, as presented in Sections \ref{s: joint moment} and \ref{s: density}, respectively.

\subsection{Proof of Proposition \ref{prop: recurrence}}
\begin{proof}
Proposition \ref{prop: recurrence} is proven by recurrence. The proof is split in two steps. 
\vspace{0.3 cm}

\noindent \textbf{Step 1} \\
The aim of this step is to show that
$$l[z^1, \dots , z^d] \le l[\emptyset, z^2, \dots , z^d] + l[dx^1,z^2, \dots , z^d].$$
From the definition of $l$ it is 
$$ l[z^1, \dots , z^d] = |q(z^1, \dots , z^d) - \tilde{q}(z^1, \dots , z^d)|.$$
As observed in \eqref{eq: q_zi depuis q1 et qx1zi}, we have 
$$q(z^1, \dots , z^d) = \int_{\mathcal{X}^1}  q^1(z^1 | x^1) {\modar q( dx^1, z^2, \dots , z^d)}$$
and, in the same way, 
$$\tilde{q}(z^1, \dots, z^d) = \int_{\mathcal{X}^1} q^1(z^1 | x^1) {\modar \tilde{q}(dx^1, z^2, \dots , z^d) }.$$
It follows that
\begin{align}{\label{eq: start step 1}}
 l[z^1, \dots , z^d] = \abs[BBig]{\int_{\mathcal{X}^1} {\modar q^1}(z^1 | x^1)  [q(dx^1, z^2, \dots , z^d) - \tilde{q}(dx^1, z^2, \dots , z^d)] }.
\end{align}
We split the signed measure $q(dx^1, z^2, \dots , z^d) - \tilde{q}(dx^1, z^2, \dots , z^d)$ into its positive and negative part:
\begin{align*}
{\modar l[z^1, \dots , z^d]} & = \Big|{\modar \int_{\mathcal{X}^1} q^1(z^1 | x^1) [q(dx^1, z^2, \dots , z^d) - \tilde{q}(dx^1, z^2, \dots , z^d)]_+  }\\
& + {\modar \int_{\mathcal{X}^1} q^1(z^1 | x^1) [q(dx^1, z^2, \dots , z^d) - \tilde{q}(dx^1, z^2, \dots , z^d)]_{-} \Big|} \\
& \le \Big|\sup_{x^1 \in \mathcal{X}^1} {\modar q^1}(z^1 | x^1) {\modar \int_{\mathcal{X}^1} [q(dx^1, z^2, \dots , z^d) - \tilde{q}(dx^1, z^2, \dots , z^d)]_+ } \\
& + \inf_{x^1 \in \mathcal{X}^1} {\modar q^1}(z^1 | x^1) {\modar\int_{\mathcal{X}^1} [q(dx^1, z^2, \dots , z^d) - \tilde{q}(dx^1, z^2, \dots , z^d)]_{-} } \Big|,
\end{align*}
using the notation $[A]_+$ and $[A]_{-}$ for the positive and negative part of $A$, respectively.
We now have
\begin{align*}
& [q({\modar d}x^1, z^2, \dots , z^d) - \tilde{q}({\modar d}x^1, z^2, \dots , z^d)]_{-} \\
&= [q({\modar d}x^1, z^2, \dots , z^d) - \tilde{q}({\modar d}x^1, z^2, \dots , z^d)] - [q({\modar d}x^1, z^2, \dots , z^d) - \tilde{q}({\modar d}x^1, z^2, \dots , z^d)]_{+}.
\end{align*}

We remark that, if we integrate the last equation with respect to $x^1\in\mathcal{X}^1$, the middle term gives a contribution when $d\ge2$, since $\int_{\mathcal{X}^1} q({\modar d}x_1, z_2, \dots , z_d)  = q({\modar \emptyset , } z_2, \dots , z_d)$ is not simply $1$ as in the mono-dimensional case.
Hence, it provides
\begin{align}{\label{eq: 1.5, middle step 1}}
{\modar l[z^1, \dots , z^d]} & \le \Big|\big(\sup_{x^1 \in \mathcal{X}^1} {\modar q^1}(z^1 | x^1) - \inf_{x^1 \in \mathcal{X}^1} {\modar q^1}(z^1 | x^1) \big){\modar \int_{\mathcal{X}^1} [q(x^1, z^2, \dots , z^d) - \tilde{q}(x^1, z^2, \dots , z^d)]_+ }| \\
& + \inf_{x^1 \in \mathcal{X}^1} {\modar q^1(z^1 | x^1)}|q({\modar \emptyset, } z^2, \dots , z^d) - \tilde{q}({\modar \emptyset, } z^2, \dots , z^d)\Big|. \nonumber
\end{align}
We now observe that 

\begin{align}\nonumber
	\sup_{x^1 \in \mathcal{X}^1} {\modar q^1}(z^1 | x^1) - \inf_{x^1 \in \mathcal{X}^1} {\modar q^1}(z^1 | x^1) 
&=\inf_{x^1 \in \mathcal{X}^1} {\modar q^1}(z^1 | x^1) \left( 
\frac{\sup_{x^1 \in \mathcal{X}^1} {\modar q^1}(z^1 | x^1)}{\inf_{x^1 \in \mathcal{X}^1} {\modar q^1}(z^1 | x^1)}-1
\right)\\ \label{eq: sup q1 - infq1} 
&\le\inf_{x^1 \in \mathcal{X}^1} {\modar q^1}(z^1 | x^1) (e^{\alpha_1}-1)
=q^1(z^1| x^1_*) (e^{\alpha_1}-1),
\end{align}
where we used \eqref{eq: def density local privacy} and the definition  
$q^1(z^1| x^1_*) =\inf_{x^1 \in \mathcal{X}^1} {\modar q^1}(z^1 | x^1)$.
We replace it in \eqref{eq: 1.5, middle step 1}, which entails 
{\modar 
\begin{align*}
l[z^1, \dots , z^d] & \le q^1(z^1|x^1_*) (e^{\alpha_1} - 1) \int_{\mathcal{X}^1}  |q(dx^1, z^2, \dots , z^d) - \tilde{q}(dx^1, z^2, \dots , z^d)| \\
& + q^1(z^1|x^1_*)  |q(\emptyset,  z^2, \dots , z^d) - \tilde{q}( \emptyset,  z^2, \dots , z^d)| \\
& = {\modar l[dx^1, z^2, \dots , z^d]} + {\modar l[\emptyset, z^2, \dots , z^d]},
\end{align*}}
which concludes the proof of Step 1. \\

\vspace{0.3 cm}

\noindent \textbf{Step 2} \\
Assume now that $\zeta^i \in \{ \emptyset,{\modar d}x^i \}$ for any $i$ smaller than some $j$. Then, this step is devoted to the proof of 
$${\modar l[\zeta^1,\dots , \zeta^{j -1}, z^j, \dots , z^d]} \le {\modar l[\zeta^1,\dots , \zeta^{j -1}, \emptyset, z^{j + 1} ,\dots , z^d]} + {\modar l[\zeta^1,\dots , \zeta^{j -1}, dx^j, z^{j + 1}, \dots , z^d]}.$$
By definition it is indeed
\begin{align}{\label{eq: start step2 4}}
 l[\zeta^1,\dots , \zeta^{j -1}, z^j, \dots , z^d] &= \prod_{i = 1}^{j - 1}  q^i(z^i | x^i_*) \prod_{i < j: \zeta^i =  dx^i} (e^{\alpha_i} - 1) \\
& \times  \int_{\prod\limits_{i<j: \zeta^i = dx^i}\mathcal{X}^i}
 |q(\zeta^1,\dots , \zeta^{j -1}, z^j,\dots , z^d) - \tilde{q}(\zeta^1,\dots , \zeta^{j -1}, z^j, \dots , z^d)| 
\nonumber
\end{align}
We observe that, similarly to \eqref{eq: q_zi depuis q1 et qx1zi}, it is
 \begin{equation}\label{eq : q using Markov kernel j}
 	q(\zeta^1,\dots , \zeta^{j -1}, z^j, \dots , z^d) = {\modar \int_{x^j\in\mathcal{X}^j} q^j(z^j | x^j)q(\zeta^1,\dots , \zeta^{j -1}, dx^j, z^{j +1}, \dots , z^d)  }
 	\end{equation}
and also,
\begin{equation}\label{eq : q tilde using Markov kernel j}
	\tilde{q}(\zeta^1,\dots , \zeta^{j -1}, z^j, \dots , z^d) =  \int_{x^j\in\mathcal{X}^j} q^j(z_j | x_j)\tilde{q}(\zeta^1,\dots , \zeta^{j -1}, dx^j, z^{j +1}, \dots , z^d)  .
\end{equation}
Then, 
\begin{align}
& |q(\zeta^1,\dots , \zeta^{j -1}, z^j,\dots , z^d) - \tilde{q}(\zeta^1,\dots , \zeta^{j -1}, z^j, \dots , z^d)| 
\label{eq: diff q step 2 lem principal}\\
& = \Big|\int_{x^j \in \mathcal{X}^j}  q^j(z^j | x^j)[q(\zeta^1,\dots , \zeta^{j -1}, dx^j, z^{j +1}, \dots , z^d)-\tilde{q}(\zeta^1,\dots , \zeta^{j -1}, dx^j, z^{j +1}, \dots , z^d)]\Big|.
\nonumber
\end{align}
Acting as above \eqref{eq: 1.5, middle step 1}, remarking also that 
\begin{equation*}
\int_{{\modar x^j\in\mathcal{X}^j}} q(\zeta^1,\dots , \zeta^{j -1}, {\modar dx^j}, z^{j +1}, \dots , z^d)= q(\zeta^1,\dots , \zeta^{j -1}, {\modar \emptyset, }  z^{j +1}, \dots , z^d),
\end{equation*} it follows that the quantity \eqref{eq: diff q step 2 lem principal} is upper bounded by 
\begin{align*}
& \Big|(\sup_{x^j \in \mathcal{X}^j} {\modar q^j}(z^j | x^j) - \inf_{x^j \in \mathcal{X}^j} {\modar q^j}(z^j | x^j)) \\
& \times {\modar \int_{{\modar q^j \in \mathcal{X}^j}}  [q(\zeta^1,\dots , \zeta^{j -1}, {\modar dx^j}, z^{j +1}, \dots , z^d) - \tilde{q}(\zeta^1,\dots , \zeta^{j -1}, {\modar dx^j}, z^{j +1}, \dots , z^d)]_+ }\Big| \\
& + \inf_{x^j \in \mathcal{X}^j} {\modar q^j}(z^j | x^j)|q(\zeta^1,\dots , \zeta^{j -1},{\modar \emptyset, }  z^{j +1}, \dots , z^d) - \tilde{q}(\zeta^1,\dots , \zeta^{j -1},{\modar \emptyset, }  z^{j +1}, \dots , z^d)|.
\end{align*}
Thus, Equation \eqref{eq: sup q1 - infq1} with $q^j$ in place of $q^1$, entails
\begin{align*}
& |q(\zeta^1,\dots , \zeta^{j -1}, z^j,\dots , z^d) - \tilde{q}(\zeta^1,\dots , \zeta^{j -1}, z^j, \dots , z^d)| \\
& \le  (e^{\alpha_j} - 1) {\modar q^j}(z^j| x^j_*) {\modar \int_{x_j\in\mathcal{X}^j} |q(\zeta^1,\dots , \zeta^{j -1}, dx^j, z^{j +1}, \dots , z^d) - \tilde{q}(\zeta^1,\dots , \zeta^{j -1}, dx^j, z^{j +1}, \dots , z^d)| } \\
& +  {\modar q^j}(z^j | x^j_*)|q(\zeta^1,\dots , \zeta^{j -1},{\modar \emptyset, }  z^{j +1}, \dots , z^d) - \tilde{q}(\zeta^1,\dots , \zeta^{j -1}, {\modar \emptyset, } z^{j +1}, \dots , z^d)|.
\end{align*}
We replace it in \eqref{eq: start step2 4}, obtaining
\begin{align*}
&{\modar l[\zeta^1,\dots , \zeta^{j -1}, z^j, \dots , z^d] }\\
& \le \prod_{i = 1}^{j - 1} {\modar q^i}(z^i | x^i_*) \prod_{i < j: \zeta^i = {\modar dx^i}} (e^{\alpha_i} - 1)  \int_{\prod\limits_{i<j:\zeta^i=dx^i}\mathcal{X}^i} 
	\Big{[} (e^{\alpha_j} - 1) q^j(z^j|x^j_*)  \\
& \times {\modar \int_{x^j\in\mathcal{X}^j} |q(\zeta^1,\dots, \zeta^{j -1}, dx^j, z^{j +1}, \dots , z^d) - \tilde{q}(\zeta^1,\dots , \zeta^{j -1}, dx^j, z^{j +1}, \dots , z^d)| } \\
& + {\modar  q^j(z^j | x^j_*)}|q(\zeta^1,\dots , \zeta^{j -1},{\modar \emptyset, }  z^{j +1}, \dots , z^d) - \tilde{q}(\zeta^1,\dots , \zeta^{j -1}, {\modar \emptyset, } z^{j +1}, \dots , z^d)| \Big{]} \\
& = \prod_{i = 1}^{j} {\modar q^i}(z^i | x^i_*) \Big( {\modar \prod_{i < j : \zeta^i = dx^i}} (e^{\alpha_i} - 1) \Big)(e^{\alpha_j} - 1) \\
& \times  \int_{\big(\prod\limits_{i<j:\zeta^i=dx^i}\mathcal{X}^i\big) \times \mathcal{X}^{j}}  |q(\zeta^1,\dots , \zeta^{j -1}, dx^j, z^{j +1}, \dots , z^d) - \tilde{q}(\zeta^1,\dots , \zeta^{j -1}, dx^j, z^{j +1}, \dots , z^d)|
	\\
& + \prod_{i = 1}^{j} {\modar q^i}(z^i | x^i_*) \prod_{i < j: \zeta^i = {\modar dx^i}} (e^{\alpha_i} - 1) \\
& \times {\modar \int_{\prod\limits_{i<j:\zeta^i=dx^i}\mathcal{X}^i} |q(\zeta^1,\dots , \zeta^{j -1},{\modar \emptyset, }  z^{j +1}, \dots , z^d) - \tilde{q}(\zeta^1,\dots , \zeta^{j -1}, {\modar \emptyset, }  z^{j +1}, \dots , z^d)| 
}
	\\
& = {\modar l[\zeta^1,\dots , \zeta^{j -1}, dx^j, z^{j + 1}, \dots , z^d]} + {\modar l[\zeta^1,\dots , \zeta^{j -1}, \emptyset, z^{j + 1}, \dots , z^d]}.
\end{align*}
Thus, the proof of Step 2 is completed. \\
\\
To conclude, we want to show that Steps 1 and 2 imply the stated result of the proposition. From the two steps above we have, by recurrence, 
\begin{align*}
 l[z^1, \dots , z^d] & \le {\modar l[\emptyset, z^2, \dots , z^d]}+ {\modar l[dx^1, z^2, \dots , z^d]} \\
& \le {\modar l[\emptyset, \emptyset, z^3,\dots , z^d]} + {\modar l[\emptyset, dx^2, z^3 ,\dots , z^d]} + {\modar l[dx^1, \emptyset, z^3, \dots , z^d]} + {\modar l[dx^1, dx^2, z^3 ,\dots , z^d]} \\
& \le \sum_{(\zeta^1, \dots , \zeta^d) \in \prod_{j=1}^d\{ \emptyset, {dx^j} \} } {\modar l[\zeta^1, \dots , \zeta^d]},
\end{align*}
as we wanted.
\end{proof}

{\rev
\subsection{Proof of Proposition \ref{P: ajout main en divergence}}
The proof follows computations in the proof of Proposition 8 in \cite{Duchi_Ruan24}, together with a representation similar to {\revd the result of} Proposition \ref{prop: recurrence}. We start by introducing notations useful for the proof, which are modifications of \eqref{eq: def l z1 zd}. For
$\bm{z}=(z^1,\dots,z^d) \in \bm{\mathcal{Z}}$, $\bm{x}=(x^1,\dots,x^d)
\in \bm{\mathcal{X}}$, 
$\bm{x_0}=(x^1_0,\dots,x^d_0) \in \bm{\mathcal{X}}$:
\begin{align}
	\nonumber
	\delta^j(z^j \mid x^j,x^j_0)=&q^j(z^j \mid x)-q^j(z^j \mid x_0^j), 
	\text{
		$j\in \{1,\dots,d\}$,}\\
		\nonumber
	k[\zeta^1,\dots,\zeta^d]:=&
	q(z^1, \dots, z^d) - \tilde{q}(z^1, \dots, z^d) , ~  \text{ for $\bm{\zeta} = \bm{z}$},	
	\\ \nonumber
	k[\zeta^1,\dots,\zeta^d]:=&\prod_{j : \zeta^j = \emptyset } q^j(z^j | x^j_0)  \times
	\\ \label{eq: def k z1 zd}
	&
	\int_{\prod\limits_{j: \zeta^j = dx^j}\mathcal{X}^j} \prod_{j: \zeta^j = dx^j} \delta^j(z^j\mid x^j,x^j_0) [q(\zeta^1, \dots , \zeta^d) - \tilde{q}(\zeta^1, \dots , \zeta^d)],~  \text{ for $\bm{\zeta} \neq \bm{z}$}.
\end{align}
We recall that with slight abuse of notation, this defines $k[\emptyset,\dots,\emptyset]=0$.

We first prove the following lemma.
\begin{lemma}\label{L: lemma ajoute recurrence k}
	We have $k(z^1,\dots,z^d)=\displaystyle
	\sum_{(\zeta^1, \dots , \zeta^d) \in \prod_{j=1}^d\{ \emptyset, {dx^j} \} } { k[\zeta^1, \dots , \zeta^d]}$.
\end{lemma}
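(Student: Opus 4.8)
The plan is to prove the identity by an \emph{exact} version of the telescoping recurrence used for Proposition~\ref{prop: recurrence}. The key difference is that here no absolute values and no privacy or divergence constraint are involved: the statement is a purely algebraic decomposition of the signed density $q(\bm z)-\tilde q(\bm z)$, obtained by successively splitting each conditional density $q^j(z^j\mid x^j)$ into its value at the fixed reference point $x^j_0$ plus the increment $\delta^j(z^j\mid x^j,x^j_0)$, followed by linearity of the integral. In particular the identity will hold for every choice of $\bm x_0\in\bm{\mathcal{X}}$, which is precisely the freedom exploited afterwards in the proof of Proposition~\ref{P: ajout main en divergence}.

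First I would establish the one-step identity
$$k[z^1,\dots,z^d]=k[\emptyset,z^2,\dots,z^d]+k[dx^1,z^2,\dots,z^d].$$
Starting from the disintegration \eqref{eq: q_zi depuis q1 et qx1zi}, namely $q(z^1,\dots,z^d)=\int_{\mathcal{X}^1}q^1(z^1\mid x^1)\,q(dx^1,z^2,\dots,z^d)$, I write $q^1(z^1\mid x^1)=q^1(z^1\mid x^1_0)+\delta^1(z^1\mid x^1,x^1_0)$. The first piece integrates out to $q^1(z^1\mid x^1_0)\,q(\emptyset,z^2,\dots,z^d)$, since $\int_{\mathcal{X}^1}q(dx^1,z^2,\dots,z^d)=q(\emptyset,z^2,\dots,z^d)$, while the second piece is exactly $\int_{\mathcal{X}^1}\delta^1(z^1\mid x^1,x^1_0)\,q(dx^1,z^2,\dots,z^d)$. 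Subtracting the same expansion for $\tilde q$ and comparing with definition \eqref{eq: def k z1 zd} yields the claim. The general inductive step is completely parallel: assuming $\zeta^i\in\{\emptyset,dx^i\}$ for $i<j$, I use the analogue \eqref{eq : q using Markov kernel j}--\eqref{eq : q tilde using Markov kernel j} of the disintegration to replace $z^j$ by $\int_{\mathcal{X}^j}q^j(z^j\mid x^j)\,q(\zeta^1,\dots,\zeta^{j-1},dx^j,z^{j+1},\dots,z^d)\,$, split $q^j(z^j\mid x^j)$ as above, and observe that the prefactor $\prod_{i<j:\zeta^i=\emptyset}q^i(z^i\mid x^i_0)$, the product $\prod_{i<j:\zeta^i=dx^i}\delta^i(z^i\mid x^i,x^i_0)$, and the integral over $\prod_{i<j:\zeta^i=dx^i}\mathcal{X}^i$ all carry through unchanged. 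This gives
$$k[\zeta^1,\dots,\zeta^{j-1},z^j,z^{j+1},\dots,z^d]=k[\zeta^1,\dots,\zeta^{j-1},\emptyset,z^{j+1},\dots,z^d]+k[\zeta^1,\dots,\zeta^{j-1},dx^j,z^{j+1},\dots,z^d].$$

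Iterating these splits over $j=1,\dots,d$ expands $k[z^1,\dots,z^d]$ into the sum of $k[\zeta^1,\dots,\zeta^d]$ over all $2^d$ tuples $(\zeta^1,\dots,\zeta^d)\in\prod_{j=1}^d\{\emptyset,dx^j\}$, which is exactly the assertion of Lemma~\ref{L: lemma ajoute recurrence k}; the summand with $\zeta^1=\dots=\zeta^d=\emptyset$ equals $q(\emptyset,\dots,\emptyset)-\tilde q(\emptyset,\dots,\emptyset)=1-1=0$, consistently with the convention $k[\emptyset,\dots,\emptyset]=0$. I do not expect any real obstacle here, as the argument is pure bookkeeping; the only point requiring a little care is checking that the disintegration identity \eqref{eq : q using Markov kernel j} is legitimate in the present general-variable setting, i.e. that $Z^j$ is conditionally independent of the remaining coordinates given $X^j$ so that the increment $\delta^j$ can be peeled off one coordinate at a time while keeping all $z^i$ with $i>j$ frozen. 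This is precisely the conditional independence structure already used and justified in the proof of Proposition~\ref{prop: recurrence}, so it can be invoked directly.
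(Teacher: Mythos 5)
Your proof is correct and follows essentially the same route as the paper: an exact telescoping identity obtained by peeling off one coordinate at a time via the disintegration $q(\zeta^1,\dots,\zeta^{j-1},z^j,\dots,z^d)=\int_{\mathcal{X}^j}q^j(z^j\mid x^j)\,q(\zeta^1,\dots,\zeta^{j-1},dx^j,z^{j+1},\dots,z^d)$ and the split $q^j(z^j\mid x^j)=q^j(z^j\mid x^j_0)+\delta^j(z^j\mid x^j,x^j_0)$, giving the one-step identity $k[\dots,z^j,\dots]=k[\dots,\emptyset,\dots]+k[\dots,dx^j,\dots]$ and then iterating. The only cosmetic imprecision is in your final remark: the $\bm\zeta=(\emptyset,\dots,\emptyset)$ summand is actually $\prod_{j=1}^d q^j(z^j\mid x^j_0)\cdot\big(q(\emptyset,\dots,\emptyset)-\tilde q(\emptyset,\dots,\emptyset)\big)$, not just the bare difference; since the difference is $0$ the conclusion is unchanged.
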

\begin{proof}
	The proof of this lemma is very similar to the proof of Proposition \ref{prop: recurrence} and we omit some details. By induction  it is sufficient to prove that, for $j \in \{1,\dots,d-1\}$, if $\zeta^i \in \{ \emptyset, dx^i \}$ for $j < i$, we have
	\begin{multline} \label{eq : recurrence dans lemme ajoute}
 		k[\zeta^1,\dots , \zeta^{j -1}, z^j, \dots , z^d] = k[\zeta^1,\dots , \zeta^{j -1}, dx^j, z^{j + 1} ,\dots , z^d] + 
		\\ k[\zeta^1,\dots , \zeta^{j -1}, \emptyset, z^{j + 1}, \dots , z^d].		
	\end{multline}
To prove \eqref{eq : recurrence dans lemme ajoute}, using
\eqref{eq : q using Markov kernel j}--\eqref{eq : q tilde using Markov kernel j},
 we write 
$	q[\zeta^1,\dots , \zeta^{j -1}, z^j, \dots , z^d]-\tilde{q}[\zeta^1,\dots , \zeta^{j -1}, z^j,\dots , z^d]$ as
\begin{equation*}
	  \int_{x^j\in\mathcal{X}^j} q^j(z^j | x^j)[q(\zeta^1,\dots , \zeta^{j -1}, dx^j, z^{j +1}, \dots , z^d)- 
\tilde{q}(\zeta^1,\dots , \zeta^{j -1}, dx^j, z^{j +1}, \dots , z^d) ].
\end{equation*}
Then, we plug in the last expression the equality $
q^j(z^j | x^j)= \delta^j(z^j \mid x^j,x_0^j)+q^j(z^j | x^j_0)$. It yields that 
$	q[\zeta^1,\dots , \zeta^{j -1}, z^j, \dots , z^d]-\tilde{q}[\zeta^1,\dots , \zeta^{j -1}, z^j, \dots , z^d]$ is
equal to
\begin{multline*}
	  \int_{x^j\in\mathcal{X}^j} \delta^j(z^j | x^j,x_0^j)[q(\zeta^1,\dots , \zeta^{j -1}, dx^j, z^{j +1}, \dots , z^d)- 
\tilde{q}(\zeta^1,\dots , \zeta^{j -1}, dx^j, z^{j +1}, \dots , z^d) ]
+ \\ q^j(z^j | x^j_0) 	q[\zeta^1,\dots , \zeta^{j -1}, \emptyset, z^{j+1}, \dots , z^d]-\tilde{q}[\zeta^1,\dots , \zeta^{j -1}, \emptyset, z^{j+1}, \dots , z^d].	
\end{multline*}
Now, \eqref{eq : recurrence dans lemme ajoute} is a consequence of the last equality and definition \eqref{eq: def k z1 zd}.
\end{proof}
\begin{proof}[Proof of \eqref{eq: main with f divergence}]
From the definition of the $f_l$-divergence, we have
\begin{equation} \label{eq: def D M Mtilde in proof}
D_{f_l}(M\|\tilde{M})=\int_{\mathcal{Z}^1 \times \dots\times \mathcal{Z}^d}
\frac{\abs{q(\bm{z}) - \tilde{q}(\bm{z})}^{l} }{\tilde{q}(\bm{z})^{l-1}} d \bm{\mu}(\bm{z}).
\end{equation}
Since $t \mapsto t^{1-l}$ is convex, we deduce from Jensen's inequality that
\begin{equation*}
	\tilde{q}(\bm{z})^{1-l}=\left(\int_{\bm{\mathcal{X}}} 
	\prod_{j=1}^d q^j(z^j \mid x_0^j) \tilde{P}(d\bm{x_0})\right)^{1-l}
	\le \int_{\bm{\mathcal{X}}} 
	\prod_{j=1}^d q^j(z^j \mid x_0^j)^{1-l} \tilde{P}(d\bm{x_0}).
\end{equation*}
Inserting this control in \eqref{eq: def D M Mtilde in proof}, and using Fubini-Tonelli theorem we deduce that
$D_{f_l}(M\|\tilde{M})=\int_{\bm{\mathcal{X}}} W(\bm{x_0})^l  \tilde{P}(d\bm{x_0})$ where
\begin{equation*}
W(\bm{x_0})=\Big(\int_{\mathcal{Z}^1 \times \dots\times \mathcal{Z}^d}
\frac{\abs{q(\bm{z}) - \tilde{q}(\bm{z})}^{l}}{
	\prod_{j=1}^d q^j(z^j\mid x_0^j)^{l-1}}
d\bm{\mu}(\bm{z}) \Big)^{1/l}.
\end{equation*}
Now, using that $\tilde{P}$ is a probability, the control \eqref{eq: main with f divergence} follows if we can prove that $W(\bm{x_0})$ is upper bounded independently of $\bm{x_0}$ by the RHS of \eqref{eq: main with f divergence}.
From the definition of $k[\eta^1,\dots,\eta^d]$ we have 
$\abs{q(\bm{z}) - \tilde{q}(\bm{z})}=k[z^1,\dots,z^d]$, and we use Lemma \ref{L: lemma ajoute recurrence k} to get,
	\begin{equation*}
	W(\bm{x_0})
	=\Big(\int_{\mathcal{Z}^1 \times \dots\times \mathcal{Z}^d}
			\abs[\Big]{ 	\sum_{(\zeta^1, \dots , \zeta^d)  \in \prod_{j=1}^d\{ \emptyset, {dx^j} \} } 
	\frac{ k[\zeta^1,\dots,\zeta^d] }
	{ \prod_{j=1}^d q^j(z^j\mid x_0^j)^{1-1/l}}}^l
	d\bm{\mu}(\bm{z}) \Big)^{1/l}.
\end{equation*}
It follows from the triangular inequality that 
\begin{equation} \label{eq: W as sum on all marginal}
	W(\bm{x_0})
\le 
	\sum_{ (\zeta^1, \dots , \zeta^d)  \in \prod_{j=1}^d\{ \emptyset, dx^j \} }    W_{\zeta^1,\dots,\zeta^d}(\bm{x_0}),
	\end{equation}
with 
\begin{equation*} W_{\zeta^1,\dots\zeta^d}(\bm{x_0}):= 
\Big(\int_{\mathcal{Z}^1 \times \dots\times \mathcal{Z}^d}
	\abs[\Big]{ \frac{k[\zeta^1,\dots,\zeta^d]}{
		\prod_{j=1}^d q^j(z^j\mid x_0^j)^{1-1/l}}}^l
	d\bm{\mu}(\bm{z}) \Big)^{1/l}.
\end{equation*}
We remind the definition \eqref{eq: def k z1 zd} of $k[\zeta^1,\dots,\zeta^d]$ and use generalized Minkowski's inequality to get
\begin{multline*} W_{\zeta^1,\dots\zeta^d}(\bm{x_0})\le 
		\int_{\prod\limits_{j: \zeta^j = dx^j}\mathcal{X}^j}
		\Big(	
	\int_{\mathcal{Z}^1 \times \dots\times \mathcal{Z}^d}
	\prod_{j : \zeta^j=dx^j} \abs[\big]{\frac{\delta(z^j\mid x^j,x_0^j)}{q^j(z^j \mid x^j_0)^{1-1/l}}}^l
	\times \\
	\prod_{j : \zeta^j=\emptyset} q(z^j \mid x^j_0)
	d\bm{\mu}(\bm{z}) 
	\Big)^{1/l}
	 \abs{q(\zeta^1, \dots , \zeta^d) - \tilde{q}(\zeta^1, \dots , \zeta^d)}.
\end{multline*}
Recall that the reference measure is in a product form, i.e. $\bm{\mu}(d \bm{z})=\prod_{j=1}^d
\mu^j(dz^j)$. Moreover, notice that $\zeta^j$ can only be $\emptyset$ or $ dx^j$ and that $\int_{\mathcal{Z}^j}q(z^j \mid x^j_0) \mu^j(dz^j)=1$ and $\int_{\mathcal{Z}^j} \abs[\big]{\frac{\delta(z^j\mid x^j,x_0^j)}{q^j(z^j \mid x^j_0)^{1-1/l}}}^l \mu^j(dz^j)=D_{f_l}
(Q^j(\cdot\mid X^j=x^j)\|Q^j(\cdot\mid X^j=x^j_0))$. From here we deduce
\begin{align*}
	W_{\zeta^1,\dots\zeta^d}(\bm{x_0})&
\begin{multlined}[t] \le 
	\int_{\prod\limits_{j: \zeta^j = dx^j}\mathcal{X}^j}
	\Big( \prod_{j: \zeta^j = dx^j}		
	D_{f_l}
	(Q^j(\cdot\mid X^j=x^j)\|Q^j(\cdot\mid X^j=x^j_0))
	\Big)^{1/l}\\
	\times \abs{q(\zeta^1, \dots , \zeta^d) - \tilde{q}(\zeta^1, \dots , \zeta^d)}
\end{multlined}
\\
& \le \int_{\prod\limits_{j: \zeta^j = dx^j}\mathcal{X}^j}
\Big( \prod_{j: \zeta^j = dx^j}	\varepsilon_j \Big)
\abs{q(\zeta^1, \dots , \zeta^d) - \tilde{q}(\zeta^1, \dots , \zeta^d)},
\end{align*}
where we used \eqref{eq : divergence CLDP}. Since
$\int_{\prod\limits_{j: \zeta^j = dx^j}\mathcal{X}^j}
\abs{q(\zeta^1, \dots , \zeta^d) - \tilde{q}(\zeta^1, \dots , \zeta^d)}$ is the total variance distance between the laws of $(X^j)_{j: \zeta^j = dx^j}$ under $P$ and $\tilde{P}$ we deduce
\begin{equation} \label{eq: control W zeta}
W_{\zeta^1,\dots\zeta^d}(\bm{x_0}) \le 	\big( \prod_{j: \zeta^j = dx^j}	\varepsilon_j \big) \times 
d_{TV}\big( L_{((X^j)_{j: \zeta^j = dx^j})},L_{((\tilde{X}^j)_{j: \zeta^j = dx^j})}\big)
\end{equation}
Collecting \eqref{eq: W as sum on all marginal}, \eqref{eq: control W zeta}, we deduce 
\begin{equation*}
	W(\bm{x_0}) \le 	\sum_{ (\zeta^1, \dots , \zeta^d)  \in \prod_{j=1}^d\{ \emptyset, dx^j \} } 	\big( \prod_{j: \zeta^j = dx^j}	\varepsilon_j \big) \times 
	d_{TV}\big( L_{((X^j)_{j: \zeta^j = dx^j})},L_{((\tilde{X}^j)_{j: \zeta^j = dx^j})}\big).
\end{equation*}
After reorganizing the sum in the equation above, we recognize the RHS of \eqref{eq: main with f divergence}, and the Proposition \ref{P: ajout main en divergence} is proved.
%
%
\end{proof}
}
\subsection{Locally private {\rev joint moment} estimation}
In this section we prove all the technical results related to the estimation of {\rev joint moment}
under local differential privacy constraints. 

\subsubsection{Proof of Theorem \ref{th: upper bound joint moment covariance}}

\begin{proof} 
	$\bullet$ We first prove the result for $\hat{\gamma}_n$ as stated in \eqref{eq: borne sup gamma}. This is based on a bias-variance decomposition for the $L^2$ risk. Let us denote the bias term by
	\begin{equation}\label{eq: def biais cov}
		{\rev b_{T^{(1)},T^{(2)},\dots,T^{(d)}}:=\E[\prod_{j=1}^dZ^j_i]-\E[\prod_{j=1}^dX^j_i]=
			\E[\prod_{j=1}^dZ^j]-\E[\prod_{j=1}^dX^j].}		
	\end{equation}	
We first state as lemma a control on the magnitude of this bias term, its proof can be found at the end of this section.
\begin{lemma}\label{l: biais b T1 T2}
	We have {\rev
	\begin{equation*}
		\abs*{ b_{T^{(1)},T^{(2)},\dots,T^{(d)}}}
			\le \sum_{l=1}^d \Big\{ (T^{(l)})^{-k_l(1-\frac{d}{\overline{k}})}
			\Big( \prod_{\substack{j=1\\j\neq l}}^d \norm{X^j}_{k_j} \Big) \norm{X^l}_{k_l}^{1+k_l(1-\frac{d}{\overline{k}})} \Big\}.
	\end{equation*}}
\end{lemma}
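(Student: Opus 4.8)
The plan is to control the bias term $b_{T^{(1)},\dots,T^{(d)}} = \E[\prod_{j=1}^d Z^j] - \E[\prod_{j=1}^d X^j]$ by isolating, component by component, the error introduced by the truncation, and then bounding each such error with a moment argument. First I would observe that since the noise variables $\mathcal{E}^j_i$ are centered and independent of everything else, they contribute nothing to the bias: $\E[\prod_{j=1}^d Z^j] = \E[\prod_{j=1}^d \tronc{X^j}{T^{(j)}}]$. So the bias reduces to $b_{T^{(1)},\dots,T^{(d)}} = \E[\prod_{j=1}^d \tronc{X^j}{T^{(j)}}] - \E[\prod_{j=1}^d X^j]$.

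Next I would use a telescoping (hybrid) decomposition to write the difference of the two products as a sum of $d$ terms, each of which replaces one more coordinate's $X^l$ by $\tronc{X^l}{T^{(l)}}$:
\begin{equation*}
\prod_{j=1}^d \tronc{X^j}{T^{(j)}} - \prod_{j=1}^d X^j = \sum_{l=1}^d \Big( \prod_{j<l} \tronc{X^j}{T^{(j)}} \Big) \big( \tronc{X^l}{T^{(l)}} - X^l \big) \Big( \prod_{j>l} X^j \Big).
\end{equation*}
Taking expectations and absolute values, the $l$-th term is bounded by $\E\big[ \prod_{j<l}\abs{\tronc{X^j}{T^{(j)}}} \cdot \abs{\tronc{X^l}{T^{(l)}} - X^l} \cdot \prod_{j>l}\abs{X^j} \big]$. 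Since $\abs{\tronc{X^j}{T^{(j)}}}\le \abs{X^j}$, this is at most $\E\big[ \prod_{j\neq l}\abs{X^j} \cdot \abs{\tronc{X^l}{T^{(l)}} - X^l} \big]$. Now apply the generalized Hölder inequality with exponents $k_1,\dots,k_d$ plus the conjugate exponent of the collection: writing $1/\bar{k}' := 1 - \sum_{j}1/k_j$ (note $\sum_j 1/k_j = d/\bar k$ so $1/\bar k' = 1 - d/\bar k$), one gets a product of $\norm{X^j}_{k_j}$ over $j\neq l$ times the $L^{k_l}$-norm of $X^l$ times $\big(\E[\id{\abs{X^l}>T^{(l)}}]\big)^{1-d/\bar k}$, using that $\tronc{X^l}{T^{(l)}} - X^l$ vanishes on $\{\abs{X^l}\le T^{(l)}\}$ and is bounded by $2\abs{X^l}$ (absorbed into the norm) on the complement. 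Finally bound $\P(\abs{X^l}>T^{(l)}) \le (T^{(l)})^{-k_l}\E[\abs{X^l}^{k_l}] = (T^{(l)})^{-k_l}\norm{X^l}_{k_l}^{k_l}$ by Markov's inequality, which produces the factor $(T^{(l)})^{-k_l(1-d/\bar k)}$ and turns the $L^{k_l}$-norm of $X^l$ into the power $\norm{X^l}_{k_l}^{1+k_l(1-d/\bar k)}$ after combining. Summing over $l$ gives exactly the claimed bound.

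The one point requiring care — and the main obstacle — is getting the Hölder bookkeeping exactly right so that the exponent on $\norm{X^l}_{k_l}$ comes out as $1 + k_l(1-d/\bar k)$ rather than something slightly different: one must split $\abs{\tronc{X^l}{T^{(l)}} - X^l} \le 2\abs{X^l}\id{\abs{X^l}>T^{(l)}}$ and then apply Hölder to the $d+1$ factors $\{\abs{X^j}\}_{j\neq l}$, $\abs{X^l}$, and $\id{\abs{X^l}>T^{(l)}}$ with exponents $\{k_j\}_{j\neq l}$, $k_l$, and $\bar k'$ respectively (these are valid conjugate exponents precisely because $\sum_j 1/k_j < 1$), and only afterwards estimate the indicator's $L^{\bar k'}$-norm via Markov. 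I would also note for completeness that the constant factor $2^{1-d/\bar k}$ absorbed here is universal and that the assumption $\sum_j 1/k_j < 1$ is exactly what makes $\bar k' > 1$ finite, so the argument never degenerates.
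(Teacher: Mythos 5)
Your proposal follows essentially the same route as the paper: reduce to the truncated products via centering of the noise, telescope the difference of products into $d$ terms, bound each truncated factor by the untruncated one, apply the generalized H\"older inequality to $d+1$ factors with exponents $k_1,\dots,k_d$ and the conjugate $r$ defined by $\sum_j 1/k_j + 1/r = 1$, and finish with Markov on the indicator. The one small imprecision is the bound $\abs{\tronc{X^l}{T^{(l)}} - X^l}\le 2\abs{X^l}\id{\abs{X^l}>T^{(l)}}$: in fact $\abs{\tronc{x}{T}-x}\le \abs{x}\id{\abs{x}>T}$ holds without the factor $2$ (check each of the three cases $\abs{x}\le T$, $x>T$, $x<-T$), and this sharper inequality is what gives the lemma with no extraneous constant; with your version you would establish the bound only up to a factor $2$, not the exponent $2^{1-d/\bar k}$ you mention.
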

\noindent To get \eqref{eq: borne sup gamma}, we write the bias variance decomposition,
$\E\left[( \hat{\gamma}_n-\gamma )^2 \right]= {\rev (b_{T^{(1)},\dots,T^{(d)}})^2} + \text{var}(\hat{\gamma}_n) $. By independence and {\rev H\"older's inequality,}
\begin{align} \label{eq: var basic gamma}
	{\rev 
	\text{var}(\hat{\gamma}_n)=n^{-2} \sum_{i=1}^{n} \text{var}(\prod_{j=1}^dZ^j_i) \le 
	n^{-1}\E(\abs{\prod_{j=1}^dZ^j}^2) \le n^{-1} \prod_{j=1}^d \E(\abs{Z^1}^{2d})^{\frac{1}{d}}.}
\end{align}
We have for {\rev $j\in \{1,\dots,d\}$,} 
\begin{multline*}
	\E(\abs{Z^j}^{\rev 2d})=\E(\abs{\tronc{X^j}{T^{(j)}}+\mathcal{E}^j}^{2d})\le
{\rev 2^{2d-1}}\E[\tronc{X^j}{T^{(j)}}^{\rev 2d} ]+ {\rev 2^{2d-1}}\E[\abs{\mathcal{E}^j}^{\rev 2d}]
\\ \le
 {\rev 2^{2d-1}}
 |T^{(j)}|^{\rev 2d} \left( 1+ \left(\frac{2}{\alpha_j}\right)^{\rev 2d} \E\left[\abs{\mathcal{L}(1)}^{\rev 2d}\right]  \right), 
 \end{multline*} as
$\mathcal{E}^j$ is equal in law to a $\frac{2T^{(j)}}{\alpha_j} \times \mathcal{L}(1)$ variable.
We deduce {\rev $\E(\abs{Z^j}^{2d}) \le 2^{2d-1}\abs{T^{(j)}}^{2d}(1+\frac{2^{2d}d!}{\alpha^{2d}_j}) \le C\abs{T^{(j)}}^{2d}/\alpha^{2d}_j  $ for some constant $C$, using $\alpha_j \le 1$.} From \eqref{eq: var basic gamma}, it entails 
 \begin{equation}\label{eq: upper bound var est gamma} 
 \text{var}(\hat{\gamma}_n)\le C n^{-1} 
 {\rev \frac{\prod_{j=1}^d\abs{T^{(j)}}^2}{\prod_{j=1}^d\alpha^2_j},}
 \end{equation}
 for some  {\rev constant} $C>0$. In turn, recalling Lemma \ref{l: biais b T1 T2}, we get \begin{equation} \label{eq: bias T1 T2 Var est cov}
 	\E\left[ (\hat{\gamma}_n-\gamma)^2 \right] \le c 
 	\left(
 	{\rev \sum_{j=1}^d 
 	(T^{(j)})^{-2k_j(1-\frac{2}{\overline{k}})} 
 	 +n^{-1}\frac{\prod_{j=1}^d\abs{T^{(j)}}^2}{\prod_{j=1}^d\alpha^2_j}}
 \right)
 .
\end{equation}
{\rev The calibration given in the statement of the theorem
is for all $j\in\{1,\dots,d\} $, 	$(T^{(j)})^{k_j}=(n\prod_{j=1}^d\alpha_j^2)^{1/2}$,} which is such that {\revd all the terms} in the right hand side of \eqref{eq: bias T1 T2 Var est cov} {\revd equilibrate} and 
$	\E\left[ (\hat{\gamma}_n-\gamma)^2\right]\le c 
{\rev (n\prod_{j=1}^d\alpha^2_j)^{-(1-\frac{d}{\overline{k}})}
 =c(n\prod_{j=1}^d\alpha^2_j)^{-\frac{\overline{k}-d}{\overline{k}}}}$. {\rev The proof of \eqref{eq: borne sup gamma} is complete.
 }
\end{proof}

\begin{proof}[Proof of Lemma \ref{l: biais b T1 T2}] 
	From \eqref{eq: def Z covariance} and \eqref{eq: def biais cov}, we have
	\begin{align}\nonumber
		b_{{\rev T^{(1)},\dots,T^{(d)}}} & =\E \left[  
		{\rev \prod_{j=1}^d \left(\tronc{X^j}{T^{(j)}} + \mathcal{E}^j \right)} 
		\right]
		-\E \left[  {\rev  \prod_{j=1}^d X^j }  \right]
		\\
		&=  \label{eq: biais second expression est cov}
		\E
		\left[ {\rev \prod_{j=1}^d \tronc{X^j}{T^{(j)}} }   \right]
		-\E \left[   {\rev  \prod_{j=1}^d X^j }   \right],
	\end{align}
	where we used that {\rev the variables $\mathcal{E}^j$, $j=1,\dots,d$} are centered and independent of {\rev the variables $X^j$ $j=1,\dots,d$.} 
	We set $\Delta^{(j)}=\tronc{X^j}{T^{(j)}}-X^j$ for {\rev $j=1,\dots,d$} and get,
	{\rev 
		\begin{equation*} 
			b_{{\rev T^{(1)},\dots,T^{(d)}}}=
			\sum_{l=1}^d \E\left[
			\big(\prod_{j=1}^{l-1} X^j\big) \Delta^{(l)} \big(
			\prod_{j=l+1}^d\tronc{X^j}{T^{(j)}}
			\big)
			\right]
		\end{equation*}
		We remark that $\Delta^{(l)}=\Delta^{(l)} \one_{ \{ \abs{X^l} \ge T^{(l)}\}}$ and deduce
		\begin{equation*}
			\abs{b_{{\rev T^{(1)},\dots,T^{(d)}}}}\le
			\sum_{l=1}^d \E\left[
			\big(\prod_{j=1}^{l-1} \abs{X^j}\big) \abs{\Delta^{(l)}} \one_{ \{ \abs{X^l} \ge T^{(l)}\}} \big(
			\prod_{j=l+1}^d|\tronc{X^j}{T^{(j)}}|
			\big)
			\right]
		\end{equation*}
		We assess the magnitude of each term in the sum above.
		Using that $\sum_{j=1}^d \frac{1}{k_j}<1$, we define $r>1$ by the identity $
		{\revd \sum_{j=1}^{d}} \frac{1}{k_j} + \frac{1}{r}=1$. We now apply for the term corresponding to index $l$ in the sum above the H\"older inequality 
		{\revd to a product of $d+1$ quantities} with coefficients $\frac{1}{k_1}+\dots+\frac{1}{k_{l-1}}+\frac{1}{k_{l}}+\frac{1}{r}+\frac{1}{k_{l+1}}+\dots\frac{1}{k_d}=1$. This yields,
		\begin{equation*}
			\abs{b_{{\rev T^{(1)},\dots,T^{(d)}}}}\le
			\sum_{l=1}^d 
			\big(\prod_{j=1}^{l-1} \norm{X^j}_{k_j} \big) \norm{\Delta^{(l)}}_{k_l} \norm{\one_{ \{ \abs{X^l} \ge T^{(l)}\}}}_r \big(
			\prod_{j=l+1}^d\norm{\tronc{X^j}{T^{(j)}}}_{k_j}.
			\big)
		\end{equation*}
		Using that $ \abs{\Delta^{(l)}} \le \abs{X^l}$ and $\abs{\tronc{X^j}{T^{(j)}}} \le \abs{X^j}$, we deduce
		\begin{equation*}
			\abs{b_{{\rev T^{(1)},\dots,T^{(d)}}}}\le
			\sum_{l=1}^d 
			\big(\prod_{j=1}^{d} \norm{X^j}_{k_j} \big)  \norm{\one_{ \{ \abs{X^l} \ge T^{(l)}\}}}_r.
		\end{equation*}
		By Markov inequality $ \norm{\one_{ \{ \abs{X^l} \ge T^{(l)}\}}}_r = \mathbb{P}\left( \abs{X^l} \ge T^{(l)} \right)^{1/r} \le \Big( \frac{ \norm{X^l}_{k_l}^{k_l } }{|T^{(l)}|^{k_l }} \big)^{1/r}$ and we obtain
		\begin{equation*}
			\abs{b_{{\rev T^{(1)},\dots,T^{(d)}}}}\le
			\sum_{l=1}^d 
			\big(\prod_{\substack{j=1\\j \neq l}}^{d} \norm{X^j}_{k_j} \big) \norm{X^l}^{1+\frac{k_l}{r}}_{k_l} 
			|T^{(l)}|^{-k_l/r}.
		\end{equation*}
		As $\frac{1}{r}=1-\sum_{j=1}^d \frac{1}{k_j}=1-\frac{d}{\overline{k}}$, we deduce
		the lemma.
	}
\end{proof}

\subsubsection{Proof of Corollary \ref{C: upper bound covariance}}
\begin{proof}
{\rev 	By direct application of Theorem \ref{th: upper bound joint moment covariance}, we have
	\begin{equation} \label{eq: borne sup gamma dim 2}
			\E \left[ \abs*{\hat{\gamma}_n - \gamma}^2\right] \le c (n\alpha^2_1\alpha^2_2)^{-\frac{\overline{k}-2}{\overline{k}}}.
	\end{equation}	
We now deduce the result }\eqref{eq: borne sup theta} on $\hat{\theta}_n$. An additional error appears in the estimation of $\theta$ due to the inference of both means $m_1$, $m_2$. We will see that these additional errors are at most of the same magnitude as the estimation error of the cross term $\gamma=E[X^1X^2]$. We need to recall the bias-variance decomposition given in the proof of Corollary 1 in \cite{Martin} :
\begin{equation} \label{eq: biais variance Duchi}
\E\left[(\hat{m}_n^{(j)}-m^{(j)})^2\right] \le 
c\left( (T^{(j)})^{-(k_j-1)} + n^{-1}\frac{\abs{T^{(j)}}^2}{\alpha^2_j} \right), \quad \text{for $j \in \{1,2\}$},
\end{equation}  
where the constant $c$ does not depend on $n\ge 1$, $\alpha_j\in(0,1]$. Let us emphasize that the optimal trade-off in \eqref{eq: biais variance Duchi} is given by the choices of $T^{(j)}$ appearing in the statement of Theorem \ref{th: recall Duchi est moyenne}. However, our choice  $T^{(j)}=(n\alpha_1^2 \alpha_2^2)^{\frac{1}{2k_j}}$ is tailored to get the optimal trade-off while estimating $\gamma$ with $\alpha_1, \alpha_2 < 1$, and yields to a $\mathbf{L}^2$-risk for the estimation of the means which is suboptimal in $\alpha_j$. {\modar Replacing in \eqref{eq: biais variance Duchi}, we get,}
\begin{multline} \label{eq: control moyenne sous opt}
	\E\left[(\hat{m}_n^{(j)}-m^{(j)})^2\right] \le c\left((n\alpha_1^2\alpha_2^2)^{-\frac{k_j-1}{k_j}}  +
	n^{-1} \frac{(n\alpha_1^2\alpha_2^2)^2}{\alpha_j^2}
	  \right) \le
	c (n\alpha^2_1\alpha_2^2)^{-\frac{k_j-1}{k_j}}\left(1+\alpha_{3-j}^2\right)
	\\ \le  c (n\alpha^2_1\alpha_2^2)^{-\frac{k_j-1}{k_j}} \quad \text{for $j \in \{1,2\}$}.
\end{multline}  
Now, we split
\begin{equation} \label{eq: split gamma theta est cov}
	\hat{\theta}_n - \theta = \hat{\gamma}_n -\gamma
-\left[ (\hat{m}^{(1)}_n-m^{(1)}) m^{(2)} + \hat{m}^{(1)}_n(\hat{m}^{(2)}_n-m^{(2)}) 
 \right],
\end{equation}
and denote by $\sum_{l=1}^2 e_n^{(l)}$ the two terms in the bracket of the above equation. By  \eqref{eq: control moyenne sous opt}, we
have, 
{\modar 
\begin{multline}
	\label{eq: controle e1n est cov}
	\E\left[ 
	\abs{e_n^{(1)}}^2
	\right]= 
	\E \left[ (\hat{m}^{(1)}_n-m^{(1)})^2 	\right] 
	 \abs{m^{(2)}}^2 \le c (n\alpha^2_1\alpha^2_2)^{-\frac{k_1-1}{k_1}}
	 \\\le c(n\alpha^2_1\alpha^2_2)^{-[1-\frac{1}{k_1}-\frac{1}{k_2}]}
=c(n\alpha^2_1\alpha^2_2)^{-\frac{\overline{k}-2}{\overline{k}}}
\end{multline}}
where we used $n\alpha^2_1\alpha^2_2 \ge 1$ and $X^2 \in \mathbf{L}^{k_2} \subset \mathbf{L}^{1}$. 
The control of the {\modar second} term necessitates more care.
{\modar Using \eqref{eq: def Z covariance}, we have, recalling the definition $\hat{m}^{(1)}_n=n^{-1} \sum_{i=1}^{n} Z^1_i$, as given in \eqref{eq: def m hat est cov},}
\begin{align*}
	\E\left[ 
\abs{e_n^{(2)}}^2
\right]& = 	\E\left[ \abs{\hat{m}^{(1)}_n}^2 (\hat{m}_n^{(2)}-m^{(2)})^2\right]\\
&\le
	2 \E\left[ \left(\frac{1}{n}\sum_{i=1}^n \tronc{X_i^1}{T^{(1)}} \right)^2 (\hat{m}_n^{(2)}-m^{(2)})^2\right]	+
	2\E\left[ \left(\frac{1}{n}\sum_{i=1}^n \mathcal{E}^1_i \right)^2  (\hat{m}_n^{(2)}-m^{(2)})^2\right]
	\\ 
 & \le  	2 \abs{T^{(1)}}^2 \E\left[(\hat{m}_n^{(2)}-m^{(2)})^2\right]
	+2 \E\left[ \left(\frac{1}{n}\sum_{i=1}^n\mathcal{E}^1_i \right)^2 \right] \E\left[(\hat{m}_n^{(2)}-m^{(2)})^2\right],
\end{align*}
where we used $ \abs*{\tronc{X_i^1}{T^{(1)}}} \le T^{(1)}$ for the first expectation, and the independence of $(\mathcal{E}^1_i)_i$ and $\hat{m}_n^{(2)}$ for the second one.
Recalling $T^{(1)}=(n\alpha^2_1\alpha^2_2)^{\frac{1}{2k_1}}$ and that $\mathcal{E}_i^1$ are iid centered variable with variance $\frac{8\abs{T^{(1)}}^2}{\alpha^2_1}$, we get
	\begin{align} \nonumber
		\E\left[ 
	\abs{e_n^{(2)}}^2\right] &\le c\left( \abs{T^{(1)}}^2 + \frac{{\abs{T^{(1)}}^2}}{n\alpha^2_1}\right) \E\left[(\hat{m}_n^{(2)}-m^{(2)})^2\right]
\\ \nonumber
	&\le c\left( (n\alpha^2_1\alpha^2_2)^{\frac{1}{k_1}} + 
\alpha^2_1(n\alpha^2_1\alpha_2^2)^{\frac{1}{k_1}-1}	\right)
\E\left[(\hat{m}_n^{(2)}-m^{(2)})^2\right]
\\ \nonumber
& \le c\left( (n\alpha^2_1\alpha_2^2)^{\frac{1}{k_1}} + 
\alpha^2_1(\alpha^2_1\alpha_2^2)^{\frac{1}{k_1}-1}	\right)
	(n\alpha^2_1\alpha_2^2)^{-\frac{k_2-1}{k_2}} 
	\\& \label{eq: controle e2n est cov}
	\le c (n\alpha^2_1\alpha_2^2)^{\frac{1}{k_1}} (n\alpha^2_1\alpha_2^2)^{-\frac{k_2-1}{k_2}}  =c(n\alpha^2_1\alpha_2^2)^{-\frac{\overline{k}-2}{\overline{k}}},
	\end{align}
where we used $\alpha_1^2 \le 1$, $n\alpha^2_1\alpha_2^2 \ge 1$.

\noindent {\rev Collecting \eqref{eq: borne sup gamma dim 2} with} \eqref{eq: split gamma theta est cov}-- \eqref{eq: controle e2n est cov}, we deduce \eqref{eq: borne sup theta}.
\end{proof}
\subsubsection{Proof adaptive procedure}
Before proving Theorem \ref{th: est gamma adaptive}, we introduce several notations and state some auxiliary lemmas.
We set for ${\rev\bm{T}} \in \mathcal{T}$,
\begin{align} \label{eq: def D est cov}
	&\mathbb{D}_{\rev \bm{T}}=\left( \sup_{{\rev\bm{T}}^\prime \in \mathcal{T}} \abs*{\E\left[\hat{\gamma}^{({\rev\bm{T}},{\rev\bm{T}}^\prime)}_n-\hat{\gamma}^{({\rev\bm{T}}^\prime)}_n\right]}\right) \vee \abs*{
	\E\left[\hat{\gamma}^{({\rev\bm{T}})}_n\right]-\gamma},
\\ \label{eq: def b barre est adapte} & \overline{\bm{b}}_{{\rev\bm{T}}}= \overline{\bm{b}}_{ {\rev (T^{(1)},\dots,T^{(d)})}}= 
{\rev 	\sum_{l=1}^d \Big\{ |T^{(l)}|^{-k_l(1-\frac{d}{\overline{k}})}  \big(\prod_{\substack{j=1\\j \neq l}}^{d} \norm{X^j}_{k_j} \big) \norm{X^l}^{1+k_l(1-\frac{d}{\overline{k}})}_{k_l} \Big\}
}.
\end{align}
The quantity $\overline{\bm{b}}_{\rev\bm{T}}$ is some upper bound for the bias term according to Lemma \ref{l: biais b T1 T2}. We show in the next lemma that it also controls $\mathbb{D}_{\rev{\bm{T}}}$.
\begin{lemma}\label{l: control DT est cov}
	We have $\mathbb{D}_{\rev\bm{T}} \le 2 \overline{\bm{b}}_{\rev\bm{T}}$.
\end{lemma}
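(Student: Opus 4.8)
The plan is to reduce both quantities entering $\mathbb{D}_{\bm{T}}$ to truncation–bias terms and then reuse, essentially verbatim, the H\"older--Markov estimate already carried out in the proof of Lemma~\ref{l: biais b T1 T2}. First I would record that, since the Laplace noises $\mathcal{E}^{(j),T}_i$ are centered, mutually independent for distinct $(i,j,T)$, and independent of the data, expanding the products in \eqref{eq : gamma hat T adapt}--\eqref{eq : gamma hat T Tprime adapt} and discarding every monomial containing at least one Laplace factor (which has zero expectation, the factor being independent of all the others and centered) yields $\E[\hat{\gamma}^{(\bm{T})}_n]=\E\big[\prod_{j=1}^d \tronc{X^j}{T^{(j)}}\big]$ and $\E[\hat{\gamma}^{(\bm{T},\bm{T}')}_n]=\E\big[\prod_{j=1}^d \tronc{X^j}{T^{(j)}\wedge T^{\prime(j)}}\big]$, the latter being legitimate since $T^{(j)}\wedge T^{\prime(j)}\in\mathcal{T}^{(j)}$. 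In particular $\E[\hat{\gamma}^{(\bm{T})}_n]-\gamma=b_{T^{(1)},\dots,T^{(d)}}$, so by Lemma~\ref{l: biais b T1 T2} the second branch of the maximum in \eqref{eq: def D est cov} is at most $\overline{\bm{b}}_{\bm{T}}$.

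For the first branch I would fix $\bm{T}'\in\mathcal{T}$ and set $Y^j:=\tronc{X^j}{T^{(j)}\wedge T^{\prime(j)}}$, $W^j:=\tronc{X^j}{T^{\prime(j)}}$, so that $\E[\hat{\gamma}^{(\bm{T},\bm{T}')}_n-\hat{\gamma}^{(\bm{T}')}_n]=\E\big[\prod_{j}Y^j\big]-\E\big[\prod_{j}W^j\big]$. Expanding the difference of products by a coordinate-by-coordinate telescoping, $\prod_{j=1}^d Y^j-\prod_{j=1}^d W^j=\sum_{l=1}^d\big(\prod_{j<l}Y^j\big)(Y^l-W^l)\big(\prod_{j>l}W^j\big)$, and bounding each factor pointwise by $|Y^j|\le|X^j|$, $|W^j|\le|X^j|$ and $|Y^l-W^l|\le|X^l|\,\one_{\{|X^l|>T^{(l)}\}}$ (this increment vanishes unless $T^{(l)}<T^{\prime(l)}$, in which case it equals $\min(|X^l|,T^{\prime(l)})-T^{(l)}\le|X^l|-T^{(l)}$ and is supported on $\{|X^l|>T^{(l)}\}$), I obtain $\big|\E[\hat{\gamma}^{(\bm{T},\bm{T}')}_n-\hat{\gamma}^{(\bm{T}')}_n]\big|\le\sum_{l=1}^d\E\big[\big(\prod_{j\neq l}|X^j|\big)|X^l|\,\one_{\{|X^l|>T^{(l)}\}}\big]$. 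This is exactly the quantity estimated inside the proof of Lemma~\ref{l: biais b T1 T2}: applying H\"older's inequality with exponents $1/k_1+\dots+1/k_d+1/r=1$ (so $1/r=1-d/\overline{k}>0$) followed by Markov's inequality $\P(|X^l|>T^{(l)})\le\|X^l\|_{k_l}^{k_l}|T^{(l)}|^{-k_l}$ bounds it by $\sum_{l=1}^d\big(\prod_{j\neq l}\|X^j\|_{k_j}\big)\|X^l\|_{k_l}^{1+k_l/r}|T^{(l)}|^{-k_l/r}=\overline{\bm{b}}_{\bm{T}}$. Since $\bm{T}'$ was arbitrary, the supremum in \eqref{eq: def D est cov} is also $\le\overline{\bm{b}}_{\bm{T}}$, whence $\mathbb{D}_{\bm{T}}\le\overline{\bm{b}}_{\bm{T}}\le 2\,\overline{\bm{b}}_{\bm{T}}$.

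The only genuinely delicate step is the telescoping: it must be organised so that the $l$-th increment $Y^l-W^l$ is controlled by a truncation error at threshold $T^{(l)}$, not at $T^{(l)}\wedge T^{\prime(l)}$; had one instead written $\E[\hat{\gamma}^{(\bm{T},\bm{T}')}_n-\hat{\gamma}^{(\bm{T}')}_n]=b_{\bm{T}\wedge\bm{T}'}-b_{\bm{T}'}$ and used the triangle inequality, one would be led to $\overline{\bm{b}}_{\bm{T}\wedge\bm{T}'}$, which is \emph{larger} than $\overline{\bm{b}}_{\bm{T}}$ and would not close the argument. Everything else is a direct reuse of computations already performed for Lemma~\ref{l: biais b T1 T2}, and the factor $2$ in the statement leaves a comfortable margin.
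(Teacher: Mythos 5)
Your proof is correct and follows essentially the same route as the paper's: reduce the second branch of $\mathbb{D}_{\bm{T}}$ to Lemma \ref{l: biais b T1 T2}, and for the first branch telescope the difference of products coordinate by coordinate and reuse the H\"older--Markov estimate. The only difference is that you bound the $l$-th increment directly by $|X^l|\,\one_{\{|X^l|>T^{(l)}\}}$, whereas the paper splits it as $(\tronc{X^l}{T^{(l)}}-X^l)+(X^l-\tronc{X^l}{T^{\prime(l)}})$ and treats the two pieces separately (which is where its extra factor $2$ arises); your version is a touch sharper but relies on the identical observation that the truncation error is supported on $\{|X^l|>T^{(l)}\}$ once $T^{(l)}<T^{\prime(l)}$.
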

\begin{proof}
	By Lemma \ref{l: biais b T1 T2} we have $\abs*{\E\left[\hat{\gamma}^{({\rev\bm{T}})}_n\right]-\gamma} {\rev {}= \abs{
b_{T^{(1)},\dots,T^{(d)}}} } \le \overline{\bm{b}}_{\rev\bm{T}}$ for ${\rev\bm{T}}\in \mathcal{T}$.
 For $({\rev\bm{T}},{\rev\bm{T}}^\prime) \in \mathcal{T}^2$, 
	recalling \eqref{eq : gamma hat T adapt}--\eqref{eq : gamma hat T Tprime adapt},
	we have
	\begin{align*}
		\E\left[ \hat{\gamma}^{({\rev\bm{T}},{\rev\bm{T}}^\prime)}_n-\hat{\gamma}^{({\rev\bm{T}}^\prime)}_n
		\right]
		&=
		\E\left[{\rev \prod_{j=1}^d Z^{j,T^{(j)}\wedge T^{\prime(j)}}_1 } \right] 
		-\E\left[{\rev \prod_{j=1}^d Z^{j,T^{\prime(j)}}_1}   \right] \\
		&=\E\left[ {\rev \prod_{j=1}^d \tronc{X^j_1}{T^{(j)}\wedge T^{\prime(j)}} }  \right] 
		-\E\left[ {\rev  \prod_{j=1}^d \tronc{X^j_1}{T^{\prime(j)}}}  \right]
		\\&=:{\rev \bm{b}_{ \bm{T},\bm{T}^\prime} },
	\end{align*}
	where we used definitions \eqref{eq: def Zi adpat} and the centering of the Laplace variables in the second line.
	Now we show {\rev $\abs*{ \bm{b}_{\bm{T} ,\bm{T}'}} \le 2 
		\overline{\bm{b}}_{\bm{T}}$.}
{\rev	We write $ \bm{b}_{\bm{T} ,\bm{T}'} = \sum_{l=1}^d \bm{b}^{(l)}_{\bm{T},\bm{T}'}$ where
	\begin{equation*}
		\bm{b}^{(l)}_{\bm{T}, \bm{T}'}=
		\E \left[
		\Big(\prod_{j=1}^{l-1} \tronc{X^j_1}{T^{\prime(j)}} \Big)
		\Big(\tronc{X^l_1}{T^{(l)}\wedge T^{\prime(l)}}-\tronc{X^l_1}{T^{\prime(l)}} \Big)
		\Big(\prod_{j=l+1}^{d} 
		\tronc{X^j_1}{T^{(j)}\wedge T^{\prime(j)}} \Big)
				\right].
	\end{equation*} 
Let us study 	$\bm{b}^{(l)}_{\bm{T} \wedge \bm{T}'}$ :
\begin{itemize}
	\item {\em Case 1 : $T^{(l)} \ge T^{\prime(l)}$.} Then, $\tronc{X^l_1}{T^{(l)}\wedge T^{\prime(l)}}=\tronc{X^l_1}{T^{\prime(l)}} $ and $	\bm{b}^{(l)}_{\bm{T} \wedge \bm{T}'}=0$.
	\item {\em Case 2 : $T^{(l)} \le T^{\prime(l)}$.} Then, $\tronc{X^l_1}{T^{(l)}\wedge T^{\prime(l)}}=\tronc{X^l_1}{T^{(l)}}$ and we write
	\begin{multline*}
			\bm{b}^{(l)}_{\bm{T} , \bm{T}'}=
		\E \left[
		\Big(\prod_{j=1}^{l-1} \tronc{X^j_1}{T^{\prime(j)}} \Big)
		\Big(\tronc{X^l_1}{T^{(l)}}-X^l_1 \Big)
		\Big(\prod_{j=l+1}^{d} 
		\tronc{X^j_1}{T^{(j)}\wedge T^{\prime(j)}} \Big)
		\right]+\\
				\E \left[
		\Big(\prod_{j=1}^{l-1} \tronc{X^j_1}{T^{\prime(j)}} \Big)
		\Big(X^l_1-\tronc{X^l_1}{T^{\prime(l)}} \Big)
		\Big(\prod_{j=l+1}^{d} 
		\tronc{X^j_1}{T^{(j)}\wedge T^{\prime(j)}} \Big)
		\right].
	\end{multline*}
Now, we use  $\abs{\tronc{X^l_1}{T^{(l)}} - X^l_1}= \abs{\tronc{X^l_1}{T^{(l)}}-X^l_1} \one_{ \{ \abs{X^l} \ge T^{(l)}\}} 
\le \abs{X^l_1} \one_{ \{ \abs{X^l} \ge T^{(l)}\}}$. Similarly,  $\abs{X^l_1-\tronc{X^l_1}{T^{\prime(l)}}}\le 
\abs{X^l_1} \one_{ \{ \abs{X^l} \ge T^{\prime(l)}\}}$ and $	\abs{\tronc{X^j_1}{T^{(j)}\wedge T^{\prime(j)}}} \le 
\abs{	\tronc{X^j_1}{ T^{\prime(j)}} } \le \abs{X^j_1}$. We
deduce
	\begin{multline*}
	\abs{\bm{b}^{(l)}_{\bm{T} ,\bm{T}'}}\le
	\E \left[
	\Big(\prod_{j=1}^{l} \abs{X^j_1} \Big)
	\Big(\abs{X^l_1}  \one_{ \{ \abs{X^l} \ge T^{(l)}\}}  \Big)
	\Big(\prod_{j=l+1}^{d} 
	\abs{X^j_1}\Big)
	\right]+\\
	\E \left[
	\Big(\prod_{j=1}^{l} \abs{X^j_1} \Big)
	\Big(\abs{X^l_1}  \one_{ \{ \abs{X^l} \ge T^{\prime(l)}\}}  \Big)
	\Big(\prod_{j=l+1}^{d} 
	\abs{X^j_1}\Big)
	\right].
\end{multline*}
Now, following the proof of Lemma \ref{l: biais b T1 T2}, we can show
\begin{align*}
		\abs{\bm{b}^{(l)}_{\bm{T}, \bm{T}'}}
&		\le 
\left[
 (T^{(l)})^{-k_l(1-\frac{d}{\overline{k}})} + (T^{\prime(l)})^{-k_l(1-\frac{d}{\overline{k}})}\right]
\Big( \prod_{\substack{j=1\\j\neq l}}^d \norm{X^j}_{k_j} \Big) \norm{X^l}_{k_l}^{1+k_l(1-\frac{d}{\overline{k}})},\\
&
		\le 
2 (T^{(l)})^{-k_l(1-\frac{d}{\overline{k}})}
\Big( \prod_{\substack{j=1\\j\neq l}}^d \norm{X^j}_{k_j} \Big) \norm{X^l}_{k_l}^{1+k_l(1-\frac{d}{\overline{k}})},
\end{align*}
where in the second line we used $T^{\prime(l)}\ge T^{(l)}$.
\end{itemize}
Summing in $l$ the controls we have just obtained for $\abs{\bm{b}^{(l)}_{\bm{T} , \bm{T}'}}$, we deduce  
 $\abs*{ \bm{b}_{\bm{T} , \bm{T}'}} \le 2 
\overline{\bm{b}}_{\bm{T}}$, from the definition \eqref{eq: def b barre est adapte}.
This proves the lemma.
}
\end{proof}
The following proposition shows that $\mathbb{B}_T$, defined in \eqref{eq: def est biais adaptatif est cov}, can be compared to the bias and heuristically justifies the choice \eqref{eq: def That est cov}.
\begin{proposition} \label{P: maj BBT est cov}	
	Assume that {\rev $\sum_{j=1}^d \frac{1}{k_j}<1$,} $\beta_n^j=\frac{\alpha_j}{\lfloor \log_2(n) \rfloor}$, for {\rev $j=1,\dots,d$} and $\kappa_n=c_0\log_2(n)$ for some $c_0>0$.
	Then, if $c_0$ is large enough, there exists $c>0$, $\overline{c}>0$, such that for all ${\rev\bm{T}} \in \mathcal{T}$, $\forall n \ge 1$, {\rev$(n \prod_{j=1}^d\alpha^2_j)/\log(n)^{2d+1} \ge 1 $,}
	\begin{equation*}
		\E\left[\mathbb{B}_{\rev\bm{T}}\right] \le c \overline{\bm{b}}^2_{\rev\bm{T}} + \frac{c}{n^{\overline{c}}{\rev \prod_{j=1}^d\alpha^2_j}}.
	\end{equation*}
The constant $\overline{c}$ can be arbitrarily large if $c_0$ is chosen large enough.	
\end{proposition}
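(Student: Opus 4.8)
The plan is to bound $\mathbb{B}_{\bm{T}}$ by splitting each term $\left(|\hat{\gamma}^{(\bm{T},\bm{T}')}_n-\hat{\gamma}^{(\bm{T}')}_n|^2-\mathbb{V}_{\bm{T}'}\right)_+$ into a deterministic (bias) part and a stochastic (deviation) part. First I would write $\hat{\gamma}^{(\bm{T},\bm{T}')}_n-\hat{\gamma}^{(\bm{T}')}_n = \bm{b}_{\bm{T},\bm{T}'} + \left(\hat{\gamma}^{(\bm{T},\bm{T}')}_n-\E[\hat{\gamma}^{(\bm{T},\bm{T}')}_n]\right) - \left(\hat{\gamma}^{(\bm{T}')}_n-\E[\hat{\gamma}^{(\bm{T}')}_n]\right)$, where $\bm{b}_{\bm{T},\bm{T}'}$ is the bias term controlled by Lemma \ref{l: control DT est cov} (more precisely by $|\bm{b}_{\bm{T},\bm{T}'}| \le \mathbb{D}_{\bm{T}'} \vee \mathbb{D}_{\bm{T}}$ which, by that lemma, is $\le 2(\overline{\bm{b}}_{\bm{T}}+\overline{\bm{b}}_{\bm{T}'})$ — actually one needs the asymmetric bound $|\bm{b}_{\bm{T},\bm{T}'}|\le 2\overline{\bm{b}}_{\bm{T}\wedge\bm{T}'}\le 2\overline{\bm{b}}_{\bm{T}}$ established inside the proof of Lemma \ref{l: control DT est cov}). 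Using $(a+b)^2 \le 2a^2+2b^2$ and then $(u+v-w)_+ \le (u)_+ + (v-\tfrac{w}{2})_+ + \ldots$ type inequalities, I would reduce the expectation to: (i) a purely bias term of order $\overline{\bm{b}}_{\bm{T}}^2$ (using that $\overline{\bm{b}}_{\bm{T}'}\le\overline{\bm{b}}_{\bm{T}\wedge\bm{T}'}$ and the monotonicity of the bias bound, so $\overline{\bm{b}}$ evaluated at $\bm{T}\wedge\bm{T}'$ against the penalty at $\bm{T}'$), and (ii) a term $\E\big[\sup_{\bm{T}'\in\mathcal{T}}\big(|D^{(\bm{T},\bm{T}')}_n|^2 + |D^{(\bm{T}')}_n|^2 - c\,\mathbb{V}_{\bm{T}'}\big)_+\big]$ where $D$ denotes the centered statistics.

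The heart of the argument is step (ii): I would bound this supremum-over-$\mathcal{T}$ expectation by $\sum_{\bm{T}'\in\mathcal{T}}\E\big[\big(|D^{(\bm{T},\bm{T}')}_n|^2+|D^{(\bm{T}')}_n|^2 - c\,\mathbb{V}_{\bm{T}'}\big)_+\big]$ — the cardinality of $\mathcal{T}$ is only $(\lfloor\log_2 n\rfloor)^d$, which is polylogarithmic, so it can be absorbed. For a fixed $\bm{T}'$, the centered statistic $D^{(\bm{T}')}_n = \frac1n\sum_i\big(\prod_j Z^{j,T'^{(j)}}_i - \E[\cdot]\big)$ is an average of iid centered variables, each bounded by roughly $\prod_j T'^{(j)}/\beta_n^j$ (since each truncated-plus-Laplace factor has exponential tails with scale $T'^{(j)}/\beta_n^j$). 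Then I would invoke a Bernstein-type concentration inequality (or a moment bound together with $\E[(W-a)_+] = \int_a^\infty \P(W>t)dt$) to show $\P\big(|D^{(\bm{T}')}_n|^2 > t\,\mathbb{V}_{\bm{T}'}\big)$ decays like $\exp(-c\sqrt{t\kappa_n})$ or similar, using that $\mathbb{V}_{\bm{T}'} = \kappa_n \frac{\prod|T'^{(j)}|^2}{n\prod|\beta_n^j|^2}$ is exactly $\kappa_n$ times the order of the variance of $D^{(\bm{T}')}_n$. Choosing $\kappa_n = c_0\log n$ with $c_0$ large then makes $\E\big[(|D^{(\bm{T}')}_n|^2 - c\mathbb{V}_{\bm{T}'})_+\big] \le n^{-\overline{c}}\,\mathbb{V}_{\bm{T}'}\cdot(\text{const})$ with $\overline{c}$ growing with $c_0$; summing the polylog-many terms and bounding the worst $\mathbb{V}_{\bm{T}'}$ (which is at most of order $n\prod\alpha_j^2 / (\text{polylog})$ divided by appropriate powers, ultimately giving the $\frac{c}{n^{\overline{c}}\prod\alpha_j^2}$ shape after readjusting $\overline{c}$) yields the claimed residual. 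The mixed statistic $D^{(\bm{T},\bm{T}')}_n$ is handled identically since $\prod_j Z^{j,T^{(j)}\wedge T'^{(j)}}_i$ is bounded by $\prod_j T'^{(j)}/\beta_n^j$ up to constants when $T^{(j)}\wedge T'^{(j)}\le T'^{(j)}$, and symmetrically; in the end one keeps whichever penalty dominates.

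The main obstacle I anticipate is getting the deterministic bias bookkeeping tight enough: one must verify that $\sup_{\bm{T}'}\big(|\bm{b}_{\bm{T},\bm{T}'}|^2 - \tfrac12\mathbb{V}_{\bm{T}'}\big)_+ \le c\,\overline{\bm{b}}_{\bm{T}}^2$, which requires the monotonicity of $\bm{T}'\mapsto\overline{\bm{b}}_{\bm{T}'}$ (decreasing in each $T'^{(j)}$) against the monotonicity of $\bm{T}'\mapsto\mathbb{V}_{\bm{T}'}$ (increasing in each $T'^{(j)}$): for $T'^{(j)}$ small the bias bound $\overline{\bm{b}}_{\bm{T}\wedge\bm{T}'}=\overline{\bm{b}}_{\bm{T}'}$ is large but dominated by $\overline{\bm{b}}_{\bm{T}}$ only when $T'^{(j)}\ge T^{(j)}$, whereas for $T'^{(j)}<T^{(j)}$ one uses $\bm{T}\wedge\bm{T}'$-monotonicity differently — so the argument bifurcates exactly as in the proof of Lemma \ref{l: control DT est cov}. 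A secondary technical point is controlling high moments of products of Laplace-perturbed truncated variables uniformly in $\bm{T}'\in\mathcal{T}$; this is where the assumption $\alpha_j\le 1$ and the normalization $\beta_n^j = \alpha_j/\lfloor\log_2 n\rfloor$ enter, contributing the extra $(\log n)^{2d}$ factor visible in Theorem \ref{th: est gamma adaptive}. Both difficulties are essentially bookkeeping once the concentration step is set up, and I would relegate the detailed constants to the computation.
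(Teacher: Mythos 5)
Your overall plan matches the paper's proof: split $\hat{\gamma}^{(\bm{T},\bm{T}')}_n-\hat{\gamma}^{(\bm{T}')}_n$ into the two centered statistics plus the deterministic bias $\bm{b}_{\bm{T},\bm{T}'}$, take a union bound over the polylogarithmically many $\bm{T}'\in\mathcal{T}$, apply a Bernstein-type deviation bound with $\kappa_n=c_0\log n$ providing the $n^{-\overline{c}}$ decay, and control the bias via Lemma~\ref{l: control DT est cov}. Two bookkeeping points where your write-up drifts from what actually happens. First, the ``main obstacle'' you anticipate — balancing $|\bm{b}_{\bm{T},\bm{T}'}|^2$ against $\tfrac12\mathbb{V}_{\bm{T}'}$ and bifurcating on $T'^{(l)}\lessgtr T^{(l)}$ — does not arise at the level of this proposition: after the square is split, the penalty $\mathbb{V}_{\bm{T}'}$ is subtracted only from the two centered pieces, and the bias piece is handled by $\sup_{\bm{T}'}|\bm{b}_{\bm{T},\bm{T}'}|^2\le\mathbb{D}_{\bm{T}}^2\le 4\overline{\bm{b}}_{\bm{T}}^2$, which is immediate from the definition \eqref{eq: def D est cov} of $\mathbb{D}_{\bm{T}}$ and Lemma~\ref{l: control DT est cov}; the case analysis is done once, inside that lemma. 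Relatedly, your intermediate inequality $\overline{\bm{b}}_{\bm{T}\wedge\bm{T}'}\le\overline{\bm{b}}_{\bm{T}}$ has the wrong direction — $\overline{\bm{b}}$ is decreasing in each $T^{(l)}$, so $\bm{T}\wedge\bm{T}'\le\bm{T}$ gives $\overline{\bm{b}}_{\bm{T}\wedge\bm{T}'}\ge\overline{\bm{b}}_{\bm{T}}$; the bound $|\bm{b}_{\bm{T},\bm{T}'}|\le 2\overline{\bm{b}}_{\bm{T}}$ does hold, but via the term-by-term argument in the lemma's proof, not by that monotonicity step. Second, the centered summands are not bounded (the Laplace noise is unbounded), so Bernstein cannot be applied directly; the paper introduces a high-probability event $\Omega_n$ on which the Laplace variables are truncated at level $T^{(j)}\tilde{\kappa}_n^{(j)}$, applies Bernstein to the truncated statistic on $\Omega_n$, and then treats $\Omega_n^c$ (probability $\lesssim\log(n)\,n^{-4d-c_0}$ by Lemma~\ref{l: majo Omega comp est cov}) via Cauchy--Schwarz against a fourth-moment bound. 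Your ``exponential tails'' remark points at the right idea, but the explicit truncation event and the $\Omega_n^c$ control are a genuine step you would need to supply.
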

\begin{proof}
For $({\rev\bm{T}},{\rev\bm{T}}^\prime) \in \mathcal{T}^2$, we write,
\begin{align*}
	\abs{\hat{\gamma}^{({\rev\bm{T}},{\rev\bm{T}}^\prime)}_n-\hat{\gamma}^{({\rev\bm{T}}^\prime)}_n}^2 \le 8 \abs*{\hat{\gamma}^{({\rev\bm{T}},{\rev\bm{T}}^\prime)}_n - \E\left[ \hat{\gamma}^{({\rev\bm{T}},{\rev\bm{T}}^\prime)}_n  \right] }^2 +
	 8 \abs*{\hat{\gamma}^{({\rev\bm{T}}^\prime)}_n - \E\left[ \hat{\gamma}^{({\rev\bm{T}}^\prime)}_n  \right] }^2
	 +8 \abs*{\E\left[\hat{\gamma}^{({\rev\bm{T}}^\prime)}_n\right] - \E\left[ \hat{\gamma}^{({\rev\bm{T}},{\rev\bm{T}}^\prime)}_n  \right] }^2	,
\end{align*}
and deduce from \eqref{eq: def est biais adaptatif est cov} and \eqref{eq: def D est cov} that
\begin{align}
	\nonumber
	\mathbb{B}_{T} &\le 
	8 \sum_{{\rev\bm{T}}^\prime \in \mathcal{T}}
		\left\{   \left( \abs*{\hat{\gamma}^{({\rev\bm{T}},{\rev\bm{T}}^\prime)}_n - \E[\hat{\gamma}^{({\rev\bm{T}},{\rev\bm{T}}^\prime)}_n ] }^2 -
	\frac{1}{16}\mathbb{V}_{{\rev\bm{T}}^\prime}\right)_+ \right\}
\\	& \nonumber \qquad\qquad\qquad+
	8 \sum_{{\rev\bm{T}}^\prime \in \mathcal{T}}
	\left\{   \left( \abs*{\hat{\gamma}^{({\rev\bm{T}}^\prime)}_n - \E[\hat{\gamma}^{({\rev\bm{T}}^\prime)}_n ] }^2 -
	\frac{1}{16}\mathbb{V}_{{\rev\bm{T}}^\prime}\right)_+ \right\}
	+ 8 \mathbb{D}_{\rev\bm{T}}^2
	\\& \label{eq: maj B B1 B2 DT}
	=:8\left[ \mathbb{B}_{{\rev\bm{T}}}^{(1)}+ \mathbb{B}_{{\rev\bm{T}}}^{(2)} + \mathbb{D}_{\rev\bm{T}}^2\right].
\end{align}
Using Lemma \ref{l: control DT est cov}, we see that the proposition will be proved as soon as we show, 
\begin{equation} \label{eq: control Bl preuve control D}
	\E\left[\mathbb{B}^{(l)}_{\rev\bm{T}}\right] \le \frac{c}{n^{\overline{c}}{\rev \prod_{j=1}^d \alpha_j^2}},  \text{ for } {\rev\bm{T}} \in \mathcal{T}, \text{ and }l=1,2.
\end{equation}
First, we focus on
$\E\left[\mathbb{B}^{(2)}_{\rev\bm{T}}\right]$. For ${\rev\bm{T}}^\prime \in \mathcal{T}$, we denote by $g_{{\rev\bm{T}}^\prime}:\mathbb{R}^{\rev d}\times\mathbb{R}^{\rev d}$ the function 
defined as {\rev $g_{{\bm{T}}^\prime}(\bm{x},\bm{e})=g_{{\rev\bm{T}}^\prime}(\bm{x},e_1,\dots,e_d)=
	\prod_{j=1}^d(\tronc{x_j}{T^{\prime{(j)}}}+ e_j)$,} which is   
such that
\begin{equation*}
	{\rev g_{{\bm{T}}^\prime}(\bm{X}_i,{\rev  \mathcal{E}^{{(1)},T^{\prime{(1)}}}_i, \dots , \mathcal{E}^{{(d)},T^{\prime{(d)}}}_i} )=
	 \prod_{j=1}^d \big( \tronc{X_i^1}{T^{\prime{(j)}}}+\mathcal{E}^{(j),T^{\prime{(j)}}}_i \big)
		= \prod_{j=1}^d 	Z^{j,T^{\prime{(j)}}}_i,}
\end{equation*}
	by \eqref{eq: def Zi adpat}. Thus, 
\begin{equation*} 
	\hat{\gamma}^{({\rev\bm{T}}^\prime)}_n-\E\left[\hat{\gamma}^{({\rev\bm{T}}^\prime)}_n\right]
	= \frac{1}{n}\sum_{i=1}^n \left\{g_{{\rev\bm{T}}^\prime}(\bm{X}_i,{\rev  \mathcal{E}^{{(1)},T^{\prime{(1)}}}_i, \dots, \mathcal{E}^{{(d)},T^{\prime{(d)}}}_i} )- 
	\E\left[
g_{{\rev\bm{T}}^\prime}(\bm{X}_i, {\rev \mathcal{E}^{{(1)},T^{\prime{(1)}}}_i,\dots, \mathcal{E}^{{(d)},T^{\prime{(d)}}}_i} )	\right]
\right\},\end{equation*}
recalling \eqref{eq : gamma hat T adapt}. We intend to apply Bernstein's inequality and introduce a set on which the random variables we consider are bounded. Let 
\begin{equation*}
	\Omega_n=\left\{ \omega \in \Omega \mid \forall j\in {\rev \{1,\dots,d\},} ~\forall T \in \mathcal{T}^{(j)}, ~\forall i \in \{1,\dots,n\}, 
	\text{we have } \abs{\mathcal{E}_i^{{(j)},T}} \le T \tilde{\kappa}_n^{(j)} 
	\right\}
\end{equation*}
where $\tilde{\kappa}_n^{(j)}=\frac{(2c_0+{\rev 8d})\log(n)}{\beta_n^j}$ for {\rev $j=1,\dots,d$,} and $c_0$ is the constant given in the statement of the proposition.
We introduce the following lemma, its proof is postponed until after the current proposition.
\begin{lemma}\label{l: majo Omega comp est cov}
	We have $\P(\Omega_n^c) \le 2 \frac{\lfloor log_2(n) \rfloor}{n^{{\rev 4d}+c_0}}$. 
\end{lemma}
We introduce a bounded modification of $g_{T^\prime}$ defined as
\begin{equation*}
{\rev 	\tilde{g}_{{\rev\bm{T}}^\prime}(\bm{X}_i, \mathcal{E}^{1,T^{\prime{(1)}}}_i, \dots,\mathcal{E}^{d,T^{\prime{(d)}}}_i )=
	\prod_{j=1}^d 
\left(\tronc{X_i^j}{T^{\prime{(j)}}}+\tronc{\mathcal{E}^{(j),T^{\prime{(j)}}}_i}{T^{\prime{(j)}}\tilde{\kappa}_n^{(j)}}\right).}
\end{equation*}
We have $\norm{\tilde{g}_{{\rev\bm{T}}^\prime}}_\infty \le 
{\rev \prod_{j=1}^d \big( T^{\prime(j)}(1+\tilde{\kappa}_n^{(j)})\big)} =:M_{{\rev\bm{T}}^\prime}$, and with computations analogous to the ones giving \eqref{eq: upper bound var est gamma}, we can prove
\begin{equation*}
	{\rev \mathop{var}\left(  \tilde{g}_{{\rev\bm{T}}^\prime}(\bm{X}_i, \mathcal{E}^{1,T^{\prime{(1)}}}_i, \dots,\mathcal{E}^{j,T^{\prime{(j)}}}_i )    \right) \le C \frac{\prod_{j=1}^d \abs{T^{\prime(j)}}^2 }{\prod_{j=1}^d\abs{\beta_n^j}^2}}=:v_{{\rev\bm{T}}^\prime},
\end{equation*}
for some universal constant $C$. 
We recall the Bernstein's inequality (see e.g. \cite{Boucheron_et_alBook}) : for $({G}_i)_{i=1,\dots,n}$ a iid sequence with
 $\norm{G_i}_\infty \le M$ and $\mathop{var}({G}_i)\le v$,
\begin{align*}
	\mathbb{P}\left(
	\abs{\frac{1}{n}\sum_{i=1}^n {G}_i - \E[{G}_i] }\ge t
	\right) \le 2 \exp\left(-\frac{n t^2}{2(v+Mt)}\right)
\le 2 \exp\left(-\frac{n t^2}{4v}\right)+2 \exp\left(-\frac{n t}{4M}\right).
\end{align*}
As on $\Omega_n$, the random variables ${g}_{{\rev\bm{T}}^\prime}(\bm{X}_i, \mathcal{E}^{1,T^{\prime{(1)}}}_i, {\rev \dots, \mathcal{E}^{d,T^{\prime{(d)}}}_i })$ and $\tilde{g}_{{\rev\bm{T}}^\prime}(\bm{X}_i, \mathcal{E}^{1,T^{\prime{(1)}}}_i, {\rev \dots,\mathcal{E}^{d,T^{\prime{(d)}}}_i })$ are almost surely equal,
we deduce for $t \ge 0$,
\begin{multline} \label{eq : Berstein applique gamma T}
	\P \left(  \left\{\abs*{\hat{\gamma}^{({\rev\bm{T}}^\prime)}_n - \E[\hat{\gamma}^{({\rev\bm{T}}^\prime)}_n ] }^2 
	-\frac{1}{16} \mathbb{V}_{{\rev\bm{T}}^\prime} \ge t ; ~ \Omega_n \right\}
	\right) \\
	 =	\P \left(  \left\{ \abs{n^{-1} \sum_{i=1}^n \tilde{g}_{{\rev\bm{T}}^\prime}(\bm{X}_i, \mathcal{E}^{1,T^{\prime{(1)}}}_i,
	 	{\rev \dots, \mathcal{E}^{d,T^{\prime{(d)}}}_i} ) - \E[\tilde{g}_{T^\prime}(\bm{X}_i, \mathcal{E}^{1,T^{\prime{(1)}}}_i,
	 	{\rev \dots, \mathcal{E}^{d,T^{\prime{(d)}}}_i} )]}  \ge  \sqrt{\frac{1}{16} \mathbb{V}_{T^\prime}+t}
	;~ \Omega_n\right\}\right)
	 \\ \le 2 \exp\left(-\frac{n \left( \frac{1}{16} \mathbb{V}_{{\rev\bm{T}}^\prime}+t\right) }{4v_{{\rev\bm{T}}^\prime}}\right)+2 \exp\left(-\frac{n \sqrt{\frac{1}{16} \mathbb{V}_{{\rev\bm{T}}^\prime}+t}}{4M_{{\rev\bm{T}}^\prime}}\right)
	  \\ \le 2 \exp\left(-\frac{n \mathbb{V}_{{\rev\bm{T}}^\prime}}{64 v_{{\rev\bm{T}}^\prime}} \right)	  
	  \exp \left( -\frac{nt}{4v_{{\rev\bm{T}}^\prime}}\right)
	  +2 \exp\left(\frac{-n\sqrt{\mathbb{V}_{{\rev\bm{T}}^\prime}}}{32 M_{{\rev\bm{T}}^\prime} }  \right)
	  \exp\left(\frac{-n\sqrt{t}}{8M_{{\rev\bm{T}}^\prime}} \right).
\end{multline}
By \eqref{eq: def penalisation est cov}, we have $\frac{n\mathbb{V}_{{\rev\bm{T}}^\prime}}{v_{{\rev\bm{T}}^\prime}}=\frac{\kappa_n}{C}=\frac{c_0 \ln(n)}{C}$ 
for some universal constant $C$, and 
{\rev $\frac{n\sqrt{\mathbb{V}_{{\rev\bm{T}}^\prime}}}{M_{{\rev\bm{T}}^\prime}}=\frac{\sqrt{n}\sqrt{\kappa_n}}{\prod_{j=1}^d\beta_n^j \prod_{j=1}^d(1+\tilde{k}_n^{(j)})}
$} 
is equal to $\frac{ \sqrt{n} \sqrt{c_0 \log(n)} \lfloor\log_2(n)\rfloor^{\rev 2d} }{{\rev \prod_{j=1}^d \alpha_j} {\rev \prod_{j=1}^d (1+(2c_0+8d)\frac{\log(n)\lfloor\log_2(n)\rfloor}{\alpha_j})}} \ge \frac{C'{\modar{\sqrt{n}}}}{(c_0+{\rev 4d})^{\rev 2d-1/2} \log(n)^{\rev 2d -1/2}}$ for some constant $C'$. Hence,
\begin{multline*}
\P \left(  \left\{\abs*{\hat{\gamma}^{({\rev\bm{T}}^\prime)}_n - \E[\hat{\gamma}^{({\rev\bm{T}}^\prime)}_n ] }^2 
-\frac{1}{16} \mathbb{V}_{{\rev\bm{T}}^\prime} \ge t ; ~ \Omega_n \right\}
\right) \\ \le 2 \exp \left( -\frac{c_0 \ln(n)}{C64}\right)	  
\exp \left( -\frac{nt}{4v_{{\rev\bm{T}}^\prime}}\right)
+2 \exp\left( -\frac{C'{\modar{\sqrt{n}}}}{{\rev (c_0+2d)^{2d-1/2} \log(n)^{2d-1/2}}}   \right) 
\exp\left(\frac{-n\sqrt{t}}{8M_{{\rev\bm{T}}^\prime}} \right).
\end{multline*}
By choosing $c_0$ large enough, we have
\begin{equation*}
	\P \left(  \left\{\abs*{\hat{\gamma}^{({\rev\bm{T}}^\prime)}_n - \E[\hat{\gamma}^{({\rev\bm{T}}^\prime)}_n ] }^2 
	-\frac{1}{16} \mathbb{V}_{{\rev\bm{T}}^\prime} \ge t ; ~ \Omega_n \right\}
	\right) \\ \le C n^{-\overline{c}} 	\left( \exp \left( -\frac{nt}{4v_{{\rev\bm{T}}^\prime}}\right) + \exp\left(\frac{-n\sqrt{t}}{8M_{{\rev\bm{T}}^\prime}} \right)\right),
\end{equation*}
where $\overline{c}>0$ is any arbitrary constant. Since the previous control is valid for any ${\rev\bm{T}}^\prime \in \mathcal{T}$, we deduce that,
\begin{multline*}
	\P \left(  \left\{ \sup_{{\rev\bm{T}}^\prime\in\mathcal{T}} \left( \abs*{\hat{\gamma}^{({\rev\bm{T}}^\prime)}_n - \E[\hat{\gamma}^{({\rev\bm{T}}^\prime)}_n ] }^2 
	-\frac{1}{16} \mathbb{V}_{{\rev\bm{T}}^\prime} \right)_+ \ge t ; ~ \Omega_n \right\}
	\right) \\ \le Cn^{-\overline{c}} \sum_{{\rev\bm{T}}^\prime \in \mathcal{T}}  	\left( \exp \left( -\frac{nt}{4v_{{\rev\bm{T}}^\prime}}\right) + \exp\left(\frac{-n\sqrt{t}}{8M_{{\rev\bm{T}}^\prime}} \right)\right).
\end{multline*}
Integrating with respect to $t$ on $[0,\infty)$, we get
\begin{equation*}
	\E \left[ 
	 \sup_{{\rev\bm{T}}^\prime\in\mathcal{T}} \left( \abs*{\hat{\gamma}^{(T^\prime)}_n - \E[\hat{\gamma}^{({\rev\bm{T}}^\prime)}_n ] }^2 
	-\frac{1}{16} \mathbb{V}_{{\rev\bm{T}}^\prime} \right)_+  
	\one_{\Omega_n}
	\right] \\ \le C n^{-\overline{c}}  \sum_{{\rev\bm{T}}^\prime \in \mathcal{T}} 	\left( 
	\frac{v_{{\rev\bm{T}}^\prime}}{n}+ \frac{M_{{\rev\bm{T}}^\prime}^2}{n^2}\right).
\end{equation*}
Using \eqref{eq: def ens T possible}, 
$v_{{\rev\bm{T}}^\prime}= C{\rev  \frac{\prod_{j=1}^d T^{\prime(j)}}{\prod_{j=1}^d\abs{\beta_n^j}^2}} \le C \frac{n^{{\rev2d}}{\modar {\log_2(n)}^{{\rev2d}}}}{{\rev \prod_{j=1}^d \alpha_j^2}}$ and $M_{{\rev\bm{T}}^\prime}= {\rev \prod_{j=1}^d \big(T^{\prime(j)}(1+\tilde{\kappa}_n^{(j)})\big)}$ is upper bounded by
{\rev $ n^{d} \prod_{j=1}^d(1+\frac{C\log(n)^2}{\alpha_j})$,} we deduce
\begin{equation} \label{eq: borne sup T gamma V part 1}
	\E \left[ 
	\sup_{{\rev\bm{T}}^\prime\in\mathcal{T}} \left( \abs*{\hat{\gamma}^{({\rev\bm{T}}^\prime)}_n - \E[\hat{\gamma}^{({\rev\bm{T}}^\prime)}_n ] }^2 
	-\frac{1}{16} \mathbb{V}_{{\rev\bm{T}}^\prime} \right)_+  
	\one_{\Omega_n}
	\right] \le C n^{-\overline{c}}  {\rev \frac{n^{2d-1} \log(n)^{2d+1}}{\prod_{j=1}^d\alpha^2_j}}.
\end{equation}

We now study the contribution coming from the event $\Omega_n^c$. 
We have,  using the simple inequality $(a-b)_+ \le (a)_+$ 
and Jensen's inequality
\begin{equation*}
	\E \left[ 
\sup_{{\rev\bm{T}}^\prime\in\mathcal{T}} \left( \abs*{\hat{\gamma}^{({\rev\bm{T}}^\prime)}_n - \E[\hat{\gamma}^{({\rev\bm{T}}^\prime)}_n ] }^2 
-\frac{1}{16} \mathbb{V}_{{\rev\bm{T}}^\prime} \right)_+  
\one_{\Omega_n^c}\right] \le2 \sum_{{\rev\bm{T}}^\prime \in \mathcal{T}}
	\E \left[ 
\abs*{\hat{\gamma}^{({\rev\bm{T}}^\prime)}_n}^2 \one_{\Omega_n^c}\right].
\end{equation*}
By using Cauchy--Schwarz's inequality, it comes
\begin{align*}
	\E \left[ 
	\sup_{T^\prime\in\mathcal{T}} \left( \abs*{\hat{\gamma}^{(T^\prime)}_n - \E[\hat{\gamma}^{(T^\prime)}_n ] }^2 
	-\frac{1}{16} \mathbb{V}_{T^\prime} \right)_+  
	\one_{\Omega_n^c}\right] &\le 2\sum_{T^\prime \in \mathcal{T}}
	\E \left[ 
	\abs*{\hat{\gamma}^{(T^\prime)}_n}^4 \right]^{1/2} \P(\Omega_n^c)^{1/2}
	\\&\le 2
	\sum_{T^\prime \in \mathcal{T}}
	 \left[ n^{-1}
	\sum_{i=1}^n \E({\rev \abs{\prod_{j=1}^dZ^{j,T^{\prime(j)}}}^4}) \right]^{1/2} \P(\Omega_n^c)^{1/2},
\end{align*}
where we used again Jensen's inequality. Since $\E(\abs{Z^{l,T^{\prime(l)}}})^q \le C_q \frac{\abs{T^{\prime(l)}}^q}{\beta_n^q}$ for all $q\ge 1$ and $l\in\{1,\dots,d\}$, we deduce,

\begin{align} \nonumber
	\E \left[ 
	\sup_{{\rev\bm{T}}^\prime\in\mathcal{T}} 
	\left( \abs*{\hat{\gamma}^{({\rev\bm{T}}^\prime)}_n - \E[\hat{\gamma}^{({\rev\bm{T}}^\prime)}_n ] }^2 
	-\frac{1}{16} \mathbb{V}_{{\rev\bm{T}}^\prime} \right)_+  
	\one_{\Omega_n^c}\right] &\le
	C  {\rev \Big( \sum_{{\rev\bm{T}}^\prime \in \mathcal{T}}
 \frac{\prod_{j=1}^d \abs{T^{\prime(j)}}^2 }{ \prod_{j=1}^d \beta_n^j } \Big)} \P(\Omega_n^c)^{1/2}
\\ & \le \nonumber
C \mathop{card}(\mathcal{T}) \times \frac{n^{\rev 2d}}{{\rev \prod_{j=1}^d \beta_n^j}} \times  \frac{\sqrt{\log(n)}}{n^{{\rev 2d}+c_0/2}}
\\& \le \label{eq: borne sup T gamma V part 2}
C  \frac{\log(n)^{{\rev 3d+1/2}}}{{\rev \prod_{j=1}^d \alpha^2_j}~n^{c_0/2}}
\end{align}
	by Lemma \ref{l: majo Omega comp est cov}. Collecting \eqref{eq: borne sup T gamma V part 1} and 
\eqref{eq: borne sup T gamma V part 2}, with the fact that $c_0$ can be chosen arbitrarily large, and $\alpha\le 1$, we have
\begin{equation*}
	\E \left[ 
	\sup_{{\rev\bm{T}}^\prime\in\mathcal{T}} \left( \abs*{\hat{\gamma}^{({\rev\bm{T}}^\prime)}_n - \E[\hat{\gamma}^{({\rev\bm{T}}^\prime)}_n ] }^2 
	-\frac{1}{16} \mathbb{V}_{T^\prime} \right)_+  \right] 
	\le \frac{C}{{\rev \prod_{j=1}^d\alpha^2_j }~ n^{\overline{c}}}.
\end{equation*}
		This is exactly \eqref{eq: control Bl preuve control D} with $l=2$. The proof of $\eqref{eq: control Bl preuve control D}$ with $l=1$ is obtained similarly, remarking that the application of the Bernstein's inequality, with the same constants $M_{T^\prime}$, $v_{T^\prime}$ still yields to the upper bound \eqref{eq : Berstein applique gamma T} for
				$	\P \left(  \left\{\abs*{\hat{\gamma}^{({\rev\bm{T}},{\rev\bm{T}}^\prime)}_n - \E[\hat{\gamma}^{({\rev\bm{T}},{\rev\bm{T}}^\prime)}_n ] }^2 
		-\frac{1}{16} \mathbb{V}_{{\rev\bm{T}}^\prime} \ge t ; ~ \Omega_n \right\}
		\right) $.	
\end{proof}

\begin{proof}[Proof of Lemma \ref{l: majo Omega comp est cov}]
The set  $\Omega_n^c$ is included in 
\begin{equation*}
\bigcup_{j=1}^{\rev d} \bigcup_{T^{(j)}\in\mathcal{T}^{(j)}} \{ \abs{\mathcal{E}^{(j),T^{(j)}}} \ge T^{(j)} \tilde{\kappa}_n^{(j)}\}.	
\end{equation*} 
But $\mathcal{E}^{(j),T^{(j)}}/T^{(j)}$ is distributed as a Laplace $\frac{2}{\beta_n^j}\times\mathcal{L}(1)$ variable, and we deduce 
\begin{multline*}
	\P\left(\Omega_n^c\right) \le {\rev \sum_{j=1}^d \mathop{card}\left(\mathcal{T}^{(j)}\right) \P\left( \frac{2}{\beta_n^j}\abs*{\mathcal{L}(1)} \ge \tilde{\kappa}_n^{(j)}\right)} 
	 \le 2 \lfloor \log_2(n) \rfloor  e^{-\tilde{\kappa}_n\beta_n^{{\rev(j)}}/2} \\= 2 \lfloor \log_2(n) \rfloor  e^{-\ln(n)(2c_0+{\rev 8d})/2}=2 \lfloor \log_2(n) \rfloor  n^{-{\rev 4d}-c_0}, 
\end{multline*}   
as we wanted. 
\end{proof}

\begin{proposition}\label{prop: est gamma adaptive sur classe}
	Assume that {\rev $k_1^{-1}+\dots+k_d^{-1}<1$,} and $\beta_n^j=\frac{\alpha_j}{\lfloor \log_2(n) \rfloor}$, $\kappa_n=c_0 \log (n)$ for some $c_0>0$. {\modar If $c_0$ is large enough, there} exist $c>0$, $\overline{c}_0>0$, such that
	\begin{equation*}
		\E\left[(\hat{\gamma}^{\widehat{T}}_n - \gamma )^2\right]
		\le c \inf_{{\rev\bm{T}}\in \mathcal{T}} [\overline{\bm{b}}_{\rev\bm{T}}^2 + \mathbb{V}_{\rev\bm{T}} ]
		+ \frac{c}{{\rev \prod_{j=1}^d\alpha^2_j ~ n^{\overline{c}_0}}},
	\end{equation*}
	for all $n \ge 1$, $\alpha_j\le 1$, $(n{\rev \prod_{j=1}^d\alpha^2_j})/(\log(n))^{\rev 2d+1} \ge 1 $. Moreover, the constant $\overline{c}_0$ can be chosen arbitrarily large by choosing $c_0$ large enough.
\end{proposition}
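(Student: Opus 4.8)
The plan is to establish Proposition \ref{prop: est gamma adaptive sur classe} via the standard Goldenshluger--Lepski oracle inequality argument, adapted to the present componentwise-private setting. We start from the definition \eqref{eq: def That est cov} of $\widehat{\bm{T}}$, which minimizes $\mathbb{B}_{\bm{T}}+\mathbb{V}_{\bm{T}}$ over $\bm{T}\in\mathcal{T}$. Fix an arbitrary competitor $\bm{T}\in\mathcal{T}$. The key decomposition is the triangle-type bound
\begin{equation*}
\abs*{\hat{\gamma}^{(\widehat{\bm{T}})}_n-\gamma} \le \abs*{\hat{\gamma}^{(\widehat{\bm{T}})}_n-\hat{\gamma}^{(\widehat{\bm{T}},\bm{T})}_n} + \abs*{\hat{\gamma}^{(\widehat{\bm{T}},\bm{T})}_n-\hat{\gamma}^{(\bm{T})}_n} + \abs*{\hat{\gamma}^{(\bm{T})}_n-\gamma},
\end{equation*}
using the commutativity $\hat{\gamma}^{(\widehat{\bm{T}},\bm{T})}_n=\hat{\gamma}^{(\bm{T},\widehat{\bm{T}})}_n$. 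By the definition \eqref{eq: def est biais adaptatif est cov} of $\mathbb{B}$, the first term squared is bounded by $\mathbb{B}_{\bm{T}}+\mathbb{V}_{\widehat{\bm{T}}}$ and the second term squared by $\mathbb{B}_{\widehat{\bm{T}}}+\mathbb{V}_{\bm{T}}$. Then one invokes the minimizing property of $\widehat{\bm{T}}$ to replace $\mathbb{B}_{\widehat{\bm{T}}}+\mathbb{V}_{\widehat{\bm{T}}}$ by $\mathbb{B}_{\bm{T}}+\mathbb{V}_{\bm{T}}$. Combining, we get (after squaring and using $(a+b+c)^2\le 3(a^2+b^2+c^2)$) a bound of the form
\begin{equation*}
\abs*{\hat{\gamma}^{(\widehat{\bm{T}})}_n-\gamma}^2 \le c\left(\mathbb{B}_{\bm{T}}+\mathbb{V}_{\bm{T}} + \abs*{\hat{\gamma}^{(\bm{T})}_n-\gamma}^2\right).
\end{equation*}

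Next I would take expectations. The term $\E[\mathbb{B}_{\bm{T}}]$ is controlled directly by Proposition \ref{P: maj BBT est cov}, which gives $\E[\mathbb{B}_{\bm{T}}]\le c\,\overline{\bm{b}}_{\bm{T}}^2 + c n^{-\overline{c}}(\prod_j\alpha_j^2)^{-1}$. The term $\mathbb{V}_{\bm{T}}$ is deterministic. Finally, for $\E[\abs*{\hat{\gamma}^{(\bm{T})}_n-\gamma}^2]$ one uses the bias-variance decomposition already developed in the proof of Theorem \ref{th: upper bound joint moment covariance}: the bias is bounded by $\overline{\bm{b}}_{\bm{T}}$ via Lemma \ref{l: biais b T1 T2} (noting $\overline{\bm{b}}_{\bm{T}}$ in \eqref{eq: def b barre est adapte} is exactly this bound), and the variance by $c\,\mathbb{V}_{\bm{T}}/\kappa_n \le c\,\mathbb{V}_{\bm{T}}$ thanks to the upper bound \eqref{eq: upper bound var est gamma} applied with truncations $T^{(j)}$ and noise scales $\beta_n^j$, together with the definition \eqref{eq: def penalisation est cov} of $\mathbb{V}_{\bm{T}}$ (which absorbs the variance up to the factor $\kappa_n\ge 1$). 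Putting these together yields
\begin{equation*}
\E\left[\abs*{\hat{\gamma}^{(\widehat{\bm{T}})}_n-\gamma}^2\right]\le c\left(\overline{\bm{b}}_{\bm{T}}^2+\mathbb{V}_{\bm{T}}\right)+\frac{c}{\prod_{j=1}^d\alpha_j^2\,n^{\overline{c}_0}},
\end{equation*}
and since $\bm{T}\in\mathcal{T}$ was arbitrary, one takes the infimum over $\bm{T}\in\mathcal{T}$ to conclude.

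The main obstacle — and the reason Proposition \ref{P: maj BBT est cov} is stated and proved separately — is the control of $\E[\mathbb{B}_{\bm{T}}]$, which requires the concentration argument (Bernstein's inequality on the truncated event $\Omega_n$, plus the crude bound on $\Omega_n^c$) to show that the stochastic fluctuations of $\hat{\gamma}^{(\bm{T},\bm{T}')}_n-\hat{\gamma}^{(\bm{T}')}_n$ are dominated by the penalty $\mathbb{V}_{\bm{T}'}$ uniformly over $\bm{T}'\in\mathcal{T}$. Since that work is packaged into Propositions \ref{P: maj BBT est cov} and \ref{l: control DT est cov} (and Lemma \ref{l: control DT est cov}), the remaining argument here is the relatively routine Goldenshluger--Lepski bookkeeping described above. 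One minor point requiring care: in assembling the final rate one should verify that the choices $T^{(j)}=(n\prod_l\alpha_l^2/\log(n)^{2d+1})^{1/(2k_j)}$ (rounded to lie in the grid $\mathcal{T}^{(j)}$) make $\overline{\bm{b}}_{\bm{T}}^2$ and $\mathbb{V}_{\bm{T}}$ of the same order; this is a direct computation mirroring the calibration in \eqref{eq: bias T1 T2 Var est cov}, and passing from Proposition \ref{prop: est gamma adaptive sur classe} to Theorem \ref{th: est gamma adaptive} is then immediate.
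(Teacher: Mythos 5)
Your proof is correct and follows essentially the same Goldenshluger--Lepski argument as the paper: the same triangle decomposition around $\hat{\gamma}^{(\widehat{\bm{T}},\bm{T})}_n$, the same use of the minimizing property of $\widehat{\bm{T}}$, and the same appeal to Proposition \ref{P: maj BBT est cov} to control $\E[\mathbb{B}_{\bm{T}}]$. The only cosmetic difference is that you bound the oracle term $\E[\abs*{\hat{\gamma}^{(\bm{T})}_n-\gamma}^2]$ directly via the bias--variance decomposition and Lemma \ref{l: biais b T1 T2}, whereas the paper routes the bias through $\mathbb{D}_{\bm{T}}$ and Lemma \ref{l: control DT est cov}; both yield the same $\overline{\bm{b}}_{\bm{T}}^2+\mathbb{V}_{\bm{T}}$ contribution.
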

\begin{proof} Let ${\rev\bm{T}}\in\mathcal{T}$, we have
\begin{equation*}
\abs{\hat{\gamma}^{(\widehat{{\rev\bm{T}}})}-\gamma}\le
\abs{\hat{\gamma}^{(\widehat{{\rev\bm{T}}})}-\hat{\gamma}^{(\widehat{{\rev\bm{T}}},{\rev\bm{T}})}}
+
\abs{\hat{\gamma}^{(\widehat{{\rev\bm{T}}},{\rev\bm{T}})}-\hat{\gamma}^{({\rev\bm{T}})}}+
\abs{\hat{\gamma}^{({\rev\bm{T}})}-\gamma}.
\end{equation*}
From the definition \eqref{eq: def D est cov}, we have 
$\abs{\hat{\gamma}^{({\rev\bm{T}})}-\gamma} \le  \abs{\hat{\gamma}^{({\rev\bm{T}})}-\E[\hat{\gamma}^{({\rev\bm{T}})}]} +
\abs{\E[\hat{\gamma}^{({\rev\bm{T}})}-\gamma]}  \le \abs{\hat{\gamma}^{({\rev\bm{T}})}-\E[\hat{\gamma}^{({\rev\bm{T}})}]}+\mathbb{D}_{\rev\bm{T}}$, and it follows
\begin{equation*}
	\abs{\hat{\gamma}^{(\widehat{{\rev\bm{T}}})}-\gamma}\le
	\abs{\hat{\gamma}^{(\widehat{{\rev\bm{T}}})}-\hat{\gamma}^{(\widehat{{\rev\bm{T}}},{\rev\bm{T}})}}
	+
	\abs{\hat{\gamma}^{(\widehat{{\rev\bm{T}}},{\rev\bm{T}})}-\hat{\gamma}^{({\rev\bm{T}})}}+
\abs{\hat{\gamma}^{({\rev\bm{T}})}-\E[\hat{\gamma}^{({\rev\bm{T}})}]}+\mathbb{D}_{\rev\bm{T}}.
\end{equation*}	
By \eqref{eq: def est biais adaptatif est cov}, we have 
$\abs{\hat{\gamma}^{(\widehat{{\rev\bm{T}}},{\rev\bm{T}})}-\hat{\gamma}^{({\rev\bm{T}})}}^2 \le \mathbb{B}_{\widehat{{\rev\bm{T}}}}+\mathbb{V}_{\widehat{{\rev\bm{T}}}}$,
and recalling $\hat{\gamma}^{(\widehat{{\rev\bm{T}}},{\rev\bm{T}})}=\hat{\gamma}^{({\rev\bm{T}},\widehat{{\rev\bm{T}}})}$, we also get
$\abs{\hat{\gamma}^{(\widehat{{\rev\bm{T}}})}-\hat{\gamma}^{(\widehat{{\rev\bm{T}}},{\rev\bm{T}})}}^2 \le \mathbb{B}_{{{\rev\bm{T}}}}+\mathbb{V}_{{{\rev\bm{T}}}}$. Thus, 
\begin{equation*}
	\abs{\hat{\gamma}^{(\widehat{\rev\bm{T}})}-\gamma}^2\le16 \left[ 
\mathbb{B}_{\widehat{{\rev\bm{T}}}}+\mathbb{V}_{\widehat{{\rev\bm{T}}}}
	+
	\mathbb{B}_{{{\rev\bm{T}}}}+\mathbb{V}_{{{\rev\bm{T}}}}+
	\abs{\hat{\gamma}^{({\rev\bm{T}})}-\E[\hat{\gamma}^{({\rev\bm{T}})}]}^2+\mathbb{D}_{{\rev\bm{T}}}^{{\revd2}}\right].
\end{equation*}		
Using \eqref{eq: def That est cov}, we deduce
\begin{equation*}
	\abs{\hat{\gamma}^{(\widehat{{\rev\bm{T}}})}-\gamma}^2\le16 \left[ 
	2\mathbb{B}_{{{\rev\bm{T}}}}+2\mathbb{V}_{{{\rev\bm{T}}}}+
	\abs{\hat{\gamma}^{({\rev\bm{T}})}-\E[\hat{\gamma}^{({\rev\bm{T}})}]}^2+\mathbb{D}_{{\rev\bm{T}}}^{{\revd2}}\right].
\end{equation*}		
From the study of the variance of $\hat{\gamma}^{({\rev\bm{T}})}$ as in \eqref{eq: upper bound var est gamma}, we have $\E[\abs{\hat{\gamma}^{({\rev\bm{T}})}-\E[\hat{\gamma}^{({\rev\bm{T}})}]}^2] \le
 C n^{-1} {\rev  \frac{\prod_{j=1}^d\abs{T^{(j)}}^2}{\prod_{j=1}^d\abs{\beta_n^j}^2}}$, which is smaller than $\mathbb{V}_T$, if $c_0$ in \eqref{eq: def penalisation est cov} is large enough. Thus, we deduce
 \begin{equation*}
 	\E\left[\abs{\hat{\gamma}^{(\widehat{{\rev\bm{T}}})}-\gamma}^2\right]\le16 \left[ 
 	2\E[\mathbb{B}_{{{\rev\bm{T}}}}]+3\mathbb{V}_{{{\rev\bm{T}}}}+\mathbb{D}_{{\rev\bm{T}}}^{{\revd2}}\right].
 \end{equation*}		
Now, Lemma  \ref{l: control DT est cov} and Proposition \ref{P: maj BBT est cov} yield to
  \begin{equation*}
 	\E\left[\abs{\hat{\gamma}^{(\widehat{{\rev\bm{T}}})}-\gamma}^2\right]\le c \left[ 
 \overline{\bm{b}}_{\rev\bm{T}}^2 + \mathbb{V}_{\rev\bm{T}}\right] + \frac{c}{{\rev \prod_{j=1}^d \alpha^2_j}~ n^{\overline{c}_0}},
 \end{equation*}		
for any ${\rev\bm{T}}\in \mathcal{T}$. This proves the proposition.
\end{proof}
We end this section by providing the proof of Theorem \ref{th: est gamma adaptive}
\begin{proof}[Proof of Theorem \ref{th: est gamma adaptive}]
By {\rev Proposition  \ref{prop: est gamma adaptive sur classe},} it is sufficient to evaluate $\inf_{{\rev\bm{T}} \in \mathcal{T}} \left[ 
\overline{\bm{b}}_{\rev\bm{T}}^2 + \mathbb{V}_{\rev\bm{T}}\right]$ which is, up to a constant, the infimum over ${\rev\bm{T}}\in\mathcal{T}$ of
\begin{align} \nonumber
	{\rev \sum_{j=1}^d	(T^{(j)})^{-2k_j(1-\frac{d}{\overline{k}})} 
	+n^{-1} \kappa_n\frac{\prod_{j=1}^d\abs{T^{(j)}}^2 }{\prod_{j=1}^d\abs{\beta_n^j}^2}}
	\\ \label{eq: a optimiser dans Tcal est cov} \le 
	{\rev \sum_{j=1}^d	(T^{(j)})^{-2k_j(1-\frac{d}{\overline{k}})} 
	+Cn^{-1}  \log(n)^{2d+1} \frac{\prod_{j=1}^d\abs{T^{(j)}}^2 }{\prod_{j=1}^d\alpha^2_j}},	
\end{align}
	for some $C>0$. If we set $T^{*(j)}=\left(\frac{n{\rev \prod_{j=1}^d\alpha_j^2}}{\log(n)^{\rev 2d+1}}\right)^{1/(2k_{\rev j})}$ for {\rev 
		$j\in\{1,\dots,d\}$,} the above quantity is smaller than some constant time
	$$
	\left(\frac{\log(n)^{\rev 2d+1}}{n{\rev \prod_{j=1}^d\alpha_j^2}}\right)^{\frac{\overline{k}-{\rev d}}{\overline{k}}},
	$$
	which is the expected rate. It remains to check that the same rate can be obtained by restricting $T$ in $\mathcal{T}$. As $(n{\rev \prod_{j=1}^d\alpha_j^2})/(\log(n))^{\rev 2d+1} \ge 1 $, and $\alpha_j \le 1$, we see that for $n\ge 3$, {\rev $T^{*(j)}\in [1,n]$ 
		for $j \in \{1,\dots,d\}$}. 
	By the definition \eqref{eq: def ens T possible} of $\mathcal{T}^{(j)}$, we see that $\mathcal{T}^{(j)}$ is a grid of $[1,n]$ such that for any $t^*\in [1,n]$, there exists $t^{(j)} \in \mathcal{T}^{(j)}$ with $\frac{1}{2}t^* \le t^{(j)}\le 2t^*$. Hence, by replacing the $T^{*(j)}$ by their closest values in $\mathcal{T}^{j}$ we only {\modar increase} the value of \eqref{eq: a optimiser dans Tcal est cov} by a multiplicative constant. We deduce
	$$
\inf_{T \in \mathcal{{\rev\bm{T}}}} \left[ 
\overline{\bm{b}}_{\rev\bm{T}}^2 + \mathbb{V}_{\rev\bm{T}}\right] \le c\left(\frac{\log(n)^{\rev 2d+1}}{n{\rev \prod_{j=1}^d\alpha_j^2}}\right)^{\frac{\overline{k}-{\rev d}}{\overline{k}}},
	$$
	and Theorem \ref{th: est gamma adaptive} follows from {\rev  Proposition \ref{prop: est gamma adaptive sur classe}.}
\end{proof}

\subsection{Proof locally private density estimation}\label{s: Proof locally density}
This section is devoted to the proof of the results stated in Section \ref{s: density}, about the density estimation under $\alpha$-CLDP constraints.

{\rev 
	We start by proving that $Z_i^j$ defined according to 
\eqref{eq: def Zij} are $\alpha_j$ local differential private view of the observation $X_i^j$, as stated in Lemma \ref{l: privacy procedure}.
\begin{proof}[Proof of Lemma \ref{l: privacy procedure}]
		The density of $\mathcal{E}_i^j$ at the point $x\in \R$ is given by the value $\frac{1}{2 \kappa} \alpha_j h \exp(- \frac{1}{2 \kappa} \alpha_j h |x|)$. Then, the reverse triangle inequality and the fact the infinity norm of $K$ is bounded by $\kappa$ provide 
		\begin{align*}
			\sup_{z \in \mathcal{Z}} \frac{q^j(z |X_i^j = x)}{q^j(z |X_i^j = x')} & \le \sup_{z \in \mathcal{Z}} \exp \Big(- \frac{1}{2 \kappa} \alpha_j h \abs{z - \frac{1}{h} K(\frac{x - x_0^j}{h})} + \frac{1}{2 \kappa} \alpha_j h \abs{z - \frac{1}{h} K(\frac{x' - x_0^j}{h})} \Big) \\
			& \le \exp \Big(\frac{1}{2 \kappa} \alpha_j h \frac{1}{h} \abs{K(\frac{x - x_0^j}{h}) - K(\frac{x' - x_0^j}{h})} \Big) \\
			& \le \exp(\alpha_j).
		\end{align*}
	\end{proof}
}

\begin{proof}[Proof of Theorem \ref{th: upper density}.]
The proof is based on the usual bias-variance decomposition. We have indeed
$\E[|\hat{\pi}^Z_h(\bm{x_0}) - \pi(\bm{x_0})|^2] = |\E[\hat{\pi}^Z_h(\bm{x_0})] - \pi(\bm{x_0})|^2 + var(\hat{\pi}^Z_h(\bm{x_0})).$
One can easily bound the bias part (see for example Proposition 1.2 in \cite{Tsy}), obtaining for any $\bm{x_0} \in \R^d$ and any $h > 0$
\begin{equation}{\label{eq: bias density}}
|\E[\hat{\pi}^Z_h(\bm{x_0})] - \pi(\bm{x_0})|^2 \le c h^{2 \beta},
\end{equation}
for some $c > 0$. Regarding the variance of $\hat{\pi}^Z_h(\bm{x_0})$, we use its explicit form and the fact that the vectors $\bm{X}_1, \dots , \bm{X}_n$ and the Laplace random variables are independent to get
\begin{align*}
var(\hat{\pi}^Z_h(\bm{x_0}))
& = \frac{1}{n^2} \sum_{i = 1}^n var \Big( \prod_{j = 1}^d ( \frac{1}{h} K(\frac{X_i^j - x_0^j}{h}) + \mathcal{E}_i^j ) \Big) \\
& = \frac{1}{n^2} \sum_{i = 1}^n var \Big( \sum_{I_k} \prod_{j \in I_k}  \frac{1}{h} K(\frac{X_i^j - x_0^j}{h}) \prod_{j \in (I_k)^c} \mathcal{E}_i^j  \Big) 
\end{align*}
where $I_k$ is a set of index such that $|I_k| = k$, for $k \in \{1, \dots , d \}$.
Then, it is well known that $var(\frac{1}{h^d} \prod_{j=1}^d K(\frac{X_i^j - x_0^j}{h})) \le \frac{c}{h^d}$
for some positive $c$ (one can easily see that by adapting Proposition 1.1 in \cite{Tsy} to the multidimensional context). 
Moreover, by construction, $\mathcal{E}_i^j$ are iid $\sim \mathcal{L}(\frac{2 \kappa}{\alpha_j h})$, which guarantees that
$var(\prod_{j=1}^d \mathcal{E}_i^j) \le \frac{c}{ \prod_{j = 1}^d (\alpha_j h)^{2}}.$
One can then readily check that, for any set of index $I_k$ such that $|I_k| = k$, it is 
$$var(\prod_{j \in I_k} \frac{1}{h} K(\frac{X_i^j - x_0^j}{h}) \prod_{j \in (I_k)^c}\mathcal{E}_i^j) \le \frac{c}{h^k} \frac{1}{ h^{2(d - k)}} \prod_{j \in (I_k)^c} \frac{1}{\alpha_{j}^2}.$$
 It implies 
\begin{align}{\label{eq: variance with sum}}
var(\hat{\pi}^Z_h(\bm{x_0})) \le \frac{1}{n} \sum_{k =0}^d \frac{c}{h^k} \frac{1}{ h^{2(d - k)}} \sum_{I_k} \prod_{j \in (I_k)^c} \frac{1}{\alpha_{j}^2} = \frac{c}{n h^{2d} \prod_{j = 1}^d \alpha_j^2} \sum_{k =0}^d  \sum_{I_k}\prod_{j \in I_k}(h \alpha_j^2).
\end{align}
We now recall that $h$ is a bandwidth we have assumed being smaller than $1$. We have also required $\alpha_j \le 1$ for any $j \in \{1, \dots , d \}$, which yields $h \alpha_j^2 < 1$ for any $j \in \{1, \dots, d \}$. Then, the largest term in the sum above is the one for which $k = 0$, which means that $I_k = \emptyset$. We derive 
\begin{equation}{\label{eq: var}}
var(\hat{\pi}^Z_h(\bm{x_0})) \le \frac{c}{n h^{2d} \prod_{j = 1}^d \alpha_j^2}.
\end{equation}
We then look for the choice of $h$ that realizes the trade-off between the bound on the variance here above and the bound on the bias term gathered in \eqref{eq: bias density}. This is achieved by the rate optimal bandwidth 
$h^* := (\frac{1}{n \, \prod_{j = 1}^d \alpha_j^{2}})^{\frac{1}{2(\beta + d)}}.$
We remark that, as by hypothesis it is $n \prod_{j = 1}^d \alpha_j^{2} \rightarrow \infty$ for $n \rightarrow \infty$, it clearly follows $h^* = h^*_n \rightarrow 0$ for $n \rightarrow \infty$. 
Replacing it in \eqref{eq: bias density} and \eqref{eq: var} we obtain 
$\E[|\hat{\pi}^Z_h(\bm{x_0}) - \pi(\bm{x_0})|^2] \le c \Big( \frac{1}{n \, \prod_{j = 1}^d \alpha_j^2}\Big)^{\frac{\beta}{\beta + d}},$
 which concludes the proof. 
\end{proof}

\begin{proof}[Proof of Theorem \ref{th: upper density same alpha}]
We observe that, for $\alpha_1 = \dots = \alpha_d = \alpha$, \eqref{eq: variance with sum} translates to $var(\hat{\pi}^Z_h(\bm{x_0})) \le \frac{c}{n (h \alpha)^{2d}} \sum_{k =0}^d(h \alpha^2)^k$. Then, we consider two different cases. \\

\noindent $\bullet$ If $\alpha \ge n^{\frac{1}{2(2 \beta + d)}}$, we choose the optimal bandwidth as in the privacy free-context 
$h^* := (\frac{1}{n})^{\frac{1}{2\beta + d}}.$
We observe we have in this case 
$h \alpha^2 \ge n^{- \frac{1}{2 \beta + d} + \frac{1}{2 \beta + d}} = 1.$
Hence, in \eqref{eq: variance with sum}, the worst term is the one for which $k= d$. It implies the variance of $\hat{\pi}^Z_h(\bm{x_0})$ is bounded by 
\begin{align*}
\frac{c}{n (h \alpha)^{2d}} (h \alpha^2)^d = \frac{c}{n h^d} = c (\frac{1}{n})^{\frac{2 \beta}{2 \beta + d}}.
\end{align*}
We observe that the bandwidth $h^*$ is the one that achieves the balance in the decomposition bias-variance, as it is also $(h^*)^{2 \beta} = (\frac{1}{n})^{\frac{2 \beta}{2 \beta + d}}$. The proof in the first case is then concluded. \\

\noindent $\bullet$ Consider now what happens for $\alpha < n^{\frac{1}{2(2 \beta + d)}}$. In this case, we will see that the optimal choice in terms of convergence rate will consist in taking 
$h^* := (\frac{1}{n \, \alpha^{2d}})^{\frac{1}{2(\beta + d)}}.$
Remark that we have assumed $n \alpha^{2d} \rightarrow \infty$ for $n \rightarrow \infty$ so that $h^*\rightarrow 0$, for $n$ going to $\infty$. 
We observe in this context it is 
\begin{align*}
h^* \alpha^2 = (\frac{1}{n \, \alpha^{2d}})^{\frac{1}{2(\beta + d)}} \alpha^2 = (\frac{1}{n})^{\frac{1}{2(\beta + d)}}\alpha^{\frac{2 \beta + d}{\beta + d}} \le (\frac{1}{n})^{\frac{1}{2(\beta + d)}} n^{\frac{1}{2(\beta + d)}} = 1.
\end{align*}
Thus, the largest term in the sum in \eqref{eq: variance with sum} is for $k = 0$, which implies 
\begin{align*}
    var(\hat{\pi}^Z_h(\bm{x_0})) \le \frac{c}{n (h \alpha)^{2d}} = \frac{c}{n \alpha^{2d}}(n \alpha^{2d})^{\frac{d}{\beta + d}} = c (\frac{1}{n \alpha^{2d}})^{\frac{\beta}{\beta + d}}.
\end{align*}
The bandwidth $h^*$ realizes the balance between the variance and the bias, as $(h^*)^{2 \beta} = (\frac{1}{n \alpha^{2d}})^{\frac{\beta}{\beta + d}}$. The proof is then complete. 

\end{proof}

\subsubsection{Proof adaptive procedure}
We start by proving that $Z_i^j$ defined according to \eqref{eq: def Zi adpat} are $\alpha_j$ local differential private view of the observation $X_i^j$, as stated in Lemma \ref{l: privacy adaptive density}.

\begin{proof}[Proof of Lemma \ref{l: privacy adaptive density}] 
From the definition of Laplace, using the independence of the variables $(Z_i^{j,h})_{h \in H_n}$ and denoting as $q^j((z^{j,h})_{h \in H_n} | X_i^j = x)$ the density of the law of $Z_i^{j,h}$ conditional to $X_i^j = x \in \R$ we obtain 
\begin{align*}
\frac{q^j((z^{j,h})_{h \in H_n} | X_i^j = x)}{q^j((z^{j,h})_{h \in H_n} | X_i^j = x')} & = \frac{\prod_{h \in H_n} \exp(|z^{j,h} - \frac{1}{h} K(\frac{x - x_0^j}{h})| \frac{\beta_n^j\, h}{2 \kappa})}{\prod_{h \in H_n} \exp(|z^{j,h} - \frac{1}{h} K(\frac{x' - x_0^j}{h})| \frac{\beta_n^j \, h}{2 \kappa})} \\
& \le \prod_{h \in H_n} \exp(| K(\frac{x - x_0^j}{h}) - K(\frac{x' - x_0^j}{h})| \frac{\beta_n^j}{2 \kappa}) \\
& \le \prod_{h \in H_n} \exp(\beta_n^j) \\
& = \exp(Card(H_n)\beta_n^j ) \le \exp(\alpha_j),
\end{align*}
being the last a consequence of how we have chosen $\beta_n^j$.
\end{proof}

\noindent Before proving the main theorem, let us introduce the notation $\pi_h^z(\bm{x_0})$ for $\E[\hat{\pi}_h^z(\bm{x_0})]$ and 
\begin{align}{\label{eq: def Dh}}
\mathbb{D}_h &  := \Big( \sup_{\eta \in H_n} |\E[\hat{\pi}_{\eta \land h }^z(\bm{x_0}) - \hat{\pi}_h^z(\bm{x_0})]| \Big) \lor |\E[\hat{\pi}_h^z(\bm{x_0})] - \pi(\bm{x_0})| \\
& = \Big( \sup_{\eta < h} |\E[\hat{\pi}_{\eta }^z(\bm{x_0}) - \hat{\pi}_h^z(\bm{x_0})]| \Big) \lor |\E[\hat{\pi}_h^z(\bm{x_0})] - \pi(\bm{x_0})| \nonumber .
\end{align}
As for \eqref{eq: bias density}, with classical computations as in Proposition 1.2 of \cite{Tsy} it readily follows, for some $c>0$, 
\begin{equation}{\label{eq: bound Dh}}
    \mathbb{D}_h \le c h^{\beta}.
\end{equation}

\noindent The proof of Theorem \ref{th: adaptive density} heavily relies on the following proposition.
\begin{proposition}{\label{prop: adaptive density}}
 Assume that $\pi \in \mathcal{H}(\beta, \mathcal{L})$ for some $\beta$ and $\mathcal{L} \ge 1$. Moreover, $\beta_n^j = \frac{\alpha_j}{ \lfloor \log_2 n \rfloor}$ for any $j \in \{1, \dots , d \}$ and $a_n = {c_0} \log n$ for some ${c_0} > 0$. {\modar If $c_0$ is large enough, there} exist $c> 0$ and $\bar{c} > 0$ such that 
$$\E [(\hat{\pi}_{\hat{h}}^z(\bm{x_0}) - \pi (\bm{x_0}))^2] \le c \inf_{h \in H_n} (\mathbb{V}_h + \mathbb{D}_h^2) + \frac{c}{n^{\bar{c}}\prod_{j=1}^d \alpha_j^2}$$
for all $n \ge 1$, $\alpha_j \le 1$ and $\frac{n \prod_{j = 1}^d \alpha_j^2}{\log n^{1 + 2d}} \ge 1$. Moreover, the constant $\bar{c}$ can be chosen arbitrarily large, taking the constant $c_0$ large enough.   
\end{proposition}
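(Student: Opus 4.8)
\textbf{Proof strategy for Proposition \ref{prop: adaptive density}.} The plan is to mimic the Goldenshluger--Lepski oracle argument already carried out for the joint moment estimator in Proposition \ref{prop: est gamma adaptive sur classe}, transporting every ingredient to the density setting. First I would record a triangle-type decomposition: for arbitrary $h \in H_n$,
\begin{equation*}
|\hat{\pi}_{\hat{h}}^z(\bm{x_0}) - \pi(\bm{x_0})|
\le |\hat{\pi}_{\hat{h}}^z(\bm{x_0}) - \hat{\pi}_{\hat{h},h}^z(\bm{x_0})|
+ |\hat{\pi}_{\hat{h},h}^z(\bm{x_0}) - \hat{\pi}_{h}^z(\bm{x_0})|
+ |\hat{\pi}_{h}^z(\bm{x_0}) - \pi(\bm{x_0})|.
\end{equation*}
By definition of $\mathbb{B}_h$, the first two terms on the right are each bounded by $\sqrt{\mathbb{B}_{h} + \mathbb{V}_{h}} + \sqrt{\mathbb{B}_{\hat h} + \mathbb{V}_{\hat h}}$ up to constants, using the commutativity $\hat{\pi}_{h,\eta}^z = \hat{\pi}_{\eta,h}^z$; then the choice $\hat h = \mathrm{argmin}(\mathbb{B}_h + \mathbb{V}_h)$ replaces $\mathbb{B}_{\hat h} + \mathbb{V}_{\hat h}$ by $\mathbb{B}_h + \mathbb{V}_h$. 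The last term splits as $|\hat{\pi}_{h}^z(\bm{x_0}) - \pi_h^z(\bm{x_0})| + |\pi_h^z(\bm{x_0}) - \pi(\bm{x_0})|$, where the second piece is at most $\mathbb{D}_h$, so that after squaring and taking expectations
\begin{equation*}
\E[(\hat{\pi}_{\hat h}^z(\bm{x_0}) - \pi(\bm{x_0}))^2]
\le c\,\E[\mathbb{B}_h] + c\,\mathbb{V}_h + c\,\mathbb{D}_h^2 + c\,\E[|\hat{\pi}_{h}^z(\bm{x_0}) - \pi_h^z(\bm{x_0})|^2].
\end{equation*}
The variance term $\E[|\hat{\pi}_{h}^z(\bm{x_0}) - \pi_h^z(\bm{x_0})|^2]$ is handled by the bound \eqref{eq: var} from the proof of Theorem \ref{th: upper density} (with $\beta_n^j$ in place of $\alpha_j$, which costs the $\log n$ factors already present in $\mathbb{V}_h$), so it is absorbed into $\mathbb{V}_h$ provided $c_0$ is large enough.

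The core of the argument is thus the bound $\E[\mathbb{B}_h] \le c\,\mathbb{D}_h^2 + \frac{c}{n^{\bar c}\prod_j \alpha_j^2}$, the density analogue of Proposition \ref{P: maj BBT est cov}. I would first show $\mathbb{D}_h$ controls the ``deterministic'' part: writing $\mathbb{B}_h \le 8[\mathbb{B}_h^{(1)} + \mathbb{B}_h^{(2)} + \mathbb{D}_h^2]$ exactly as in \eqref{eq: maj B B1 B2 DT}, where $\mathbb{B}_h^{(1)}, \mathbb{B}_h^{(2)}$ are the stochastic deviation parts $\sup_\eta (|\hat{\pi}^z_{h,\eta} - \pi^z_{h,\eta}|^2 - \tfrac1{16}\mathbb{V}_\eta)_+$ and $\sup_\eta(|\hat{\pi}^z_{\eta} - \pi^z_{\eta}|^2 - \tfrac1{16}\mathbb{V}_\eta)_+$. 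For these I would use Bernstein's inequality on the truncated summands $\prod_{j} (\tfrac1{h'} K(\tfrac{X_i^j - x_0^j}{h'}) + \tronc{\mathcal{E}_i^{j,h'}}{h'^{-1}\tilde\kappa_n^{(j)}})$ exactly as in the proof of Proposition \ref{P: maj BBT est cov}: introduce the good event $\Omega_n$ on which all Laplace noises satisfy $|\mathcal{E}_i^{j,h}| \le \tfrac1h \tilde\kappa_n^{(j)}$ with $\tilde\kappa_n^{(j)} \asymp \frac{\log n}{\beta_n^j}$, bound $\P(\Omega_n^c)$ by an analogue of Lemma \ref{l: majo Omega comp est cov} (polynomially small with exponent tunable via $c_0$), apply Bernstein with variance proxy $v_h \asymp \frac{1}{h^{2d}\prod_j(\beta_n^j)^2}$ and uniform bound $M_h$, and integrate the tail in $t$. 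The penalty $\mathbb{V}_h = a_n \frac{1}{n h^{2d}\prod_j (\beta_n^j)^2}$ with $a_n = c_0\log n$ is designed precisely so that $n\mathbb{V}_h / v_h = c_0\log n / C$, making the exponential in Bernstein beat the cardinality $\mathrm{card}(H_n) \le \log_2 n$ of the grid when $c_0$ is large; on $\Omega_n^c$ one uses Cauchy--Schwarz together with the crude moment bounds $\E|Z_i^{j,h}|^q \le C_q h^{-q}(\beta_n^j)^{-q}$ and the smallness of $\P(\Omega_n^c)$.

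The main obstacle — and the only genuinely new bookkeeping compared to Proposition \ref{P: maj BBT est cov} — is the appearance of the kernel factors $\tfrac1h K(\tfrac{X_i^j - x_0^j}{h})$ in place of truncated moments $\tronc{X_i^j}{T^{(j)}}$; these are bounded by $\kappa/h$ but are \emph{not} monotone in $h$, so the ``bias comparison'' step that in the moment case used $\tronc{x}{T \wedge T'} = \tronc{x}{T'}$ when $T \ge T'$ must instead go through a genuine smoothing/bias estimate. Concretely, I would prove the density analogue of Lemma \ref{l: control DT est cov}, namely $\mathbb{D}_h \le c\,\bar b_h$ with $\bar b_h := c h^\beta$ an explicit bias bound; this follows from the Hölder smoothness of $\pi$, the vanishing-moment property \eqref{eq: property kernel} of $K$, and the fact that $\pi_{h\wedge\eta}^z(\bm{x_0}) - \pi_\eta^z(\bm{x_0})$ is a difference of two bias terms, each $O(\max(h,\eta)^\beta)$ — here one uses $\E[\prod_j \mathcal{E}_i^{j,\cdot}] = 0$ and the product structure to reduce to the one-dimensional kernel bias, exactly as \eqref{eq: bias density}. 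Once $\mathbb{D}_h \le c h^\beta$ is in hand, combining the displayed oracle inequality with $\E[\mathbb{B}_h] \le c\,\mathbb{D}_h^2 + \frac{c}{n^{\bar c}\prod_j\alpha_j^2}$ and taking the infimum over $h \in H_n$ yields the claim, with the constant $\bar c$ inherited from the tail exponent, which can be made arbitrarily large by enlarging $c_0$. Theorem \ref{th: adaptive density} then follows by plugging $\mathbb{D}_h^2 \le c h^{2\beta}$ and $\mathbb{V}_h \asymp \frac{\log n}{n h^{2d}\prod_j (\beta_n^j)^2} \asymp \frac{(\log n)^{1+2d}}{n h^{2d}\prod_j \alpha_j^2}$ into the oracle bound, optimizing over the grid $H_n$ (whose geometric spacing loses only a constant), and choosing $h^* \asymp \big(\frac{(\log n)^{1+2d}}{n\prod_j\alpha_j^2}\big)^{1/(2(\beta+d))}$, which lies in $[1/n,1]$ under the stated hypothesis $\frac{n\prod_j\alpha_j^2}{(\log n)^{1+2d}} \ge 1$ and $\alpha_j \le 1$.
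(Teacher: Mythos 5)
Your proposal is correct and follows essentially the same route as the paper: the same Goldenshluger--Lepski oracle decomposition, the same split of $\mathbb{B}_h$ into two stochastic deviation pieces plus $\mathbb{D}_h^2$, the same good event $\tilde\Omega_n$ with truncated Laplace noises and Bernstein's inequality, and the same use of the kernel bias bound $\mathbb{D}_h \le c\,h^\beta$ (which the paper records as \eqref{eq: bound Dh} just before the proposition). The one remark worth flagging: your observation about the kernel not being monotone in $h$ is correct, but it does not create an obstacle — the paper never needs the monotonicity trick of Lemma~\ref{l: control DT est cov} in the density case, precisely because the deterministic term $\mathbb{D}_h$ is bounded directly by the standard Hölder/vanishing-moment bias computation (Proposition~1.2 of \cite{Tsy}), exactly as you ultimately propose. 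Also, a small slip in notation: the two oracle terms are bounded by $\sqrt{\mathbb{B}_h + \mathbb{V}_{\hat h}}$ and $\sqrt{\mathbb{B}_{\hat h} + \mathbb{V}_h}$ (not both by $\sqrt{\mathbb{B}_h+\mathbb{V}_h}+\sqrt{\mathbb{B}_{\hat h}+\mathbb{V}_{\hat h}}$); after invoking $\mathbb{B}_{\hat h}+\mathbb{V}_{\hat h}\le\mathbb{B}_h+\mathbb{V}_h$ the conclusion is the same, so this does not affect the argument.
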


\begin{proof}[Proof of Proposition \ref{prop: adaptive density}]
Let $h \in H_n$. It is 
$$|\hat{\pi}_{\hat{h}}^z(\bm{x_0}) - \pi (\bm{x_0})| \le |\hat{\pi}_{\hat{h}}^z(\bm{x_0}) - \hat{\pi}_{\hat{h},h}^z(\bm{x_0})| + |\hat{\pi}_{\hat{h},h}^z(\bm{x_0}) - \hat{\pi}_{h}^z(\bm{x_0})| + |\hat{\pi}_{h}^z(\bm{x_0}) - \pi (\bm{x_0})|.$$
Following the same computations as in the proof of {\modar Proposition \ref{prop: est gamma adaptive sur classe}}
 it is then easy to check that 
$$\E[|\hat{\pi}_{\hat{h}}^z(\bm{x_0}) - \pi (\bm{x_0})|^2] \le c \Big( \E[\mathbb{B}_h] + \mathbb{V}_h + \mathbb{D}_h^2 \Big).$$
Next, we study in detail $\E[\mathbb{B}_h]$. Splitting $\mathbb{B}_h$ in a way analogous to 
\eqref{eq: maj B B1 B2 DT} in Proposition \ref{P: maj BBT est cov} we have
\begin{align*}
	\mathbb{B}_{h} &\le 
	8 \sum_{\eta \in H_n}
		\left\{   \left( \abs*{\hat{\pi}^z_{h, \eta}(\bm{x_0})  - \E[\hat{\pi}^z_{h, \eta}(\bm{x_0})] }^2 -
	\frac{1}{16}\mathbb{V}_{\eta}\right)_+ \right\}
\\	& \qquad\qquad\qquad+
	8 \sum_{\eta \in H_n}
	\left\{   \left( \abs*{\hat{\pi}^z_{\eta}(\bm{x_0})  - \E[\hat{\pi}^z_{ \eta}(\bm{x_0})] }^2 -
	\frac{1}{16}\mathbb{V}_{\eta}\right)_+ \right\}
	+ 8 \mathbb{D}_h^2
	\\& \label{eq: maj B B1 B2 DT}
	=:8\left[ \mathbb{B}_{h}^{(1)}+ \mathbb{B}_{h}^{(2)} + \mathbb{D}_h^2\right].
\end{align*}
Hence, Proposition \ref{prop: adaptive density} will be proven once we show that 
\begin{equation}{\label{eq: bound Bh 1.25}}
\E[\mathbb{B}_h^{(l)}] \le \frac{c}{n^{\bar{c}}} \frac{1}{\prod_{j=1}^d \alpha_j^2}
\end{equation}
for $h \in H_n$ and $l=1, 2$. We start by considering $\E[\mathbb{B}_h^{(2)}]$. Similarly as in Proposition \ref{prop: adaptive density}, we introduce for any $\eta \in H_n$ the function $g: \R^d \times \R^d \rightarrow \R, $ defined as 
$$g_\eta(x^1, \dots , x^d, e^1, \dots , e^d) = (\frac{1}{\eta} K(\frac{x^1 -x_0^1}{\eta}) + e^1) \times \dots \times (\frac{1}{\eta} K(\frac{x^d -x_0^d}{\eta}) + e^d).$$
It is such that
\begin{align*}
g_\eta({X_i}^1, \dots, X_i^d, \mathcal{E}_i^{1, \eta}, \dots , \mathcal{E}_i^{d, \eta}) & = (\frac{1}{\eta} K(\frac{{X_i}^1 -x_0^1}{\eta}) + \mathcal{E}_i^{1, \eta}) \times\dots \times (\frac{1}{\eta} K(\frac{X_i^d -x_0^d}{\eta}) + \mathcal{E}_i^{d, \eta}) \\
& = Z_i^{1, \eta} \times \dots \times Z_i^{d, \eta}. 
\end{align*}
Hence, we can write 
$$\hat{\pi}^z_{\eta}(\bm{x_0})  - \E[\hat{\pi}^z_{ \eta}(\bm{x_0})] = \frac{1}{n} \sum_{i = 1}^n \left \{ g_\eta(\bm{X}_i, \bm{\mathcal{E}}_i^{\eta}) - \E[g_\eta(\bm{X}_i, \bm{\mathcal{E}}_i^{\eta})] \right  \}.$$
As in the proof of Proposition \ref{P: maj BBT est cov} we want to apply Bernstein's inequality, for which we need the variables to be bounded. For this reason we introduce 
\begin{equation*}
	\tilde{\Omega}_n=\left\{ \omega \in \Omega \mid \forall j\in \{1,\dots,d\}  ~\forall l\in \{1,2\}, ~\forall h \in H_n, ~\forall i \in \{1,\dots,n\}, 
	\text{we have } \abs{\mathcal{E}_i^{j,h}} \le \frac{ \tilde{a}_n^j }{h}
	\right\},
\end{equation*}
where $\tilde{a}_n^j := \frac{\log n}{\beta_n^j} 2\kappa (c_0 + 4d)$, $\kappa$ is as in \eqref{eq: property kernel} and $c_0$ is the constant given in the statement. Then, we can modify $g_\eta$. We set
\begin{align*}
\tilde{g_\eta}({X_i}^1, \dots , X_i^d, \mathcal{E}_i^{1, \eta}, \dots , \mathcal{E}_i^{d, \eta}) : = (\frac{1}{\eta} K(\frac{{X_i}^1 -x_0^1}{\eta}) + [\mathcal{E}_i^{1, \eta}]_{\frac{\Tilde{a}_n^1}{\eta}}) \times \dots \times (\frac{1}{\eta} K(\frac{X_i^d -x_0^d}{\eta}) + [\mathcal{E}_i^{d, \eta}]_{\frac{\Tilde{a}_n^d}{\eta}}),
\end{align*}
where we have used the same notation as in {\modar Section \ref{Sss:joint moment estimator}} to denote the truncation of the Laplace random variables. \\
Following the proof of Lemma \ref{l: majo Omega comp est cov} it is easy to check that 
\begin{equation}{\label{eq: bound P tilde Omega c 1.5}}
\mathbb{P}(\tilde{\Omega}_n^c) \le d \frac{\lfloor \log_2 n \rfloor}{n^{4d + c_0}}.
\end{equation}
Indeed, $\tilde{\Omega}_n^c$ is included in $\cup_{j = 1}^d \cup_{h \in H_n} \{ |\mathcal{E}_i^{j,h}| \ge \frac{\tilde{a}_n^j}{h} \}$, with $h \mathcal{E}_i^{j,h}$ distributed as $\frac{2 \kappa}{\beta_n^j} \mathcal{L}(1)$. Hence, 
\begin{align*}
 \mathbb{P}(\tilde{\Omega}_n^c) & \le Card(H_n) \sum_{j = 1}^d \mathbb{P}(\frac{2 \kappa}{\beta_n^j}|\mathcal{L}(1)| \ge \tilde{a}_n^j ) \le \lfloor \log_2 n \rfloor \sum_{j = 1}^d e^{- \frac{\tilde{a}^j_n \beta_n^j}{2 \kappa}} \le d \lfloor \log_2 n \rfloor n^{-4d - c_0}, 
\end{align*}
as we have chosen $\tilde{a}^j_n := \frac{\log n}{ \beta_n^j} 2\kappa (c_0 + 4d)$. \\
We observe that 
\begin{equation}{\label{eq: def M eta 2}}
\left \| \tilde{g}_\eta \right \|_\infty \le \frac{1}{\eta^d} (\kappa + \tilde{a}_n^1)\times ... \times  (\kappa + \tilde{a}_n^d)=: M_\eta.
\end{equation}
Acting as in the proof of Theorem \ref{th: upper density} (see in particular \eqref{eq: variance with sum}) it is moreover easy to see that 
\begin{align}{\label{eq: def v eta 3}}
 var(\tilde{g_\eta}({X_i}^1, ... , X_i^d, \mathcal{E}_i^{1, \eta}, ... , \mathcal{E}_i^{d, \eta})) & \le \frac{c}{\eta^{2d}} \frac{1}{\prod_{j = 1}^d (\beta_n^j)^2} \sum_{k = 0}^d \prod_{j = 1}^k \eta (\beta_n^j)^2 \le \frac{c}{\eta^{2d}} \frac{1}{\prod_{j = 1}^d (\beta_n^j)^2} =: v_\eta,
\end{align}
where we have used that $\eta (\beta_n^j)^2 \le 1$. We then apply Bernstein inequality (similarly to the proof of Proposition \ref{P: maj BBT est cov}) to the random variables $\tilde{g_\eta}$, on $\tilde{\Omega}_n$. It follows, as in \eqref{eq : Berstein applique gamma T},
\begin{multline*}
	\P \left(  \left\{\abs*{\hat{\pi}^z_{\eta}(\bm{x_0})  - \E[\hat{\pi}^z_{ \eta}(\bm{x_0})] }^2 
	-\frac{1}{16} \mathbb{V}_{\eta} \ge t ; ~ \tilde{\Omega}_n \right\}
	\right) \\
	 =	\P \left(  \left\{ \abs{n^{-1} \sum_{i=1}^n \tilde{g}_{\eta}(\bm{X}_i, \bm{\mathcal{E}}^{\eta}_i) - \E[\tilde{g}_{\eta}(\bm{X}_i, \bm{\mathcal{E}}^{\eta}_i)]}  \ge  \sqrt{\frac{1}{16} \mathbb{V}_{\eta}+t}
	;~ \tilde{\Omega}_n\right\}\right)
	  \\ \le 2 \exp\left(-\frac{n \mathbb{V}_{\eta}}{64 v_{\eta}} \right)	  
	  \exp \left( -\frac{nt}{4v_{\eta}}\right)
	  +2 \exp\left(\frac{-n\sqrt{\mathbb{V}_{\eta}}}{32 M_{\eta} }  \right) 
	  \exp\left(\frac{-n\sqrt{t}}{8M_{\eta}} \right).
\end{multline*}
We now have $n \frac{\mathbb{V}_\eta }{v_\eta} = \frac{a_n}{c} = \frac{c_0 \log n}{c}$ for some universal constant $c$. Moreover, 
\begin{align*}
    \frac{n \sqrt{\mathbb{V}_\eta}}{M_\eta} & = n \sqrt{a_n \frac{1}{n \eta^{2d}} \frac{1}{\prod_{j = 1}^d (\beta_n^j)^2}} \eta^d \frac{1}{ \prod_{j = 1}^d(\kappa + \tilde{a}_n^j)} \\
    & = \sqrt{n} \sqrt{c_0 \log n}  \frac{1}{ \prod_{j = 1}^d(\kappa + \frac{\log n}{\beta_n^j} 2\kappa (c_0 + 4d))} \frac{1}{\prod_{j = 1}^d (\beta_n^j)^2} \\
    & = \sqrt{n} \sqrt{c_0 \log n} \frac{1}{ \prod_{j = 1}^d(\kappa + \frac{\log n\, \lfloor \log_2 n \rfloor}{  \alpha_j} 2\kappa (c_0 + 4d))} \frac{(\lfloor \log_2 n \rfloor)^{2d}}{\prod_{j = 1}^d \alpha_j^2} \\
    & \ge c' \sqrt{n} (\log n)^{\frac{1}{2} }
\end{align*}
for some constant $c'$. Then, we can follow the arguing in Proposition \ref{P: maj BBT est cov} and, integrating with respect to $t$ on $[0, \infty)$, we obtain 
\begin{equation}{\label{eq: bound on tilde omega 4}}
 \E \left[ \sup_{\eta \in H_n} \left( \abs*{\hat{\pi}^z_{\eta}(\bm{x_0})  - \E[\hat{\pi}^z_{ \eta}(\bm{x_0})] }^2 
	-\frac{1}{16} \mathbb{V}_{\eta} \right)_+ 1_{\tilde{\Omega}_n}
	\right] \le c n^{- \bar{c}} \sum_{\eta \in H_n} (\frac{v_\eta}{n} + \frac
    {M_\eta^2}{n^2}).
\end{equation}
From \eqref{eq: def M eta 2} and \eqref{eq: def v eta 3} and the definition \eqref{eq: def Hn} it follows 
$$M_\eta \le \frac{1}{\eta^d} \prod_{j = 1}^d(\kappa + \frac{\log n}{\beta_n^j} 2 \kappa (c_0 + 4d) ) \le c n^d \prod_{j = 1}^d(1 + \frac{\log n}{ \alpha_j} \lfloor \log_2 n \rfloor ),$$
$$v_\eta \le \frac{c}{\eta^{2d}} \frac{1}{\prod_{j = 1}^d (\beta_n^j)^2} \le n^{2d} (\log_2 n)^{2d} \frac{1}{\prod_{j = 1}^d \alpha_j^2}.$$
Replacing it in \eqref{eq: bound on tilde omega 4} we obtain 
\begin{align}{\label{eq: final bound on tilde omega 5}}
 & \E \left[ \sup_{\eta \in H_n} \left( \abs*{\hat{\pi}^z_{\eta}(\bm{x_0})  - \E[\hat{\pi}^z_{ \eta}(\bm{x_0})] }^2 
	-\frac{1}{16} \mathbb{V}_{\eta} \right)_+ 1_{\tilde{\Omega}_n}
	\right] \\
 & \le c n^{- \bar{c}} \left( n^{2d-1} (\log_2 n)^{2d + 1} \frac{1}{\prod_{j = 1}^d \alpha_j^2} + n^{2d - 2}\prod_{j = 1}^d(1 + \frac{\log n}{ \alpha_j} \lfloor \log_2 n \rfloor )^2 \lfloor \log_2 n \rfloor  \right). \nonumber 
\end{align}
We then deal with the contribution on $\tilde{\Omega}_n^c$ which, together with \eqref{eq: bound P tilde Omega c 1.5}, provides
$$ \E \left[ \sup_{\eta \in H_n} \left( \abs*{\hat{\pi}^z_{\eta}(\bm{x_0})  - \E[\hat{\pi}^z_{ \eta}(\bm{x_0})] }^2 
	-\frac{1}{16} \mathbb{V}_{\eta} \right)_+ 1_{\tilde{\Omega}_n^c}
	\right] \le 2 \sum_{\eta \in H_n} \E[|\hat{\pi}^z_{\eta}(\bm{x_0})|^4]^\frac{1}{2} \mathbb{P}(\tilde{\Omega}_n^c)^\frac{1}{2}. $$
We observe that, because of Jensen inequality, it is 
$$\E[|\hat{\pi}^z_{\eta}(\bm{x_0})|^4] \le \frac{1}{n} \sum_{i = 1}^n \E[|Z_i^{1, \eta} \times \dots \times Z_i^{d, \eta} |^4].$$
Moreover, for all $q \ge 1$,
\begin{align*}
\E[|Z_i^{j, \eta}|^q] &= \E[|\frac{1}{\eta} K(\frac{X_i^j - x_0^j}{\eta}) + \mathcal{E}_i^{j, \eta}|^q] \le \frac{c}{\eta^q} + (\frac{2 \kappa}{\eta \beta_n^j})^q \E[|\mathcal{L}(1)|^q] \le (\frac{c}{\eta \beta_n^j})^q.
\end{align*}
It yields 
\begin{align}{\label{eq: final tilde omega c 6}}
 \E \left[ \sup_{\eta \in H_n} \left( \abs*{\hat{\pi}^z_{\eta}(\bm{x_0})  - \E[\hat{\pi}^z_{ \eta}(\bm{x_0})] }^2 
	-\frac{1}{16} \mathbb{V}_{\eta} \right)_+ 1_{\tilde{\Omega}_n^c}
	\right] & \le c \sum_{\eta \in H_n} \frac{1}{(\eta \beta_n^1)^2} \times \dots \times \frac{1}{(\eta \beta_n^d)^2} (d \lfloor \log_2 n \rfloor n^{- 4 d - c_0})^\frac{1}{2} \nonumber \\
 & \le c\, \mathop{card}(H_n) \frac{n^{2d}}{\prod_{j = 1}^d (\beta_n^j)^2} \frac{\sqrt{\log n}}{n^{2d + \frac{c_0}{2}}} \\
 & \le c \frac{(\log n)^{2d + \frac{3}{2}}}{n^{\frac{c_0}{2}} \prod_{j = 1}^d \alpha_j^2}. \nonumber 
\end{align}
From \eqref{eq: final bound on tilde omega 5} and \eqref{eq: final tilde omega c 6}, recalling that $c_0$ can be chosen arbitrarily large and that $\alpha_j \le 1$ for any $j \in \{1, \dots , d \}$ we obtain, for some $\bar{c} > 0$, 
$$\E \left[ \sup_{\eta \in H_n} \left( \abs*{\hat{\pi}^z_{\eta}(\bm{x_0})  - \E[\hat{\pi}^z_{ \eta}(\bm{x_0})] }^2 
	-\frac{1}{16} \mathbb{V}_{\eta} \right)_+
	\right] \le c \frac{1}{n^{\bar{c}}\, \prod_{j = 1}^d \alpha_j^2 }.$$
 We have therefore proven \eqref{eq: bound Bh 1.25} for $l=2$. For $l=1$ the proof of the bound in \eqref{eq: bound Bh 1.25} is obtained in the same way, applying Bernstein inequality with the same constants $M_\eta$ and $v_\eta$ on 
 $$\P \left(  \left\{\abs*{\hat{\pi}^z_{\eta, h}(\bm{x_0})  - \E[\hat{\pi}^z_{ \eta, h}(\bm{x_0})] }^2 
	-\frac{1}{16} \mathbb{V}_{\eta} \ge t ; ~ \tilde{\Omega}_n \right\}
	\right).$$
The proof is therefore concluded.   
\end{proof}

\begin{proof} [Proof of Theorem \ref{th: adaptive density}].
From Proposition \ref{prop: adaptive density} above one can remark it is enough to evaluate $\inf_{h \in H_n} (\mathbb{D}_h^2 + \mathbb{V}_h )$. Equation \eqref{eq: bound Dh} entails we want to evaluate, up to a constant, the infimum over $h \in H_n$ of 
\begin{equation}{\label{eq: final bound adap density 6.5}}
h^{2 \beta} + a_n \frac{1}{n h^{2 d} \prod_{j = 1}^d (\beta_n^j)^2} \le  h^{2 \beta} +  \frac{c \log n}{n h^{2 d} } \frac{(\log n)^{2d}}{\prod_{j = 1}^d \alpha_j^2}.
\end{equation}
If we choose 
\begin{equation}{\label{eq: optimal adaptive bandwidth 7}}
h^*(n) := \left( \frac{(\log n)^{2 d + 1}}{n \prod_{j = 1}^d \alpha_j^2} \right)^\frac{1}{2(\beta + d)},
\end{equation}
then we obtain the quantity $\left( \frac{(\log n)^{2 d + 1}}{n \prod_{j = 1}^d \alpha_j^2} \right)^\frac{2 \beta}{2(\beta + d)}$, which is the wanted rate. To conclude the proof we have to check that $h^*(n)$ as in \eqref{eq: optimal adaptive bandwidth 7} belongs to $H_n$. It is true as $H_n$ has been constructed in analogy to $\mathcal{T}$, with $\frac{1}{h}$ playing the same role as $T^{(l)}$, for $l=1,2$. Indeed, following the same argumentation as in the proof of Theorem \ref{th: est gamma adaptive}, as $\frac{n \prod_{j = 1}^d \alpha_j^2}{(\log n)^{2 d + 1}} \ge 1$ and $\alpha_j \le 1$, it is $\frac{1}{h^*(n)} \in [1, n]$. Then, by replacing $\frac{1}{h^*(n)}$ by its closest value in $H_n$ we only modify its value in \eqref{eq: final bound adap density 6.5} by a constant, which provides 
$$\inf_{h \in H_n} (\mathbb{D}_h^2 + \mathbb{V}_h ) \le c \left( \frac{(\log n)^{2 d + 1}}{n \prod_{j = 1}^d \alpha_j^2} \right)^\frac{2 \beta}{2(\beta + d)}.$$
It concludes the proof of Theorem \ref{th: adaptive density}.

\end{proof}

\end{document}